\newif\ifarxiv
\def%
\newcommand{\ifundef}[1]{\expandafter\ifx\csname#1\endcsname\relax}
\DeclareMathAlphabet{\mathbbe}{U}{bbold}{m}{n}
\def\re@DeclareMathSymbol#1#2#3#4{%
    \let#1=\undefined
    \DeclareMathSymbol{#1}{#2}{#3}{#4}}
  \DeclareSymbolFont{tcSyC}{U}{txsyc}{m}{n}
  \re@DeclareMathSymbol{\Top}{\mathord}{tcSyC}{120}
  \re@DeclareMathSymbol{\Bot}{\mathord}{tcSyC}{121}
  \DeclareFontFamily{U}{MnSymbolC}{}
  \DeclareSymbolFont{mnSyC}{U}{MnSymbolC}{m}{n}
  \DeclareFontShape{U}{MnSymbolC}{m}{n}{
      <-6>  MnSymbolC5
     <6-7>  MnSymbolC6
     <7-8>  MnSymbolC7
     <8-9>  MnSymbolC8
     <9-10> MnSymbolC9
    <10-12> MnSymbolC10
    <12->   MnSymbolC12}{}
  \DeclareFontShape{U}{MnSymbolC}{b}{n}{
      <-6>  MnSymbolC-Bold5
     <6-7>  MnSymbolC-Bold6
     <7-8>  MnSymbolC-Bold7
     <8-9>  MnSymbolC-Bold8
     <9-10> MnSymbolC-Bold9
    <10-12> MnSymbolC-Bold10
    <12->   MnSymbolC-Bold12}{}
  \re@DeclareMathSymbol{\righthalfcup}{\mathord}{mnSyC}{184}
  \re@DeclareMathSymbol{\lefthalfcap}{\mathord}{mnSyC}{185}
\DeclareFontFamily{U}{MnSymbolA}{}
\DeclareSymbolFont{mnSyA}{U}{MnSymbolA}{m}{n}
\DeclareFontShape{U}{MnSymbolA}{m}{n}{
    <-6>  MnSymbolA5
   <6-7>  MnSymbolA6
   <7-8>  MnSymbolA7
   <8-9>  MnSymbolA8
   <9-10> MnSymbolA9
  <10-12> MnSymbolA10
  <12->   MnSymbolA12}{}
\DeclareFontShape{U}{MnSymbolA}{b}{n}{
    <-6>  MnSymbolA-Bold5
   <6-7>  MnSymbolA-Bold6
   <7-8>  MnSymbolA-Bold7
   <8-9>  MnSymbolA-Bold8
   <9-10> MnSymbolA-Bold9
  <10-12> MnSymbolA-Bold10
  <12->   MnSymbolA-Bold12}{}
\re@DeclareMathSymbol{\twoheadedswarrow}{\mathord}{mnSyA}{30}
\newcommand{\mlaux}[3]{\setbox0=\hbox{$\mathsurround=0pt #2{#3}$}%
  \dimen0=\dp0\advance\dimen0 by \ht0\lower#1\dimen0\box0}
\newcommand{\makellapm}[2]{\hbox to 0pt{\hss$\mathsurround=0pt #1{#2}$}}
\newcommand{\makerlapm}[2]{\hbox to 0pt{$\mathsurround=0pt #1{#2}$\hss}}
\newcommand{\makelapm}[2]{\hbox to 0pt{\hss$\mathsurround=0pt #1{#2}$\hss}}
\newcommand{\makeushort}[3]{%
	\setbox0=\hbox{$\mathsurround=0pt #2{#3}$}%
	\hbox to 1\wd0{\hss\underbar{\hbox to #1\wd0{\hss\box0\hss}}\hss}}
\def\makebigger#1#2#3{\scalebox{#1}{$\mathsurround=0pt #2{#3}$}}
\def\bigger#1#2{{\relax\mathpalette{\makebigger{#1}}{#2}}}
\def\scaleuphalf{1.0954}
\newcommand{\op}{^{\mathord{\text{\rm op}}}}
\newcommand{\coop}{^{\mathord{\text{\rm coop}}}}
\renewcommand{\th}{^{\text{th}}}
\newcommand{\defeq}{\mathrel{:=}}
\def\newmop{\@ifstar{\@newmop m}{\@newmop o}}
\def\@newmop#1{\@ifnextchar[{\@@newmop #1}{\@@@newmop #1}}
\def\@@newmop#1[#2]{\@declmathop #1#2}
\def\@@@newmop#1#2{\expandafter\@declmathop\expandafter #1\csname #2\endcsname{#2}}
\newcommand{\h}{\mathfrak{h}}
\newcommand{\qop}[1]{{\mathord{\text{\normalfont{\textsf{#1}}}}}}
\newcommand{\Fun}{\qop{Fun}}
\newcommand{\comma}{\mathbin{\downarrow}}
\newcommand{\rotatemath}[2]{\rotatebox[origin=c]{180}{$\m@th #1{#2}$}}
\newcommand{\pocorner}{\hbox to 8pt{{\vrule height8pt depth0pt width0.5pt}%
    \vbox to 8pt{{\hrule height0.5pt width7.5pt depth0pt}\vfill}}}
\newcommand{\pbcorner}{\vbox to 0pt{\kern 4pt\hbox to 0pt{\kern 4pt%
      \vbox{{\hrule height0.5pt width7.5pt depth0pt}}%
      {\vrule height8pt depth0pt width0.5pt}\hss}\vss}}
\newcommand{\pbexcursion}{\save[]+<5pt,-5pt>*{\pbcorner}\restore}
\newcommand{\pbdiamond}{\save[]+<0pt,-5pt>*{\rotatebox{-45}{$\pbcorner$}}\restore}
\newcommand{\pwr}{\pitchfork}
\newcommand{\leib}[1]{\mathbin{\widehat{#1}}}
\newcommand{\pbtimes}[1]{\mathbin{\mathop{\times}_{#1}}}
\newcommand{\category}[1]{\underline{\smash[b]{\text{\rm{#1}}}}}
\newcommand{\lcat}{\mathcal}
\newcommand{\catthree}{{\bigger{1.12}{\mathbbe{3}}}}
\newcommand{\cattwo}{{\bigger{1.12}{\mathbbe{2}}}}
\newcommand{\catone}{{\bigger{1.16}{\mathbbe{1}}}}
\newcommand{\iso}{{\mathbb{I}}}
\def\Del@Sym{{\bigger\scaleuphalf{\mathbbe{\Delta}}}}
\def\del@fn{\futurelet\del@next}
\def\del@dn{\def\del@next}
\def\parsedel@{%
  \ifx +\del@next \del@dn+{\Del@Sym_{\mathord{+}}}%
  \else \del@dn {\del@fn\parsedel@@}%
  \fi\del@next}
\def\parsedel@@{%
  \ifx\space@\del@next \expandafter\del@dn\space{\del@fn\parsedel@@}%
  \else\ifx [\del@next \del@dn[{\del@fn\parsedel@@@}%
  \else\ifx _\del@next \del@dn{\Delta}%
  \else\ifx ^\del@next \del@dn{\Delta}%
  \else \del@dn{\Del@Sym}%
  \fi\fi\fi\fi\del@next}
\def\parsedel@@@{%
  \ifx\space@\del@next \expandafter\del@dn\space{\del@fn\parsedel@@@}%
  \else\ifx t\del@next \del@dn t{\Del@Sym_{\top}\del@fn\parsedel@@@@}%
  \else\ifx b\del@next \del@dn b{\Del@Sym_{\bot}\del@fn\parsedel@@@@}%
  \else \del@dn{\errmessage{unexpected modifier}}%
  \fi\fi\fi\del@next}
\def\parsedel@@@@{%
  \ifx\space@\del@next \expandafter\del@dn\space{\del@fn\parsedel@@@@}%
  \else\ifx ]\del@next \del@dn]{}%
  \else \del@dn{\errmessage{expecting close of option block}}%
  \fi\fi\del@next}
\def\Del{\del@fn\parsedel@}
\newcommand{\Horn}{\Lambda}
\newcommand{\Cat}{\category{Cat}}
\newcommand{\sSet}{\category{sSet}}
\newcommand{\qCat}{\category{qCat}}
\newcommand{\Segal}{\category{Segal}}
\newcommand{\CSS}{\category{CSS}}
\newcommand{\MMod}[1]{\category{Mod}_{\lcat{#1}}}
\newcommand{\dmod}[3]{\xymatrix@=1.25em{{#2} \ar[r]|\mid^{ {#1}} & {#3}}}
\newcommand{\face}{\delta}
\newcommand{\degen}{\sigma}
\newcommand{\join}{\star}
\newcommand{\slice}{/}
\newcommand{\slicer}[2]{{#1\mkern-1mu}_{{}\slice{#2}}}
\newcommand{\ho}{h}
\newcommand{\boundary}{\partial}
\def\reedyfilt#1_#2{#1_{\leq #2}}
\def\makeslashed#1#2#3#4#5{#1{\mathpalette{\sla@{#2}{#3}{#4}}{#5}}}
\def\@mathlower#1#2#3{\setbox0=\hbox{$\m@th#2#3$}\lower#1\ht0\box0}
\def\mathlower#1#2{\mathpalette{\@mathlower{#1}}{#2}}
\newcommand{\inc}{\hookrightarrow}
\newcommand{\tfib}{\twoheadrightarrow}
\newcommand{\xtfib}[1]{\xtwoheadrightarrow{#1}}
\newcommand{\longtwoheadrightarrow}{\mathrel{\mathord{-}\mkern-3mu\mathord\twoheadrightarrow}}
\newcommand{\we}{\xrightarrow{\mkern10mu{\smash{\mathlower{0.6}{\sim}}}\mkern10mu}}
\newcommand{\trvfib}{\stackrel{\smash{\mkern-2mu\mathlower{1.5}{\sim}}}\longtwoheadrightarrow}
\newcommand{\To}{\Rightarrow}
\def\tens@fn{\futurelet\tens@next}
\def\tens@dn{\def\tens@nextcont}
\newtoks\tens@toks
\def\addtotens@toks#1{\tens@toks=\expandafter{\the\tens@toks#1}}
\def\parsetens@@{%
    \ifx\space@\tens@next \expandafter\tens@dn\space{\tens@fn\parsetens@@}%
    \else\ifx ^\tens@next \tens@dn ^##1{\parsetens@procsep^\addtotens@toks{##1}%
      \tens@fn\parsetens@@}%
    \else\ifx _\tens@next \tens@dn _##1{\parsetens@procsep_\addtotens@toks{##1}%
      \tens@fn\parsetens@@}%
    \else\tens@dn{\ifx *\tens@last \else\addtotens@toks\egroup\fi\the\tens@toks}%
    \fi\fi\fi\tens@nextcont}
\def\parsetens@procsep#1{%
  \ifx *\tens@last \addtotens@toks{#1}\addtotens@toks\bgroup%
  \else\ifx \tens@last\tens@next \addtotens@toks,%
  \else \addtotens@toks\egroup\addtotens@toks\bgroup%
    \addtotens@toks\egroup\addtotens@toks{#1}\addtotens@toks\bgroup%
  \fi\fi\let\tens@last\tens@next}
\newcommand{\tn}[1]{\let\tens@last=*\tens@toks={#1}\tens@fn\parsetens@@}
\def\adjdisplay#1-|#2:#3->#4.{{%
    \xymatrix@R=0em@!C=2.5em{%
      *+[l]{#3} \ar@/_0.55pc/[rr]_-{#2} & {\bot} &
      *+[r]{#4}\ar@/_0.55pc/[ll]_-{#1}}}}
\def\adjdisplaytwo#1-|#2:#3->#4.{{%
\xymatrix@=1.2em{
      {#3}\ar@/_1.5ex/[rr]_-{#2}^-{}="one"
      & & {#4}
      \ar@/_1.5ex/[ll]_-{#1}^-{}="two" 
      \ar@{}"one";"two"|{\bot}
    }}}
\def\tripleadjdisplay#1-|#2-|#3:#4->#5.{{%
\xymatrix@=2.4em{ 
{#4}\ar[r]|{#2} &
{#5} \ar@/_3ex/[l]_{#1}^{\bot} \ar@/^3ex/[l]_{\bot}^{#3}}
}}
\def\adjinline#1-|#2:#3->#4.{{#1}\dashv{#2}:#3\to #4}
\newcommand{\pent}[1]{
  \xybox{
    \POS (0,-15)*+{\a}="0", 
         (-14,-5)*+{\b}="1", 
         (-9,12)*+{\c}="2", 
         (9,12)*+{\d}="3", 
         (14,-5)*+{\e}="4"
    \POS"0" \ar "1"^{\labelstyle \ab}|{}="01"
    \POS"1" \ar "2"^{\labelstyle \bc}|{}="12"
    \POS"2" \ar "3"^{\labelstyle \cd}|{}="23"
    \POS"3" \ar "4"^{\labelstyle \de}|{}="34"
    \POS"0" \ar "4"_{\labelstyle \ae}|{}="04"
    \ifcase #1
    \POS"0" \ar "2"|{\labelstyle \ac}="02"
    \POS"0" \ar "3"|{\labelstyle \ad}="03"
    \POS"02";"1"**{}, ?(0.3) \ar@{=>} ?(0.7)^{\labelstyle \abc}
    \POS"03";"2"**{}, ?(0.25) \ar@{=>} ?(0.5)_{\labelstyle \acd}
    \POS"04";"3"**{}, ?(0.2) \ar@{=>} ?(0.4)_{\labelstyle \ade}
    \or
    \POS"1" \ar "3"|{\labelstyle \bd}="13"
    \POS"1" \ar "4"|{\labelstyle \be}="14"
    \POS"13";"2"**{}, ?(0.3) \ar@{=>} ?(0.7)_{\labelstyle \bcd}
    \POS"14";"3"**{}, ?(0.25) \ar@{=>} ?(0.5)_{\labelstyle \bde}
    \POS"04";"1"**{}, ?(0.25) \ar@{=>} ?(0.5)_{\labelstyle \abe}
    \or
    \POS"2" \ar "4"|{\labelstyle \ce}="24"
    \POS"0" \ar "2"|{\labelstyle \ac}="02"
    \POS"02";"1"**{}, ?(0.3) \ar@{=>} ?(0.7)^{\labelstyle \abc}
    \POS"04";"2"**{}, ?(0.2) \ar@{=>} ?(0.35)_{\labelstyle \ace}
    \POS"24";"3"**{}, ?(0.2) \ar@{=>} ?(0.6)^{\labelstyle \cde}
    \or
    \POS"1" \ar "3"|{\labelstyle \bd}="13"
    \POS"0" \ar "3"|{\labelstyle \ad}="03"
    \POS"04";"3"**{}, ?(0.2) \ar@{=>} ?(0.4)_{\labelstyle \ade}
    \POS"13";"2"**{}, ?(0.3) \ar@{=>} ?(0.7)_{\labelstyle \bcd}
    \POS"03";"1"**{}, ?(0.25) \ar@{=>} ?(0.5)^{\labelstyle \abd}
    \or
    \POS"2" \ar "4"|{\labelstyle \ce}="24"
    \POS"1" \ar "4"|{\labelstyle \be}="14"
    \POS"24";"3"**{}, ?(0.2) \ar@{=>} ?(0.6)^{\labelstyle \cde}
    \POS"04";"1"**{}, ?(0.25) \ar@{=>} ?(0.5)_{\labelstyle \abe}
    \POS"14";"2"**{}, ?(0.25) \ar@{=>} ?(0.5)^{\labelstyle \bce}
    \else\fi
  }
}
\newcommand{\pentofpent}[1]{
  \def\baselen{#1}
  \begin{xy}
    0;<\baselen,0mm>:
    *{\xybox{
        \POS(0,-4)*[o]{\pent 0}="zero"
        \POS(16,40)*[o]{\pent 3}="three"
        \POS(72,40)*[o]{\pent 1}="one"
        \POS(88,-4)*[o]{\pent 4}="four"
        \POS(44,-36)*[o]{\pent 2}="two"
        \ar@<1ex>"zero";"three"^-{\objectstyle\abcd}
        \ar@<1ex>"three";"one"^-{\objectstyle\abde}
        \ar@<1ex>"one";"four"^-{\objectstyle\bcde}
        \ar@<-1ex>"zero";"two"_-{\objectstyle\acde}
        \ar@<-1ex>"two";"four"_-{\objectstyle\abce}
        \ar@{=>}(44,-5);(44,+15)^{\objectstyle\abcde}
     }}
  \end{xy}
}
\setlist{}
\theoremstyle{plain}
\newtheorem{thm}{Theorem}[subsection]
\newtheorem{lem}[thm]{Lemma}
\newtheorem{cor}[thm]{Corollary}
\newtheorem{prop}[thm]{Proposition}
\theoremstyle{definition}
\newtheorem{defn}[thm]{Definition}
\newtheorem{ex}[thm]{Example}
\theoremstyle{remark}
\newtheorem{obs}[thm]{Observation}
\newtheorem{rec}[thm]{Recall}
\newtheorem{rmk}[thm]{Remark}
\newtheorem{exs}[thm]{Exercise}
\let\c@equation\c@thm
\numberwithin{equation}{subsection}
\renewcommand{\Cat}{\mathcal{C}\mathrm{at}}
\renewcommand{\sSet}{\mathrm{s}\mathcal{S}\mathrm{et}}
\renewcommand{\qCat}{\mathrm{q}\mathcal{C}\mathrm{at}}
\renewcommand{\ho}{\mathrm{ho}}
\renewcommand{\CSS}{\mathcal{C}\mathrm{SS}}
\renewcommand{\Segal}{\mathcal{S}\mathrm{egal}}
\newcommand{\Sp}{\mathcal{S}\mathrm{p}}
\newcommand{\Rezk}{\mathcal{R}\mathrm{ezk}}
\newcommand{\extRef}[3]{%
  {\protect\IfBeginWith{#3}{itm:}{}{#2.}}\ref*{#1:#3}}
\newcommand{\refI}{\extRef{found}{I}}
\newcommand{\refII}{\extRef{cohadj}{II}}
\newcommand{\refIV}{\extRef{yoneda}{IV}}
\newcommand{\refV}{\extRef{equipment}{V}}
\title{Infinity category theory from scratch}
\author[Riehl]{Emily Riehl}
\address{
  Department of Mathematics \\
Johns Hopkins University \\
Baltimore, MD 21218\\
  USA
}
\email{eriehl@math.jhu.edu}
\thanks{\textit{Acknowledgments:} The first-named author wishes to thank the organizers of the Young Topologists' Meeting 2015 for providing her with the opportunity to speak about this topic; the Hausdorff Research Institute for Mathematics, which hosted her during the period in which these lectures were prepared; and the Simons Foundation and National Science Foundation for financial support through the AMS-Simons Travel Grant program and DMS-1509016. Edoardo Lanari pointed out a circularity in the interpretation of the axiomatization for an $\infty$-cosmos with all objects cofibrant, now corrected. The authors are also grateful for insightful suggestions from the referee that were incorporated into the published manuscript.}
\author[Verity]{Dominic Verity}
\address{
  Centre of Australian Category Theory \\
  Macquarie University \\
  NSW 2109 \\
  Australia
}
\email{dominic.verity@mq.edu.au}
\date{\today}
\subjclass[2010]{%
  18G55, 55U35
}
\begin{document}
  
  \ifpdf
  \DeclareGraphicsExtensions{.pdf, .jpg, .tif}
  \else
  \DeclareGraphicsExtensions{.eps, .jpg}
  \fi
  
\begin{abstract} 
These lecture notes were written to accompany a mini course given at the 2015 Young Topologists' Meeting at \'{E}cole Polytechnique F\'{e}d\'{e}rale de Lausanne, videos of which can be found at  \href{http://hessbellwald-lab.epfl.ch/ytm2015}{hessbellwald-lab.epfl.ch/ytm2015}. We use the terms $\infty$-\emph{categories} and $\infty$-\emph{functors} to mean the objects and morphisms in an $\infty$-\emph{cosmos}: a simplicially enriched category satisfying a few axioms, reminiscent of an enriched category of ``fibrant objects.'' Quasi-categories, Segal categories, complete Segal spaces, naturally marked simplicial sets, iterated complete Segal spaces, $\theta_n$-spaces, and fibered versions of each of these are all $\infty$-categories in this sense. We show that the basic category theory of $\infty$-categories and $\infty$-functors can be developed from the axioms of an $\infty$-cosmos; indeed, most of the work is internal to a strict 2-category of $\infty$-categories, $\infty$-functors, and natural transformations. In the $\infty$-cosmos of quasi-categories, we recapture precisely the same category theory developed by Joyal and Lurie, although in most cases our definitions, which are 2-categorical rather than combinatorial in nature, present a new incarnation of the standard concepts.

In the first lecture, we define an $\infty$-cosmos and introduce its \emph{homotopy 2-category}, the strict 2-category mentioned above. We illustrate the use of formal category theory to develop the basic theory of equivalences of and adjunctions between $\infty$-categories. In the second lecture, we study limits and colimits of diagrams taking values in an $\infty$-category and relate these concepts to adjunctions between $\infty$-categories. In the third lecture, we define comma $\infty$-categories, which satisfy a particular weak 2-dimensional universal property in the homotopy 2-category. We illustrate the use of comma $\infty$-categories to encode the universal properties of (co)limits and adjointness. Because comma $\infty$-categories are preserved by all functors of $\infty$-cosmoi and created by certain weak equivalences of $\infty$-cosmoi, these characterizations form the foundations for ``model independence'' results. In the fourth lecture, we introduce (co)cartesian fibrations, a certain class of $\infty$-functors, and also consider the special case with groupoidal fibers. We then describe the calculus of \emph{modules} between $\infty$-categories ---  comma $\infty$-categories being the prototypical example --- and use this framework to introduce the Yoneda lemma and develop the theory of pointwise Kan extensions of $\infty$-functors.
\end{abstract}
  
  \maketitle

\tableofcontents

\renewcommand\thesection{Lecture~\arabic{section}}
\section{Basic 2-category theory in an \texorpdfstring{$\infty$}{infinity}-cosmos}\label{sec:2-cat}
\renewcommand\thesection{\arabic{section}}

The goal of these lectures is to give a very precise account of the foundations of $\infty$-category theory, paralleling the development of ordinary category theory.

To explain what we  mean by the term ``$\infty$-categories,'' it is useful to distinguish between schematic definitions of infinite-dimensional categories and models of infinite-dimensional categories. For instance, $(\infty,1)$-\emph{category} is a schematic notion, describing a category that is weakly enriched in $\infty$-\emph{groupoids}, itself a schematic term for something like topological spaces. The main source of ambiguity in this definition is meaning of ``weakly enriched,'' which is not precisely defined.

Models of $(\infty,1)$-categories are precise mathematical objects meant to embody this schema. These include \emph{quasi-categories} (originally called \emph{weak Kan complexes}), \emph{complete Segal spaces}, \emph{Segal categories}, and \emph{naturally marked simplicial sets} (\emph{1-trivial saturated weak complicial sets}), among others.

To develop the theory of $(\infty,1)$-categories in a non-schematic way, it makes sense to choose a model. Andr\'e Joyal pioneered and Jacob Lurie extended a wildly successful project to extend basic category theory from ordinary categories to quasi-categories. A natural question is then: does this work extend to other models of $(\infty,1)$-categories? And to what extent are basic categorical notions invariant under change of models? 

The general consensus is that the choice of model should not matter so much, but one obstacle to proving results of this kind is that, to a large extent, precise versions of the categorical definitions that have been established for quasi-categories had not been given for the other models. 

Here we will use the term $\infty$-\emph{category} to refer to any of the models of $(\infty,1)$-categories listed above: quasi-categories, complete Segal spaces, Segal categories, or naturally marked simplicial sets.\footnote{Later, we will explain that the results we prove about $\infty$-\emph{categories} also apply to  other varieties of infinite-dimensional categories, but let us not get into this now.} Note that, for us ``$\infty$-category'' will not function as a schematic term: several well-behaved models of $(\infty,1)$-categories will be included in the scope of its meaning, but others are excluded.  With this interpretation in mind, let us describe a few features of our project to extend the basic theory of categories to $\infty$-categories.
\begin{itemize}
\item It is blind to which model of $\infty$-categories is being considered.
\end{itemize}
That is, our definitions of basic categorical notions will be stated and our theorems will be proven without reference to particular features of any model of $\infty$-categories and will apply, simultaneously, to all of them. 
\begin{itemize}
\item It is compatible with the Joyal/Lurie theory of quasi-categories.
\end{itemize}
In the special case of quasi-categories, our presentation of the basic categorical notions will necessarily differ from existing one, as our formalism is unaware of the fact that quasi-categories are simplicial sets. Nonetheless the categorical concepts so-defined are precisely equivalent to the Joyal/Lurie theory, and so our approach can be mixed with the existing one. This is good news because, at present, the Joyal/Lurie theory of quasi-categories is considerably more expansive.\footnote{But we are not finished yet.}
\begin{itemize}
\item It is invariant under change of models between $(\infty,1)$-categories.
\end{itemize}
Independent work of Bertrand T\"{o}en and of Clark Barwick and Chris Schommer-Pries proves that all models of $(\infty,1)$-categories ``have the same homotopy theory,'' in the sense of being connected by a zig-zag of Quillen equivalences of model categories  \cite{Toen:2005vu}  or having equivalent quasi-categories  \cite{BSP:2011ot}. In lectures 3 and 4, we will see that the models of $(\infty,1)$-categories that fall within the scope of this theory all ``have the same basic category theory,'' in the sense that the answers to basic categorical questions are invariant under change of model. Finally:
\begin{itemize}
\item It is as simple as possible.
\end{itemize}
Our original aim in this project was to provide a streamlined, from the ground up, development of the foundations of quasi-category theory that more closely parallels classical category theory and is relatively easy to work with, at least for those with some affinity for abstract nonsense. With a bit of effort, these ideas could be extended to cover  a wider variety of models of $(\infty,1)$-categories, but to do so would come at the cost of complicating the proofs. 

The plan for the first lecture is to introduce the axiomatic framework that makes it possible to develop the basic theory of $\infty$-categories without knowing precisely which variety of infinite-dimensional categories we are working with. In the second installment, we will continue with a discussion of limits and colimits of diagrams valued inside an $\infty$-category, which we relate to the adjunctions between $\infty$-categories that we will define now. In the third lecture, we will introduce \emph{comma $\infty$-categories}, which will be the essential ingredient in our proofs that basic category theory  is invariant under change of models. In the first two lectures, we will see how basic categorical notions can be developed ``model agnostically,'' i.e., simultaneously for all models. In lecture three, we will be able to prove that this theory is both preserved and reflected by certain change-of-model functors. Finally, in the final talk we will introduce \emph{modules} between $\infty$-categories, which we use to develop the theory of pointwise Kan extensions.

The work to date on this project can be found in a series of five papers:
\begin{enumerate}[label=\Roman*.]
\item The 2-category theory of quasi-categories \cite{RiehlVerity:2012tt}, \href{http://arxiv.org/abs/1306.5144}{ar\-Xiv:1306.5144}
\item Homotopy coherent adjunctions and the formal theory of monads \cite{RiehlVerity:2012hc},\\ \href{http://arxiv.org/abs/1310.8279}{arXiv:1310.8279}
\item Completeness results for quasi-categories of algebras, homotopy limits, and related general constructions \cite{RiehlVerity:2013cp}, \href{http://arxiv.org/abs/1401.6247}{arXiv:1401.6247}
\item Fibrations and Yoneda's lemma in an $\infty$-cosmos \cite{RiehlVerity:2015fy},  \href{http://arxiv.org/abs/1506.05500}{arXiv:1506.05500}
\item Kan extensions and the calculus of modules for $\infty$-categories \cite{RiehlVerity:2015ke}, \\ \href{http://arxiv.org/abs/1507.01460}{arXiv:1507.01460}
\end{enumerate}
references within which will have the form, e.g., \refII{thm:monadiccomparisonadj}.

\subsection{Formal category theory in a 2-category}

Recall the term $\infty$-\emph{categories} refers to the reader's choice of one of several listed models of infinite-dimensional categories. Each fixed model of $\infty$-categories has an accompanying notion of $\infty$-\emph{functors} between them. The reason we are able to develop the basic theory of $\infty$-categories without knowing exactly what the $\infty$-categories are is that 
the $\infty$-categories and $\infty$-functors of each type are packaged inside an axiomatic framework that describes the surrounding category with these objects and morphisms. We will say more about this in a moment, but first we want to illustrate how closely our development of the theory of $\infty$-categories parallels the development of the theory of ordinary categories.  One thing that is provided by the axiomatization that we will describe in a moment is a reasonable notion of $\infty$-\emph{natural transformations} between $\infty$-functors.

Categories, functors, and natural transformations naturally assemble into a \emph{2-category} $\h\Cat$, a structure which records the various ways in which functors and natural transformations can be composed. Similarly, we will see that the $\infty$-categories of each fixed model, the $\infty$-functors between them, and the $\infty$-natural transformations between $\infty$-functors also assemble into a 2-category, which we call the \emph{homotopy 2-category}. Each variety of $\infty$-categories will have their own homotopy 2-category --- one example being $\h\Cat$. A \emph{homotopy 2-category} is a strict 2-category whose: 
\begin{itemize} 
\item objects $A$, $B$, and $C$ are \emph{$\infty$-categories};
\item 1-cells $f \colon A \to B$, $g \colon B \to C$, are ($\infty$-)\emph{functors};
\item 2-cells $\xymatrix{ A \ar@/^2ex/[r]^f \ar@/_2ex/[r]_g \ar@{}[r]|{\Downarrow\alpha}& B,}$ written in line as $\alpha \colon f \To g \colon A \to B$, are ($\infty$-)\emph{natural transformations}.
\end{itemize}

\begin{rec}[pasting diagrams in 2-categories]
The objects and 1-cells in a 2-category define its \emph{underlying 1-category}. In particular, the 1-cells have the familiar associative and unital composition law. The 2-cells can be composed in two ways:
\begin{itemize}
\item The \emph{vertical composite} of  $\vcenter{\xymatrix@R=30pt{  A \ar@/^2ex/[r]^f \ar@/_2ex/[r]_g \ar@{}[r]|{\Downarrow\alpha}& B \\  A \ar@/^2ex/[r]^g \ar@/_2ex/[r]_h \ar@{}[r]|{\Downarrow\beta}& B}}$ defines a 2-cell $\xymatrix{  A \ar@/^2ex/[r]^f \ar@/_2ex/[r]_h \ar@{}[r]|{\Downarrow\beta\cdot\alpha}& B}$.

\item The \emph{horizontal composite} of  $ \xymatrix{  A \ar@/^2ex/[r]^f \ar@/_2ex/[r]_g \ar@{}[r]|{\Downarrow\alpha}& B \ar@/^2ex/[r]^h \ar@/_2ex/[r]_k \ar@{}[r]|{\Downarrow\gamma} & C}$  defines a 2-cell $\xymatrix{ A \ar@/^2ex/[r]^{hf} \ar@/_2ex/[r]_{kg} \ar@{}[r]|{\Downarrow\gamma\alpha}& C}$.
\end{itemize}
A degenerate special case of horizontal composition, in which all but one of the 2-cells is an identity $\id_f$ on its boundary 1-cell $f$, is called \emph{whiskering}: 
\[ \vcenter{\xymatrix{  A \ar@/^2ex/[r]^f \ar@/_2ex/[r]_f \ar@{}[r]|{\Downarrow\id_f}& B \ar@/^2ex/[r]^h \ar@/_2ex/[r]_k \ar@{}[r]|{\Downarrow\gamma} & C \ar@/^2ex/[r]^\ell \ar@/_2ex/[r]_\ell \ar@{}[r]|{\Downarrow\id_{\ell}} & D} } =  \vcenter{\xymatrix{  A \ar[r]^f & B \ar@/^2ex/[r]^h \ar@/_2ex/[r]_k \ar@{}[r]|{\Downarrow\gamma} & C \ar[r]^\ell & D} } =  \vcenter{\xymatrix{   A \ar@/^2ex/[r]^{\ell hf} \ar@/_2ex/[r]_{\ell kf} \ar@{}[r]|{\Downarrow\ell \gamma f} & D} }\]

Vertical composition is associative and unital with respect to identity 2-cells:  there is a category of 1-cells and 2-cells between each fixed pair of objects. Horizontal composition is associative and unital with respect to those identity 2-cells $\id_{\id_A}$ on identity 1-cells. Moreover, the vertical and horizontal composition operations commute by the law of \emph{middle four interchange} which says that any \emph{pasting diagram}, examples of which are displayed below, has a unique composite 2-cell.
\[ \vcenter{\xymatrix{ A \ar@/^3.5ex/[r]^f_{\Downarrow\alpha} \ar[r]|g \ar@/_3.5ex/[r]_h^{\Downarrow\beta} & B  \ar@/^3.5ex/[r]^j_{\Downarrow\gamma} \ar[r]|k \ar@/_3.5ex/[r]_\ell^{\Downarrow\delta} & C}}
\qquad\qquad \vcenter{\xymatrix@=15pt{  & B \ar[rr]^g \ar@{}[d]|(.6){\Downarrow\alpha} \ar[dr]|p & \ar@{}[d]|(.4){\Downarrow\beta} & C \ar@{}[dd]|{\Downarrow\gamma} \ar[dr]^h \\ A \ar[ur]^f \ar[dr]_\ell \ar[rr]|m & \ar@{}[dr]|{\Downarrow\delta} & G \ar[ur]|r \ar[dr]|s & & D \\ & F \ar[rr]_k & & E \ar[ur]_j}}
\]
\end{rec}

Now let us explore how such a framework can be used to develop the basic category theory of the $\infty$-categories that define the objects of the homotopy 2-category. For instance, the homotopy 2-category provides a convenient framework in which to define the notion of adjunction between $\infty$-categories.

\begin{defn}\label{defn:adjunction} An \emph{adjunction} between $\infty$-categories consists of:
\begin{itemize}
\item a pair of $\infty$-categories $A$ and $B$;
\item a pair of functors $f \colon B \to A$ and $u \colon A \to B$; and
\item a pair of natural transformations $\eta \colon \id_B \To uf$ and $\epsilon \colon fu \To \id_A$
\end{itemize}
so that the triangle identities hold:
\[\xymatrix@=1.5em{ & B \ar[dr]|f \ar@{}[d]|(.6){\Downarrow\epsilon} \ar@{=}[rr] &  \ar@{}[d]|(.4){\Downarrow\eta} & B \ar@{}[d]^*+{=} & B &&   B \ar@{=}[rr] \ar[dr]_f & \ar@{}[d]|(.4){\Downarrow \eta} & B \ar[dr]^f \ar@{}[d]|(.6){\Downarrow\epsilon} & {\mkern40mu}\ar@{}[d]^*+{=} &  B \ar@/^2ex/[d]^f \ar@/_2ex/[d]_f \ar@{}[d]|(.4){\id_f}|(.6){\Leftarrow} & \\A \ar[ur]^u \ar@{=}[rr] & &  A \ar[ur]_u & {\mkern40mu} & A \ar@/^2ex/[u]^u \ar@/_2ex/[u]_u \ar@{}[u]|(.4){\Rightarrow}|(.6){\id_u}  && &A \ar[ur]|u \ar@{=}[rr] & & A & A }\] The left-hand equality of pasting diagrams asserts that $u\epsilon \cdot \eta u = \id_u$, while the right-hand equality asserts that $\epsilon f \cdot f\eta = \id_f$.
\end{defn}

We write $f \dashv u$ to assert that the functor $f \colon B \to A$ is \emph{left adjoint} to the functor $u \colon A \to B$, its \emph{right adjoint}. From the standpoint of this definition, we can easily prove some basic properties of adjunctions. Note that in the special case of the homotopy 2-category $\h\Cat$, these proofs are exactly the familiar ones.

\begin{prop}\label{prop:adjunctions-compose} Adjunctions compose: given adjoint functors
\[ \vcenter{\xymatrix{ C \ar@<1ex>[r]^{f'} \ar@{}[r]|\perp & B \ar@<1ex>[r]^f \ar@<1ex>[l]^{u'} \ar@{}[r]|\perp & A  \ar@<1ex>[l]^u}} \qquad \rightsquigarrow\qquad  \vcenter{\xymatrix{ C \ar@<1ex>[r]^{ff'} \ar@{}[r]|\perp & A \ar@<1ex>[l]^{u'u}}}\] the composite functors are adjoint.
\end{prop}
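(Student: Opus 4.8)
The plan is to exhibit explicit unit and counit 2-cells for the composite and to check the triangle identities of Definition~\ref{defn:adjunction} by a pasting-diagram computation. Since ``adjunction'' is a purely 2-categorical notion and the homotopy 2-category is a genuine strict 2-category, this argument is formally identical to the classical one for $\Cat_2$, so no homotopy-theoretic input is needed. Writing $\eta'\colon \id_C\To u'f'$, $\epsilon'\colon f'u'\To\id_B$ for the unit and counit of $f'\dashv u'$ and $\eta\colon\id_B\To uf$, $\epsilon\colon fu\To\id_A$ for those of $f\dashv u$, I would take as unit and counit of $ff'\dashv u'u$ the whiskered vertical composites
\[ \bar\eta \defeq (u'\eta f')\cdot\eta' \colon \id_C \To u'f' \To u'uff', \qquad \bar\epsilon \defeq \epsilon\cdot(f\epsilon' u) \colon ff'u'u \To fu \To \id_A. \]
It is cleanest to record these, and all the 2-cells below, as pasting diagrams assembled from the two given adjunction triangles, so that the 1-cell boundaries can be tracked mechanically.

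For the first triangle identity one evaluates the vertical composite $u'u\bar\epsilon\cdot\bar\eta\,u'u\colon u'u\To u'u$. Whiskering the definitions of $\bar\eta$ and $\bar\epsilon$ presents this as a four-fold vertical composite; a single application of middle four interchange to the two inner 2-cells (which arise by whiskering $\eta$ and $\epsilon'$, one of whose boundary 1-cells is an identity) rearranges the chain into the form $\bigl(u'u\epsilon\cdot u'\eta u\bigr)\cdot\bigl(u'\epsilon' u\cdot\eta' u'u\bigr)$. The right-hand factor equals $(u'\epsilon'\cdot\eta' u')u = \id_{u'u}$ by the triangle identity $u'\epsilon'\cdot\eta' u'=\id_{u'}$ for $f'\dashv u'$, and the left-hand factor equals $u'(u\epsilon\cdot\eta u) = \id_{u'u}$ by the triangle identity $u\epsilon\cdot\eta u=\id_u$ for $f\dashv u$; hence the whole composite is $\id_{u'u}$. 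The second triangle identity, $\bar\epsilon\,ff'\cdot ff'\bar\eta = \id_{ff'}$, follows by the dual manipulation, now invoking the other triangle identity $\epsilon' f'\cdot f'\eta' = \id_{f'}$ and $\epsilon f\cdot f\eta = \id_f$ of each adjunction.

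I do not expect a genuine obstacle here: the verification uses only the strict 2-category axioms recalled above --- associativity and unitality of horizontal and vertical composition, and middle four interchange --- together with the triangle identities that are assumed by hypothesis, exactly as in ordinary category theory. The only point that requires care is the bookkeeping of whiskers and boundaries when expanding $\bar\eta$ and $\bar\epsilon$, which is precisely why I would carry out the intermediate steps as pasting diagrams rather than as strings of symbolically whiskered 2-cells.
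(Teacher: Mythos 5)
Your proposal is correct and matches the paper's proof: the paper likewise takes the unit and counit of $ff'\dashv u'u$ to be the pasted composites $u'\eta f'\cdot\eta'$ and $\epsilon\cdot f\epsilon' u$, and verifies the triangle identities by exactly the pasting-diagram/interchange manipulation you describe, reducing to the triangle identities of the two given adjunctions.
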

\begin{proof}
Writing $\eta \colon \id_B \To uf$, $\epsilon \colon fu \To \id_A$, $\eta' \colon \id_C \To u'f'$, and $\epsilon' \colon fu \To \id_B$ for the respective units and counits, the pasting diagrams 
\[ \xymatrix@=1.5em{  C \ar[dr]_{f'} \ar@{=}[rrrr] & & \ar@{}[d]|{\Downarrow \eta'} & & C & & C \ar@{}[d]|(.6){\Downarrow\epsilon'} \ar[dr]^{f'} \\ & B \ar@{=}[rr] \ar[dr]_f & \ar@{}[d]|(.4){\Downarrow\eta} & B \ar[ur]_{u'} & & B \ar[ur]^{u'} \ar@{=}[rr] & \ar@{}[d]|{\Downarrow\epsilon} & B \ar[dr]^f \\ & & A \ar[ur]_u & & A \ar[ur]^u \ar@{=}[rrrr] & & & & A}\] define the unit and counit of $ff' \dashv u'u$ so that the triangle identities
\[ \xymatrix@=.70em{  C \ar[dr]_{f'} \ar@{=}[rrrr] & & \ar@{}[d]|{\Downarrow \eta'} & & C  \ar@{}[d]|(.6){\Downarrow\epsilon'} \ar[dr]^{f'}& & && C   \ar@/^3ex/[dd]^(.3){ff'} \ar@/_3ex/[dd]_(.3){ff'} \ar@{}[dd]|(.4){\id_{ff'}}|(.6){\Leftarrow} &   &&  & C \ar@{}[d]|(.6){\Downarrow\epsilon'} \ar[dr]|{f'} \ar@{=}[rrrr] & & \ar@{}[d]|{\Downarrow \eta'} & & C & & C  \\ & B \ar@{=}[rr] \ar[dr]_f & \ar@{}[d]|(.4){\Downarrow\eta} & B \ar[ur]|{u'}  \ar@{=}[rr] & \ar@{}[d]|{\Downarrow\epsilon} & B \ar[dr]^f & & \ar@{}[l]|{\displaystyle =} & &&   & B \ar[ur]^{u'} \ar@{=}[rr] & \ar@{}[d]|{\Downarrow\epsilon} & B \ar[dr]|f   \ar@{=}[rr] & \ar@{}[d]|(.4){\Downarrow\eta} & B \ar[ur]_{u'} & & \ar@{}[l]|{\displaystyle =}  \\ & & A \ar[ur]|u  \ar@{=}[rrrr] & & & & A & &  A & &   A \ar[ur]^u \ar@{=}[rrrr] & & & & A \ar[ur]_u & & & &  A \ar@/^3ex/[uu]^(.3){u'u} \ar@/_3ex/[uu]_(.3){u'u} \ar@{}[uu]|(.6){\id_{u'u}}|(.4){\Rightarrow} & &  } \] 
hold.
\end{proof}

It is also straightforward to show that the existence of adjoint functors is equivalence-invariant, in the sense of the standard 2-categorical notion of equivalence.

\begin{defn}\label{defn:equivalence} An \emph{equivalence} between $\infty$-categories consists of:
\begin{itemize}
\item a pair of $\infty$-categories $A$ and $B$;
\item a pair of functors $f \colon B \to A$ and $g \colon A \to B$; and
\item a pair of natural isomorphisms  $\eta \colon \id_B \cong gf$ and $\epsilon \colon fg \cong \id_A$.
\end{itemize}
An ($\infty$-)\emph{natural isomorphism} is a 2-cell in the homotopy 2-category that admits a vertical inverse 2-cell.
\end{defn}

We write $A \simeq B$ and say that $A$ and $B$ are \emph{equivalent} if there exists an equivalence between $A$ and $B$. The direction for the natural isomorphisms comprising an equivalence is immaterial. Our notation is chosen to suggest the connection with adjunctions conveyed by the following exercise.

\begin{exs}\label{exs:adjoint-equivalence} Show that an equivalence, defined in any 2-category, can always be promoted to an \emph{adjoint equivalence} by modifying one of the 2-cell isomorphisms. That is, show that the 2-cell isomorphisms in an equivalence can be chosen so as to satisfy the triangle identities.
\end{exs}

Combining Exercise \ref{exs:adjoint-equivalence} with the symmetry in the definition of an equivalence, we have:

\begin{cor}\label{cor:equivs-are-adjoints} Any functor $f \colon A \to B$ that defines an equivalence of $\infty$-categories admits both a left and a right adjoint.
\end{cor}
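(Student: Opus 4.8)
The plan is to feed the equivalence data for $f$ into Exercise~\ref{exs:adjoint-equivalence} twice --- once for $f$ itself and once for its inverse equivalence --- exploiting the built-in symmetry of Definition~\ref{defn:equivalence}.

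First I would unpack the hypothesis: since $f \colon A \to B$ is an equivalence, there is a functor $g \colon B \to A$ together with natural isomorphisms $\id_A \cong gf$ and $fg \cong \id_B$. Exercise~\ref{exs:adjoint-equivalence} tells us that any equivalence in a 2-category --- here the homotopy 2-category --- can be upgraded to an adjoint equivalence by replacing one of the two invertible 2-cells with a suitable isomorphism satisfying the triangle identities. Applying this to the equivalence $(f,g)$ produces an adjunction $f \dashv g$ whose unit and counit are isomorphisms; in particular $g$ is a right adjoint of $f$, so $f$ admits a right adjoint.

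Next I would invoke the symmetry noted just after Definition~\ref{defn:equivalence}: the very same data also exhibits $g \colon B \to A$ as an equivalence with inverse $f$, the direction of the natural isomorphisms being immaterial. Running Exercise~\ref{exs:adjoint-equivalence} again, now on the equivalence $(g,f)$, upgrades it to an adjoint equivalence $g \dashv f$. By the defining shape of an adjunction (Definition~\ref{defn:adjunction}) this exhibits $f \colon A \to B$ as a right adjoint of $g$, equivalently $g$ as a left adjoint of $f$. Combining the two conclusions, $f$ admits both a left and a right adjoint, as claimed.

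The only point requiring care --- and the one I would state explicitly rather than gloss --- is that the two applications of Exercise~\ref{exs:adjoint-equivalence} genuinely yield adjunctions pointing in opposite directions: the first run keeps the functors $f,g$ fixed and produces $f \dashv g$, while the second run, applied to the swapped equivalence $(g,f)$, again keeps the functors fixed but now produces $g \dashv f$. No further computation is needed, since all of the 2-categorical bookkeeping, in particular the verification of the triangle identities, has already been discharged inside the exercise. (Alternatively, one could observe directly that in any adjoint equivalence $f \dashv g$ the inverse 2-cells satisfy the triangle identities for $g \dashv f$, but routing everything through the exercise is cleaner.)
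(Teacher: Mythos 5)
Your argument is correct and is exactly the paper's intended one: the corollary is stated as an immediate consequence of Exercise~\ref{exs:adjoint-equivalence} combined with the symmetry of Definition~\ref{defn:equivalence}, which is precisely the two applications (to $(f,g)$ yielding $f \dashv g$, and to $(g,f)$ yielding $g \dashv f$) that you spell out. Your explicit note about the two adjunctions pointing in opposite directions is a helpful clarification of what the paper leaves implicit, but it is not a departure from its proof.
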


\begin{prop}\label{prop:equiv-invar-adjunction} If $\xymatrix{ B \ar@<1ex>[r]^f \ar@{}[r]|\perp & A \ar@<1ex>[l]^u}$ is an adjunction and $A \simeq A'$ and $B \simeq B'$ are any equivalences, then the equivalent functors $\xymatrix{ B' \ar@<1ex>[r]^{f'} \ar@{}[r]|\perp & A' \ar@<1ex>[l]^{u'}}$ are again adjoints.
\end{prop}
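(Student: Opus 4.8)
The plan is to reduce the claim to the composability of adjunctions (Proposition~\ref{prop:adjunctions-compose}) together with the fact that equivalences are adjoints (Exercise~\ref{exs:adjoint-equivalence} and Corollary~\ref{cor:equivs-are-adjoints}). First I would make precise what ``the equivalent functors $f'$ and $u'$'' means: having fixed equivalences $a \colon A \to A'$ and $b \colon B \to B'$, the functor $f'$ comes equipped with a natural isomorphism $f' b \cong a f$ (equivalently, choosing a quasi-inverse $b^{-1}$, a natural isomorphism $f' \cong a f b^{-1}$), and likewise $u' \cong b u a^{-1}$. By Exercise~\ref{exs:adjoint-equivalence} I may upgrade $a$ and $b$ to adjoint equivalences; since the unit and counit of an adjoint equivalence are invertible, each of $a,a^{-1},b,b^{-1}$ is simultaneously a left and a right adjoint of its inverse. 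In particular $b^{-1}\dashv b$ and $a \dashv a^{-1}$.

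Next I would assemble the composite adjunction. Applying Proposition~\ref{prop:adjunctions-compose} twice to the chain of left adjoints
\[
B' \xrightarrow{\ b^{-1}\ } B \xrightarrow{\ f\ } A \xrightarrow{\ a\ } A'
\]
produces an adjunction $a f b^{-1} \dashv b u a^{-1}$, whose unit and counit are obtained by pasting together the units and counits of $b^{-1}\dashv b$, $f\dashv u$, and $a\dashv a^{-1}$ exactly as in the proof of Proposition~\ref{prop:adjunctions-compose}.

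Finally I would transport this adjunction across the natural isomorphisms $f' \cong a f b^{-1}$ and $u' \cong b u a^{-1}$: define $\eta'$ and $\epsilon'$ by whiskering and pasting the unit and counit of $a f b^{-1} \dashv b u a^{-1}$ with these invertible $2$-cells, and then check the triangle identities. This last verification --- the only genuine computation in the argument --- is the routine fact that an adjunction in any $2$-category may be transported along natural isomorphisms of its left and right legs; it is a short diagram chase using naturality together with the triangle identities already in hand. I expect the only real care needed is bookkeeping: keeping the directions of the equivalences and their quasi-inverses straight, and confirming that the chosen isomorphisms are sufficiently compatible that the transported triangle identities close up. No model-specific input is required; everything takes place in the homotopy $2$-category.
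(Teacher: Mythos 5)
Your proof is correct and takes essentially the same route as the paper: promote the two given equivalences to adjoint equivalences via Exercise \ref{exs:adjoint-equivalence} and then compose the chain of three adjunctions using Proposition \ref{prop:adjunctions-compose}. The only difference is your added final step transporting the composite adjunction $afb^{-1} \dashv bua^{-1}$ along the isomorphisms $f' \cong afb^{-1}$ and $u' \cong bua^{-1}$; the paper simply takes these composites to be $f'$ and $u'$, recording that transport fact separately as Exercise \ref{exs:iso-invar-adjunction}(i), so your version is a slightly more careful reading of the statement rather than a different argument.
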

\begin{proof}
Promoting the equivalences to adjoint equivalences we have the composite adjunction
\[ \xymatrix{ B' \ar@<1ex>[r]^\sim \ar@{}[r]|\perp & B \ar@<1ex>[r]^f \ar@<1ex>[l]^\sim \ar@{}[r]|\perp & A \ar@<1ex>[r]^\sim \ar@{}[r]|\perp \ar@<1ex>[l]^u & A' \ar@<1ex>[l]^\sim,}\] defining $f' \dashv u'$.
\end{proof}

An analogous result to Proposition \ref{prop:equiv-invar-adjunction} holds for other notions of equivalent functors:

\begin{exs}\label{exs:iso-invar-adjunction} In any 2-category show that:
\begin{enumerate}[label=(\roman*)]
\item If $f' \cong f$ and $f \dashv u$ then $f' \dashv u$.
\item If $f \dashv u$ and $f' \dashv u$ then $f \cong f'$.
\item If $f' \cong f$ and $f$ is an equivalence then so is $f'$.
\end{enumerate}
\end{exs}

\subsection{\texorpdfstring{$\infty$}{infinity}-cosmoi}

The results in the previous section illustrate some of the basic formal category theory that can be developed internally to any 2-category. As our terminology suggests, Definitions \ref{defn:adjunction} and \ref{defn:equivalence} indeed define appropriate notions of adjunction and equivalence for many varieties of $(\infty,1)$-categories in such a way that \ref{prop:adjunctions-compose}, \ref{exs:adjoint-equivalence}, \ref{cor:equivs-are-adjoints}, \ref{prop:equiv-invar-adjunction}, and \ref{exs:iso-invar-adjunction} all hold. To understand this, we will now explain how to define homotopy 2-categories whose objects are, respectively, quasi-categories, complete Segal spaces, Segal categories, or naturally marked simplicial sets, among other examples. Each homotopy 2-category arises as a quotient of an $\infty$-\emph{cosmos}, which records the properties of the model category ---  typically used to encode the ``homotopy theory'' of each type of $(\infty,1)$-category --- that will be required to develop their basic category theory.

It is not necessary to know anything about model categories in order to understand the definition of an $\infty$-cosmos, but for those who are familiar with them, we briefly say a few words to motivate this axiomatization. If our aim was to establish the basic homotopy theory of some type of objects, experience suggests that a good setting in which to work is a  simplicial model category, that is in a simplicially enriched category equipped with a model structure that is enriched over the Kan/Quillen model structure on simplicial sets. In a simplicial model category, there is a formula for the homotopy limit or colimit of a diagram of any shape.  In the subcategory of fibrant-cofibrant objects, the model-theoretic notion of ``weak equivalence'' coincides with a more symmetrically defined notion of ``homotopy equivalence.'' Also, mapping-spaces between objects that are fibrant and cofibrant  have the ``correct homotopy type'' and are invariant under these notions of equivalence.

By analogy, we posit that a good setting in which to establish the basic category theory of  some type of objects (the ``$\infty$-categories'') is in a simplicially enriched category equipped with a model structure that is enriched over the Quasi/Joyal model structure on simplicial sets. Each of the models of $(\infty,1)$-categories listed above arise as precisely the fibrant objects (which are also cofibrant) in a model structure of this kind. In fact, to do our work, we need only a portion of the axioms defining a Quasi/Joyal-enriched model structure, which define what we call an $\infty$-\emph{cosmos}.

\begin{defn}[$\infty$-cosmos]\label{qcat.ctxt.cof.def}
An $\infty$-\emph{cosmos} is a simplicially enriched category $\lcat{K}$ whose 
\begin{itemize}
\item objects we refer to as the \emph{$\infty$-categories} in the $\infty$-cosmos, whose
\item hom simplicial sets $\Fun(A,B)$ are all  quasi-categories\footnote{A quasi-category is a particular type of simplicial set. See \ref{rec:equiv-qcats}.}, 
\end{itemize} and that is equipped with a specified subcategory of \emph{isofibrations}, denoted by ``$\tfib$'',
satisfying the following axioms:
 \begin{enumerate}[label=(\alph*)]
    \item\label{qcat.ctxt.cof:a} (completeness) As a simplicially enriched category,  $\lcat{K}$ possesses a terminal object $1$, cotensors $A^U$ of  objects $A$ by all\footnote{It suffices to require only cotensors with finitely presented simplicial sets (those with only finitely many non-degenerate simplices).} simplicial sets $U$, and pullbacks of isofibrations along any functor.\footnote{For the theory of homotopy coherent adjunctions and monads developed in \cite{RiehlVerity:2012hc}, retracts and limits of towers of isofibrations are also required, with the accompanying stability properties of \ref{qcat.ctxt.cof:b}. These limits are present in all of the $\infty$-cosmoi we are aware of, but will not be required for any results discussed here.}
    \item\label{qcat.ctxt.cof:b} (isofibrations) The class of isofibrations contains the isomorphisms and all of the functors $!\colon A \tfib 1$ with codomain $1$; is stable under pullback along all functors; and if $p\colon E\tfib B$ is an isofibration in $\lcat{K}$ and $i\colon U\inc V$ is an inclusion of  simplicial sets then the Leibniz cotensor $i\leib\pwr p\colon E^V\tfib E^U\times_{B^U} B^V$ is an isofibration. Moreover, for any object $X$ and isofibration $p \colon E \tfib B$, $\Fun(X,p) \colon \Fun(X,E) \tfib \Fun(X,B)$ is an isofibration of quasi-categories.
\end{enumerate}
The underlying category of an $\infty$-cosmos $\lcat{K}$ has a canonical subcategory of (representably-defined) \emph{equivalences}, denoted by ``$\we$'', satisfying the 2-of-6 property. A functor $f \colon A \to B$ is an \emph{equivalence} just when the induced functor $\Fun(X,f) \colon \Fun(X,A) \to \Fun(X,B)$ is an equivalence of quasi-categories for all objects $X \in \lcat{K}$.  The  \emph{trivial fibrations}, denoted by ``$\trvfib$'', are those functors that are both equivalences and isofibrations. 

It follows from \ref{qcat.ctxt.cof.def}\ref{qcat.ctxt.cof:a}-\ref{qcat.ctxt.cof:b} that: 
 \begin{enumerate}[label=(\alph*), resume]
    \item\label{qcat.ctxt.cof:c} (cofibrancy) All objects are \emph{cofibrant}, in the sense that they enjoy the left lifting property with respect to all trivial fibrations in $\lcat{K}$. 
\[ \xymatrix{ & E \ar@{->>}[d]^{\rotatebox{90}{$\displaystyle\sim$}} \\ A \ar[r] \ar@{-->}[ur]^{\exists} & B}\] 
    \item\label{qcat.ctxt.cof:d} (trivial fibrations) The trivial fibrations define a subcategory containing  the isomorphisms; are stable under pullback along all functors; and the Leibniz cotensor $i\leib\pwr p\colon E^V\trvfib E^U\times_{B^U}  B^V$ of an isofibration $p\colon E\tfib B$ in $\lcat{K}$ and a monomorphism $i\colon U\inc V$ between simplicial sets   is a trivial fibration when $p$ is a trivial fibration in $\lcat{K}$ or $i$ is trivial cofibration in the Joyal model structure on $\sSet$ (see \refV{lem:triv.fib.stab}). Moreover, for any object $X$ and trivial fibration $p \colon E \trvfib B$, $\Fun(X,p) \colon \Fun(X,E) \trvfib \Fun(X,B)$ is a trivial fibration of quasi-categories.\footnote{The surjectivity on vertices of such maps proves \ref{qcat.ctxt.cof:c}.}
\item\label{qcat.ctxt.cof:e} (factorization) Any functor $f \colon A \to B$ may be factored as $f = p j$ 
\[ \xymatrix{ & N_f \ar@{->>}[dr]^p \ar@{->>}@/_3ex/[dl]_{q}^*-{\rotatebox{45}{$\labelstyle\sim$}} \\ A \ar[rr]_f \ar[ur]^*-{\rotatebox{45}{$\labelstyle\sim$}}_j & & B}\] where $p \colon N_f \tfib B$ is an isofibration and $j \colon A \we N_f$ is right inverse to a trivial fibration $q \colon N_f \trvfib A$ (see \refIV{lem:Brown.fact}).
\end{enumerate}

Every $\infty$-cosmos has products by \ref{qcat.ctxt.cof:a}.  An $\infty$-cosmos is \emph{cartesian closed} if it satisfies the extra axiom:
 \begin{enumerate}[label=(\alph*), resume]
    \item\label{qcat.ctxt.cof:g} (cartesian closure) The product bifunctor $-\times - \colon \lcat{K} \times \lcat{K} \to \lcat{K}$ extends to a simplicially enriched two-variable adjunction
\[ \Fun(A \times B,C) \cong \Fun(A, C^B) \cong \Fun(B,C^A).\]
\end{enumerate}
\end{defn}

\begin{rec}[quasi-categories]\label{rec:equiv-qcats}
A \emph{quasi-category} is a simplicial set satisfying the weak Kan condition, i.e., in which every inner horn $\Horn^{n,k} \inc \Del^n$, $0 < k< n$,  has a filler. The quasi-categories define the fibrant objects in a model structure on simplicial sets due to Joyal in which the cofibrations are the monomorphisms and the fibrations between fibrant objects are maps we call \emph{isofibrations}. An \emph{isofibration} is a map that has the right lifting property against the inner horn inclusions and also against the inclusion $\Del^0 \to \iso$ of either endpoint into the ``interval'' $\iso$, defined to be the nerve of the free category $\bullet \cong \bullet$ containing an isomorphism.\footnote{Joyal refers to these maps as \emph{quasi-fibrations} \cite{Joyal:2002:QuasiCategories} and Lurie calls them \emph{categorical fibrations} \cite{Lurie:2009ht}. We prefer isofibrations because this term indicates the corresponding 2-categorical property; see \refIV{lem:isofib.are.representably.so}.}

The weak equivalences between quasi-categories are precisely the \emph{equivalences between quasi-categories}, which  can be understood as a type of ``simplicial homotopy equivalence'' with respect to the interval $\iso$. That is, a map $f \colon A \to B$ of quasi-categories is an equivalence just when there exists a map $g \colon B \to A$ together with maps $A \to A^\iso$ and $B \to B^\iso$ that restrict along the vertices of $\iso$ to the maps $\id_A$, $gf$, $fg$, and $\id_B$ respectively: 
\[ f \colon A \we B\quad \mathrm{iff}\quad \exists g \colon B \we A\quad \mathrm{and} \quad \vcenter{\xymatrix{ & A \\ A \ar[r] \ar[ur]^{\id_A} \ar[dr]_{gf} & A^\iso \ar@{->>}[u]_{p_0} \ar@{->>}[d]^{p_1} \\ & A}} \quad\mathrm{and}\quad \vcenter{\xymatrix{ & B \\ B \ar[r]\ar[ur]^{fg} \ar[dr]_{\id_B} & B^{\iso} \ar@{->>}[u]_{p_0} \ar@{->>}[d]^{p_1} \\ & B}}\]
\end{rec}

The subcategory of fibrant objects in a model category that is enriched over the Joyal model structure on simplicial sets defines an $\infty$-cosmos --- assuming all fibrant objects are cofibrant.\footnote{This hypothesis is not essential; see \refIV{qcat.ctxt.def}.} In such examples, the convention will be to define the isofibrations to be the fibrations between fibrant objects. It follows that the equivalences are precisely the weak equivalences between fibrant objects in the model category. This is the source of each of the following examples of $\infty$-cosmoi. 

\begin{ex} There exist $\infty$-cosmoi:
\begin{itemize}
\item $\Cat$, whose objects are ordinary categories, with isofibrations and equivalences the usual categorical isofibrations and equivalences (\refIV{ex:cat-cosmos});
\item $\qCat$, whose objects are quasi-categories (\refIV{ex:qcat-qcat-ctxt}), with isofibration and equivalences as in \ref{rec:equiv-qcats};
\item $\CSS$, whose objects are complete Segal spaces (\refIV{ex:CSS-cosmos});
\item $\Segal$, whose objects are Segal categories (\refIV{ex:segal-cosmos}); 
\item $\sSet_+$, whose objects are naturally marked simplicial sets (\refIV{ex:marked-cosmos});
\end{itemize}
all of which are cartesian closed. Thus each of these varieties of $(\infty,1)$-categories are examples of $\infty$-\emph{categories}, in our sense; the associated $\infty$-\emph{functors} are just the usual functors, maps of simplicial sets, maps of bisimplicial sets, and maps of marked simplicial sets, respectively.  For ordinary categories, the isofibrations and equivalences coincide with the usual categorical notions bearing these names. For the quasi-categories, complete Segal spaces, Segal categories, and marked simplicial sets, the equivalences of $\infty$-categories are exactly the weak equivalences between fibrant-cofibrant objects in the model structure that is used to present the basic homotopy theory of each variety of $(\infty,1)$-category.
\end{ex}

So, each model of $(\infty,1)$-categories mentioned in the introduction has an $\infty$-cosmos. These are not the only examples, however:

\begin{ex} There exist $\infty$-cosmoi:
\begin{itemize}
\item $\theta_n$-$\Sp$, whose objects are  \emph{$\theta_n$-spaces}, a simplicial presheaf model of $(\infty,n)$-categories (\refIV{ex:theta-n-cosmos})
\item $\Rezk_\lcat{M}$, whose objects are \emph{Rezk objects} in a sufficiently nice model category $\lcat{M}$.\footnote{Here, ``sufficiently nice'' means permitting left Bousfield localization. With the definition of $\infty$-cosmos presented in \ref{qcat.ctxt.cof.def} we also need to require that the resulting fibrant objects are all cofibrant.} Rezk objects are used to define iterated complete Segal spaces, another simplicial presheaf model of $(\infty,n)$-categories (\refIV{prop:rezk-cosmos}).
\end{itemize}
Moreover, if $\lcat{K}$ is any $\infty$-cosmos and $B \in \lcat{K}$, then there is a \emph{sliced $\infty$-cosmos} $\lcat{K}_{/B}$, whose objects are isofibrations with codomain $B$. Sliced $\infty$-cosmoi will play a big role in  \ref{sec:modules}.
\end{ex}

In summary, $\infty$-categories, for us, are the objects in a universe called an $\infty$-cosmos that is suitable for the development of their basic category theory --- much like a simplicial model category is a suitable environment in which to develop the basic homotopy theory of its objects. In our definition of an $\infty$-cosmos, we are not seeking to axiomatize the universe surrounding any particular variety of infinite-dimensional category, in contrast to \cite{Toen:2005vu}  or   \cite{BSP:2011ot}. Rather, the axioms outline what is needed to prove our theorems. The axiomatization presented here could also be made more general --- indeed, \cite{RiehlVerity:2015fy} uses a weaker definition of $\infty$-cosmos than will be considered here and further weakenings are also possible. Our aim is to optimize for simplicity of presentation, while applying sufficiently broadly. A perpetual challenge in category theory, or in many areas of abstract mathematics, is  to find the right level of generality, which is often not the maximal level of generality.

\subsection{The homotopy 2-category of an \texorpdfstring{$\infty$}{infinity}-cosmos}

In fact most of our work to develop the basic theory of $\infty$-categories takes place not in their ambient $\infty$-cosmos, but in a quotient of the $\infty$-cosmos that we call the \emph{homotopy 2-category}. Each $\infty$-cosmos has an underlying 1-category whose objects are the $\infty$-categories of that $\infty$-cosmos and whose morphisms, which we call $\infty$-\emph{functors} or more often simply \emph{functors}, are the vertices of the mapping quasi-categories. 

\begin{defn}[the homotopy 2-category of $\infty$-cosmos] The \emph{homotopy 2-category} of an $\infty$-cosmos $\lcat{K}$ is a strict 2-category $\h\lcat{K}$ so that 
\begin{itemize}
\item the objects of $\h\lcat{K}$ are the objects of $\lcat{K}$, i.e., the $\infty$-categories;
\item the 1-cells $f \colon A \to B$ of $\h\lcat{K}$ are the vertices $f \in \Fun(A,B)$ in the mapping quasi-categories of $\lcat{K}$, i.e., the $\infty$-functors;
\item a 2-cell  $\xymatrix{ A \ar@/^2ex/[r]^f \ar@/_2ex/[r]_g \ar@{}[r]|{\Downarrow\alpha}& B}$ in $\h\lcat{K}$ is represented by a 1-simplex $\alpha \colon f \to g \in \Fun(A,B)$, where a parallel pair of 1-simplices in $\Fun(A,B)$ represent the same 2-cell if and only if they bound a 2-simplex whose remaining outer face is degenerate.
\end{itemize}
Put concisely, the homotopy 2-category is the 2-category $\h\lcat{K} \defeq \ho_*\lcat{K}$ defined by applying the homotopy category functor $\ho \colon \qCat \to \Cat$ to the mapping quasi-categories of the $\infty$-cosmos; the hom-categories in $\h\lcat{K}$  are defined by the formula  \[\hom(A,B)\defeq \ho(\Fun(A,B))\] to be the homotopy categories of the mapping quasi-categories in $\lcat{K}$. 
\end{defn}

The homotopy 2-category $\h\qCat$ of the $\infty$-cosmos of quasi-categories was first introduced by Joyal in his work on the foundations of quasi-category theory.

\begin{obs}[functors representing (isomorphic) 2-cells]
We write $\cattwo$ for the simplicial set $\Del^1$, which is the nerve of the walking arrow $\bullet \to \bullet$. A natural transformation $\xymatrix{ A \ar@/^2ex/[r]^f \ar@/_2ex/[r]_g \ar@{}[r]|{\Downarrow\alpha}& B}$ in the homotopy 2-category of an $\infty$-cosmos $\lcat{K}$ is represented by a map of simplicial sets $\alpha \colon \cattwo \to \Fun(A,B)$, which transposes to define a functor in $\lcat{K}$  that composes with the two projections to the maps $g$ and $f$ respectively. 
\[ \xymatrix{ & B \\ A \ar[r]^-{\alpha} \ar[ur]^f \ar[dr]_g & B^\cattwo \ar@{->>}[u]_{p_0} \ar@{->>}[d]^{p_1} \\ & B}\]

The 2-cell $\xymatrix{ A \ar@/^2ex/[r]^f \ar@/_2ex/[r]_g \ar@{}[r]|{\Downarrow\alpha}& B}$ is an isomorphism in the homotopy 2-category $\h\lcat{K}$ if and only if the arrow in $\hom(A,B)$ represented by the map $\ho(\alpha) \colon  \ho\cattwo \to \ho\Fun(A,B)$  is an isomorphism. This is the case if an only if the representing simplicial map $\alpha \colon \cattwo \to \Fun(A,B)$ extends to a simplicial map $\alpha' \colon \iso \to \Fun(A,B)$, which transposes to define a functor in $\lcat{K}$
\[ \xymatrix{ & B \\ A \ar[r]^-{{\alpha}'} \ar[ur]^f \ar[dr]_g &  B^\iso \ar@{->>}[u]_{p_0} \ar@{->>}[d]^{p_1} \\ & B}\]
\end{obs}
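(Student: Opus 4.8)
The statement bundles three assertions, and the plan is to dispatch them in turn, with only the last carrying genuine content. \emph{First}, I would identify the $2$-cells $f \To g$ of $\lcat{K}_2$ with functors $A \to B^{\cattwo}$ lying over the two projections. The cotensor $B^{\cattwo}$ exists by the completeness axiom \ref{qcat.ctxt.cof.def}\ref{qcat.ctxt.cof:a} and is characterised by a natural isomorphism $\fun(X, B^{\cattwo}) \cong \fun(X,B)^{\cattwo}$ of quasi-categories; setting $X = A$ and reading off $0$-simplices shows that a functor $A \to B^{\cattwo}$ in $\lcat{K}$ is precisely a simplicial map $\cattwo \to \fun(A,B)$, i.e.\ a $1$-simplex of $\fun(A,B)$, and by construction of the homotopy $2$-category these $1$-simplices are exactly the representatives of its $2$-cells. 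Postcomposing such a functor with the maps $p_0, p_1 \colon B^{\cattwo} \to B$ induced by the two vertices $\Del^0 \rightrightarrows \cattwo$ returns the two endpoints of the $1$-simplex, namely $f$ and $g$, and $p_0, p_1$ are isofibrations by the Leibniz-cotensor clause of the isofibration axiom \ref{qcat.ctxt.cof.def}\ref{qcat.ctxt.cof:b} applied to $B \tfib 1$. This yields the first displayed square.

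\emph{Second}, the equivalence ``$\alpha$ is an isomorphism in $\lcat{K}_2$ if and only if $\ho(\alpha)$ is an isomorphism of $\hom(A,B)$'' unwinds to a tautology: since $\ho$ inverts the nerve, $\ho\cattwo$ is the walking arrow, so the functor $\ho(\alpha)\colon \ho\cattwo \to \ho\fun(A,B) = \hom(A,B)$ is precisely the choice of the arrow represented by the $1$-simplex $\alpha$ --- which is the $2$-cell $\alpha$ itself --- and a $2$-cell of a strict $2$-category admits a vertical inverse exactly when it is invertible in the relevant hom-category.

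\emph{Third}, and this is the crux, the equivalence with extendability along $\cattwo \inc \iso$ is Joyal's characterisation of the isomorphisms in a quasi-category: a $1$-simplex of a quasi-category $Q$ represents an isomorphism in $\ho Q$ if and only if it extends to a map $\iso \to Q$ (see \refI{defn:equivalences} and \refI{rmk:equiv-markings}), which I would apply with $Q = \fun(A,B)$; transposing the resulting extension $\alpha'\colon \iso \to \fun(A,B)$ across $\fun(A, B^{\iso}) \cong \fun(A,B)^{\iso}$ produces the second displayed square, whose legs $p_0, p_1 \colon B^{\iso} \tfib B$ are again isofibrations by \ref{qcat.ctxt.cof.def}\ref{qcat.ctxt.cof:b}. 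Should one prefer the internal argument to the citation, the ``extends $\Rightarrow$ isomorphism'' direction is purely formal --- apply $\ho$ to $\alpha'$, note that $\ho\iso$ is the free-living isomorphism whose generating arrow restricts to that of $\ho\cattwo$, and invoke preservation of isomorphisms by functors --- whereas the converse is the hard part: from invertibility of $[\alpha]$ in $\ho\fun(A,B)$ one must first extract an inverse $1$-simplex together with $2$-simplices exhibiting the two round-trip composites as degeneracies, and then extend $\alpha$ over the (one-per-dimension) nondegenerate simplices of $\iso$ inductively, the required fillers being supplied by inner-horn lifting together with Joyal's special outer-horn lifting property \refI{prop:joyal-special-horn}. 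The bookkeeping verifying that the special outer horns arising in that induction have their distinguished edge an isomorphism is where essentially all the work lives; the rest is definition-chasing through the cotensor adjunction.
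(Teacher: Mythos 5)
Your proposal is correct and follows essentially the same route the paper (implicitly) takes: the statement is an observation resting on exactly the cotensor transposition $\fun(A,B^\cattwo)\cong\fun(A,B)^\cattwo$, the definition of $2$-cells in the homotopy $2$-category, and Joyal's characterisation of isomorphisms in a quasi-category as $1$-simplices extending along $\cattwo\inc\iso$, which is what the paper relies on via \refI{defn:equivalences} and \refI{rmk:equiv-markings}. The only blemish is cosmetic: in your optional internal sketch, $\iso$ has two nondegenerate simplices in each positive dimension, not one, though this does not affect the inductive argument you describe.
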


A priori, it is a bit of a surprise that the homotopy 2-category remembers enough information from the $\infty$-cosmos to develop the basic category theory of its objects. The first result that shows why this might be the case is the following.

\begin{prop}[{\refIV{prop:equiv.are.weak.equiv}}]\label{prop:equiv.are.weak.equiv} A functor $ f\colon A \to B$ is an equivalence in the $\infty$-cosmos $\lcat{K}$ if and only if it is an equivalence in the homotopy 2-category $\h\lcat{K}$.
\end{prop}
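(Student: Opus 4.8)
The plan is to prove the two implications separately; since an equivalence in $\lcat{K}$ is by definition one that is ``representably'' an equivalence of quasi-categories, while an equivalence in $\lcat{K}_2$ is a genuinely $2$-categorical equivalence, essentially all of the work is in translating between these two packagings of the data. The bridge in both directions is the description of invertible $2$-cells recorded above: an invertible $2$-cell $\alpha\colon g\To h\colon A\to B$ in $\lcat{K}_2$ is represented by a functor $A\to B^\iso$ in $\lcat{K}$ whose composites with the two projections $p_0,p_1\colon B^\iso\to B$ induced by the vertices of $\iso$ recover $g$ and $h$, and conversely any such functor represents an invertible $2$-cell. I will also use freely that the direction of the witnessing isomorphisms in an equivalence is immaterial.

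For the implication ``equivalence in $\lcat{K}_2$ $\Rightarrow$ equivalence in $\lcat{K}$'', I would start from an inverse $g\colon B\to A$ and invertible $2$-cells $\id_A\cong gf$ and $fg\cong\id_B$ in $\lcat{K}_2$, use the bridge to represent these by functors $A\to A^\iso$ and $B\to B^\iso$ restricting along the vertices of $\iso$ to the pairs $(\id_A, gf)$ and $(fg,\id_B)$, and then apply the representable functor $\fun(X,-)\colon\lcat{K}\to\sSet$ for an arbitrary $X\in\lcat{K}$. Since the cotensor $A^\iso$ is characterised representably, $\fun(X,-)$ carries it to $\fun(X,A)^\iso$ compatibly with the endpoint inclusions, so one obtains maps $\fun(X,A)\to\fun(X,A)^\iso$ and $\fun(X,B)\to\fun(X,B)^\iso$ restricting to $\bigl(\id_{\fun(X,A)},\,\fun(X,g)\fun(X,f)\bigr)$ and $\bigl(\fun(X,f)\fun(X,g),\,\id_{\fun(X,B)}\bigr)$ --- which is precisely the data, recalled in \ref{rec:equiv-qcats}, exhibiting $\fun(X,f)$ as an equivalence of quasi-categories with inverse $\fun(X,g)$. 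As $X$ is arbitrary, $f$ is an equivalence in $\lcat{K}$.

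For the converse, I would begin by observing that if $\fun(X,f)$ is an equivalence of quasi-categories for every $X$, then the induced functor on homotopy categories $\ho\fun(X,f)\colon\hom(X,A)\to\hom(X,B)$ is an equivalence of ordinary categories for every $X$. Taking $X=B$, essential surjectivity produces $g\colon B\to A$ with an isomorphism $\epsilon\colon fg\cong\id_B$ in $\hom(B,B)$, i.e.\ an invertible $2$-cell in $\lcat{K}_2$. Taking $X=A$ and whiskering $\epsilon$ by $f$ yields an isomorphism $fgf\cong f$ in $\hom(A,B)$, that is, an isomorphism between the images under the fully faithful functor $\ho\fun(A,f)$ of the objects $gf$ and $\id_A$ of $\hom(A,A)$; since fully faithful functors reflect isomorphisms, this descends to an invertible $2$-cell $\eta\colon\id_A\cong gf$ in $\lcat{K}_2$. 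The triple $(g,\eta,\epsilon)$ then realises $f$ as an equivalence in $\lcat{K}_2$ in the sense of Definition~\ref{defn:equivalence}.

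I expect the main (mild) obstacle to be the bookkeeping at the interface of the two notions: in the direction $\lcat{K}_2\Rightarrow\lcat{K}$, verifying that applying $\fun(X,-)$ genuinely converts the $\iso$-valued witnesses of the $2$-cell isomorphisms into the $\iso$-homotopies in the definition of a quasi-categorical equivalence, which rests on $\fun(X,-)$ preserving the cotensor by $\iso$ compatibly with its endpoint inclusions; and in the direction $\lcat{K}\Rightarrow\lcat{K}_2$, the passage from the representable hypothesis to an actual inverse and invertible $2$-cells, for which one needs that an equivalence of quasi-categories induces an equivalence of homotopy categories together with the elementary fact that fully faithful functors reflect isomorphisms.
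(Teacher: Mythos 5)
Your proof is correct, and it departs from the paper's in one of the two directions. For the implication from a cosmos-level equivalence to a 2-categorical one, you and the paper do essentially the same thing: apply $\ho$ to the representable equivalences $\fun(X,f)$ to obtain equivalences of hom-categories $\hom(X,A)\simeq\hom(X,B)$ and conclude from there; the paper compresses the conclusion into an appeal to the Yoneda lemma in $\lcat{K}_2$, whereas you unpack it by hand, taking $X=B$ to extract $g$ and $\epsilon\colon fg\cong\id_B$ by essential surjectivity and $X=A$ to reflect $\epsilon f$ along the fully faithful $\hom(A,f)$ to obtain $\eta\colon \id_A\cong gf$ --- the same argument, made explicit. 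For the converse your route is genuinely different: the paper notes that $\iso\to\Del^0$ is a weak equivalence in the Joyal model structure, so $B^\iso$ is a path object, deduces that any functor isomorphic in $\lcat{K}_2$ to a cosmos equivalence is itself one (2-of-3), and then applies the 2-of-6 property to $gf\cong\id_A$ and $fg\cong\id_B$; you instead represent the invertible 2-cells by functors $A\to A^\iso$ and $B\to B^\iso$ and push them through $\fun(X,-)$, using that the cotensor's defining universal property gives $\fun(X,A^\iso)\cong\fun(X,A)^\iso$ compatibly with the endpoint projections, so that the image data is literally the $\iso$-homotopy-equivalence witness for $\fun(X,f)$ required by \ref{rec:equiv-qcats}. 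Your version is more elementary and self-contained, relying on the explicit characterization of equivalences of quasi-categories; the paper's version instead uses only the abstract closure properties (path object, 2-of-3, 2-of-6) built into the $\infty$-cosmos axioms, and yields as a useful byproduct the fact that any functor isomorphic to an equivalence is an equivalence. Both are complete proofs.
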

\begin{proof}
By definition, any equivalence $f \colon A \we B$ in the $\infty$-cosmos induces an equivalence $\Fun(X,A) \we \Fun(X,B)$ of quasi-categories for any $X$, which becomes an equivalence of categories $\hom(X,A) \we \hom(X,B)$ upon applying the homotopy category functor $\ho \colon \qCat \to \Cat$. Applying the Yoneda lemma in the homotopy 2-category $\h\lcat{K}$, it follows easily that $f$ is an equivalence in the sense of Definition \ref{defn:equivalence}.

Conversely, as the map $\iso \to \Del^0$ of simplicial sets is a weak equivalence in the Joyal model structure, the cotensor $ B^\iso$ defines a path object for the $\infty$-category $B$. 
\[ \xymatrix{ & B^\iso \ar@{->>}[dr]^-{(p_1,p_0)}  \\ B \ar[rr]_\Delta \ar[ur]_*-{\rotatebox{45}{$\labelstyle\sim$}}^-{\Delta} & & B \times B}\] 
It follows from the 2-of-3 property that any functor that is isomorphic in the homotopy 2-category to an equivalence in the $\infty$-cosmos is again an equivalence in the $\infty$-cosmos. Now it follows immediately from the 2-of-6 property for equivalences in the $\infty$-cosmos and the fact that the class of equivalences includes the identities, that any 2-categorical equivalence in the sense of Definition \ref{defn:equivalence} is an equivalence in the $\infty$-cosmos.
\end{proof}

The upshot is that any categorical notion defined up to equivalence in the homotopy 2-category is also characterized up to equivalence in the $\infty$-cosmos.

Axioms \ref{qcat.ctxt.cof.def}\ref{qcat.ctxt.cof:a} and \ref{qcat.ctxt.cof:b} imply that an $\infty$-cosmos has finite products satisfying a simplicially enriched universal property. Consequently:

\begin{prop}\label{prop:cartesian-closure}
The homotopy 2-category of an $\infty$-cosmos has finite products, and if the $\infty$-cosmos is cartesian closed, then so is its homotopy 2-category.
\end{prop}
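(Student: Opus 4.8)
The plan is to reduce both assertions to the single fact that the homotopy category functor $\ho\colon\qCat\to\Cat$ preserves finite products, and then transport the relevant structure along the induced change of enriching category. So the first thing I would check is that $\ho$ preserves finite products when restricted to quasi-categories. For a quasi-category $A$, the homotopy category $\ho(A)$ has the vertices of $A$ as objects, homotopy classes of $1$-simplices as morphisms, and composition computed by filling inner horns $\Horn^{2,1}\inc\Del^2$. Since products of quasi-categories are formed pointwise as simplicial sets, a $1$-simplex of $A\times B$ is a pair of $1$-simplices, a homotopy between two of them is a pair of homotopies, and an inner horn in $A\times B$ admits a componentwise filler; hence the canonical comparison $\ho(A\times B)\to\ho(A)\times\ho(B)$ is an isomorphism, and $\ho(\Del^0)\cong\catone$. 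Thus $\ho$ is a (strong) monoidal functor $(\qCat,\times,\Del^0)\to(\Cat,\times,\catone)$, so change of base along it — precisely the operation $\lcat{K}\rightsquigarrow\lcat{K}_2=\ho_*\lcat{K}$ — carries $\qCat$-enriched categories, functors, natural transformations, conical limits, and enriched adjunctions to their $\Cat$-enriched, i.e.\ $2$-categorical, counterparts.

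For the first assertion, recall (as recorded just before the statement) that $\lcat{K}$ has a terminal object $1$ and binary products $A\times B$ which are simplicially enriched limits, so that $\fun(X,1)\cong\Del^0$ and $\fun(X,A\times B)\cong\fun(X,A)\times\fun(X,B)$, $\qCat$-naturally in $X$. Applying $\ho$ and the previous paragraph gives $\hom(X,1)\cong\catone$ and $\hom(X,A\times B)\cong\hom(X,A)\times\hom(X,B)$, $2$-naturally in $X$; this is exactly the statement that $1$ and $A\times B$ are the $2$-terminal object and the $2$-products of $\lcat{K}_2$. More abstractly, change of base along a product-preserving functor preserves conical limits, so the enriched finite products of $\lcat{K}$ remain finite products in $\lcat{K}_2$.

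For the second assertion, suppose $\lcat{K}$ is cartesian closed. Then axiom \ref{qcat.ctxt.cof:g} provides, for each object $B$, an exponential $C^B$ and a simplicially enriched adjunction $-\times B\dashv(-)^B$ witnessed by the isomorphism $\fun(A\times B,C)\cong\fun(A,C^B)$, natural in $A$ and $C$. Since change of base along the product-preserving functor $\ho$ sends enriched adjunctions to enriched adjunctions, applying $\ho$ yields a strict $2$-adjunction $-\times B\dashv(-)^B$ in $\lcat{K}_2$, witnessed by the $2$-natural isomorphism $\hom(A\times B,C)\cong\hom(A,C^B)$. Combined with the first assertion — $\lcat{K}_2$ has finite products — and the fact that $-\times B$ now has a right $2$-adjoint for every $B$, this is precisely the statement that $\lcat{K}_2$ is a cartesian closed $2$-category.

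I expect the only genuinely non-formal step to be the first one: $\ho$ preserves finite products \emph{when restricted to quasi-categories}, but not on general simplicial sets, so the argument must use the identification of the homotopy category of a quasi-category in which morphisms are honest homotopy classes of edges composed via horn-fillers — an identification that fails outside $\qCat$. Everything after that is bookkeeping about change of enriching category, and the computations of $\hom(X,-)$ displayed above could equally be offered as a self-contained replacement for the "change of base" language if one prefers to avoid invoking the general machinery.
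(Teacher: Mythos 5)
Your proposal is correct and follows essentially the same route as the paper: the paper likewise cites product-preservation of $\ho\colon\qCat\to\Cat$ and applies it to the defining isomorphisms $\fun(X,1)\cong\Del^0$, $\fun(X,A\times B)\cong\fun(X,A)\times\fun(X,B)$, and (in the cartesian closed case) $\fun(A\times B,C)\cong\fun(A,C^B)\cong\fun(B,C^A)$, obtaining the corresponding isomorphisms of hom-categories; your explicit verification of the first step and the change-of-base packaging are just more detailed renderings of the same argument. (Only your closing aside is off: the homotopy category functor in fact preserves finite products of all simplicial sets, not just quasi-categories, but nothing in your proof depends on that remark.)
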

\begin{proof}
The homotopy category functor $\ho \colon \qCat \to \Cat$ preserves finite products. Applying this to the  defining isomorphisms $\Fun(X,1)\cong \Del^0$ and $\Fun(X, A\times B)\cong\Fun(X,A)\times\Fun(X,B)$ for the simplicially enriched terminal object and binary products of $\lcat{K}$ yields isomorphisms $\hom(X,1)\cong \catone$ and $\hom(X, A\times B)\cong\hom(X,A)\times\hom(X,B)$. These demonstrate that $1$ and $A\times B$ are also the 2-categorical terminal object and binary products in $\h\lcat{K}$.

In this case where $\lcat{K}$ is cartesian closed, applying the homotopy category functor to the defining isomorphisms on mapping quasi-categories yields the required natural isomorphisms
\[ \hom(A \times B, C) \cong \hom(A,C^B) \cong \hom(B,C^A)\] of hom-categories.
\end{proof}

\begin{defn}\label{defn:qcat-ctxt-functor} A \emph{functor of $\infty$-cosmoi} $F \colon \lcat{K} \to \lcat{L}$ is a simplicial functor that preserves isofibrations and the limits listed  in~\ref{qcat.ctxt.cof.def}\ref{qcat.ctxt.cof:a}. Simplicial functoriality implies that a functor of $\infty$-cosmoi  preserves equivalences and hence also trivial fibrations. 
\end{defn}

For any $\infty$-cosmoi that arise as the fibrant objects in a Joyal-enriched model category, a simplicially enriched right Quillen adjoint will define a functor of $\infty$-cosmoi. This is the source of many of the following examples.

\begin{ex}
The following define functors of $\infty$-cosmoi:
\begin{itemize}
\item $\Fun(X,-) \colon\lcat{K} \to \qCat$ for any object $X \in \lcat{K}$ (see Proposition \refIV{prop:representable-functors}). 
\item As a special case, the \emph{underlying quasi-category functor} $\Fun(1,-) \colon \lcat{K} \to \qCat$. Examples include the functors $\CSS \to \qCat$ and $\Segal \to \qCat$ that take a complete Segal space or Segal category to its 0th row (see \refIV{ex:CSS-cosmos} and  \refIV{ex:segal-cosmos}) and the functor $\sSet_+\to \qCat$ that carries a naturally marked simplicial set to its underlying quasi-category (see \refIV{ex:marked-cosmos}).
\item $(-)^U \colon \lcat{K} \to \lcat{K}$ for any simplicial set $U$, by \ref{qcat.ctxt.cof.def}\ref{qcat.ctxt.cof:b} and the fact that simplicially enriched limits commute with each other.
\item The inclusion $\Cat\to \qCat$ of categories into quasi-categories that identifies a category with its nerve (see  \refIV{ex:cat-cosmos}).
\item The functor $t^! \colon \qCat \to \CSS$ defined in example \refIV{ex:other-CSS-functor}.
\item The functor $\CSS \to \Segal$ that ``discretizes'' the 0th space of a complete Segal space. 
\end{itemize}
\end{ex}

The appropriate notion of functor between 2-categories is called a \emph{2-functor}, preserving all of the structure on the nose. A functor $F \colon \lcat{K} \to \lcat{L}$ of $\infty$-cosmoi induces a 2-functor $\h{F} \defeq \ho_*F \colon \h\lcat{K} \to \h\lcat{L}$ between their homotopy 2-categories. Because adjunctions and equivalences in a 2-category are defined equationally, they are preserved by any 2-functor; in particular, the 2-functor between homotopy 2-categories induced by a functor of $\infty$-cosmoi preserves adjunctions and equivalences. Hence:

\begin{prop}\label{prop:induced-adjunctions}
If $\xymatrix{ B \ar@<1ex>[r]^f \ar@{}[r]|\perp & A \ar@<1ex>[l]^u}$ is an adjunction between $\infty$-categories then \begin{enumerate}[label=(\roman*)]
\item For any $\infty$-category $X$,  $\xymatrix{ \Fun(X,B) \ar@<1ex>[r]^{\Fun(X,f)} \ar@{}[r]|\perp & \Fun(X,A) \ar@<1ex>[l]^{\Fun(X,u)}}$ defines an adjunction of quasi-categories.
\item\label{itm:induced-adjunctions:ii} For any $\infty$-category $X$,  $\xymatrix{ \hom(X,B) \ar@<1ex>[r]^{\hom(X,f)} \ar@{}[r]|\perp & \hom(X,A) \ar@<1ex>[l]^{\hom(X,u)}}$ defines an adjunction of categories.
\item For any  simplicial set $U$,  $\xymatrix{ B^U \ar@<1ex>[r]^{ f^U} \ar@{}[r]|\perp &  A^U \ar@<1ex>[l]^{u^U}}$ defines an adjunction of $\infty$-categories.
\item If the $\infty$-cosmos is cartesian closed, then  for any $\infty$-category $C$ the pre- and post-composition functors define adjunctions of $\infty$-categories:
\[ \xymatrix{ B^C \ar@<1ex>[r]^{f_*} \ar@{}[r]|\perp & A^C \ar@<1ex>[l]^{u_*} & C^B \ar@<1ex>[r]^{u^*} \ar@{}[r]|\perp & C^A \ar@<1ex>[l]^{f^*}}\]
\end{enumerate}
\end{prop}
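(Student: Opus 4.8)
The plan is to exploit the fact established just above that a functor of $\infty$-cosmoi induces a $2$-functor between homotopy $2$-categories, and that $2$-functors preserve adjunctions because an adjunction in a $2$-category is exactly a quadruple $(f,u,\eta,\epsilon)$ satisfying the two triangle identities --- equational data that any $2$-functor carries along. So each clause will be proven by producing the relevant functor of $\infty$-cosmoi (or directly the relevant $2$-functor), applying it to the given adjunction $f \dashv u$ in $\lcat{K}_2$, and reading off the resulting adjunction.

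For clause (i), I would apply the functor of $\infty$-cosmoi $\fun(X,-)\colon \lcat{K} \to \qCat$ (from the example list after Definition~\ref{defn:qcat-ctxt-functor}) and pass to homotopy $2$-categories; this gives a $2$-functor $\qCat_2 \to \qCat_2$ — no, more precisely $\lcat{K}_2 \to \qCat_2$ — sending the adjunction $f \dashv u$ to an adjunction $\fun(X,f) \dashv \fun(X,u)$ in $\qCat_2$, which is by definition an adjunction of quasi-categories. Clause (ii) is then immediate: apply the homotopy category $2$-functor $\ho_* \colon \qCat_2 \to \Cat_2$ — equivalently, observe that $\hom(X,-) = \ho(\fun(X,-))$ is the composite $2$-functor $\lcat{K}_2 \to \Cat_2$ — to obtain $\hom(X,f)\dashv\hom(X,u)$, an ordinary adjunction of categories. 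Clause (iii) uses the functor of $\infty$-cosmoi $(-)^U \colon \lcat{K}\to\lcat{K}$ (again from the example list, valid by \ref{qcat.ctxt.cof.def}\ref{qcat.ctxt.cof:b}), whose induced $2$-functor on $\lcat{K}_2$ sends $f \dashv u$ to $f^U \dashv u^U$. For clause (iv), when $\lcat{K}$ is cartesian closed, Proposition~\ref{prop:cartesian-closure} tells us $\lcat{K}_2$ is a cartesian closed $2$-category; the post-composition functors $f_* = C^{(-)}$-style operations and pre-composition functors $u^*$ are then $2$-functorial in the relevant variable (for a fixed exponent or base $C$), or more slickly one invokes the standard $2$-categorical fact that in any cartesian closed $2$-category the functors $(-)^C$ and $C^{(-)}$ preserve adjunctions (with the contravariant one reversing handedness, which is why $u^* \dashv f^*$ rather than $f^* \dashv u^*$). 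Applying this to $f \dashv u$ yields $f_* \dashv u_*$ on $A^C, B^C$ and $u^* \dashv f^*$ on $C^A, C^B$.

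There is essentially no hard part here: once Proposition~\ref{prop:cartesian-closure} and the remark that functors of $\infty$-cosmoi induce $2$-functors preserving adjunctions are in hand, every clause is a one-line invocation of a $2$-functor applied to an equationally-defined structure. The only point requiring a modicum of care is clause (iv): one should be explicit that $C^{(-)} \colon \lcat{K}_2 \to \lcat{K}_2^{\mathrm{op}}$ is a $2$-functor into the $1$-cell-dual (so it reverses the direction of $1$-cells and hence swaps left and right adjoints), which accounts for the asymmetry between the two adjunctions displayed. Alternatively one can avoid the dual by noting that an adjunction $f \dashv u$ in a $2$-category $\lcat{M}$ is the same as an adjunction $u \dashv f$ in $\lcat{M}^{\mathrm{co}}$ (reversing $2$-cells), and that $C^{(-)}$ is a genuine $2$-functor $\lcat{K}_2^{\mathrm{op}} \to \lcat{K}_2$; either bookkeeping convention gives the stated result. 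No genuine obstacle arises, so the proof will be short.
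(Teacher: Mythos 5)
Your proposal is correct and matches the paper's proof: the paper likewise dispatches all four clauses by observing that the adjunction $f\dashv u$ in $\lcat{K}_2$ is preserved by the 2-functors $\fun(X,-)$, $\hom(X,-)$, $(-)^U$, $(-)^C$, and $C^{(-)}\colon\lcat{K}_2\op\to\lcat{K}_2$, the last contravariant one accounting for the swap to $u^*\dashv f^*$ exactly as you note.
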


Taking $X = 1$ in \ref{itm:induced-adjunctions:ii} yields an adjunction between the \emph{homotopy categories} associated to the $\infty$-categories $A$ and $B$.

\begin{proof}
The adjunction $f \dashv u$ in $\h\lcat{K}$ is preserved by the 2-functors $\Fun(X,-)\colon \h\lcat{K} \to \h\qCat$, $\hom(X,-) \colon \h\lcat{K} \to \h\Cat$, $(-)^U\colon \h\lcat{K} \to \h\lcat{K}$, $(-)^C \colon \h\lcat{K} \to \h\lcat{K}$, and $C^{(-)} \colon \h\lcat{K}\op \to \h\lcat{K}$.
\end{proof}

Via the simplicial cotensor and the embedding $\Cat\inc\sSet$, for any $\infty$-category $A$ in an $\infty$-cosmos $\lcat{K}$, there is also a 2-functor $A^{(-)} \colon \h\Cat\op\to \h\lcat{K}$, which is another source of adjunctions between $\infty$-categories:

\begin{ex}[{\refI{ex:comp.ident.adj}}]
For any $\infty$-category $A$, there is an adjunction     
\[\xymatrix@C=10em{
      *+[l]{ A^\cattwo\times_AA^\cattwo}
      \ar[r]|*+{\scriptstyle m} &
      {A^\cattwo}
      \ar@/^2.5ex/[l]^{i_1}_{}="l" \ar@/_2.5ex/[l]_{i_0}^{}="u"
      \ar@{} "u";"l" |(0.2){\bot} |(0.8){\bot} 
    }\] between the ``composition'' functor $m$ and the pair of functors that ``extend an arrow into a composable pair'' by using the identities at its domain and codomain.

To prove this, first note that there exists a pair of adjunctions
  \begin{equation*}
    \xymatrix@C=10em@R=1ex{
      {{\cattwo}}\ar[r]|*+{\scriptstyle \face^1} & {{\catthree}}
      \ar@/^2.5ex/[l]^{\degen^0}_{}="l" \ar@/_2.5ex/[l]_{\degen^1}^{}="u"
      \ar@{} "u";"l" |(0.2){\bot} |(0.8){\bot}
    }
  \end{equation*}
between ordinal categories so that the counit of the top adjunction and unit of the bottom adjunction are identities. Applying $A^{(-)} \colon \h\Cat\op\to \h\lcat{K}$ converts these into  adjunctions
  \begin{equation*}
    \xymatrix@C=10em{
      {A^\catthree}
      \ar[r]|*+{\scriptstyle A^{\face^1}} &
      {A^\cattwo}
      \ar@/^2.5ex/[l]^{A^{\degen^1}}_{}="l" \ar@/_2.5ex/[l]_{A^{\degen^0}}^{}="u"
      \ar@{} "u";"l" |(0.2){\bot} |(0.8){\bot} 
    } 
  \end{equation*}
in which the upper adjunction has identity unit and the lower adjunction has identity counit. We write $A^\cattwo \defeq A^{\Del^1}$ and refer to this as the \emph{arrow $\infty$-category} associated to $A$ on account of a weak 2-categorical universal property that we will describe in  \ref{sec:comma}.

The horn inclusion $\Horn^{2,1}\inc\Del^2$ is a trivial cofibration in Joyal's model structure, inducing an equivalence $p \colon A^\catthree \trvfib A^\cattwo \times_A A^\cattwo$ of $\infty$-categories, whose codomain we identify from the left-hand pushout in simplicial sets, which induces the right-hand pullback in the $\infty$-cosmos:
\begin{equation*}
  \xymatrix@=2em{ \Horn^{2,1} \pbexcursion & \Del^1 \ar[l]_-{\face^2} & & 
    {A^{\Horn^{2,1}}}\pbexcursion
    \ar[r]^-{\pi_0}\ar[d]_-{\pi_1} & {A^\cattwo}\ar@{->>}[d]^-{p_1} \\ \Del^1 \ar[u]^{\face^0} & \Del^0 \ar[u]_{\face^0}\ar[l]^-{\face^1} & &
    {A^\cattwo}\ar@{->>}[r]_-{p_0} & A
  }
\end{equation*} 
Proposition \ref{prop:equiv.are.weak.equiv} tells us that $p \colon A^\catthree \trvfib A^\cattwo \times_A A^\cattwo$ defines an equivalence in the homotopy 2-category. In particular, by Corollary \ref{cor:equivs-are-adjoints}, $p$ admits an equivalence inverse $p'$ that is simultaneously a left and a right adjoint. 
Composing $p \dashv p' \dashv p$ with the displayed adjunction, we obtain the adjunctions $i_0 \dashv m \dashv i_1$.

In fact, these adjunctions can be defined so that the unit and counit 2-cells, and not just the functors, are fibered over the endpoint evaluation functors $A^\cattwo\times_AA^\cattwo \tfib A \times A$ and $A^\cattwo \tfib A \times A$. The proof makes use of the fact that these maps are isofibrations; see \refI{ex:comp.ident.adj}.
\end{ex}

\renewcommand\thesection{Lecture~\arabic{section}}
\section{Limits and colimits in \texorpdfstring{$\infty$}{infinity}-categories}\label{sec:limits}
\renewcommand\thesection{\arabic{section}}

Recall that we use the term $\infty$-\emph{category} to refer to any variety of infinite-dimensional category that inhabits an $\infty$-cosmos.  An $\infty$-\emph{cosmos} is a simplicially enriched category $\lcat{K}$, whose homs $\Fun(A,B)$ are quasi-categories, that admits certain simplicially-enriched limit constructions and whose specified class of \emph{isofibrations} enjoy certain closure properties. The objects and morphisms of the underlying category of $\lcat{K}$ define the $\infty$-\emph{categories} and $\infty$-\emph{functors} of the $\infty$-cosmos. There are $\infty$-cosmoi for quasi-categories, complete Segal spaces, Segal categories, and naturally marked simplicial sets, each of these being a model of $(\infty,1)$-categories.

Our development of the basic theory of adjunctions and equivalences between $\infty$-cat\-e\-gor\-ies takes place entirely within the \emph{homotopy 2-category} $\h\lcat{K}$ of an $\infty$-cosmos $\lcat{K}$. The objects and morphisms in the homotopy 2-category are again the $\infty$-categories and $\infty$-functors. A 2-cell $\xymatrix{ A \ar@/^2ex/[r]^f \ar@/_2ex/[r]_{g} \ar@{}[r]|{\Downarrow \alpha} &B}$ between a parallel pair of functors between $\infty$-categories is represented by a 1-simplex $\alpha \colon f \to g \in \Fun(A,B)$, where two parallel 1-simplices are equivalent if and only if they bound a 2-simplex whose third edge is degenerate. More concisely, the hom-category $\hom(A,B)$ between two objects in the homotopy 2-category is the homotopy category of the mapping quasi-category $\Fun(A,B)$.

Proposition \ref{prop:cartesian-closure} demonstrates that the homotopy 2-category of any $\infty$-cosmos has finite products, satisfying a 2-dimensional universal property. For the terminal $\infty$-category 1, this says that $\hom(X,1) \cong \catone$, that is there is a unique $\infty$-functor $! \colon X \to 1$ for any $\infty$-category $X$ and this functor admits no non-identity endomorphisms.  Proposition \ref{prop:equiv.are.weak.equiv} demonstrates that the notions of equivalence between $\infty$-categories defined at the level of the $\infty$-cosmos coincide precisely with the notions of equivalence in the homotopy 2-category, the upshot being that equivalence-invariant 2-categorical constructions are appropriately homotopical.

Fix an ambient $\infty$-cosmos $\lcat{K}$. We will work inside this universe for the remainder of this lecture. Our aim now is to define appropriate notions of limit and colimit of diagrams taking values in an $\infty$-category $A$ inside the $\infty$-cosmos. To define limits and colimits, we need a way to look inside the $\infty$-category $A$ without leaving the comfort of the $\infty$-cosmos axiomatization. For this, we make use of the terminal $\infty$-category $1$. A functor $a \colon 1\to A$ will be called an \emph{element}\footnote{Synonyms include \emph{point} or \emph{object}. The term ``element'' is perhaps less traditional but also less likely to be confused with other mathematical notions currently under consideration.} of $A$; by analogy, a functor $a \colon X \to A$ is a \emph{generalized element} of $A$. Elements of $A$ and the 2-cells between them define the \emph{homotopy category} of the $\infty$-category $A$, as we record in passing.

\begin{defn}\label{defn:homotopy-category}
The \emph{homotopy category}  of an $\infty$-category $A$ is the category $\hom(1,A)$ whose objects are the elements of $A$ and whose morphisms $\alpha \colon a \to a'$ are 2-cells $\xymatrix{ 1 \ar@/^2ex/[r]^a \ar@/_2ex/[r]_{a'} \ar@{}[r]|{\Downarrow \alpha} &A}$.
\end{defn}

\subsection{Terminal elements}

Before introducing the general notion of limits, we will warm up with a special case of terminal objects, which we will call \emph{terminal elements}.

\begin{defn}\label{defn:terminal-element} A \emph{terminal element} in an $\infty$-category $A$ is a right adjoint $t \colon 1 \to A$ to the unique functor $! \colon A \to 1$. Explicitly, the data consists of:
\begin{itemize}
\item an element $t \colon 1 \to A$ and
\item a natural transformation $\eta \colon \id_A \To t!$ whose component $\eta t$ at the element $t$ is an isomorphism.\footnote{If $\eta$ is the unit of the adjunction $! \dashv t$, then the triangle identities demand that $\eta t =\id_t$.  However, by a 2-categorical trick, to show that such an adjunction exists, it suffices to find a 2-cell $\eta$ so that $\eta t$ is an isomorphism (see \refI{lem:min-term-pres}).}
\end{itemize}
\end{defn}

Several basic facts about terminal elements can be deduced immediately from the general theory of adjunctions.

\begin{prop}$\quad$
\begin{enumerate}[label=(\roman*)]
\item\label{itm:term-i} An element $t \colon 1 \to A$ is terminal if and only if it is representably terminal, i.e., if for all $f \colon X \to A$ there exists a unique 2-cell $\vcenter{\xymatrix@=1em{ X \ar[rr]^f \ar[dr]_{!} & \ar@{}[d]|(.4){\Downarrow\exists!} & A \\ & 1 \ar[ur]_t}}$
\item\label{itm:term-ii} Terminal elements are preserved by right adjoints and by equivalences.
\item\label{itm:term-iii} If  $A' \simeq A$ then $A$ has a terminal element if and only if $A'$ does.
\end{enumerate}
\end{prop}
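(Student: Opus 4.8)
The plan is to deduce all three statements from the theory of adjunctions already developed, together with the ``minimal information'' lemma \refI{lem:min-term-pres} quoted in the footnote to Definition~\ref{defn:terminal-element}.

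\emph{Part (i).} For the forwards implication, assume $! \dashv t$. Applying Proposition~\ref{prop:induced-adjunctions}(ii) with the functor $\hom(X,-)$, for every $\infty$-category $X$ we obtain an adjunction $\hom(X,!) \dashv \hom(X,t)$ of ordinary categories. Since $\hom(X,1) \cong \catone$ is the terminal category, its right adjoint $\hom(X,t)\colon \catone \to \hom(X,A)$ names a terminal object of $\hom(X,A)$, namely the composite 2-cell-free datum $t \circ !_X = t!$; and an object of $\hom(X,A)$ being terminal says exactly that for each $f\colon X \to A$ there is a unique 2-cell $f \To t!$, which is the triangle displayed in \ref{itm:term-i}. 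Conversely, assume $t$ is representably terminal. Taking $X = A$ and $f = \id_A$ produces a unique 2-cell $\eta\colon \id_A \To t!$. Whiskering with $t\colon 1 \to A$ and using $!\,t = \id_1$ gives a 2-cell $t \To t$; applying the representable terminality hypothesis at $X = 1$, $f = t$ (and noting $t \circ !_1 = t$) shows the only such 2-cell is $\id_t$, so $\eta t = \id_t$ is invertible. Then \refI{lem:min-term-pres} upgrades this to an adjunction $! \dashv t$ with unit $\eta$, so $t$ is terminal.

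\emph{Part (ii).} Suppose $g\colon A \to B$ is a right adjoint, say $f \dashv g$ with $f\colon B \to A$, and let $t\colon 1 \to A$ be terminal, so $!_A \dashv t$. By uniqueness of functors into $1$ we have $!_A \circ f = !_B$, so Proposition~\ref{prop:adjunctions-compose} applied to the composable adjoints $B \xrightarrow{f} A \xrightarrow{!_A} 1$ gives $!_B = !_A f \dashv g t$; hence $gt$ is a terminal element of $B$. (Equivalently, one can argue representably: by Proposition~\ref{prop:induced-adjunctions}(ii) the functor $\hom(X,g)$ is a right adjoint, hence preserves terminal objects, so $g t = \hom(X,g)(t!)$ represents the terminal object of $\hom(X,B)$ for every $X$, and part (i) applies.) Since any equivalence is in particular a right adjoint by Corollary~\ref{cor:equivs-are-adjoints}, equivalences also carry terminal elements to terminal elements.

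\emph{Part (iii).} Given $A' \simeq A$, choose an equivalence $A \to A'$ and an equivalence $A' \to A$ (e.g.\ its inverse). By part (ii) each sends terminal elements to terminal elements, so $A$ has a terminal element if and only if $A'$ does.

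\emph{The main obstacle} is the converse of part (i): one must track the identities among whiskered and composite 2-cells ($!\,t = \id_1$, $!_1 = \id_1$, $t \circ \id_1 = t$) carefully enough to see that the extracted 2-cell $\eta$ has invertible component at $t$, which is precisely the hypothesis needed to invoke \refI{lem:min-term-pres}; once (i) is in hand, (ii) and (iii) are routine.
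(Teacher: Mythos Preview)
Your proof is correct and follows essentially the same strategy as the paper: part~(i) forward via Proposition~\ref{prop:induced-adjunctions}\ref{itm:induced-adjunctions:ii}, part~(ii) via composition of adjunctions (Proposition~\ref{prop:adjunctions-compose}) together with Corollary~\ref{cor:equivs-are-adjoints}, and part~(iii) as an immediate consequence of~(ii). The only point of divergence is the converse of~(i): the paper simply says ``the converse follows from the Yoneda lemma,'' meaning that adjunctions in a 2-category are representably defined, so representable terminality of $t$ (i.e.\ $\hom(X,!)\dashv\hom(X,t)$ for all $X$) directly yields $!\dashv t$; you instead construct the unit $\eta$ by hand, verify $\eta t=\id_t$, and invoke \refI{lem:min-term-pres}. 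Both routes are valid --- yours is more explicit and self-contained, while the paper's is terser and relies on the standard 2-categorical Yoneda principle.
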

\begin{proof}
For \ref{itm:term-i}, Proposition \ref{prop:induced-adjunctions}\ref{itm:induced-adjunctions:ii} proves that terminal elements are representably terminal; the converse follows from the Yoneda lemma. \ref{itm:term-ii} is a special case of Proposition \ref{prop:adjunctions-compose}, via Corollary \ref{cor:equivs-are-adjoints}; \ref{itm:term-iii} follows.
\end{proof}

\subsection{Limits}

Terminal elements are limits indexed by the empty set. We now turn to limits of generic diagrams whose indexing shapes are given by simplicial sets. We have a 2-category $\h\sSet$ of simplicial sets, extending in the evident way the definition of the homotopy 2-category $\h\qCat \subset \h\sSet$ of quasi-categories. The 2-category of categories sits as a full subcategory $\h\Cat\subset\h\qCat \subset \h\sSet$, with categories identified with the simplicial sets defining their nerves. In this way, diagrams indexed by categories are among the diagrams indexed by simplicial sets. 

For any $\infty$-category $A$ in an $\infty$-cosmos $\lcat{K}$, there is a simplicial functor $A^{(-)} \colon \sSet\op \to \lcat{K}$, which descends to a 2-functor $A^{(-)} \colon \h\sSet\op \to \h\lcat{K}$. These simplicial cotensors are used to define $\infty$-categories of diagrams. 

\begin{defn}[diagram $\infty$-categories]\label{defn:diagram-cats} If $J$ is a simplicial set and $A$ is an $\infty$-category, then the $\infty$-category $A^J$ is the \emph{$\infty$-category of $J$-indexed diagrams in $A$}.
\end{defn}

\begin{rmk}
In the case where the $\infty$-cosmos is cartesian closed, in the sense of Definition \ref{qcat.ctxt.cof.def}\ref{qcat.ctxt.cof:g}, we could instead take the indexing shape $J$ to be an $\infty$-category, in which case the internal hom $A^J$ is the \emph{$\infty$-category of $J$-indexed diagrams in $A$}. The development of the theory of limits indexed by an $\infty$-category in a cartesian closed $\infty$-cosmos entirely parallels the development for limits indexed by a simplicial set. The conflated notation of \ref{qcat.ctxt.cof.def}\ref{qcat.ctxt.cof:a} and \ref{qcat.ctxt.cof.def}\ref{qcat.ctxt.cof:g} is intended to further highlight this parallelism.
\end{rmk}

In analogy with Definition \ref{defn:terminal-element}, we have:

\begin{defn}\label{defn:all-limits} An $\infty$-category $A$ \emph{admits all limits of shape $J$} if the constant diagram functor $\Delta \colon A \to A^J$, induced by the unique functor $!\colon  J \to 1$, has a right adjoint:
\[ \xymatrix{ A \ar@<1ex>[r]^-\Delta \ar@{}[r]|-\perp & A^J \ar@<1ex>[l]^-{\lim}}\] 
\end{defn}

From the vantage point of Definition \ref{defn:all-limits}, the following result is easy:

\begin{exs} Show, using \ref{prop:adjunctions-compose} and \ref{exs:iso-invar-adjunction},  that a right adjoint functor $u \colon A \to B$ between $\infty$-categories that admit all limits of shape $J$  necessarily preserves them, in the sense that the functors
\[ \xymatrix{ A^J \ar[d]_{\lim} \ar[r]^{u^J} & B^J \ar[d]^{\lim} \ar@{}[dl]|\cong \\ A \ar[r]_u & B}\] commute up to isomorphism.
\end{exs}

The problem with Definition \ref{defn:all-limits} is that it is insufficiently general: many $\infty$-categories will have certain, but not all, limits of diagrams of a particular indexing shape. With this aim in mind, we will now re-express Definition \ref{defn:all-limits} in a form that permits its extension to cover this sort of situation. For this, we make use of the following 2-categorical notion.

\begin{defn}[absolute right lifting] Given a cospan $C \xrightarrow{g} A \xleftarrow{f} B$, a functor $\ell \colon C \to B$ and a 2-cell \begin{equation}\label{eq:abs-right-lifting} \xymatrix{ \ar@{}[dr]|(.7){\Downarrow\lambda} & B \ar[d]^f \\ C \ar[ur]^\ell \ar[r]_g & A}\end{equation} define an \emph{absolute right lifting of $g$ through $f$} if any 2-cell as displayed below-left factors uniquely through $\lambda$ as displayed below-right
\[    \vcenter{\xymatrix{ X \ar[d]_c \ar[r]^b \ar@{}[dr]|{\Downarrow\chi} & B \ar[d]^f \\ C \ar[r]_g & A}} \mkern20mu = \mkern20mu \vcenter{\xymatrix{ X \ar[d]_c \ar[r]^b \ar@{}[dr]|(.3){\exists !\Downarrow}|(.7){\Downarrow\lambda} & B \ar[d]^f \\ C \ar[ur]|(.4)*+<2pt>{\scriptstyle \ell} \ar[r]_g & A}}
\]
\end{defn}

We refer to the 2-cell \eqref{eq:abs-right-lifting} as an \emph{absolute right lifting diagram}. In category theory, the term ``absolute'' typically means ``preserved by all functors.'' An absolute right lifting diagram is a right lifting diagram $\lambda \colon f\ell \To g$ so that the restriction of $\lambda$ along any generalized element $c \colon X \to C$ again defines a right lifting diagram.

\begin{exs}\label{exs:adj-as-abs-lifting} Show that in any 2-category, a 2-cell $\epsilon \colon fu \To \id_A$ defines the counit of an adjunction $f \dashv u$ if and only if
\[ \xymatrix{ \ar@{}[dr]|(.7){\Downarrow\epsilon} & B \ar[d]^f \\ A \ar[ur]^u \ar@{=}[r] & A}\] defines an absolute right lifting diagram.
\end{exs}

Applying Exercise \ref{exs:adj-as-abs-lifting}, Definition \ref{defn:all-limits} is equivalent to the assertion that the \emph{limit cone}, our term for the counit of $\Delta \dashv \lim$  defines an absolute right lifting diagram:
\begin{equation}\label{eq:all-limits-abs-lifting} \xymatrix{ \ar@{}[dr]|(.7){\Downarrow\epsilon} & A\ar[d]^\Delta \\ A^J \ar@{=}[r] \ar[ur]^{\lim} & A^J}\end{equation}  This motivates the following definition.

\begin{defn}[limit]\label{defn:limit} A \emph{limit} of a $J$-indexed diagram in $A$ is an absolute right lifting of the diagram $d$ through the constant diagram functor $\Delta \colon A \to A^J$
\begin{equation}\label{eq:lim-diagram-defn} \xymatrix{ \ar@{}[dr]|(.7){\Downarrow\lambda} & A\ar[d]^\Delta \\ 1 \ar[r]_d \ar[ur]^{\lim d} & A^J}\end{equation} the 2-cell component of which  defines the \emph{limit cone} $\lambda \colon \Delta \lim d \To d$.
\end{defn}

If $A$ has all $J$-indexed limits, then the restriction of the absolute right lifting diagram \eqref{eq:all-limits-abs-lifting} along the element $d \colon 1\to A^J$ defines a limit for $d$. Interpolating between Definitions \ref{defn:limit} and \ref{defn:all-limits}, we can define a \emph{limit of a family of diagrams} to be an absolute right lifting of the family $d \colon K \to A^J$ through $\Delta \colon A \to A^J$. For instance:

\begin{thm}[{\refI{thm:splitgeorealizations}}]\label{thm:totalization} For every cosimplicial object in an $\infty$-category that admits an coaugmentation and a splitting, the coaugmentation defines its limit. That is, for every $\infty$-category $A$, the functors
\[ \xymatrix{ \ar@{}[dr]|(.7){\Downarrow\lambda} & A\ar[d]^\Delta \\ A^{\Del[b]} \ar[r]_{\mathrm{res}} \ar[ur]^{\ev_{[-1]}} & A^{\Del}}\] define an absolute right lifting diagram.
\end{thm}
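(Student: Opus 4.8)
The plan is to reduce the statement to a claim about split augmented \emph{simplicial} objects (the dual situation) by the standard duality, and then to exhibit the augmentation-evaluation functor $\ev_{[-1]}\colon A^{\Del[b]}\to A$ as having the required absolute-lifting property by hand. Since $\Del[b]$ denotes the augmented simplex category with a ``splitting'' comonad (the addition of a bottom element $[-1]\hookrightarrow[n]$, giving the free-living split augmented simplicial object), the key structural input is that the inclusion $i\colon\Del\hookrightarrow\Del[b]$ of the truncation admits a \emph{right adjoint} in the 2-category $\Cat_2$ — the splitting comonad is induced by an adjunction $i\dashv r$ between the relevant ordinal categories, with the counit $ir\To\id$ being the coaugmentation and one of the triangle identities strict. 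This is precisely the cosimplicial analogue of the adjunction used in the excerpt's treatment of $A^\cattwo\times_A A^\cattwo\leftrightarrows A^\cattwo$ (the $\face^1/\degen^0/\degen^1$ adjunctions), so I would model the proof on that example.

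First I would make the indexing-category adjunction explicit: there is an adjunction $i\dashv r\colon\Del[b]\to\Del$ (or its appropriate variant) whose counit $\varepsilon\colon ir\To\id_{\Del[b]}$ has the property that $\varepsilon$ restricted along $[-1]$ is an identity, i.e.\ the ``splitting'' data. Applying the 2-functor $A^{(-)}\colon\Cat_2\op\to\lcat{K}_2$ (available for any $\infty$-category $A$, as noted just after \ref{defn:diagram-cats} and used in \ref{ex:comp.ident.adj}) converts this into an adjunction between $A^{\Del}$ and $A^{\Del[b]}$, and $A^{(-)}$ being a 2-functor preserves the triangle identities and the fact that one of them is strict. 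The functor $\ev_{[-1]}\colon A^{\Del[b]}\to A$ is $A^{\nu}$ for the vertex inclusion $\nu\colon[0]=[-1]^{+}\to\Del[b]$, and $\mathrm{res}=A^i\colon A^{\Del[b]}\to A^{\Del}$. I would then identify the composite $\ev_{[-1]}$ with the right adjoint in a composite adjunction (precomposing the adjunction $A^i\dashv A^r$ with the evident adjunction relating $A^{\Del[b]}$, $A^{\Del}$, and $A$ coming from the terminal and vertex functors among $[-1]^+\hookrightarrow\Del\hookrightarrow\Del[b]$), exactly as $i_0\dashv m\dashv i_1$ were built in the excerpt.

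Once the relevant 2-cell is produced as the counit of such an adjunction, I would invoke Exercise \ref{exs:adj-as-abs-lifting}: a counit $\varepsilon\colon fu\To\id$ is an absolute right lifting diagram. But here the cospan is $A^{\Del[b]}\xrightarrow{\mathrm{res}}A^{\Del}\xleftarrow{\Delta}A$, not $\id_{A^{\Del}}$, so strictly I need the variant ``a right adjoint relative to $\Delta$'' — i.e.\ I want $\ev_{[-1]}$ to be the absolute right lifting of $\mathrm{res}$ through $\Delta$. This follows because the composite of the adjunction $\Delta\dashv\lim$-type data with an honest adjunction is again an absolute lifting: more carefully, the 2-cell $\lambda$ in the statement is obtained by pasting the coaugmentation counit with the canonical comparison identifying $\Delta\circ\ev_{[-1]}$ with the restriction of $ir$-fixed points, and one checks directly that pasting with $\lambda$ inverts pasting with the unit, using that the indexing adjunction had a strict triangle identity. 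I would verify the factorization-and-uniqueness bijection on 2-cells $\chi\colon\Delta c\To\mathrm{res}\,b$ by transposing across the adjunction $A^i\dashv A^r$ and then across $A^{\nu}$, which is the cosimplicial dual of the elementary ``a split coequalizer is an absolute coequalizer'' computation.

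The main obstacle I expect is purely bookkeeping: getting the indexing adjunction between augmented ordinal categories exactly right (which bottom-element insertion, in which variance, and which of the two triangle identities is strict) so that the resulting 2-cell in $\lcat{K}_2$ is genuinely the coaugmentation $\ev_{[-1]}\To$ (whiskered appropriately) and not some other natural transformation; and handling the fact that the cospan base is $\mathrm{res}$ rather than an identity, so that one is proving an ``absolute right lifting'' rather than literally recognizing a counit. Both of these are dual to arguments already carried out in \refI{thm:splitgeorealizations} and in \ref{ex:comp.ident.adj}, so I would lean on those; no homotopical subtlety enters, since everything is formal 2-category theory in $\lcat{K}_2$ applied through the 2-functor $A^{(-)}$.
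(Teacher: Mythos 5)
Your route is essentially the paper's own: its proof sketch likewise reduces everything to formal data among the indexing categories $\catone$, $\Del$, $\Del[b]$ --- the 2-cell $\lambda$ supplied by initiality of $[-1]$ in $\Del[b]$, with its universal property witnessed by a pair of adjunctions (\refI{lem:doms2catlemma}) --- and then transports it along the 2-functor $A^{(-)}\colon\Cat_2\op\to\lcat{K}_2$; your $i\dashv r$ together with the initiality adjunction $[-1]\dashv{!}$ is exactly that kind of adjunction data, so your ``transpose twice'' verification in $\lcat{K}_2$ is the same argument run after applying $A^{(-)}$ rather than before. Two details to correct in the write-up: since $A^{(-)}$ is contravariant on 1-cells, $i\dashv r$ yields $A^{r}\dashv A^{i}=\mathrm{res}$ (so $\mathrm{res}$ is the \emph{right} adjoint, and $A^{r}\circ\Delta = A^{!}$ is the constant-$\Del[b]$-diagram functor, which is what lets the two transpositions compose); and the counit of $i\dashv r$ consists of the extra degeneracies (the splitting), not the coaugmentation --- the coaugmentation 2-cell comes from $[-1]\dashv{!}$. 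Your remaining ``check directly'' step, that the composite transposition is implemented by pasting with the stated $\lambda$, is then immediate: the pasted 2-cell is the image under $A^{(-)}$ of some natural transformation $[-1]\circ{!}\To\iota$ in $\Cat_2$, and initiality of $[-1]$ makes $\lambda$ the unique such 2-cell.
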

Here $\Del$ is the usual simplex category of finite non-empty ordinals and order-preserving maps. It defines a full subcategory of $\Del+$, which freely appends an initial object $[-1]$, and this in turn defines a wide subcategory\footnote{A \emph{wide} subcategory is a subcategory containing all of the objects.} of $\Del[b]$, which adds an ``extra degeneracy'' map between each pair of consecutive ordinals. Diagrams indexed by $\Del \subset \Del+\subset \Del[b]$ are, respectively, called \emph{cosimplicial objects}, \emph{coaugmented cosimplicial objects}, and \emph{split cosimplicial objects}. The limit of a cosimplicial object is often called its \emph{totalization}.

\begin{proof}[Proof sketch]
In $\h\Cat$, there is a canonical 2-cell
\[ \xymatrix{ \Del \ar@{^(->}[r] \ar[d]_{!} & \Del[b] \\ \catone \ar[ur]_{[-1]} & \ar@{}[ul]|(.7){\Uparrow\lambda}}\]
because $[-1] \in \Del[b]$ is initial. This data defines an absolute right extension diagram that is moreover preserved by any 2-functor, as the universal property of the functor $[-1] \colon \catone \to \Del[b]$ and the 2-cell $\lambda$ is exhibited by a pair of adjunctions (see \refI{lem:doms2catlemma}). The 2-functor  $A^{(-)} \colon \h\Cat\op \to \h\lcat{K}$ converts this into the absolute right lifting diagram of the statement.
\end{proof} 

The most important result relating adjunctions and limits is of course:

\begin{thm}[{\refI{prop:RAPL}}]\label{thm:RAPL} Right adjoints preserve limits.
\end{thm}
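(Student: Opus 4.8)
The plan is to transcribe the classical argument $\hom_B(b,u\lim d)\cong\hom_A(fb,\lim d)\cong\hom_{A^J}(\Delta_A(fb),d)\cong\hom_{B^J}(\Delta_B b,u^Jd)$ into the language of absolute right lifting diagrams, where each isomorphism becomes an elementary manipulation valid in the homotopy $2$-category. Write $f\dashv u$ with $f\colon B\to A$ and $u\colon A\to B$ as in Definition~\ref{defn:adjunction}, let $d\colon 1\to A^J$ be a diagram that admits a limit, and let $\lambda\colon\Delta_A(\lim d)\To d$ be a limit cone, which by Definition~\ref{defn:limit} exhibits $\lim d$ as the absolute right lifting of $d$ through the constant-diagram functor $\Delta_A\colon A\to A^J$. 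Bifunctoriality of the cotensor (with $\Delta=(-)^{!}$ for $!\colon J\to 1$) supplies the naturality identities $u^J\Delta_A=\Delta_B u$ and $f^J\Delta_B=\Delta_A f$; using the first, $u^J\lambda$ is a $2$-cell $\Delta_B(u\lim d)=u^J\Delta_A(\lim d)\To u^Jd$, i.e.\ a cone on $u^Jd$ with summit $u\lim d$. The goal is to prove that this cone $u^J\lambda$ is again an absolute right lifting diagram; since $d$ was arbitrary, this shows that $u$ preserves limits.

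First I would record two auxiliary absolute right lifting diagrams, both extracted from the adjunction. Since $(-)^J$ is a $2$-functor, Proposition~\ref{prop:induced-adjunctions}(iii) yields $f^J\dashv u^J$; by Exercise~\ref{exs:adj-as-abs-lifting} its counit exhibits $u^J$ as the absolute right lifting of $\id_{A^J}$ through $f^J$, and since an absolute right lifting diagram restricts along any functor to an absolute right lifting diagram, restricting along $d\colon 1\to A^J$ exhibits $u^Jd$ as the absolute right lifting of $d$ through $f^J$. Similarly the counit $\epsilon$ of $f\dashv u$ exhibits $u$ as the absolute right lifting of $\id_A$ through $f$, and restricting along $\lim d\colon 1\to A$ exhibits $u\lim d$ as the absolute right lifting of $\lim d$ through $f$.

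Next I would invoke two elementary stability properties of absolute right lifting diagrams, each an easy direct factorisation argument. (1) \emph{Vertical composition}: if $\ell$ is the absolute right lifting of $g$ through $p$ and $g$ is the absolute right lifting of $h$ through $q$, then the composite $2$-cell exhibits $\ell$ as the absolute right lifting of $h$ through $q\circ p$. Applied to the last sentence of the previous paragraph together with the limit cone $\lambda$, this shows that $u\lim d$ is the absolute right lifting of $d$ through $\Delta_A f=f^J\Delta_B\colon B\to A^J$. (2) \emph{Transposition}: for an adjunction $p\dashv q$, a $2$-cell exhibits $\ell$ as the absolute right lifting of $h$ through $p\circ r$ if and only if its mate exhibits $\ell$ as the absolute right lifting of $qh$ through $r$, because transposing across $p\dashv q$ is a bijection on the relevant $2$-cells compatible with whiskering by $r$. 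Taking $p=f^J$, $q=u^J$, $r=\Delta_B$ converts the statement ``$u\lim d$ is the absolute right lifting of $d$ through $f^J\Delta_B$'' into ``$u\lim d$ is the absolute right lifting of $u^Jd$ through $\Delta_B$,'' which is precisely what we want.

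What I am deferring --- the genuinely routine part --- is the verification of lemmas (1) and (2) and the identification of the comparison cone produced by this chain of manipulations with $u^J\lambda$ itself. The latter is a single diagram chase: the mate of $\lambda\cdot\Delta_A(\epsilon_{\lim d})$ along $f^J\dashv u^J$ works out, after using $2$-functoriality of $(-)^J$ to rewrite the whiskered unit of $f^J\dashv u^J$ as $\Delta_B$ applied to a whiskered unit of $f\dashv u$, to $u^J\lambda\cdot\Delta_B(u\epsilon_{\lim d}\cdot\eta_{u\lim d})$, whereupon the triangle identity $u\epsilon\cdot\eta u=\id_u$ collapses the second factor. (Equivalently, one can bypass lemmas (1) and (2) and verify directly that $u^J\lambda$ is an absolute right lifting diagram: given a test cone $\chi\colon\Delta_B b\To u^Jd$, transpose it across $f^J\dashv u^J$, factor the transpose through $\lambda$ using that $\lim d$ is \emph{absolute}, transpose the resulting $2$-cell back across $f\dashv u$, and check by the same triangle-identity chase that this is the required unique factorisation.) This chase is the only place where any care is needed; everything else is formal $2$-category theory.
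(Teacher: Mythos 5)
Your proposal is correct and is essentially the paper's own argument: the paper proves this by exactly the procedure in your parenthetical remark --- given a test cone, transpose it across $f^J\dashv u^J$ by pasting with the counit, factor the result through $\lambda$ using absoluteness of the limit cone, transpose back with the unit, and collapse the round trip via 2-functoriality of the simplicial cotensor and a triangle identity, with uniqueness following because these transpositions are bijective. Your main route merely repackages the same computation into two standard lemmas (vertical pasting of absolute right lifting diagrams and the mates/transposition lemma, the latter being \refI{obs:transpose-abs-lifting}), so the organization is tidier but not substantively different.
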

Our proof will closely follow the classical one. Given a diagram $d\colon 1 \to A^J$ and a right adjoint $u \colon A \to B$ to some functor $f$, a cone with summit $b \colon 1 \to B$ over $u^J d$ transposes to define a cone with summit $fb$ over $d$, which factors uniquely through the limit cone. This factorization transposes back across the adjunction to show that $u$ carries the limit cone over $d$ to a limit cone over $u^Jd$.

\begin{proof}
Suppose that $A$ admits limits of a diagram $d\colon 1\to A^J$ as witnessed by an absolute right lifting diagram \eqref{eq:lim-diagram-defn}. By Proposition \ref{prop:induced-adjunctions}, an adjunction $f \dashv u$ induces an adjunction $f^J \dashv u^J$. We must show that \[\xymatrix{ \ar@{}[dr]|(.7){\Downarrow\lambda} & A \ar[d]^-\Delta \ar[r]^u & B \ar[d]^-\Delta \\ 1 \ar[ur]^{\lim d} \ar[r]_d& A^J \ar[r]_{u^J} & B^J}\] is again an absolute right lifting diagram. Given a square
\[\xymatrix{ X \ar[d]_{!} \ar[rr]^b \ar@{}[drr]|{\Downarrow\chi} & & B \ar[d]^-{\Delta} \\ 1 \ar[r]_-{d} & A^J \ar[r]_{u^J} & B^J} \] we first ``transpose across the adjunction,'' by composing with $f$ and the counit. 
\[\vcenter{\xymatrix{ X \ar[d]_-{!} \ar[rr]^b \ar@{}[drr]|{\Downarrow\chi} & & B \ar[d]^-\Delta \ar[r]^f & A \ar[d]^-\Delta  \\ 1 \ar[r]_-{d} & A^J \ar@{=}@/_3.5ex/[rr]^{\Downarrow\epsilon^J} \ar[r]^{u^J} & B^J \ar[r]^{f^J} & A^J}} = \vcenter{\xymatrix{ X \ar@{}[drr]|(.3){\exists !\Downarrow\zeta}|(.7){\Downarrow\lambda} \ar[d]_-{!} \ar[r]^b & B \ar[r]^f & A \ar[d]^-\Delta \\ 1 \ar[urr]_(0.4){\lim d} \ar[rr]_{d} & & A^J}} \] The universal property of the absolute right lifting diagram $\lambda \colon \Delta \lim \To d$ induces a unique factorisation $\zeta$, which may then be ``transposed back across the adjunction'' by composing with $u$ and the unit.
\[  \vcenter{\xymatrix{ X \ar@{}[drr]|(.3){\exists !\Downarrow\zeta}|(.7){\Downarrow\lambda} \ar[d]_-{!} \ar[r]^b & B \ar@{=}@/^3.5ex/[rr]_{\Downarrow\eta} \ar[r]|f & A \ar[d]^-\Delta \ar[r]_u & B \ar[d]^-\Delta \\ 1 \ar[urr]_(0.4){\lim d} \ar[rr]_-{d} & & A^J \ar[r]_{u^J} & B^J}}= \vcenter{\xymatrix{ X \ar[d]_-{!} \ar[rr]^b \ar@{}[drr]|{\Downarrow\chi} & & B \ar[d]^-\Delta \ar@{=}@/^3.5ex/[rr]_{\Downarrow\eta} \ar[r]_f & A \ar[d]^-\Delta \ar[r]_u & B \ar[d]^-\Delta  \\ 1 \ar[r]_-{d} & A^J \ar@{=}@/_3.5ex/[rr]^{\Downarrow\epsilon^J} \ar[r]^{u^J} & B^J \ar[r]^{f^J} & A^J
 \ar[r]_{u^J} & B^J}}  \] \[ = \vcenter{\xymatrix{ X \ar[d]_-{!} \ar[rr]^b \ar@{}[drr]|{\Downarrow\chi} & & B \ar[d]^-\Delta \ar@{=}@/^3.5ex/[rr]  &  & B \ar[d]^-\Delta  \\ 1 \ar[r]_-{d} & A^J \ar@{=}@/_3.5ex/[rr]^{\Downarrow\epsilon^J} \ar[r]^{u^J} & B^J \ar[r]|{f^J}  \ar@{=}@/^3.5ex/[rr]_{\Downarrow\eta^J}& A^J \ar[r]_{u^J} & B^J}}  = \vcenter{\xymatrix{ X \ar[d]_-{!} \ar[rr]^b \ar@{}[drr]|{\Downarrow\chi} & & B \ar[d]^-\Delta \\ 1 \ar[r]_-{d} & A^J \ar[r]_{u^J} & B^J}}\] Here the second equality is a consequence of the 2-functoriality of the simplicial cotensor, while the third is an application of a triangle identity for the adjunction $f^J \dashv u^J$. The pasted composite of $\zeta$ and $\eta$ is the desired factorisation of $\chi$ through $\lambda$. 

The proof that this factorization is unique, which again parallels the classical argument, is left to the reader: the essential point is that the transposes defined via these pasting diagrams are unique.
\end{proof}

The same argument also shows that a right adjoint preserves the limit of a family of diagrams $d \colon K \to A^J$. On account of Exercise \ref{exs:adjoint-equivalence}, we have the immediate corollary:

\begin{cor}\label{cor:equiv-pres-lim} Equivalences preserve limits.
\end{cor}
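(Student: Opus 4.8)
The plan is to reduce this immediately to Theorem \ref{thm:RAPL}, exploiting the fact that an equivalence is in particular a right adjoint. First I would invoke Exercise \ref{exs:adjoint-equivalence}, applied in the homotopy 2-category $\lcat{K}_2$: any equivalence $f \colon A \to B$ of $\infty$-categories can be promoted to an adjoint equivalence, and in particular $f$ admits a left adjoint $g \colon B \to A$ (any equivalence inverse, with its two natural isomorphisms suitably adjusted). Thus every equivalence is a right adjoint, so Theorem \ref{thm:RAPL} applies without modification: if $\lambda$ exhibits $\lim d \colon 1 \to A$ as the limit of a diagram $d \colon 1 \to A^J$, then whiskering with $f$ exhibits $f \lim d$ as the limit of $f^J d \colon 1 \to B^J$.

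Spelling this out, one transports the absolute right lifting diagram
\[ \xymatrix{ \ar@{}[dr]|(.7){\Downarrow\lambda} & A \ar[d]^\Delta \\ 1 \ar[r]_d \ar[ur]^{\lim d} & A^J} \]
along $f$: using that $g \dashv f$ induces $g^J \dashv f^J$ (Proposition \ref{prop:induced-adjunctions}), the argument proving Theorem \ref{thm:RAPL} shows that
\[ \xymatrix{ \ar@{}[dr]|(.7){\Downarrow\lambda} & A \ar[d]^-\Delta \ar[r]^f & B \ar[d]^-\Delta \\ 1 \ar[ur]^{\lim d} \ar[r]_d & A^J \ar[r]_{f^J} & B^J} \]
is again an absolute right lifting diagram, which is precisely the assertion that $f$ preserves this limit. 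Since the proof of Theorem \ref{thm:RAPL} applies verbatim to an absolute right lifting of a family $d \colon K \to A^J$ through $\Delta$, the same reasoning shows that equivalences preserve limits of families of diagrams as well.

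There is essentially no obstacle here; the only point worth checking is that Exercise \ref{exs:adjoint-equivalence}, phrased for an arbitrary 2-category, applies to $\lcat{K}_2$ and that the resulting 2-categorical notion of equivalence agrees with the notion of equivalence of $\infty$-categories appearing in the statement — but this is exactly the content of Proposition \ref{prop:equiv.are.weak.equiv}. Hence the corollary follows formally from the two cited results.
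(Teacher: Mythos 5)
Your proof is correct and matches the paper's own reasoning: the corollary is deduced from Theorem \ref{thm:RAPL} by promoting an equivalence to an adjoint equivalence via Exercise \ref{exs:adjoint-equivalence}, so that the equivalence is in particular a right adjoint. The extra remark about families of diagrams and the appeal to Proposition \ref{prop:equiv.are.weak.equiv} are consistent with, and no more than, what the paper itself indicates.
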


Moreover, ``completeness'' of $\infty$-categories is transferred along equivalences.

\begin{prop}\label{prop:completeness-equivalence} If $A \simeq B$ then any family of diagrams in $A$ that admits limits in $B$ also admits limits in $A$.
\end{prop}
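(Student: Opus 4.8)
The plan is to derive this as a quick consequence of the fact that equivalences preserve limits (Corollary~\ref{cor:equiv-pres-lim}), applied to the inverse equivalence. Fix an equivalence $e \colon A \we B$ together with an inverse equivalence $e' \colon B \we A$, so that there is a natural isomorphism $\id_A \cong e'e$. Applying the 2-functor $(-)^J \colon \lcat{K}_2 \to \lcat{K}_2$ --- which preserves equivalences, invertible $2$-cells, and adjunctions --- produces an equivalence $e^J \colon A^J \we B^J$ with inverse $(e')^J$ and a natural isomorphism $\phi \colon \id_{A^J} \cong (e'e)^J = (e')^J e^J$. I read the hypothesis ``the family $d \colon K \to A^J$ admits limits in $B$'' as: the transported family $e^J d \colon K \to B^J$ admits a limit, i.e.\ an absolute right lifting through $\Delta \colon B \to B^J$; since $e^J$ and $(e')^J$ are mutually inverse this is independent of the chosen representative equivalence.

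First I would observe that the inverse equivalence $e' \colon B \to A$ preserves limits of $K$-indexed families of $J$-shaped diagrams. This is Corollary~\ref{cor:equiv-pres-lim} for single diagrams, and its proof --- like that of Theorem~\ref{thm:RAPL} on which it rests, and whose family version is recorded immediately afterwards --- extends verbatim to families. Applying this to the family $e^J d$, which admits a limit in $B$ by hypothesis, we conclude that $(e')^J e^J d \colon K \to A^J$ admits a limit in $A$: explicitly, if $(\ell, \lambda)$ is the absolute right lifting of $e^J d$ through $\Delta \colon B \to B^J$, then $e'\ell \colon K \to A$ equipped with the induced $2$-cell is the absolute right lifting of $(e')^J e^J d$ through $\Delta \colon A \to A^J$.

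To conclude I would transport this limit along the component $\phi_d \colon d \cong (e')^J e^J d$ of $\phi$ at $d$ in $\hom(K, A^J)$: if $(\ell', \lambda')$ is an absolute right lifting of $(e')^J e^J d$ through $\Delta \colon A \to A^J$, then $\ell'$ equipped with the vertical composite $\phi_d^{-1} \cdot \lambda' \colon \Delta \ell' \To d$ is an absolute right lifting of $d$ through $\Delta$. The only point to verify here is the elementary fact that absolute right lifting diagrams are invariant, in the obvious sense, under replacing the functor being lifted by an isomorphic one --- a one-line cancellation using that $\phi_d$ is invertible. This produces the required limit, so $d$ admits limits in $A$.

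With this route there is really no obstacle: every step is a citation or pure reindexing, and the only place that repays attention is keeping straight the directions of the natural isomorphisms $\id_A \cong e'e$, $\phi$, and $\phi_d$, together with the fact that ``equivalences preserve limits'' is available for families of diagrams and not merely for single ones (it is, by the discussion surrounding Theorem~\ref{thm:RAPL} and Corollary~\ref{cor:equiv-pres-lim}). Were one to insist on a self-contained argument, the natural alternative is to promote $e$ to an adjoint equivalence (Exercise~\ref{exs:adjoint-equivalence}), so that $e^J \dashv (e')^J$ becomes an adjoint equivalence, and then to verify directly that $e'\ell$ is the absolute right lifting of $d$ by transporting test $2$-cells back and forth across it, as in the proof of Theorem~\ref{thm:RAPL}; there the pasting computation showing that the induced factorizations satisfy the defining equation --- and their uniqueness --- would be the main thing to get right.
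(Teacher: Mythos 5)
Your proposal is correct and is essentially the paper's own argument: the paper constructs exactly the composite you describe---whisker the absolute right lifting of $e^Jd$ through $\Delta\colon B \to B^J$ with $e'$ and $(e')^J$ and paste with the natural isomorphism $\id_{A^J} \cong (e')^Je^J$---and then leaves the verification of the universal property as an RAPL-style exercise after promoting to an adjoint equivalence. Your way of discharging that exercise, by citing the family version of Theorem~\ref{thm:RAPL} (noted in the paper right after its proof) together with the elementary invariance of absolute right lifting diagrams under an isomorphism of the functor being lifted, is a clean and valid modularization of the same verification.
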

\begin{proof}
Consider a family of diagrams $d\colon K \to A^J$ of shape $J$ in $A$ that admits limits in $B$ after composing with the equivalence $A \simeq B$, given by an absolute right lifting diagram:
\[ \xymatrix{ \ar@{}[drr]|(.7){\Downarrow \lambda} & & B \ar[d]^\Delta \\ K \ar[urr]^\lim  \ar[r]_d & A^J \ar[r]_{\sim} & B^J}\] We claim that the composite 2-cell 
\[ \xymatrix{ \ar@{}[drr]|(.7){\Downarrow \lambda} & & B \ar[d]^\Delta \ar[r]^\sim & A \ar[d]^\Delta \\ K \ar[urr]^\lim  \ar[r]_d & A^J\ar@{=}@/_3.5ex/[rr]^{\Downarrow\cong}  \ar[r]_{\sim} & B^J \ar[r]^\sim & A^J }\] again defines an absolute right lifting diagram, proving that the original family $d \colon K \to A^J$ admits limits in $A$. Promoting the equivalence to an adjoint equivalence, as in Exercise \ref{exs:adjoint-equivalence}, the proof of this universal property is very similar to the proof of Theorem \ref{thm:RAPL}; the remaining details are left as an exercise.
\end{proof}

\subsection{Colimits}

The theory of colimits of $J$-indexed diagrams in an $\infty$-category $A$  is dual to the theory of limits by reversing the direction of the 2-cells but not the 1-cells in the ambient homotopy 2-category $\h\lcat{K}$.

\begin{defn}[absolute left lifting] Given a cospan $C \xrightarrow{g} A \xleftarrow{f} B$, a functor $\ell \colon C \to B$ and a 2-cell \[\xymatrix{ \ar@{}[dr]|(.7){\Uparrow\lambda} & B \ar[d]^f \\ C \ar[ur]^\ell \ar[r]_g & A}\] define an \emph{absolute left lifting of $g$ through $f$} if any 2-cell as displayed below-left factors uniquely through $\lambda$ as displayed below-right
\[    \vcenter{\xymatrix{ X \ar[d]_c \ar[r]^b \ar@{}[dr]|{\Uparrow\chi} & B \ar[d]^f \\ C \ar[r]_g & A}} \mkern20mu = \mkern20mu \vcenter{\xymatrix{ X \ar[d]_c \ar[r]^b \ar@{}[dr]|(.3){\exists !\Uparrow}|(.7){\Uparrow\lambda} & B \ar[d]^f \\ C \ar[ur]|(.4)*+<2pt>{\scriptstyle \ell} \ar[r]_g & A}}
\]
\end{defn}

\begin{defn}[colimit]\label{defn:colimit} A \emph{colimit} of a $J$-indexed diagram in $A$ is an absolute left lifting of the diagram $d$ through the constant diagram functor $\Delta \colon A \to A^J$
\[ \xymatrix{ \ar@{}[dr]|(.7){\Uparrow\lambda} & A\ar[d]^\Delta \\ 1 \ar[r]_d \ar[ur]^{\colim d} & A^J}\] the 2-cell component of which  defines the \emph{colimit cone} $\lambda \colon d \To \Delta \colim d$.
\end{defn}

We leave the formulation of the evident duals of \ref{thm:totalization}, \ref{thm:RAPL}, \ref{cor:equiv-pres-lim}, and \ref{prop:completeness-equivalence} as an exercise.

\subsection{Arrow \texorpdfstring{$\infty$}{infinity}-categories}

By design, our definitions of limits and colimits of diagrams in an $\infty$-category and of adjunctions between $\infty$-categories are in a form that can be easily expressed internally to a 2-category. Consequently, we have not yet described  the universal properties encoded by the $\infty$-functors represented by limit or colimit elements.  Our aim in the next lecture will be to present equivalent definitions of limits, colimits, and adjunctions that can be given in these terms. These will require a more substantial use of the $\infty$-cosmos axiomatization than we have needed thus far and will also form the basis for our ``model independence'' results.

To prepare the way for this discussion, we now briefly introduce the $\infty$-categories that will serve as  a vehicle to encode these sorts of representable universal properties, at least in the very simplest case: these are \emph{arrow $\infty$-categories}. 

Here is the idea that motivates their importance. Recall that we use the terminal $\infty$-category $1$ to probe inside an $\infty$-category $A$, in the sense that an element of $A$ is defined to be a functor $a \colon 1 \to A$. We would like to be able to probe similarly for arrows in $A$. An approximate notion is given by a 2-cell of the form $\xymatrix{1 \ar@/^2ex/[r]^a \ar@/_2ex/[r]_{a'} \ar@{}[r]|{\Downarrow f} & A}$, but these correspond to arrows in the homotopy category of $A$, as in Definition \ref{defn:homotopy-category}, rather than arrows in $A$ itself. If we had a ``walking arrow'' $\infty$-category $\cattwo$, then we could define an arrow in $A$ to be a functor $f\colon \cattwo \to A$. The axioms of an $\infty$-cosmos do not guarantee this, but we can represent functors of this type in ``transposed'' form, as elements $f \colon 1 \to A^\cattwo$ in the arrow $\infty$-category that we now define.

\begin{defn}[arrow $\infty$-categories] For any $\infty$-category $A$, the simplicial cotensor 
\[ \xymatrix@C=30pt{ A^\cattwo \defeq A^{\Del^1} \ar@{->>}[r]^-{(p_1,p_0)} & {A^{\boundary\Delta^1}} \cong A \times A}\] defines the \emph{arrow $\infty$-category} $A^\cattwo$, equipped with an isofibration $(p_1,p_0)\colon A^\cattwo \tfib A \times A$, where $p_1 \colon A^\cattwo \tfib A$ denotes the codomain projection and $p_0 \colon A^\cattwo \tfib A$ denotes the domain projection.
\end{defn}

Using the notation $\cattwo \defeq \Delta^1$, the defining universal property of the simplicial cotensor asserts that the canonical map defines an isomorphism of quasi-categories
\[ \Fun(X, A^\cattwo) \stackrel{\cong}{\longrightarrow} \Fun(X,A)^{\cattwo}.\]  In particular, taking $X=A^\cattwo$, the identity functor $\id_{A^\cattwo}$ transposes to define a vertex in $\Fun(A^\cattwo,A)^\cattwo$ which represents a 2-cell
\begin{equation}\label{eq:generic-arrow} \xymatrix{ A^\cattwo \ar@{->>}@/^2ex/[r]^{p_0} \ar@{->>}@/_2ex/[r]_{p_1} \ar@{}[r]|-{\Downarrow\phi} & A}\end{equation} in the homotopy 2-category $\h\lcat{K}$.

There is an analogous categorical notion of cotensor with $\cattwo$: if $C$ is an ordinary 1-category, then $C^\cattwo$ is the category whose objects are morphisms in $C$ and whose morphisms are commutative squares. In particular, for any pair of $\infty$-categories, we can form the arrow category $\hom(X,A)^\cattwo$ of the hom-category between them. There is a canonical functor $\hom(X,A^\cattwo) \to \hom(X,A)^\cattwo$ obtained by applying $\hom(X,-)$ to the 2-cell \eqref{eq:generic-arrow} and appealing to the universal property of $\hom(X,A)^\cattwo$. Now the arrow $\infty$-category $A^\cattwo$ would be a strict $\cattwo$-cotensor in the homotopy 2-category $\h\lcat{K}$ if this map defined an isomorphism  $\hom(X,A^\cattwo) \stackrel{\cong}{\longrightarrow} \hom(X,A)^\cattwo$, analogous to the isomorphism of mapping quasi-categories that demonstrates that $A^\cattwo$ is a strict $\Delta^1$-cotensor in $\lcat{K}$. Instead: 

\begin{prop}[{\refI{prop:weak-cotensors}}]\label{prop:weak-cotensors} For any $\infty$-categories $X$ and $A$, the canonical functor
\[ \hom(X, A^\cattwo) \longrightarrow \hom(X,A)^{\cattwo}\] is \emph{smothering}: that is, surjective on objects, full, and conservative.\footnote{A functor is \emph{conservative} when it reflects isomorphisms.}
\end{prop}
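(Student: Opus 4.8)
The plan is to unpack the canonical functor $\hom(X,A^\cattwo)\to\hom(X,A)^\cattwo$ as the result of applying the homotopy category functor $\ho$ to the isofibration of quasi-categories $\fun(X,A^\cattwo)\to\fun(X,A)^\cattwo$, and then to verify each of the three properties (surjective on objects, full, conservative) using the one piece of extra structure we have: this is an isofibration of quasi-categories, since by the cotensor axiom $\fun(X,A^\cattwo)\cong\fun(X,A)^\cattwo$ and the right-hand side maps by an isofibration to $\fun(X,A)^{\boundary\Delta^1}\cong\fun(X,A)\times\fun(X,A)$ by the Leibniz cotensor clause of \ref{qcat.ctxt.cof.def}\ref{qcat.ctxt.cof:b} applied to $\boundary\Delta^1\inc\Delta^1$ and $A\tfib 1$. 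Actually, more directly: the canonical functor is obtained by applying $\ho$ to the identity $\fun(X,A^\cattwo)\xrightarrow{\cong}\fun(X,A)^\cattwo$ together with the natural comparison $\ho(Q^\cattwo)\to(\ho Q)^\cattwo$ for a quasi-category $Q$. So the statement reduces to: for any quasi-category $Q$, the canonical functor $\ho(Q^{\Del^1})\to(\ho Q)^{\Del^1}$ is surjective on objects, full, and conservative. This is the form I would prove.

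\emph{Surjective on objects.} An object of $(\ho Q)^{\Del^1}$ is a morphism in the homotopy category $\ho Q$, which by definition (the excerpt's description of $\ho$) is represented by some 1-simplex of $Q$, i.e.\ a vertex of $Q^{\Del^1}$; this vertex maps to the given morphism. \emph{Fullness.} A morphism in $(\ho Q)^{\Del^1}$ from $f\colon a\to a'$ to $g\colon b\to b'$ is a commuting square in $\ho Q$, i.e.\ maps $u\colon a\to b$, $u'\colon a'\to b'$ with $gu = u'f$ in $\ho Q$. Lifting $u,u',f,g$ to 1-simplices of $Q$ and using that $gu = u'f$ holds in $\ho Q$, one produces a 2-simplex of $Q$ witnessing the common composite; assembling $f,g,u,u'$ and these witnessing 2-simplices into a map $\Del^1\times\Del^1\to Q$ (which amounts to filling a horn $\Horn^{2,1}$-type configuration twice, using the weak Kan condition) gives a 1-simplex of $Q^{\Del^1}$ whose image is the prescribed square. \emph{Conservativity.} Suppose a 1-simplex $\sigma$ of $Q^{\Del^1}$, viewed as a map $\Del^1\times\Del^1\to Q$, has image in $(\ho Q)^{\Del^1}$ an isomorphism; this means its two "vertical" edges $u = \sigma|_{\{0\}\times\Del^1}$ and $u' = \sigma|_{\{1\}\times\Del^1}$ become isomorphisms in $\ho Q$, hence (by \ref{rec:equiv-qcats}, since $Q$ is a quasi-category these are exactly the isomorphisms in the sense internal to $Q$) each of $u$, $u'$ extends over $\iso$. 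One then needs to extend $\sigma\colon\Del^1\times\Del^1\to Q$ to a map $\iso\times\Del^1\to Q$, which exhibits $\sigma$ as an isomorphism in $Q^{\Del^1}$; equivalently, using that $Q^{\Del^1}$ is a quasi-category and the standard fact (Joyal) that a 1-simplex of a quasi-category is an isomorphism iff its endpoints-edges are, this reduces to checking that $u$ and $u'$ are isomorphisms, which we have. I would cite \refI{prop:weak-cotensors} and the results of \cite{RiehlVerity:2012tt} for the combinatorial details, or alternatively give the argument via the lifting/filling described.

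\emph{Main obstacle.} The genuinely nontrivial point is conservativity, and specifically the passage "$u,u'$ isomorphisms in $Q$ $\Rightarrow$ $\sigma$ an isomorphism in $Q^{\Del^1}$". The clean way to handle this is to observe that $(p_1,p_0)\colon Q^{\Del^1}\tfib Q\times Q$ is an isofibration of quasi-categories (again the Leibniz cotensor clause), and an isofibration creates isomorphisms in the sense that a 1-simplex upstairs mapping to a pair of isomorphisms downstairs, and which is "pointwise" invertible, can be completed to an isomorphism; concretely, lift the path $\iso\to Q\times Q$ built from the isomorphism-extensions of $u,u'$ along the isofibration $(p_1,p_0)$ starting from $\sigma$, using the right lifting property of the isofibration against $\Del^0\hookrightarrow\iso$ (guaranteed because $(p_1,p_0)$ is an isofibration between quasi-categories). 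The resulting lift is a map $\iso\to Q^{\Del^1}$ restricting to $\sigma$, exhibiting $\sigma$ as an isomorphism. This is where the $\infty$-cosmos axioms — rather than purely formal 2-category theory — are actually used, matching the remark in the text that this proposition "requires a more substantial use of the $\infty$-cosmos axiomatization."
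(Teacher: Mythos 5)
Your overall strategy coincides with the one used in the source paper: reduce via the cotensor isomorphism $\fun(X,A^\cattwo)\cong\fun(X,A)^{\Del^1}$ to showing that $\ho(Q^{\Del^1})\to(\ho Q)^{\Del^1}$ is smothering for a quasi-category $Q$, and verify surjectivity on objects and fullness by representing arrows by $1$-simplices and assembling two $2$-simplices into a map $\Del^1\times\Del^1\to Q$. Those two parts of your argument are fine.

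The gap is in conservativity, which is exactly the nontrivial content. What you need is: a $1$-simplex $\sigma$ of $Q^{\Del^1}$ whose two projections $p_0\sigma$, $p_1\sigma$ are isomorphisms in $Q$ is itself an isomorphism in $Q^{\Del^1}$. This is not a ``standard fact that a $1$-simplex is an isomorphism iff its endpoint edges are''; it is precisely the lemma \refI{lem:pointwise-equiv}/\refI{cor:pointwise-equiv} (``pointwise isomorphisms are isomorphisms''), and it is what the cited proof of \refI{prop:weak-cotensors} invokes. Your proposed replacement argument does not work as stated: extending $p_0\sigma$ and $p_1\sigma$ to maps $\iso\to Q$ gives a map $\iso\to Q\times Q$, but the right lifting property of the isofibration $(p_1,p_0)\colon Q^{\Del^1}\tfib Q\times Q$ against $\Del^0\inc\iso$ only lets you lift starting from a \emph{vertex} of $Q^{\Del^1}$; it produces some isomorphism out of that vertex, with no control forcing its underlying $1$-simplex to be $\sigma$. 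To prescribe $\sigma$ you would need to solve a lifting problem against $\Del^1\inc\iso$, and this inclusion is not a trivial cofibration in the Joyal model structure (indeed $Q^\iso\to Q^{\Del^1}$ is generally far from surjective), so no $\infty$-cosmos/isofibration axiom hands you that lift. The correct proofs of the pointwise-isomorphism lemma use genuinely combinatorial input---Joyal's special horn filling (\refI{prop:joyal-special-horn}) or the naturally marked model structure---so you should either cite that lemma outright (citing \refI{prop:weak-cotensors} itself would be circular) or reproduce one of those arguments; the lifting step you sketch cannot substitute for it.
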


Surjectivity on objects asserts that any 2-cell $\alpha \colon f \To g \colon X \to A$ is represented by a functor $\hat{\alpha} \colon X \to A^\cattwo$ with the property that the whiskered composite of the representing functor with the canonical 2-cell $\phi \colon p_0 \To p_1$ equals $\alpha$:
\[ \vcenter{\xymatrix{ X \ar[r]^-{\hat{\alpha}} & A^\cattwo \ar@{->>}@/^2ex/[r]^{p_0} \ar@{->>}@/_2ex/[r]_{p_1} \ar@{}[r]|-{\Downarrow\phi} & A}} = \vcenter{ \xymatrix{X \ar@/^2ex/[r]^f \ar@/_2ex/[r]_g \ar@{}[r]|{\Downarrow\alpha} & A}}\]
These representatives are not unique: 

\begin{exs} Show that any parallel pair of functors $\hat\alpha, \hat{\alpha}' \colon X \to A^\cattwo$ over $A \times A$ that are connected by an invertible 2-cell that projects to the identity along $(p_1,p_0) \colon A^\cattwo \tfib A \times A$ will necessarily represent the same 2-cell $\alpha \colon f \To g$.
\[ \xymatrix@=1.5em{ X \ar@/^2ex/[rr]^{\hat\alpha} \ar@/_2ex/[rr]_{{\hat\alpha}'} \ar@{}[rr]|{\Downarrow\cong} \ar[dr]_{(g,f)} & & A^\cattwo \ar@{->>}[dl]^{(p_1,p_0)} \\ & A \times A}\] Then use Proposition \ref{prop:weak-cotensors} to prove the converse: that any parallel pair of functors that represent the same 2-cell $\alpha \colon f \To g$ are connected by an isomorphism of this form. 

Lemma \ref{lem:ess.unique.1-cell.ind}, proven below, generalizes this result.
\end{exs}

This non-uniqueness of representing functors implies, in particular, that the functor of Proposition \ref{prop:weak-cotensors} cannot define an isomorphism of categories. Nonetheless, we are able to make substantial  use of the \emph{weak 2-dimensional universal property} of the arrow construction, as expressed by Proposition \ref{prop:weak-cotensors}, as we shall shortly discover.

\renewcommand\thesection{Lecture~\arabic{section}}
\section{Comma \texorpdfstring{$\infty$}{infinity}-categories and model independence}\label{sec:comma}
\renewcommand\thesection{\arabic{section}}

In the first two parts of this series, we have introduced notions of adjunctions between and limits and colimits of diagrams valued within $\infty$-categories, which are objects in some well-behaved universe we call an $\infty$-cosmos. In our treatment of adjunctions between $\infty$-categories and limits and colimits of diagrams valued in an $\infty$-category we have privileged definitions of these basic categorical notions that can be defined internally to any 2-category. Consequently, we have not yet seen the analogues of, for instance, the idea that an adjunction encodes a natural correspondence between certain arrows in a pair of $\infty$-categories, or the idea that a limit defines a terminal object in the $\infty$-category of cones. Our aim in this lecture is to present equivalent definitions in terms of these universal properties, utilizing additional structures in the homotopy 2-category guaranteed by  the $\infty$-cosmos axioms. The axioms imply that the homotopy 2-category of an $\infty$-cosmos admits the construction of the \emph{comma $\infty$-category} for any cospan of functors $C \xrightarrow{g} A \xleftarrow{f} B$. 

Immediately from their constructions, comma $\infty$-categories are preserved by functors of $\infty$-cosmoi. For a certain special class of functors, which we will call \emph{weak equivalences of $\infty$-cosmoi}, equivalences and comma $\infty$-categories are also created and reflected. Having encoded limits, colimits, and adjunctions as equivalences between comma $\infty$-categories, a corollary will be that these notions are preserved, reflected, and created by weak equivalences of $\infty$-cosmoi. In particular, these notions are invariant under change of models between quasi-categories, complete Segal spaces, Segal categories, and naturally marked simplicial sets.

\subsection{The weak 2-universal property of comma \texorpdfstring{$\infty$}{infinity}-categories}

Proposition \ref{prop:cartesian-closure} demonstrates that the homotopy 2-category of an $\infty$-cosmos has finite products, in the 2-categorical sense. Unlike the special case of $\h\Cat$, a general homotopy category will admit few 2-dimensional limit notions. However, certain simplicially-enriched limits in the $\infty$-cosmos provided by the axioms  \ref{qcat.ctxt.cof.def}\ref{qcat.ctxt.cof:a} and \ref{qcat.ctxt.cof:b} descend to a particular variety of weak 2-dimensional limits.  Arrow $\infty$-categories define a special case of a more general \emph{comma construction} that we now introduce.

\begin{defn}[comma $\infty$-categories]\label{defn:comma} Any pair of functors  $f\colon B\to A$ and $g\colon C\to A$ in an $\infty$-cosmos $\lcat{K}$ has an associated \emph{comma $\infty$-category}, constructed by the following pullback, formed in $\lcat{K}$:
  \begin{equation}\label{eq:comma-as-simp-pullback}
    \xymatrix@=2.5em{
      {f\comma g}\pbexcursion \ar[r]\ar@{->>}[d]_{(p_1,p_0)} &
      {A^\cattwo} \ar@{->>}[d]^{(p_1,p_0)} \\
      {C\times B} \ar[r]_-{g\times f} & {A\times A}
    }
  \end{equation}
The top horizontal functor represents a 2-cell 
\[
    \xymatrix@=10pt{
      & f \downarrow g \ar@{->>}[dl]_{p_1} \ar@{->>}[dr]^{p_0} \ar@{}[dd]|(.4){\phi}|{\Leftarrow}  \\ 
      C \ar[dr]_g & & B \ar[dl]^f \\ 
      & A}
\]
in the homotopy 2-category $\h\lcat{K}$, called the \emph{comma cone}, which is defined by composing with \eqref{eq:generic-arrow}. Note that, by construction, the map $(p_1,p_0) \colon f \comma g \tfib C \times B$ is an isofibration.
\end{defn}

As a simplicially-enriched limit in $\lcat{K}$, the $\infty$-category $f \comma g$ has a universal property expressed via a natural isomorphism of quasi-categories
\[ \Fun(X, f\comma g)\stackrel{\cong}{\longrightarrow} \Fun(X,f) \comma \Fun(X,g)\] for any $X \in \lcat{K}$, where the right-hand side is computed by the analogous pullback to \eqref{eq:comma-as-simp-pullback}, formed in $\qCat$. 

There is an analogous 2-categorical notion of comma object. The comma $\infty$-category $f \comma g$ would be a strict comma object in the homotopy 2-category $\h\lcat{K}$ if there were an isomorphism between the hom-category $\hom(X,f \comma g)$ and the comma category $\hom(X,f) \comma \hom(X,g)$ constructed from the pair of functors 
\[ \hom(X,C) \xrightarrow{\hom(X,g)} \hom(X,A) \xleftarrow{\hom(X,f)} \hom(X,B).\] Instead:

\begin{prop}[{\refI{prop:weakcomma}}]\label{prop:comma-smothering} For any object $X$,  the induced comparison functor of hom-categories
  \begin{equation*}
    \hom(X,f\comma g)\longrightarrow\hom(X,f)\comma\hom(X,g)
  \end{equation*}
is \emph{smothering}: surjective on objects, locally surjective on arrows, and conservative.
\end{prop}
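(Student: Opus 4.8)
The plan is to reduce the statement to one purely about quasi-categories and then bootstrap from the arrow case, Proposition~\ref{prop:weak-cotensors}, exploiting that $f\comma g$ is assembled from the arrow $\infty$-category $A^\cattwo$ by a pullback along an isofibration.

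First I would carry out the reduction. Since $\fun(X,-)\colon\lcat{K}\to\qCat$ is a functor of $\infty$-cosmoi it sends the defining pullback~\eqref{eq:comma-as-simp-pullback} to the corresponding pullback in $\qCat$, whence $\fun(X,f\comma g)\cong\fun(X,f)\comma\fun(X,g)$; applying $\ho\colon\qCat\to\Cat$, which preserves finite products, identifies the comparison functor of the statement with the canonical functor $\ho(F\comma G)\to\ho F\comma\ho G$ for the cospan of quasi-categories $\fun(X,C)\xrightarrow{G}\fun(X,A)\xleftarrow{F}\fun(X,B)$, $F\defeq\fun(X,f)$, $G\defeq\fun(X,g)$. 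So it will suffice to prove that, for any cospan $\mathcal{C}\xrightarrow{G}\mathcal{A}\xleftarrow{F}\mathcal{B}$ of quasi-categories, the comparison $\ho(F\comma G)\to\ho F\comma\ho G$ is smothering.

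Next I would factor this comparison. Here $F\comma G$ is the pullback in $\qCat$ of the isofibration $(p_1,p_0)\colon\mathcal{A}^\cattwo\tfib\mathcal{A}\times\mathcal{A}$ — an isofibration by \ref{qcat.ctxt.cof.def}\ref{qcat.ctxt.cof:b}, being the Leibniz cotensor $(\boundary\Del^1\inc\Del^1)\leib\pwr(\mathcal{A}\tfib 1)$ — along $G\times F$. Since $\ho$ preserves products but not pullbacks, the comparison factors as
\[ \ho(F\comma G)\longrightarrow P\defeq\ho(\mathcal{A}^\cattwo)\times_{\ho\mathcal{A}\times\ho\mathcal{A}}\bigl(\ho\mathcal{C}\times\ho\mathcal{B}\bigr)\longrightarrow\ho F\comma\ho G, \]
the second map being induced by the smothering comparison $\ho(\mathcal{A}^\cattwo)\to(\ho\mathcal{A})^\cattwo$ of Proposition~\ref{prop:weak-cotensors} (for $\mathcal{A}$), over $\ho\mathcal{A}\times\ho\mathcal{A}$; one checks the composite is the functor in question. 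As smothering functors are closed under composition, it remains to see both factors are smothering. The second is a pullback of $\ho(\mathcal{A}^\cattwo)\to(\ho\mathcal{A})^\cattwo$ along $\ho F\comma\ho G\to(\ho\mathcal{A})^\cattwo$, and smothering functors are stable under pullback along any functor (surjectivity on objects and fullness transfer at once; conservativity uses that the opposite leg preserves invertibility and that the inverse of a compatible pair of isomorphisms is again compatible). For the first factor, $\ho(F\comma G)\to P$, which is the ``weak homotopy pullback'' comparison of \refI{prop:weak-homotopy-pullbacks}, I would argue: it is bijective on objects (both sides have object set $(\mathcal{A}^\cattwo)_0\times_{(\mathcal{A}\times\mathcal{A})_0}(\mathcal{C}\times\mathcal{B})_0$); it is full because a morphism of $P$, presented by a compatible pair of homotopy classes of $1$-simplices, can be lifted after replacing the $\mathcal{A}^\cattwo$-representative by a homotopic one that agrees, strictly over $\mathcal{A}\times\mathcal{A}$, with the $\mathcal{C}\times\mathcal{B}$-representative — obtained by filling an inner horn $\Horn^{2,1}$, legitimate as $(p_1,p_0)$ is an inner fibration — producing a $1$-simplex of $F\comma G$ over the given morphism; and it is conservative because the projection $F\comma G\tfib\mathcal{C}\times\mathcal{B}$ is an isofibration of quasi-categories (a pullback of $(p_1,p_0)$) and isofibrations of quasi-categories reflect isomorphisms, so a $1$-simplex of $F\comma G$ made invertible in $P$ already has invertible image in $\mathcal{C}\times\mathcal{B}$ and is hence invertible.

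The hard part will be exactly this last point, that $\ho(F\comma G)\to P$ is smothering: since $\ho$ fails to preserve pullbacks, its fullness and conservativity are genuinely non-formal and lean on the inner-horn-filling and isomorphism-reflection behaviour of isofibrations of quasi-categories — precisely the closure properties the $\infty$-cosmos axioms are designed to guarantee. The remaining ingredients — the reduction along $\fun(X,-)$, stability of smothering functors under pullback, and their closure under composition — are routine.
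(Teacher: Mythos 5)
Your decomposition is in fact the one the paper itself uses for \refI{prop:weakcomma}: the comma comparison factors as the pullback comparison for the isofibration $(p_1,p_0)\colon A^\cattwo\tfib A\times A$ (\refI{prop:weak-homotopy-pullbacks}) followed by a pullback of the cotensor comparison of Proposition~\ref{prop:weak-cotensors}, using that smothering functors are closed under composition and pullback; your preliminary reduction along $\fun(X,-)$, the bijectivity on objects, and the inner-horn-filling argument for fullness are all fine.

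The gap is in the conservativity of the first factor $\ho(F\comma G)\to P$, which you justify by the principle that isofibrations of quasi-categories reflect isomorphisms. That principle is false: $\Delta^1\tfib\Delta^0$ is an isofibration, as is the single projection $p_0\colon\mathcal{A}^\cattwo\tfib\mathcal{A}$, and neither is conservative. The conservativity of $F\comma G\tfib\mathcal{C}\times\mathcal{B}$ is in fact true, but it is essentially the conservativity clause of the proposition you are proving, so it cannot be had for free from the isofibration property; moreover your argument discards the hypothesis that the $\mathcal{A}^\cattwo$-component of the arrow of $P$ is also invertible, which is exactly what must be used. The correct argument --- and the genuinely non-formal input in the paper's proof --- is that isomorphisms in such a pullback are detected componentwise: an edge of $F\comma G$ whose images in $\mathcal{A}^\cattwo$ and in $\mathcal{C}\times\mathcal{B}$ are both isomorphisms is itself an isomorphism, and likewise an edge of the cotensor $\mathcal{A}^\cattwo$ is an isomorphism as soon as its two projections to $\mathcal{A}$ are. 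This is the ``pointwise isomorphisms are isomorphisms'' lemma, \refI{lem:pointwise-equiv} and \refI{cor:pointwise-equiv}, proved via the model structure on naturally marked simplicial sets, and it is precisely the conservativity content of \refI{prop:weak-homotopy-pullbacks}, which you name but do not actually invoke. As written, the step you yourself single out as the hard part is the one resting on a false lemma.
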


\begin{obs}[the weak universal property of commas]\label{obs:comma-UP}
Explicitly, the weak universal property expressed by Proposition \ref{prop:comma-smothering} supplies three operations in the homotopy 2-category:
  \begin{enumerate}[label=(\roman*)]
    \item (1-cell induction) Given a 2-cell $\alpha \colon fb \To gc$
\[    \vcenter{ \xymatrix@=10pt{
      & X \ar[dl]_{c} \ar[dr]^{b} \ar@{}[dd]|(.4){\alpha}|{\Leftarrow}  \\ 
      C \ar[dr]_g & & B \ar[dl]^f \\ 
      & A}} \quad =\quad \vcenter{     \xymatrix@=10pt{ & X \ar@{-->}[d]^{a} \ar@/^2ex/[ddr]^b \ar@/_2ex/[ddl]_c \\ 
      & f \downarrow g \ar[dl]_{p_1} \ar[dr]^{p_0} \ar@{}[dd]|(.4){\phi}|{\Leftarrow}  \\ 
      C \ar[dr]_g & & B \ar[dl]^f \\ 
      & A}}
\] 
over the pair of functors $f$ and $g$, there exists a 1-cell $a\colon X\to f\comma g$, defined by \emph{1-cell induction}, so that $p_0a = b$, $p_1a = c$, and $\alpha = \phi a$.
    \item (2-cell induction) Given a pair of functors $a,a'\colon X\to f\comma g$ and a pair of 2-cells 
\[
    \vcenter{\xymatrix@=10pt{
      & {X}\ar[dl]_{a'}\ar[dr]^{a}
      \ar@{}[dd]|(.4){\tau_1}|{\Leftarrow} & \\
      {f\comma g}\ar[dr]_{p_1} & & 
      {f\comma g}\ar[dl]^{p_1} \\
      & C &
    }}
    \mkern30mu\text{and}\mkern30mu
    \vcenter{\xymatrix@=10pt{
      & {X}\ar[dl]_{a'}\ar[dr]^{a}
      \ar@{}[dd]|(.4){\tau_0}|{\Leftarrow} & \\
      {f\comma g}\ar[dr]_{p_0} & & 
      {f\comma g}\ar[dl]^{p_0} \\
      & B &
    }}
\]
  with the property that 
\[
      \xymatrix@=10pt{ & X \ar[dl]_{a'} \ar[dr]^a \ar@{}[dd]|(.4){\tau_1}|{\Leftarrow} & &  & \ar@{}[dd]|{\displaystyle =} & &  & X \ar[dl]_{a'} \ar[dr]^a  \ar@{}[dd]|(.4){\tau_0}|{\Leftarrow} \\ f \downarrow g  \ar[dr]_{p_1} & & f \downarrow g \ar[dl]|{p_1} \ar[dr]^{p_0}   \ar@{}[dd]|(.4){\phi}|{\Leftarrow} & & & &   f \downarrow g \ar[dl]_{p_1} \ar[dr]|{p_0}  \ar@{}[dd]|(.4){\phi}|{\Leftarrow}  & & f \downarrow g \ar[dl]^{p_0} \\ & C \ar[dr]_g & & B \ar[dl]^f & & C \ar[dr]_g & & B \ar[dl]^f & &  \\ & & A & &  &  & A}
\]
  then there exists a 2-cell $\tau \colon a \Rightarrow a'$, defined by \emph{2-cell induction}, satisfying the equalities 
  \[     \vcenter{\xymatrix@=10pt{
      & {X}\ar[dl]_{a'}\ar[dr]^{a}
      \ar@{}[dd]|(.4){\tau_1}|{\Leftarrow} & \\
      {f\comma g}\ar[dr]_{p_1} & & 
      {f\comma g}\ar[dl]^{p_1} \\
      & C &
    }}     \mkern10mu = \mkern10mu
        \vcenter{\xymatrix@=30pt{ X \ar@/^3ex/[d]^a \ar@/_3ex/[d]_{a'} \ar@{}[d]|(.4){\tau}|{\Leftarrow}  \\ f \downarrow g \ar[d]^{p_1} \\ C}}
      \mkern20mu\text{and}\mkern20mu
  \vcenter{\xymatrix@=10pt{
      & {X}\ar[dl]_{a'}\ar[dr]^{a}
      \ar@{}[dd]|(.4){\tau_0}|{\Leftarrow} & \\
      {f\comma g}\ar[dr]_{p_0} & & 
      {f\comma g}\ar[dl]^{p_0} \\
      & B &
    }}    \mkern10mu = \mkern10mu
    \vcenter{\xymatrix@=30pt{ X \ar@/^3ex/[d]^a \ar@/_3ex/[d]_{a'} \ar@{}[d]|(.4){\tau}|{\Leftarrow}  \\ f \downarrow g \ar[d]^{p_0} \\ B}}  
  \]

    \item (conservativity) Any 2-cell $\xymatrix{ X \ar@/^2ex/[r]^{a} \ar@/_2ex/[r]_{a'} \ar@{}[r]|-{\Downarrow\tau} & { f \comma g}}$  with the property that the whiskered 2-cells $p_0\tau$ and $p_1\tau$ are both isomorphisms is also an isomorphism.
  \end{enumerate}
\end{obs}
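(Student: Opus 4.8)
The plan is to derive the three operations directly from the defining properties of a smothering functor, as applied to the comparison functor
\[ \hom(X, f\comma g) \longrightarrow \hom(X,f)\comma\hom(X,g) \]
supplied by Proposition~\ref{prop:comma-smothering}. First I would recall that, by the definition restated there, this functor is surjective on objects, full (``locally surjective on arrows''), and conservative, and then make explicit the comma category of ordinary categories appearing as its codomain: its objects are triples $(c,b,\alpha)$ consisting of functors $c\colon X\to C$, $b\colon X\to B$, and a $2$-cell $\alpha\colon fb\To gc$; its morphisms $(c,b,\alpha)\to(c',b',\alpha')$ are pairs of $2$-cells $(\tau_1\colon c\To c',\tau_0\colon b\To b')$ satisfying the evident compatibility (pasting) equation relating them through $\alpha$ and $\alpha'$; and its isomorphisms are exactly those pairs $(\tau_1,\tau_0)$ whose two components are both invertible. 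Under the comparison functor, an object $a\colon X\to f\comma g$ is carried to the triple $(p_1 a, p_0 a, \phi a)$, where $\phi$ is the comma cone of Definition~\ref{defn:comma}, and a $2$-cell $\tau\colon a\To a'$ is carried to the pair $(p_1\tau, p_0\tau)$.

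With these identifications in place, each operation is a transcription of one of the three axioms. Operation (i), \emph{$1$-cell induction}, is precisely surjectivity on objects: a $2$-cell $\alpha\colon fb\To gc$ over the cospan is exactly an object of $\hom(X,f)\comma\hom(X,g)$, and a preimage is a functor $a\colon X\to f\comma g$ with $p_0a=b$, $p_1a=c$, and $\phi a=\alpha$. Operation (ii), \emph{$2$-cell induction}, is fullness: the compatibility hypothesis on the pasted composites of $\tau_1$, $\tau_0$ with $\phi$ is exactly the statement that $(\tau_1,\tau_0)$ is a morphism in $\hom(X,f)\comma\hom(X,g)$ from the image of $a$ to the image of $a'$ (after recording that $\phi a$ and $\phi a'$ are the $\alpha$, $\alpha'$ of that category), so fullness produces a lift $\tau\colon a\To a'$ with $p_1\tau=\tau_1$ and $p_0\tau=\tau_0$. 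Operation (iii), \emph{conservativity}, is immediate: if $p_0\tau$ and $p_1\tau$ are both invertible then the image $(p_1\tau,p_0\tau)$ of $\tau$ is an isomorphism in the comma category, whence conservativity of the comparison functor forces $\tau$ itself to be invertible.

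The only genuinely delicate point — and the step I would take the most care over — is the bookkeeping of variance: arranging the orientations of $\tau_1$, $\tau_0$, $\tau$ and of the comma cone $\phi$ so that the pasting diagrams displayed in (ii) really do encode the commuting square defining a morphism of $\hom(X,f)\comma\hom(X,g)$, and checking that the image of $a$ under the comparison functor is the triple $(p_1a,p_0a,\phi a)$ rather than some op-dual of it. There is no analytic content beyond Proposition~\ref{prop:comma-smothering}; the argument is entirely a matter of unwinding the standard description of the comma category of a cospan of categories and matching it against the pasting-diagram notation of the statement.
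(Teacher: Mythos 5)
Your proposal is correct and matches the paper's own reasoning: the observation is exactly an unpacking of the smothering comparison functor of Proposition \ref{prop:comma-smothering}, with 1-cell induction, 2-cell induction, and conservativity corresponding respectively to surjectivity on objects, fullness, and conservativity, once one identifies objects and morphisms of $\hom(X,f)\comma\hom(X,g)$ and notes that the comparison carries $a$ to $(p_1a,p_0a,\phi a)$ and $\tau$ to $(p_1\tau,p_0\tau)$. Your care about variance and about isomorphisms in the comma category being detected componentwise is exactly the right bookkeeping.
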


\begin{lem}[{\refI{lem:1cell-ind-uniqueness}}]\label{lem:ess.unique.1-cell.ind}
A parallel pair of functors over $C \times B$
\[ \xymatrix@=1.5em{ X \ar@<.5ex>[rr]^a \ar@<-.5ex>[rr]_{a'} \ar[dr]_{(c,b)} && f \comma g \ar@{->>}[dl]^{(p_1,p_0)} \\ & C \times B}\] 
are isomorphic over $C \times B$ if and only if $a$ and $a'$  both enjoy the same defining properties as 1-cells induced by the weak 2-universal property of $f\comma g$, i.e., if these functors satisfy $\phi a = \phi a'$.\footnote{For  $a$ and $a'$ to define a parallel pair of functors over $C \times B$ we must have $p_0a =p_0a'$ and $p_1 a = p_1a'$.} That is, 2-cells of the form displayed on the left
\[ \vcenter{\xymatrix@=10pt{
      & X \ar[dl]_{c} \ar[dr]^{b} \ar@{}[dd]|(.4){\alpha}|{\Leftarrow}  \\ 
      C \ar[dr]_g & & B \ar[dl]^f \\ 
      & A}} \qquad \leftrightsquigarrow \qquad \vcenter{     \xymatrix@=10pt{ & X \ar@{->}[dd]^{a} \ar[dr]^b \ar[dl]_c \\ C & & B \\ & f \comma g \ar@{->>}[ul]^{p_1} \ar@{->>}[ur]_{p_0}}}\]
stand in bijection with isomorphism classes of maps of spans, as displayed on the right.
\end{lem}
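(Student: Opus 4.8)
## Proof plan for Lemma \ref{lem:ess.unique.1-cell.ind}

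The plan is to prove both implications using the three operations (1-cell induction, 2-cell induction, conservativity) packaged in the weak 2-universal property of Observation \ref{obs:comma-UP}, together with the uniqueness statement already recorded in the parenthetical Exercise about arrow $\infty$-categories (of which this lemma is the promised generalization). Throughout, write $\alpha \defeq \phi a$ and $\alpha' \defeq \phi a'$ for the 2-cells obtained by whiskering the comma cone $\phi$ with $a$ and $a'$; since $a,a'$ lie over $C \times B$ via $(c,b)$, both $\alpha$ and $\alpha'$ are 2-cells $fb \To gc$ of the shape displayed on the left.

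\textbf{Isomorphic over $C\times B$ implies $\phi a = \phi a'$.} Suppose $\theta \colon a \To a'$ is an invertible 2-cell lying over $C \times B$, meaning $p_1\theta = \id_c$ and $p_0\theta = \id_b$. Whiskering the equation that defines $\phi$ on these induced cells, one computes $\phi a' \cdot (\text{nothing}) $... more precisely: the comma cone $\phi \colon fp_0 \To gp_1$ is natural, so whiskering $\phi$ with $\theta$ gives $\phi a' \cdot f p_0 \theta = g p_1 \theta \cdot \phi a$, i.e. $\phi a' \cdot f(\id_b) = g(\id_c) \cdot \phi a$, which collapses to $\phi a' = \phi a$. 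This is the easy direction and is just an instance of the interchange law (middle four interchange) in the homotopy 2-category.

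\textbf{$\phi a = \phi a'$ implies isomorphic over $C \times B$.} This is the substantive direction. Assume $a, a' \colon X \to f\comma g$ both lie over $(c,b)$ and satisfy $\phi a = \phi a' =: \alpha$. I will produce the isomorphism by 2-cell induction applied to the pair of \emph{identity} 2-cells. Take $\tau_1 \defeq \id_{c} = \id_{p_1 a} \colon p_1 a \To p_1 a'$ and $\tau_0 \defeq \id_{b} = \id_{p_0 a} \colon p_0 a \To p_0 a'$; these are legitimate 2-cells between the relevant whiskered composites precisely because $a$ and $a'$ are parallel over $C\times B$. The compatibility condition required for 2-cell induction is exactly the equation of pasting diagrams displayed in Observation \ref{obs:comma-UP}(ii): pasting $\tau_1$ above the comma cone $\phi$ must agree with pasting $\tau_0$ above $\phi$; here both sides reduce, using $\phi a = \phi a'$ and the fact that $\tau_0, \tau_1$ are identities, to the single 2-cell $\alpha$, so the hypothesis holds. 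Hence there is an induced 2-cell $\tau \colon a \To a'$ with $p_1 \tau = \tau_1 = \id_c$ and $p_0 \tau = \tau_0 = \id_b$. It remains to check $\tau$ is invertible: this is precisely the content of the conservativity operation in Observation \ref{obs:comma-UP}(iii), since $p_0 \tau$ and $p_1 \tau$ are identities and therefore isomorphisms. Thus $\tau \colon a \cong a'$ is an isomorphism lying over $C \times B$.

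\textbf{Translation to maps of spans.} Finally I would spell out the claimed bijection. A 2-cell $\alpha \colon fb \To gc$ of the left-hand shape determines, by 1-cell induction, a functor $a \colon X \to f \comma g$ with $p_0 a = b$, $p_1 a = c$, $\phi a = \alpha$, hence a map of spans as on the right; conversely any map of spans $a$ over $C \times B$ yields $\alpha \defeq \phi a$. By the two implications just proved, two such $a, a'$ give the same $\alpha$ if and only if they are isomorphic over $C \times B$, so these assignments descend to a bijection between 2-cells $\alpha$ and isomorphism classes of maps of spans. (One should note that 1-cell induction shows this is surjective onto isomorphism classes, so it is genuinely a bijection, not just an injection.)

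\textbf{Main obstacle.} The only real care needed is in the substantive direction: one must verify that the identity 2-cells $\tau_0, \tau_1$ genuinely satisfy the compatibility hypothesis of 2-cell induction. This is a diagram-chase in the hypothesis $\phi a = \phi a'$ and the unit/interchange axioms of the 2-category $\lcat{K}_2$; it is routine but must be done honestly since the whole lemma hinges on it. Everything else is a direct appeal to the three operations of Observation \ref{obs:comma-UP}.
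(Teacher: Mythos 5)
Your proposal is correct and matches the paper's own argument: the forward implication is exactly the appeal to uniqueness of pasting composites (middle four interchange) showing any isomorphism over $C\times B$ forces $\phi a = \phi a'$, and the converse is the same application of 2-cell induction to the identity 2-cells $\id_c$, $\id_b$ followed by conservativity. Your verification that the compatibility condition for 2-cell induction reduces to the hypothesis $\phi a = \phi a'$ is the one point the paper leaves implicit, and you handle it correctly.
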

\begin{proof} An isomorphism $\tau$ between a parallel pair $a,a' \colon X \to f \comma g$ with the same defining conditions is induced by 2-cell induction and conservativity from the pair of identity 2-cells $p_1\tau = \id_c$ and $p_1 \tau = \id_b$. Conversely, by uniqueness of composites of pasting diagrams, if there is any isomorphism $\tau \colon a \cong a'$ over an identity on $C \times B$ then whiskered composites $\phi a$ and $\phi a'$ are equal.
\end{proof}

\begin{lem}[{\refV{lem:equiv-to-comma}}]\label{lem:equiv-to-comma} 
For any pair of functors $C \xrightarrow{g} A \xleftarrow{f} B$, the universal property of \ref{obs:comma-UP} characterizes a unique equivalence class of isofibrations $E \tfib C \times B$. That is:
\begin{enumerate}[label=(\roman*)]
\item\label{itm:equiv-to-comma-i} Any pair of isofibrations $E \tfib C \times B$ and $E' \tfib C \times B$ equipped with a comma cone that enjoys the weak universal property of \ref{obs:comma-UP} are equivalent, over $C \times B$.
\item\label{itm:equiv-to-comma-ii} Any isofibration $E \tfib C \times B$ that is equivalent over $C\times B$ to the comma $f \comma g \tfib C \times B$, has the universal property of \ref{obs:comma-UP}.
\end{enumerate}
\end{lem}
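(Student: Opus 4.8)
The plan is to deduce both parts from the weak universal property of \ref{obs:comma-UP} together with the essential uniqueness of $1$-cell induction recorded in Lemma \ref{lem:ess.unique.1-cell.ind}; the point to note first is that the proof of that lemma uses only $2$-cell induction and conservativity, so it applies verbatim to any isofibration $E\tfib C\times B$ equipped with a cone satisfying the weak universal property, not merely to $f\comma g$ itself. Throughout, ``equivalence over $C\times B$'' is to mean an equivalence in the sliced homotopy $2$-category, equivalently (cf.\ \refI{defn:fibred-equivalence}) an equivalence whose inverse equivalence and whose unit and counit isomorphisms all lie over $C\times B$.

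For part \ref{itm:equiv-to-comma-i}, suppose $E\tfib C\times B$ and $E'\tfib C\times B$ carry comma cones $\psi$ and $\psi'$, each enjoying the weak universal property. Applying $1$-cell induction for $E'$ to the comma cone $\psi$ of $E$ (that is, taking the vertex of \ref{obs:comma-UP}(i) to be $E$ and its legs to be the two projections of $E$) yields a functor $k\colon E\to E'$ over $C\times B$ with $\psi' k=\psi$; symmetrically $1$-cell induction for $E$ applied to $\psi'$ yields $k'\colon E'\to E$ over $C\times B$ with $\psi k'=\psi'$. Then $k'k$ and $\id_E$ are both functors over $C\times B$ whose restriction of $\psi$ is again $\psi$, so by the cited uniqueness $k'k\cong\id_E$ over $C\times B$; likewise $kk'\cong\id_{E'}$ over $C\times B$, whence $k$ is an equivalence over $C\times B$. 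Since, by Proposition \ref{prop:comma-smothering} as unpacked in \ref{obs:comma-UP}, the comma $f\comma g\tfib C\times B$ itself carries a cone with the weak universal property, this in particular shows that every $E$ with the weak universal property is equivalent over $C\times B$ to $f\comma g$, and hence transitively that any two such isofibrations are equivalent over $C\times B$.

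For part \ref{itm:equiv-to-comma-ii}, let $e\colon E\we f\comma g$ be an equivalence over $C\times B$, with inverse $e'$ and comparison isomorphisms $\id_E\cong e'e$ and $\id_{f\comma g}\cong ee'$ lying over $C\times B$. Equip $E$ with the cone $\psi\defeq\phi e$ obtained by restricting the comma cone $\phi$ along $e$; as $e$ lies over $C\times B$, this $\psi$ has the correct legs. I would then verify the three operations of \ref{obs:comma-UP} by transport along $e$. For $1$-cell induction, given $\alpha$, take the functor $a\colon X\to f\comma g$ induced for $f\comma g$ and set the $E$-induced functor to be $e'a$; its legs are correct and $\psi(e'a)=\phi(ee'a)=\phi a=\alpha$, the middle equality holding because $ee'a\cong a$ over $C\times B$ (Lemma \ref{lem:ess.unique.1-cell.ind}). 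For $2$-cell induction, whisker the given compatible pair of $2$-cells with $e$ to obtain a compatible pair for $ea,ea'$, apply $2$-cell induction for $f\comma g$, and whisker the resulting $2$-cell back with $e'$; the two defining equalities are then checked by whiskering with $e$ and invoking conservativity, using that whiskering by the equivalence $e$ reflects invertibility of $2$-cells. For conservativity, if $\tau$ in $E$ has both leg-whiskerings invertible then so does $e\tau$ (since $e$ lies over $C\times B$), hence $e\tau$ is invertible by conservativity for $f\comma g$, and therefore $\tau$ is invertible because whiskering by $e$ reflects invertibility.

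The main obstacle will be the bookkeeping in the $2$-cell induction clause of part \ref{itm:equiv-to-comma-ii}: one must check both that the compatibility condition transports correctly across the equivalence and, more delicately, that the $2$-cell produced for $f\comma g$ — once whiskered back with $e'$ — satisfies the two defining equalities of \ref{obs:comma-UP}(ii) \emph{on the nose} rather than merely up to isomorphism. This is precisely where one uses the hypothesis that the comparison isomorphisms of the equivalence project to identities along the legs to $C\times B$, together with conservativity to upgrade an equality-up-to-isomorphism to the required strict equality. The remaining verifications are routine transports along the fibred equivalence.
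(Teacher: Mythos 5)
Your proof is correct and follows essentially the same route as the paper: part \ref{itm:equiv-to-comma-i} is the standard mutual 1-cell-induction uniqueness argument the paper alludes to, and part \ref{itm:equiv-to-comma-ii} is just an unpacked version of the paper's observation that $\hom(X,E) \we \hom(X,f\comma g) \to \hom(X,f)\comma\hom(X,g)$ is smothering, relying on the same two key inputs --- the fibredness of the equivalence and Lemma \ref{lem:ess.unique.1-cell.ind} --- to secure strict surjectivity on objects (your 1-cell induction clause). One small quibble: in your 2-cell induction step the strict equalities $p_1\tau=\tau_1$, $p_0\tau=\tau_0$ follow directly from conjugating $e'\sigma$ by the comparison isomorphisms, which project to identities over $C\times B$; conservativity plays no role there, though your construction is otherwise exactly right.
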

\begin{proof}
The proof of \ref{itm:equiv-to-comma-i} is an elementary exercise in the application of the universal property of \ref{obs:comma-UP}, paralleling the standard proof that a strictly-defined limit is unique up to isomorphism. For \ref{itm:equiv-to-comma-ii}, an equivalence $E \we f\comma g$ defines a representable equivalence in the homotopy 2-category. Thus, for any object $X$, we have a composable pair of functors
\begin{equation}\label{eq:equiv-to-comma-smothering} \hom(X,E) \we \hom(X,f \comma g) \longrightarrow \hom(X,f) \comma \hom(X,g),\end{equation} the first being an equivalence and the second being smothering, and so the composite is full, conservative, and essentially surjective on objects. To show that it is in fact surjective on objects, we make use of the fact that the equivalence $E \simeq f \comma g$ is fibered over $C \times B$. Any object in the codomain of \eqref{eq:equiv-to-comma-smothering} lifts to a representing functor $X \to f \comma g$, which is isomorphic over $C \times B$ to a functor of the form $X \xrightarrow{e} E \we f \comma g$. By Lemma \ref{lem:ess.unique.1-cell.ind}, $e$ defines the desired preimage in $\hom(X,E)$. 
\end{proof}

\subsection{Hom-spaces and groupoidal objects}

The comma construction allows us to define hom-spaces between a pair of elements in an $\infty$-category.

\begin{defn}[hom-spaces]\label{defn:hom-space} Given a pair of elements $a,a' \colon 1\to A$ in an $\infty$-category $A$, their \emph{hom-space} is the comma $\infty$-category $a \comma a'$ defined by the pullback
\[ \xymatrix{ a \comma a' \ar@{->>}[d] \ar[r] \pbexcursion & A^\cattwo \ar@{->>}[d]^{(p_1,p_0)} \\ 1 \ar[r]_-{(a',a)} & A \times A}\] 
\end{defn}

We refer to the $\infty$-category $a \comma a'$ as a hom-space because, as an easy corollary of the conservativity of its weak universal property,  it defines a \emph{groupoidal object} in the $\infty$-cosmos.

\begin{defn}\label{defn:groupoidal-object} We say an object $E$ in an $\infty$-cosmos $\lcat{K}$ is \emph{groupoidal} if the following equivalent conditions are satisfied:
\begin{enumerate}[label=(\roman*)]
\item\label{itm:groupoidal-object:i} $E$ is a groupoidal object in the homotopy 2-category $\h\lcat{K}$, that is,  every 2-cell with codomain $E$ is invertible.
\item\label{itm:groupoidal-object:ii} For each $X \in \lcat{K}$, the hom-category $\hom(X,E)$ is a groupoid. 
\item\label{itm:groupoidal-object:iii}  For each $X \in \lcat{K}$, the mapping quasi-category $\Fun(X,E)$ is a Kan complex.
\item\label{itm:groupoidal-object:iv} The isofibration $E^\iso \tfib E^\cattwo$, induced by the inclusion of simplicial sets $\cattwo\inc\iso$, is a trivial fibration.
\end{enumerate}
Here \ref{itm:groupoidal-object:ii} is an unpacking of \ref{itm:groupoidal-object:i}. The equivalence of \ref{itm:groupoidal-object:ii} and \ref{itm:groupoidal-object:iii} is a well-known result of Joyal \cite[1.4]{Joyal:2002:QuasiCategories}. Condition \ref{itm:groupoidal-object:iv} is equivalent to the assertion that $\Fun(X,E)^\iso \tfib \Fun(X,E)^\cattwo$ is a trivial fibration between quasi-categories for all $X$. If this is a trivial fibration, then surjectivity on vertices implies that every 1-simplex in $\Fun(X,E)$ is an isomorphism, proving \ref{itm:groupoidal-object:iii}. As $\cattwo \inc\iso$ is a weak homotopy equivalence, \ref{itm:groupoidal-object:iii} implies  \ref{itm:groupoidal-object:iv}.
\end{defn}

\begin{rmk}
In the $\infty$-cosmoi whose objects model $(\infty,1)$-categories, we posit that the groupoidal objects are precisely the corresponding $\infty$-\emph{groupoids}. For instance, in the $\infty$-cosmos for quasi-categories, an object is groupoidal if and only if it is a Kan complex. In the $\infty$-cosmos for naturally marked simplicial sets, an object is groupoidal if and only if it is a Kan complex with every edge marked.
\end{rmk}

\subsection{Commas representing functors}

For any functors between $\infty$-categories, e.g., $f \colon B \to A$ and $u \colon A \to B$, the following pullbacks in $\lcat{K}$
\[ \xymatrix@=1.5em{ f \comma A \ar[d]_{(p_1,p_0)} \ar[r] \pbexcursion &  A^\cattwo \ar[d]  & & B \comma u \ar[d]_{(q_1,q_0)} \ar[r] \pbexcursion &  B^\cattwo \ar[d] \\ A \times B \ar[r]_{\id_A \times f} & A \times A & & A \times B \ar[r]_{u \times \id_B} & B \times B}\]
define the comma $\infty$-categories $f \comma A$ and $B \comma u$, each of which is equipped with an isofibration to $A \times B$. The horizontal functors represent 2-cells
\begin{equation}\label{eq:adj-comma-cone}
\xymatrix@=15pt{ & f \comma A \ar@{->>}[dl]_{p_1} \ar@{->>}[dr]^{p_0} \ar@{}[d]|(.6){\Leftarrow\alpha} & && & B \comma u \ar@{->>}[dl]_{q_1} \ar@{->>}[dr]^{q_0} \ar@{}[d]|(.6){\Leftarrow\beta} \\ A & & B \ar[ll]^f && A \ar[rr]_u & & B}
\end{equation}
in the homotopy 2-category which satisfy the following weak universal property derived from Lemma \ref{lem:ess.unique.1-cell.ind}:
\begin{itemize}
\item 2-cells $ \chi \colon fb \To a$ stand in bijection with isomorphism classes of maps of spans:
\[\vcenter{\xymatrix@=15pt{
      & X \ar[dl]_{a} \ar[dr]^{b} \ar@{}[d]|{\chi}|(.7){\Leftarrow}  \\ 
     A & & B \ar[ll]^f }} \qquad \leftrightsquigarrow \qquad     \vcenter{\xymatrix@=10pt{ & X \ar@{->}[dd]^{x} \ar[dr]^b \ar[dl]_a \\ A & & B \\ & f \comma A \ar@{->>}[ul]^{p_1} \ar@{->>}[ur]_{p_0}}}\]
\item 2-cells $\zeta \colon b \To ua$ stand in bijection with isomorphism classes of maps of spans:
\[\vcenter{\xymatrix@=15pt{
      & X \ar[dl]_{a} \ar[dr]^{b} \ar@{}[d]|{\zeta}|(.7){\Leftarrow}  \\ 
     A \ar[rr]_u & & B }}\qquad \leftrightsquigarrow \qquad   \vcenter{\xymatrix@=10pt{ & X \ar@{->}[dd]^{y} \ar[dr]^b \ar[dl]_a \\ A & & B \\ & B \comma u \ar@{->>}[ul]^{q_1} \ar@{->>}[ur]_{q_0}}}\]

\end{itemize}
Here the isomorphism classes are defined with respect to natural isomorphisms $x \cong x'$ projecting to an identity over $A \times B$. The bijection is implemented by whiskering a functor over $A \times B$ with the 2-cells of \eqref{eq:adj-comma-cone}.

If $f \dashv u$, then $\hom(X,f) \dashv \hom(X,u)$ defines an adjunction of categories for any $\infty$-category $X$. This tells us that there is a natural bijection between 2-cells 
\[\vcenter{\xymatrix@=15pt{
      & X \ar[dl]_{a} \ar[dr]^{b} \ar@{}[d]|{\chi}|(.7){\Leftarrow}  \\ 
     A & & B \ar[ll]^f }}\qquad \leftrightsquigarrow\qquad \vcenter{\xymatrix@=15pt{
      & X \ar[dl]_{a} \ar[dr]^{b} \ar@{}[d]|{\zeta}|(.7){\Leftarrow}  \\ 
     A \ar[rr]_u & & B }}\] implemented by pasting along the bottom with the unit or with the counit. By a Yoneda-style argument, this yields the following result, which we instead prove directly. 

\begin{prop}[{\refI{prop:adjointequiv}}]\label{prop:adjoint-comma-equiv-I} If $\xymatrix{ B \ar@<1ex>[r]^f \ar@{}[r]|\perp & A \ar@<1ex>[l]^u}$ is an adjunction between $\infty$-categories, then there is a fibered equivalence between the comma $\infty$-categories:
\begin{equation*}
  \xymatrix@=1em{
    {f\comma A}\ar@{->>}[dr]_(0.4){(p_1,p_0)}\ar[rr]^{w}_{\sim}
    && {B\comma u}\ar@{->>}[dl]^(0.4){(q_1,q_0)} \\
    & {A\times B}&
  }
\end{equation*}
\end{prop}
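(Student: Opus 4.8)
\emph{Strategy.} The plan is to build the functor $w$ together with a candidate inverse $w'$ by one-cell induction into the two comma $\infty$-categories, using the adjunction $f \dashv u$ to transpose the universal $2$-cell of each comma to the other side, and then to show that the two round trips $w'w$ and $ww'$ are isomorphic to identities \emph{over} $A\times B$ by comparing whiskered comma cones through Lemma~\ref{lem:ess.unique.1-cell.ind}. Recall from the bulleted correspondences displayed just above the statement that the comma cone of $f\comma A$ is a $2$-cell $\alpha\colon fp_0\To p_1$, and that, for any $X$, whiskering with $\alpha$ gives a bijection between functors $X\to f\comma A$ over $A\times B$ (taken up to fibered isomorphism) and $2$-cells of the form $fb\To a$; dually the comma cone of $B\comma u$ is $\beta\colon q_0\To uq_1$, and $B\comma u$ has the analogous property with respect to $2$-cells $b\To ua$. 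Recall also that $f\dashv u$ induces $\hom(X,f)\dashv\hom(X,u)$ for every $X$ (Proposition~\ref{prop:induced-adjunctions}), hence a bijection between $2$-cells $fb\To a$ and $2$-cells $b\To ua$, natural in $X$, implemented by pasting with the unit $\eta$ and counit $\epsilon$.

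\emph{Construction of $w$ and $w'$.} First I would transpose the comma cone $\alpha\colon fp_0\To p_1$ of $f\comma A$ across this bijection, obtaining the $2$-cell $u\alpha\cdot\eta p_0\colon p_0\To up_1$; one-cell induction (Observation~\ref{obs:comma-UP}(i)) then yields a functor $w\colon f\comma A\to B\comma u$ over $A\times B$ with $\beta w = u\alpha\cdot\eta p_0$. Symmetrically, transposing $\beta\colon q_0\To uq_1$ in the other direction gives $\epsilon q_1\cdot f\beta\colon fq_0\To q_1$, and hence a functor $w'\colon B\comma u\to f\comma A$ over $A\times B$ with $\alpha w' = \epsilon q_1\cdot f\beta$.

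\emph{The round trips.} Since $w$ and $w'$ are over $A\times B$, so are $w'w$ and $ww'$, so by Lemma~\ref{lem:ess.unique.1-cell.ind} it suffices to verify $\alpha(w'w)=\alpha$ and $\beta(ww')=\beta$. For the first, whiskering gives $\alpha(w'w)=(\alpha w')w=\epsilon p_1\cdot fu\alpha\cdot f\eta p_0$; naturality of $\epsilon$ against $\alpha$ rewrites $\epsilon p_1\cdot fu\alpha$ as $\alpha\cdot(\epsilon f)p_0$, and the triangle identity $\epsilon f\cdot f\eta=\id_f$ then cancels the tail, leaving $\alpha$. The identity $\beta(ww')=\beta$ is the mirror-image computation, using naturality of $\eta$ and the triangle identity $u\epsilon\cdot\eta u=\id_u$. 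Hence $w'w\cong\id$ and $ww'\cong\id$ over $A\times B$, so $w$ is the asserted fibered equivalence.

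\emph{Main obstacle.} There is no real difficulty here: the argument is entirely formal, and the only care needed is in the bookkeeping of whiskerings and in placing the two triangle identities correctly. The one structural ingredient is Lemma~\ref{lem:ess.unique.1-cell.ind} — which itself rests on the conservativity clause of the weak universal property of commas (Proposition~\ref{prop:comma-smothering}) — since it is precisely what lets us conclude the fibered isomorphisms $w'w\cong\id$ and $ww'\cong\id$ from mere equality of whiskered comma cones, so that we never have to exhibit the comparison $2$-cells by hand.
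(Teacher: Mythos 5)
Your proposal is correct and follows essentially the same route as the paper: both construct $w$ and $w'$ by 1-cell induction from the comma cones transposed via the unit and counit, verify $\alpha(w'w)=\alpha$ and $\beta(ww')=\beta$ by a whiskering computation that reduces to a triangle identity (the paper presents this as a chain of pasting equalities, you as an algebraic computation using naturality of $\epsilon$), and then invoke Lemma~\ref{lem:ess.unique.1-cell.ind} to upgrade these equalities to fibered isomorphisms $w'w\cong\id$ and $ww'\cong\id$.
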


The proof of this result mirrors the standard construction of the adjoint-transpose bijection using the unit and counit of an adjunction. 

\begin{proof}
The composite 2-cells displayed on the left of the pasting equalities induce functors  $w'\colon B \comma u \to f \comma A$ and $w\colon f \comma A \to B \comma u$
\begin{equation*}\xymatrix@C=10pt{ & B \comma u \ar[dl]_{q_1} \ar[dr]^{q_0} \ar@{}[d]|(.6){\Leftarrow\beta} &  &  & B \comma u \ar[d]^{w'}  & && & f \comma A \ar[dl]_{p_1} \ar[dr]^{p_0} \ar@{}[d]|(.6){\Leftarrow\alpha} & & & f \comma A \ar[d]^{w}  \\ A \ar@{=}[dr] \ar[rr]^u & \ar@{}[d]|{\Leftarrow\epsilon} & B  \ar[dl]^f & = & f \comma A \ar[dl]_{p_1} \ar[dr]^{p_0} \ar@{}[d]|(.6){\Leftarrow\alpha} & & & A  \ar[dr]_u & \ar@{}[d]|{\Leftarrow\eta} & B \ar@{=}[dl] \ar[ll]_f & = & B \comma u \ar[dl]_{q_1} \ar[dr]^{q_0} \ar@{}[d]|(.6){\Leftarrow\beta}  \\ & A & & A  & & B \ar[ll]^f & & & B & & A \ar[rr]_u & & B}\end{equation*} 
that commute with the projections to $A \times B$.
\begin{equation*}
    \xymatrix@=1em{ f\comma A \ar@{->>}[dr]_{(p_1,p_0)} \ar@/^1ex/[rr]^{w} & & B \comma u \ar@{->>}[dl]^{(q_1,q_0)} \ar@/^1ex/[ll]^{w'} \\ & A \times B} 
\end{equation*}
Supposing $f \dashv u$, we have the following series of pasting equalities:
\begin{equation*}
\xymatrix@C=10pt{ & f \comma A \ar[d]^{w} & & & f \comma A \ar[d]^{w} & & & f \comma A \ar[dl]_{p_1} \ar[dr]^{p_0}\ar@{}[d]|(.6){\Leftarrow\alpha} & \ar@{}[dr]|*+{=} & & f \comma A  \ar[dl]_{p_1} \ar[dr]^{p_0} \ar@{}[d]|(.6){\Leftarrow\alpha} \\ & B \comma u \ar[d]^{w'} & \ar@{}[d]|*+{=} & & B \comma u \ar[dl]_{q_1} \ar[dr]^{q_0} \ar@{}[d]|(.6){\Leftarrow\beta}  & {=}  & A  \ar[drr]|u \ar@{=}[d] &\ar@{}[dr]|(.4){\Leftarrow\eta} & B \ar[ll]_f \ar@{=}[d]  & A & & B \ar[ll]^f \\ & f \comma A \ar[dl]_{p_1} \ar[dr]^{p_0} \ar@{}[d]|(.6){\Leftarrow\alpha} & & A \ar@{=}[dr] \ar[rr]^u & \ar@{}[d]|(0.4){\Leftarrow\epsilon}  & B  \ar[dl]^f  & A \ar@{}[ur]|(0.4){\Leftarrow\epsilon} &   & B \ar[ll]^f  \\ A & & B \ar[ll]^f & & A & & & &  }
\end{equation*} 
in which the last step is an application of one of the triangle identities.  This tells us that the endo-1-cells $w'w$ and $\id_{f\comma A}$ on the object $(p_1,p_0)\colon f\comma A\tfib A\times B$ both map to the same 2-cell $\alpha$ under the whiskering operation. By Lemma \ref{lem:ess.unique.1-cell.ind}, it follows that these are connected via a natural isomorphism  $w'w$ and $\id_{f\comma A}$ over $A \times B$. A dual argument provides a natural isomorphism $ww' \cong \id_{B \comma f}$ over $A \times B$, defining a fibered equivalence $f \comma A \simeq B \comma u$ over  $A\times B$.
\end{proof}

\begin{cor} If $f \colon B \to A$ and $u \colon A \to B$ are functors so that $f \dashv u$, then for any pair of elements $a \colon 1 \to A$ and $b \colon 1 \to B$, the hom-spaces $fa \comma b$ and $a \comma ub$ are equivalent.
\end{cor}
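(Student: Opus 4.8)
The plan is to obtain this as a restriction of the fibered equivalence $w\colon f\comma A\we B\comma u$ over $A\times B$ supplied by Proposition~\ref{prop:adjoint-comma-equiv-I}. The first observation is that both hom-spaces in question are obtained by pulling the relevant comma $\infty$-categories back along the element $(a,b)\colon 1\to A\times B$. Pasting the pullback square defining $f\comma A\tfib A\times B$ (whose lower edge is $\id_A\times f\colon A\times B\to A\times A$) with the pullback of $(p_1,p_0)\colon A^\cattwo\tfib A\times A$ along $(a,b)$ exhibits $(a,b)^*(f\comma A)$ as the pullback of $A^\cattwo\tfib A\times A$ along $(a,fb)\colon 1\to A\times A$, which by Definition~\ref{defn:hom-space} is exactly the hom-space $fb\comma a$; the dual pasting exhibits $(a,b)^*(B\comma u)$ as the hom-space $b\comma ua$. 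In particular $w$ restricts to a functor $fb\comma a\to b\comma ua$.

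It then remains to check that this restricted functor is an equivalence, and the point is simply that fibered equivalences are stable under pullback. A fibered equivalence over $A\times B$ is an equivalence in the sliced $\infty$-cosmos $\lcat{K}/(A\times B)$; restriction along $(a,b)$ is effected by the pullback functor $(a,b)^*\colon\lcat{K}/(A\times B)\to\lcat{K}$ (identifying $\lcat{K}/1$ with $\lcat{K}$); and this pullback functor is a functor of $\infty$-cosmoi by \refIV{prop:pullback-functors}, hence preserves equivalences by Definition~\ref{defn:qcat-ctxt-functor}. Applying this to $w$ produces the asserted equivalence $fb\comma a\simeq b\comma ua$.

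The one place that calls for care, and the step I would flag as the main (if minor) obstacle, is exactly this bookkeeping: matching the ``fibered equivalence'' of Proposition~\ref{prop:adjoint-comma-equiv-I} with an equivalence in the sliced $\infty$-cosmos, so that pullback-stability applies. If one prefers to avoid slices, a purely $2$-categorical alternative is available: Lemma~\ref{lem:equiv-to-comma} shows that $B\comma u$ enjoys the weak universal property of~\ref{obs:comma-UP} characterising $f\comma A$, and restricting the comma cones of~\eqref{eq:adj-comma-cone} along $(a,b)$ identifies $fb\comma a$ and $b\comma ua$ as representing, up to the isomorphisms of spans of Lemma~\ref{lem:ess.unique.1-cell.ind}, functors on $\lcat{K}_2$ that the adjoint-transpose bijection for $\hom(X,f)\dashv\hom(X,u)$ carries onto one another; the Yoneda lemma in $\lcat{K}_2$ then concludes. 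Either route is short once the pullback identifications of the first paragraph are in place.
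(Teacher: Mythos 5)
Your proposal is correct and follows essentially the same route as the paper: the paper's proof also pulls the fibered equivalence $f\comma A\simeq B\comma u$ of Proposition~\ref{prop:adjoint-comma-equiv-I} back along $(a,b)\colon 1\to A\times B$ and identifies the resulting pullbacks as the hom-spaces $fb\comma a$ and $b\comma ua$. Your extra justification that pullback preserves fibered equivalences (via the pullback functor of sliced $\infty$-cosmoi) simply fills in a step the paper asserts without comment.
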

\begin{proof}
Fibered equivalences can be pulled back along any functor to define another fibered equivalence. Pulling back the equivalence of Proposition  \ref{prop:adjoint-comma-equiv-I} along a pair of elements $1 \xrightarrow{(a,b)} A \times B$
  \[  \begin{xy}
      0;<5pc,0pc>:
      *{\xybox{
        \POS(0,0)*+{1}="A",
        \POS(1.3,0)*+{A \times B}="B",
        \POS(-0.3,1.3)*+{fb \comma a}="F",
        \POS(-0.25,1.25)*+{\pbcorner},
        \POS(0.45,0.65)*+{\pbcorner},
        \POS(0.4,0.7)*+{a \comma ub}="F'",
        \POS(1,1.3)*+{f \comma A}="E",
        \POS(1.6,0.7)*+{B \comma u}="E'",
    \ar"A";"B"_-{(a,b)}
    \ar@{->>}@/_0.2pc/"F";"A"
     \ar@{->>}@/^0.2pc/"F'";"A"
     \ar@{->>}@/_0.2pc/"E";"B"_(0.7){(p_1,p_0)}|!{"F'";"E'"}\hole
       \ar@{->>}@/^0.2pc/"E'";"B"^(0.4){(p_1,p_0)}
        \ar"F";"E"
 \ar"F'";"E'"
 \ar"E";"E'"^*{\rotatebox{145}{$\labelstyle\sim$}}
 \ar"F";"F'"^*{\rotatebox{145}{$\labelstyle\sim$}}
      }}
    \end{xy}\]
we obtain an equivalence of hom-spaces $fb \comma a \simeq b \comma ua$, the former in the $\infty$-category $A$ and the latter in the $\infty$-category $B$.
\end{proof}

The converse to Proposition \ref{prop:adjoint-comma-equiv-I} follows from a general result that will have other applications.

\subsection{Commas and absolute lifting diagrams}

A 2-cell $\lambda \colon f \ell \To g$ induces a functor 
\begin{equation}\label{eq:abs-R-lifting} \vcenter{\xymatrix{ \ar@{}[dr]|(.7){\Downarrow\lambda} & B \ar[d]^f \\ C \ar[ur]^\ell \ar[r]_g & A}} \qquad \rightsquigarrow\qquad  \vcenter{\xymatrix@=1em{ B \comma \ell \ar[rr]^w \ar@{->>}[dr]_(.4){(p_1,p_0)} & & f \comma g\ar@{->>}[dl]^(.4){(p_1,p_0)} \\ & C \times B}}\end{equation}  between comma $\infty$-categories defined by 1-cell induction for the comma $f \comma g$ from the pasted composite of the comma cone for $B \comma \ell$ and $\lambda$.
\[   \vcenter{\xymatrix@=1.2em{
    & {B\comma\ell}\ar[dl]_{p_1}\ar[dr]^{p_0} & \\
    {C} \ar[dr]_{g}\ar[rr]|*+{\scriptstyle\ell} && {B}\ar[dl]^{f} \\  
    & A &  
    \ar@{} "1,2";"3,2" |(0.3){\Leftarrow\phi} |(0.7){\Leftarrow\lambda}
  }}
  \mkern 20mu = \mkern20mu
  \vcenter{\xymatrix@=1.2em{
    & {B\comma\ell}\ar[d]^{w}\ar@/_1.5ex/[ddl]_{p_1}\ar@/^1.5ex/[ddr]^{p_0} & \\
    & {f\comma g}\ar[dl]^{p_1}\ar[dr]_{p_0} & \\
    {C}\ar[dr]_{g} & & {B}\ar[dl]^{f} \\
    & {A} & 
    \ar@{} "2,2";"4,2" |{\Leftarrow\phi}
  }}\]

A Yoneda-style argument, making use of Lemmas \ref{lem:ess.unique.1-cell.ind} and \ref{lem:equiv-to-comma}, proves the following result:

\begin{prop}[{\refI{prop:absliftingtranslation}, \refI{prop:absliftingtranslation2}}]\label{prop:abs-lifting-via-commas} The data of \eqref{eq:abs-R-lifting} defines an absolute right lifting diagram in $\h\lcat{K}$ if and only if the induced map $w\colon B \comma \ell \to f \comma g$ is an equivalence over $C \times B$. Conversely, an equivalence \[ \xymatrix@=1em{ B \comma \ell \ar[rr]^w_\sim \ar@{->>}[dr]_(.4){(p_1,p_0)} & & f \comma g\ar@{->>}[dl]^(.4){(p_1,p_0)} \\ & C \times B}\] over $C \times B$ induces a canonical 2-cell $\lambda \colon f\ell \To g$ that defines an absolute right lifting of $g$ through $f$. 
\end{prop}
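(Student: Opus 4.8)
The plan is to prove both directions by a Yoneda-style argument, transferring the universal property between the absolute right lifting diagram and the fibered equivalence $w \colon B \comma \ell \to f \comma g$, using the weak 2-universal property of commas as encoded in Observation \ref{obs:comma-UP} together with the rigidity statements Lemmas \ref{lem:ess.unique.1-cell.ind} and \ref{lem:equiv-to-comma}. The key dictionary is this: by the weak universal property of $f \comma g$, a generalized element $X \to f \comma g$ over a pair $(c,b) \colon X \to C \times B$ corresponds, up to fibered isomorphism, to a $2$-cell $\chi \colon fb \To gc$; and a generalized element $X \to B \comma \ell$ over $(c,b)$ corresponds, up to fibered isomorphism, to a $2$-cell $b \To \ell c$. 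Whiskering such an element of $B\comma\ell$ by $w$ and then reading off the comma cone of $f\comma g$ sends the $2$-cell $\bar\chi \colon b \To \ell c$ to the pasted composite $\lambda c \cdot f\bar\chi \colon fb \To f\ell c \To gc$; this is precisely the ``factor through $\lambda$'' operation appearing in the definition of absolute right lifting.

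First I would fix notation and make the dictionary precise: I unpack what it means for $w$ to be an equivalence over $C \times B$ in representable terms. Namely, $w$ is an equivalence over $C\times B$ if and only if for every $X$ the functor $\hom(X, B\comma\ell) \to \hom(X, f\comma g)$ is an equivalence of categories whose composites with the projection functors to $\hom(X,C\times B)$ agree — and since both sides are isofibrations over $\hom(X,C)\times\hom(X,B)$ (Proposition \ref{prop:comma-smothering} gives smothering functors down to the strict commas), being a fibered equivalence amounts to being fiberwise essentially surjective and fiberwise fully faithful. Using the bijections of Lemma \ref{lem:ess.unique.1-cell.ind} — which identify isomorphism classes of maps of spans into $f\comma g$ with $2$-cells $fb \To gc$, and similarly for $B\comma\ell$ — fiberwise essential surjectivity of $w_*$ translates exactly into: every $2$-cell $\chi \colon fb \To gc$ factors through $\lambda$ (existence), and fiberwise full-faithfulness translates into: the factorization is unique. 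This gives the forward direction ($w$ an equivalence $\Rightarrow$ absolute right lifting) once I check that whiskering with $w$ realizes the ``factor through $\lambda$'' operation, which is the pasting-diagram identity displayed just before the statement.

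For the converse, suppose \eqref{eq:abs-R-lifting} is an absolute right lifting diagram. I want to show $w$ is an equivalence over $C\times B$. Running the same dictionary backwards, the existence-and-uniqueness of factorizations through $\lambda$ says exactly that $w_* \colon \hom(X,B\comma\ell) \to \hom(X,f\comma g)$ is fiberwise (over $\hom(X,C)\times\hom(X,B)$) essentially surjective on objects and fully faithful for all $X$; hence $w_*$ is an equivalence of categories for all $X$, and being fibered over $C\times B$, $w$ is a fibered equivalence by a standard $2$-categorical argument (or by appeal to Lemma \ref{lem:equiv-to-comma}\ref{itm:equiv-to-comma-ii} combined with the recognition of representably-defined fibered equivalences). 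Finally, for the last sentence of the statement — an equivalence $w$ over $C\times B$ \emph{induces} a canonical $\lambda$ — I would observe that restricting any such $w$ along the identity of $B\comma\ell$ and composing with the comma cones produces a $2$-cell $fb \To gc$ over the generic pair $(p_1,p_0) \colon B\comma\ell \tfib C\times B$; transposing this back through $1$-cell induction along the element $\ell \colon C \to B$ (i.e., evaluating at the canonical map $C \to B\comma\ell$ classifying $\id_\ell$) yields $\lambda \colon f\ell \To g$, and the first part of the proof shows it is an absolute right lifting.

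The main obstacle I anticipate is the bookkeeping in the converse direction: translating ``$w_*$ is fiberwise essentially surjective and fully faithful for all $X$'' into ``$w$ is a fibered equivalence'' requires knowing that $B\comma\ell$ and $f\comma g$ have enough structure (both are isofibrations over $C\times B$, and the relevant mapping-category functors are smothering) to apply a fibered-Whitehead-type theorem; this is exactly what Lemma \ref{lem:equiv-to-comma} is designed to supply, so the real work is in verifying its hypotheses rather than in any new argument. A secondary subtlety is ensuring the identifications of Lemma \ref{lem:ess.unique.1-cell.ind} are compatible with whiskering by $w$ so that ``factor through $\lambda$'' and ``lift along $w_*$'' are literally the same operation up to the fibered isomorphisms tracked by that lemma; this is a pasting-diagram verification, routine but requiring care with the identity $\phi w = (\text{comma cone of } B\comma\ell \text{ pasted with } \lambda)$ that defines $w$.
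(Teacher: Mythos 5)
Your overall toolkit (the dictionary of Lemma \ref{lem:ess.unique.1-cell.ind} plus Lemma \ref{lem:equiv-to-comma}) is the right one, but two steps in your plan fail as stated. First, in the direction ``absolute right lifting $\Rightarrow$ $w$ is an equivalence'' you claim that existence-and-uniqueness of factorizations through $\lambda$ translates into $\hom(X,w)$ being fiberwise essentially surjective \emph{and fully faithful}, hence an equivalence for all $X$. The dictionary cannot deliver full faithfulness: Lemma \ref{lem:ess.unique.1-cell.ind} only controls \emph{isomorphism classes of objects} of $\hom(X,B\comma\ell)$ and $\hom(X,f\comma g)$, while the comparison functors to the strict commas (Proposition \ref{prop:comma-smothering}) are smothering --- full and conservative but not faithful --- so 2-cells between maps into a comma are not determined by their projected components, and a bijection of factorizations gives you no handle on hom-sets of $\hom(X,-)$. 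The paper's route avoids this entirely: use the unique factorization of the comma cone $\phi$ of $f\comma g$ through $\lambda$ to build, by 1-cell induction, an explicit candidate inverse $w'\colon f\comma g\to B\comma\ell$ over $C\times B$, and then check $ww'\cong\id$ and $w'w\cong\id$ over $C\times B$ by showing both sides classify the same 2-cell and invoking Lemma \ref{lem:ess.unique.1-cell.ind} (exactly as in the proof of Proposition \ref{prop:adjoint-comma-equiv-I}); equivalently, verify the full weak universal property (including 2-cell induction and conservativity) of $B\comma\ell$ with the pasted cone and cite Lemma \ref{lem:equiv-to-comma}\ref{itm:equiv-to-comma-i}. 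You need one of these arguments; ``fiberwise fully faithful'' is not available.

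Second, in the last part you define $\lambda$ by restricting an \emph{arbitrary} fibered equivalence $w$ along the map $\kappa\colon C\to B\comma\ell$ classifying $\id_\ell$, and then say ``the first part shows it is an absolute right lifting.'' To apply the first part you must know that this $w$ agrees, up to fibered isomorphism, with the map induced by your $\lambda$, i.e.\ that $\phi w=\lambda p_1\cdot f\phi^{B\comma\ell}$. You treat this identity as ``the identity that defines $w$,'' but that is only so in the forward direction where $w$ is constructed from $\lambda$; here $w$ is given and the identity is precisely what must be proved. The proof is not mere pasting bookkeeping: it uses the 2-cell $\id_{B\comma\ell}\To\kappa p_1$ obtained by 2-cell induction for $B\comma\ell$ (projecting to $\id$ and to $\phi^{B\comma\ell}$) together with a middle-four interchange computation --- this is \refI{lem:represented-nat-trans}, the substantive ingredient behind \refI{prop:absliftingtranslation2} that your outline omits.
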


\subsection{Adjunctions, limits, and colimits via commas}

Special cases of Proposition \ref{prop:abs-lifting-via-commas} provide characterizations of adjunctions and (co)limits as equivalences between comma $\infty$-categories.

\begin{prop}[{\refI{prop:adjointequivconverse}}]\label{prop:adjoint-comma-equiv-II} If $f \colon B \to A$ and $u \colon A \to B$ are functors so that there exists a fibered equivalence between the comma $\infty$-categories:
\begin{equation*}
  \xymatrix@=1em{
    {f\comma A}\ar@{->>}[dr]_(0.4){(p_1,p_0)}\ar[rr]^{w}_{\sim}
    && {B\comma u}\ar@{->>}[dl]^(0.4){(q_1,q_0)} \\
    & {A\times B}&
  }
\end{equation*} then $f \dashv u$.
\end{prop}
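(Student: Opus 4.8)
The plan is to deduce this from Proposition~\ref{prop:abs-lifting-via-commas} together with Exercise~\ref{exs:adj-as-abs-lifting}, which characterizes the counit of an adjunction as an absolute right lifting diagram of the identity through $f$. Concretely, I would specialize Proposition~\ref{prop:abs-lifting-via-commas} to the cospan $A \xrightarrow{\id_A} A \xleftarrow{f} B$ with $\ell = u \colon A \to B$. In that case the comma $\infty$-category $\id_A \comma \ell$ appearing on the left-hand side of \eqref{eq:abs-R-lifting} is precisely $f \comma A$ read with the appropriate variance (unpacking the pullback defining $B \comma u$ against that defining $f \comma A$), and the comma $f \comma g$ on the right becomes $f \comma \id_A$, i.e.\ again $f \comma A$ — wait, more carefully: with $g = \id_A$ and $\ell = u$, the induced comparison is $w \colon B \comma u \to f \comma A$ over $A \times B$. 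So the hypothesized fibered equivalence $f \comma A \simeq B \comma u$ over $A \times B$ is, up to taking the equivalence inverse, exactly an equivalence of the form appearing in the converse half of Proposition~\ref{prop:abs-lifting-via-commas}.

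The key steps, in order, are: (1) Identify the correct cospan and functor $\ell$ so that the comma $\infty$-categories in Proposition~\ref{prop:abs-lifting-via-commas} become $f \comma A$ and $B \comma u$; this is a matter of matching the defining pullbacks \eqref{eq:comma-as-simp-pullback} and checking that the twist by the symmetry isomorphism on $A \times B$ versus $B \times A$ is harmless, which it is since a fibered equivalence over $A \times B$ transports to one over $B \times A$ along the swap. (2) Apply the converse direction of Proposition~\ref{prop:abs-lifting-via-commas}: the fibered equivalence $w \colon f \comma A \we B \comma u$ (or its inverse, depending on direction) induces a canonical 2-cell $\epsilon \colon fu \To \id_A$ which is an absolute right lifting of $\id_A$ through $f$. (3) Invoke Exercise~\ref{exs:adj-as-abs-lifting}: an absolute right lifting diagram
\[ \xymatrix{ \ar@{}[dr]|(.7){\Downarrow\epsilon} & B \ar[d]^f \\ A \ar[ur]^u \ar@{=}[r] & A}\]
is exactly the data of a counit exhibiting $f \dashv u$. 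Hence $f \dashv u$.

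The main obstacle I anticipate is purely bookkeeping: getting the variances and the roles of the two projections straight, so that the equivalence hypothesized in the statement is genuinely an instance of (the converse half of) Proposition~\ref{prop:abs-lifting-via-commas} and not merely a superficially similar statement. In particular one must check that the fibered equivalence $w$ of the hypothesis is compatible with the comma cone data in the way required to extract the 2-cell $\lambda = \epsilon$ — that is, that any fibered equivalence over $A \times B$ between these two commas automatically agrees, up to the relevant isomorphism class of Lemma~\ref{lem:ess.unique.1-cell.ind}, with the $1$-cell induced by a $2$-cell $fu \To \id_A$. This is precisely what the converse direction of Proposition~\ref{prop:abs-lifting-via-commas} supplies, so once the identification in step~(1) is made cleanly there is no further work; the risk is only in doing step~(1) imprecisely. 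A clean way to organize this is to note that $B \comma u = \id_B \comma u$ in the notation of \eqref{eq:abs-R-lifting} with the roles of the cospan legs being $B \xrightarrow{u} B \xleftarrow{\id_B} B$ — no, better to take the cospan $A \xrightarrow{\id_A} A \xleftarrow{f} B$ and $\ell \colon A \to B$ equal to $u$, so that $f \comma \id_A \cong f \comma A$ and $\id_A \comma \ell$ wait this isn't quite $B \comma u$ either. I would settle the exact matching by writing out both defining pullback squares side by side and reading off the comparison functor, then citing Proposition~\ref{prop:abs-lifting-via-commas} verbatim.
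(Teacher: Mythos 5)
Your proposal is correct and is essentially the paper's own proof: apply the converse half of Proposition \ref{prop:abs-lifting-via-commas} to the cospan $A \xrightarrow{\id_A} A \xleftarrow{f} B$ with $\ell = u$ (inverting the given fibered equivalence if needed) to obtain an absolute right lifting 2-cell $\epsilon \colon fu \To \id_A$, then invoke Exercise \ref{exs:adj-as-abs-lifting} to conclude $f \dashv u$ with counit $\epsilon$. The bookkeeping you fret over at the end resolves exactly as in your first formulation: with $g = \id_A$ and $\ell = u$, the comma $B \comma \ell$ of \eqref{eq:abs-R-lifting} is literally $B \comma u$ and $f \comma g$ is $f \comma A$, both already isofibrations over $A \times B$, so no symmetry twist arises.
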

\begin{proof}
By Proposition \ref{prop:abs-lifting-via-commas}, the fibered equivalence induces an absolute right lifting diagram \[ \xymatrix{ \ar@{}[dr]|(.7){\Downarrow\epsilon} & B \ar[d]^f \\ A \ar[ur]^u \ar@{=}[r] & A}\] Exercise \ref{exs:adj-as-abs-lifting} then implies that $f \dashv u$, with $\epsilon \colon fu \To \id_A$ as the counit.
\end{proof}

\begin{defn}[the $\infty$-category of cones] Given a $J$-indexed diagram $d \colon 1 \to A^J$ in an $\infty$-category $A$, the \emph{$\infty$-category of cones over $d$} is the comma $\infty$-category $\Delta \comma d$ formed over the cospan
\[  \xymatrix@=10pt{
      &\Delta \comma d \ar@{->>}[dl]_{p_1} \ar@{->>}[dr]^{p_0} \ar@{}[dd]|(.4){\phi}|{\Leftarrow}  \\ 
      1 \ar[dr]_d & & A \ar[dl]^\Delta \\ 
      & A^J}\]
      By the defining simplicial pullback \eqref{eq:comma-as-simp-pullback}, the data of an element in $\Delta \comma d$ is comprised of an element $a \colon 1 \to A$ (the summit) together with an element of the hom-space from $\Delta a$ to $d$ in $A^J$ (the cone).
\end{defn}

Specializing Proposition \ref{prop:abs-lifting-via-commas}, we have:

\begin{prop}\label{prop:limit-comma-equiv} An element $\ell \colon 1 \to A$ defines a limit for a diagram $d \colon 1 \to A^J$ if and only if there is a fibered equivalence  between the comma $\infty$-category represented by $\ell$ and the $\infty$-category of cones over $d$:
\begin{equation*}
  \xymatrix@=1em{
    {A \comma \ell }\ar@{->>}[dr]_{p_0}\ar[rr]^{\sim}
    && {\Delta \comma d}\ar@{->>}[dl]^{p_0} \\
    & {A}&
  }
\end{equation*} 
\end{prop}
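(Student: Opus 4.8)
The plan is to deduce this statement directly from Proposition~\ref{prop:abs-lifting-via-commas} by specializing that result to the cospan $1 \xrightarrow{d} A^J \xleftarrow{\Delta} A$. Recall from Definition~\ref{defn:limit} that an element $\ell\colon 1\to A$ defines a limit of $d$ precisely when there is a 2-cell $\lambda\colon\Delta\ell\To d$ exhibiting $\ell$ as an absolute right lifting of $d\colon 1\to A^J$ through $\Delta\colon A\to A^J$. So the entire content of the proposition is the translation of this absolute right lifting condition into the language of fibered equivalences of commas, which is exactly what Proposition~\ref{prop:abs-lifting-via-commas} provides.

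First I would match up the relevant comma constructions. In the notation of Proposition~\ref{prop:abs-lifting-via-commas} and the diagram~\eqref{eq:abs-R-lifting}, take ``$C$'' to be $1$, ``$B$'' to be $A$, ``$f$'' to be $\Delta\colon A\to A^J$, ``$g$'' to be $d\colon 1\to A^J$, and the lift ``$\ell$'' to be our element $\ell\colon 1\to A$. Then the comma ``$B\comma\ell$'' appearing in~\eqref{eq:abs-R-lifting} is exactly the comma $\infty$-category $A\comma\ell$ represented by $\ell$, the comma ``$f\comma g$'' is the $\infty$-category of cones $\Delta\comma d$, and the base $C\times B$ over which the induced comparison functor $w$ is fibered is $1\times A$, which we identify with $A$ via $p_0$; since the other leg maps to $1$, ``fibered over $1\times A$'' is the same as ``fibered over $A$ via $p_0$''. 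With these identifications, Proposition~\ref{prop:abs-lifting-via-commas} says: the 2-cell $\lambda\colon\Delta\ell\To d$ is an absolute right lifting diagram if and only if the induced functor $w\colon A\comma\ell\to\Delta\comma d$ is an equivalence over $A$; and conversely every fibered equivalence $A\comma\ell\simeq\Delta\comma d$ over $A$ arises from a canonical such 2-cell $\lambda$, which is then an absolute right lifting.

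From here the biconditional is immediate. For the forward direction, if $\ell$ defines a limit then Definition~\ref{defn:limit} supplies the limit cone $\lambda$, which is an absolute right lifting diagram, so the first half of Proposition~\ref{prop:abs-lifting-via-commas} produces the desired fibered equivalence $w\colon A\comma\ell\xrightarrow{\sim}\Delta\comma d$ over $A$. Conversely, given any fibered equivalence between these two commas over $A$, the second half of Proposition~\ref{prop:abs-lifting-via-commas} recovers a 2-cell $\lambda\colon\Delta\ell\To d$ that is an absolute right lifting of $d$ through $\Delta$, which by Definition~\ref{defn:limit} exhibits $\ell$ as a limit of $d$.

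I do not expect any genuine obstacle here: the only care needed is the bookkeeping of the second paragraph — keeping track of which leg of each comma is which, recording that $A\comma\ell$ is indeed the ``$B\comma\ell$'' of~\eqref{eq:abs-R-lifting} and $\Delta\comma d$ the corresponding ``$f\comma g$'', and matching the two existence quantifiers (``there exists a limit cone $\lambda$'' on one side, ``there exists a fibered equivalence'' on the other). All of the substantive work has already been carried out, via Lemmas~\ref{lem:ess.unique.1-cell.ind} and~\ref{lem:equiv-to-comma}, in the proof of Proposition~\ref{prop:abs-lifting-via-commas} itself.
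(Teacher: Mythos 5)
Your proposal is correct and matches the paper's treatment exactly: the paper introduces this proposition with ``Specializing Proposition~\ref{prop:abs-lifting-via-commas}'' to the cospan $1 \xrightarrow{d} A^J \xleftarrow{\Delta} A$, which is precisely your argument, including the identification of $1\times A$ with $A$ so that ``fibered over $C\times B$'' becomes ``fibered over $A$ via $p_0$''. No gaps.
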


The conclusion of Proposition \ref{prop:limit-comma-equiv} asserts that the $\infty$-category of cones over $d$ is \emph{represented by} the element $\ell \colon 1 \to A$.

\begin{exs} Specializing to the case $J= \emptyset$, show that an element $t \colon 1 \to A$ is terminal if and only if the projection $p_0 \colon A \comma t \trvfib A$ is a equivalence, and thus a trivial fibration.
\end{exs}

\begin{exs}\label{exs:terminal-in-representable} Use 1-cell induction, 2-cell induction, and 2-cell conservativity for the comma $A \comma \ell$ associated to an element $\ell \colon 1 \to A$ to show that the identity at $\ell$ defines a terminal element ${\id_\ell} \colon 1 \to A \comma \ell$, in the sense of Definition \ref{defn:terminal-element}.
\end{exs}

A fibered version of Exercise \ref{exs:terminal-in-representable} (\refI{prop:right.liftings.as.fibred.terminal.objects}) proves the following:

\begin{prop}[{\refI{prop:limits.as.terminal.objects}}]\label{prop:limits.as.terminal.objects} A limit of a diagram $d \colon 1 \to A^J$ defines a terminal element in the $\infty$-category $\Delta \comma d \tfib A$ of cones over $d$. Conversely, a terminal element in the $\infty$-category of cones defines a limit for $d$. 
\end{prop}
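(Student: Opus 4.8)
The plan is to prove Proposition \ref{prop:limits.as.terminal.objects} by combining Proposition \ref{prop:limit-comma-equiv} with Exercise \ref{exs:terminal-in-representable}, reducing the statement about terminal objects in $\Delta \comma d$ to the already-established characterization of limits as fibered equivalences of comma $\infty$-categories. The key observation is that the $\infty$-category of cones $\Delta \comma d \tfib A$ is itself an object of the sliced $\infty$-cosmos $\lcat{K}/A$, and that terminal elements in the sense of Definition \ref{defn:terminal-element}, when applied in a sliced context, must be understood as \emph{fibered} terminal elements — which is exactly what the fibered version of Exercise \ref{exs:terminal-in-representable} (cited as \refI{prop:right.liftings.as.fibred.terminal.objects}) delivers.

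First I would recall from Proposition \ref{prop:limit-comma-equiv} that $\ell \colon 1 \to A$ is a limit of $d$ precisely when the induced functor $A \comma \ell \to \Delta \comma d$ over $A$ is a fibered equivalence. Next, I would apply the fibered analogue of Exercise \ref{exs:terminal-in-representable}: for any element $\ell \colon 1 \to A$, the identity 2-cell $\id_\ell$ exhibits $\id_\ell \colon 1 \to A \comma \ell$ as a terminal element of $A \comma \ell$, and moreover this terminal element is \emph{fibered} over $A$ in the sense that the witnessing unit 2-cell can be chosen to project to an identity along $p_0 \colon A \comma \ell \tfib A$. This is the content of \refI{prop:right.liftings.as.fibred.terminal.objects}. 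Since fibered terminal elements are preserved by fibered equivalences — being a special case of the fact that right adjoints (here, a right adjoint over $A$) compose and that equivalences over $A$ have adjoint inverses over $A$, via Proposition \ref{prop:adjunctions-compose} and Corollary \ref{cor:equivs-are-adjoints} applied in $\lcat{K}/A$ — transporting $\id_\ell$ across the equivalence $A \comma \ell \we \Delta \comma d$ produces a terminal element of $\Delta \comma d$. This terminal element, by construction, has summit the limit element $\ell$, proving the forward direction.

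For the converse, I would argue that if $\Delta \comma d \tfib A$ admits a terminal element (necessarily fibered, since $\Delta \comma d$ lives over $A$ and the relevant universal property is the sliced one), then its image under the projection to $A$ supplies an element $\ell \colon 1 \to A$ together with a fibered equivalence $A \comma \ell \simeq \Delta \comma d$ over $A$: indeed, a fibered terminal element of an object $E \tfib A$ in $\lcat{K}/A$ is the same as a representation $E \simeq A \comma e$ for the representing element $e$, which follows by combining the fibered version of Exercise \ref{exs:terminal-in-representable} with the essential uniqueness of terminal objects (the sliced analogue of the uniqueness half of the first Proposition in Section 5.1). Feeding this fibered equivalence back into Proposition \ref{prop:limit-comma-equiv} then shows $\ell$ is a limit of $d$.

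The main obstacle I anticipate is bookkeeping the passage between the unsliced language of Definitions \ref{defn:terminal-element} and \ref{defn:limit} and the sliced setting: one must be careful that "terminal element of the $\infty$-category $\Delta \comma d$" in the statement really refers to the fibered notion (terminal in $\lcat{K}/A$), and that the correspondence between fibered terminal elements of $E \tfib A$ and fibered equivalences $E \simeq A \comma e$ is set up correctly — in particular that the representing element $e$ obtained from a terminal element agrees with the summit. This is precisely why the proof leans on the fibered refinement \refI{prop:right.liftings.as.fibred.terminal.objects} rather than the plain Exercise \ref{exs:terminal-in-representable}; once that input is granted, the remaining steps are formal manipulations in the sliced homotopy 2-category $(\lcat{K}/A)_2$, reusing Proposition \ref{prop:adjunctions-compose}, Corollary \ref{cor:equivs-are-adjoints}, and Proposition \ref{prop:limit-comma-equiv} verbatim.
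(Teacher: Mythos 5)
Your forward direction is fine, and is a perfectly legitimate route: Proposition \ref{prop:limit-comma-equiv} supplies the fibered equivalence $A\comma\ell \simeq \Delta\comma d$, Exercise \ref{exs:terminal-in-representable} supplies the terminal element $\id_\ell$ of $A\comma\ell$, and terminal elements are preserved by equivalences. But the converse, which is where the content of the proposition lies, has two problems. First, the hypothesis in the statement is an ordinary terminal element in the sense of Definition \ref{defn:terminal-element}, i.e.\ a right adjoint to $!\colon \Delta\comma d \to 1$; it is \emph{not} a terminal element of the object $\Delta\comma d \tfib A$ of the sliced cosmos $\lcat{K}/A$. The latter would be a right adjoint right inverse section $A \to \Delta\comma d$ of $p_0$ --- a terminal cone with every prescribed summit --- which is a different and generally false condition. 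The word ``fibered'' in the paper's pointer to \refI{prop:right.liftings.as.fibred.terminal.objects} refers to replacing the base $1$ by a general $\infty$-category $C$ (as needed for families of diagrams and general absolute lifting diagrams), not to slicing over $A$. So your converse begins from a hypothesis (``necessarily fibered'') that is not the one you must use.

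Second, and more seriously, your bridge claim --- that a (fibered) terminal element of $E \tfib A$ is the same thing as an equivalence $E \simeq A\comma e$ over $A$ --- is false in the generality stated (take $A = 1$: it would assert that every $\infty$-category with a terminal element is equivalent to $1$), and in the only case you need, $E = \Delta\comma d$, it is precisely the converse half of the proposition being proved, modulo Proposition \ref{prop:limit-comma-equiv}; invoking it there is circular. The ingredients you cite do not yield it: knowing that $\id_\ell$ is terminal in $A\comma\ell$ and that $t$ is terminal in $\Delta\comma d$ produces no comparison functor between these two $\infty$-categories, and essential uniqueness of terminal elements only operates within a single $\infty$-category. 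The missing step is the actual content of \refI{prop:right.liftings.as.fibred.terminal.objects}: given a terminal element $t$ with $\ell := p_0 t$, use the representable terminality of $t$ together with 1-cell induction, 2-cell induction, and conservativity for the comma $\Delta\comma d$ (Observation \ref{obs:comma-UP}) to show directly that the whiskered cone $\phi t \colon \Delta\ell \To d$ is an absolute right lifting of $d$ through $\Delta$ --- equivalently, to construct an inverse over $A$ to the comparison functor $A\comma\ell \to \Delta\comma d$ induced by $\phi t$ as in Proposition \ref{prop:abs-lifting-via-commas}. Until that argument is supplied, the converse direction remains unproved.
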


\begin{rmk} The $\infty$-category of \emph{cones in $A$ over any $J$-indexed diagram} is the comma $\Delta \comma A^J \tfib A^J \times A$. Pulling back along an element $d \colon 1 \to A^J$ defines the $\infty$-category $\Delta \comma d$ of cones over $d$.  The defining simplicial pullback \eqref{eq:comma-as-simp-pullback} for $\Delta \comma A^J$ reveals that it is isomorphic to the simplicial cotensor $A^{\Del^0 \diamond J}$, where ``$\diamond$'' is Joyal's ``fat join'' construction. For any pair of simplicial sets $I$ and $J$, there is a weak equivalence $I \diamond J \we I \join J$  in the Joyal model structure under the disjoint union $I \coprod J$ from the \emph{fat join} $I \diamond J$  to the \emph{join} $I \join J$. Taking cotensors, this induces a fibered equivalence $A^{\Del^0 \join J} \we \Delta \comma A^J$ over $A^J \times A$, which pulls back to define an equivalence $\slicer{A}{d} \we \Delta \comma d$ between Joyal's \emph{slice $\infty$-category} and the $\infty$-category of cones over $d$; see \S\refI{subsec:join}. This is the geometrical basis for the proof that characterization of the limit of a diagram valued in a quasi-category given in Definition \ref{defn:limit} and re-expressed by Proposition \ref{prop:limits.as.terminal.objects} agrees with the Joyal's original definition (\refI{prop:limits.are.limits}).
\end{rmk}

\subsection{Model independence of basic \texorpdfstring{$\infty$}{infinity}-category theory}

We have seen that comma $\infty$-categories can be used to encode various universal properties including:
\begin{itemize}
\item the existence of an adjunction between a pair of functors $u \colon A \to B$ and $f \colon B \to A$
\item the property that an element $\ell \colon 1 \to A$ defines a limit for a diagram $d \colon 1 \to A^J$
\end{itemize}
We will now see that any categorical property that can be captured by the existence of a fibered equivalence between comma $\infty$-categories is ``model independent'' in the sense that it is preserved by any functor of $\infty$-cosmoi and reflected by those functors that define \emph{weak equivalences of $\infty$-cosmoi}.

Recall, a \emph{functor of $\infty$-cosmoi} $F \colon \lcat{K} \to \lcat{L}$ is a simplicial functor that preserves the limits listed  in~\ref{qcat.ctxt.cof.def}\ref{qcat.ctxt.cof:a} and the class of isofibrations, and hence also the classes of equivalences and  trivial fibrations.  A functor $F \colon \lcat{K} \to \lcat{L}$ of $\infty$-cosmoi induces a 2-functor $\h{F} \defeq \ho_*F \colon \h\lcat{K} \to \h\lcat{L}$ between their homotopy 2-categories.

\begin{prop}[{\refV{prop:induced-2-functor}}]\label{prop:induced-2-functor} A functor $F \colon \lcat{K} \to \lcat{L}$ of $\infty$-cosmoi induces a 2-functor $\h{F} \colon \h\lcat{K} \to \h\lcat{L}$ between their homotopy 2-categories that preserves  adjunctions, equivalences, isofibrations, trivial fibrations, groupoidal objects, products, and comma objects.
\end{prop}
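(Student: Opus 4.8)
The plan is to exploit the fact that every structure in the list — adjunctions, equivalences, isofibrations, trivial fibrations, groupoidal objects, products, comma objects — is characterised by a property that is preserved by any 2-functor, once one knows that $F$ is simplicially continuous in the appropriate sense. First I would record that $F_2 \defeq \ho_* F$ really is a strict 2-functor: since $F$ is a simplicial functor it induces maps $\fun(A,B) \to \fun(FA,FB)$ of quasi-categories strictly respecting simplicial composition, and applying the homotopy category functor $\ho \colon \qCat \to \Cat$ — which is itself a (2-)functor — yields functors $\hom(A,B) \to \hom(FA,FB)$ strictly respecting horizontal and vertical composition and identities. So $F_2$ is a 2-functor, and the first three items on the list follow immediately from the general principle that adjunctions and equivalences in a 2-category are defined by equations between pasting diagrams (Definitions \ref{defn:adjunction}, \ref{defn:equivalence}) and hence are preserved by any 2-functor; likewise a 2-cell admitting a vertical inverse is sent to one admitting a vertical inverse, so natural isomorphisms are preserved.

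Next I would handle the items that refer to $\infty$-cosmos structure rather than purely 2-categorical structure. Preservation of isofibrations is part of the definition of a functor of $\infty$-cosmoi (Definition \ref{defn:qcat-ctxt-functor}); preservation of trivial fibrations follows since $F$ preserves equivalences (by simplicial functoriality, as noted in \ref{defn:qcat-ctxt-functor}) and isofibrations, and trivial fibrations are exactly the functors that are both. For products: $F$ preserves the simplicially-enriched finite products of $\lcat{K}$ (they are among the limits of \ref{qcat.ctxt.cof.def}\ref{qcat.ctxt.cof:a}), so the canonical comparison maps $F1 \to 1$ and $F(A\times B) \to FA \times FB$ are isomorphisms in $\lcat{L}$, and passing to the homotopy 2-category these witness that $F_2$ preserves the 2-categorical products characterised as in the proof of Proposition \ref{prop:cartesian-closure}. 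For groupoidal objects, I would use condition \ref{itm:groupoidal-object:iv} of Definition \ref{defn:groupoidal-object}: $E$ is groupoidal iff the isofibration $E^\iso \tfib E^\cattwo$ induced by $\cattwo \inc \iso$ is a trivial fibration. Since $F$ preserves cotensors by simplicial sets (again among the limits of \ref{qcat.ctxt.cof:a}), $F(E^\iso) \cong (FE)^\iso$ and $F(E^\cattwo) \cong (FE)^\cattwo$ compatibly with the map induced by $\cattwo \inc \iso$, and since $F$ preserves trivial fibrations, $FE$ is groupoidal.

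Finally, comma objects. The comma $\infty$-category $f\comma g$ is constructed in \eqref{eq:comma-as-simp-pullback} as a pullback of $(p_1,p_0)\colon A^\cattwo \tfib A\times A$ along $g\times f$, i.e.\ as a simplicially-enriched limit built from cotensors, products, and pullbacks of isofibrations — all of which $F$ preserves. Hence $F(f\comma g)$ is, canonically, the analogous pullback in $\lcat{L}$, so $F(f\comma g) \cong Ff \comma Fg$ over $FC \times FB$, and this isomorphism carries the comma cone $\phi$ (defined by composing with \eqref{eq:generic-arrow}, which $F$ preserves since it preserves the representing map) to the comma cone for $Ff\comma Fg$. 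One should phrase "preserves comma objects" carefully: $F_2$ sends a comma cone in $\lcat{K}_2$ with the weak universal property of \ref{obs:comma-UP} to one satisfying the same weak universal property in $\lcat{L}_2$ — this is immediate from the strict isomorphism just exhibited together with Lemma \ref{lem:equiv-to-comma}\ref{itm:equiv-to-comma-ii}. I expect the only subtlety, and the step to state with care rather than any real difficulty, is this bookkeeping about what "preserves comma objects" means: since commas are only weak 2-limits, $F_2$ need not preserve them "on the nose" as 2-limits, but it does preserve the specific construction and its universal property, which is all that is needed downstream. The remaining verifications are routine diagram chases and I would leave them to the reader, citing \refV{prop:induced-2-functor} for the full account.
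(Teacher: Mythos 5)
Your proof is correct and follows essentially the same route as the paper's: adjunctions and equivalences are preserved by any 2-functor, isofibrations and products come straight from Definition \ref{defn:qcat-ctxt-functor}, trivial fibrations are the intersection of the two preserved classes, groupoidal objects are handled via criterion \ref{defn:groupoidal-object}\ref{itm:groupoidal-object:iv}, and commas via preservation of the simplicial pullback \eqref{eq:comma-as-simp-pullback}.

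The one point to tighten is the comma step. Your strict isomorphism $F(f\comma g)\cong Ff\comma Fg$ over $FC\times FB$ only addresses the image of the canonically constructed comma; but ``preserves comma objects'' should cover an arbitrary isofibration $E\tfib C\times B$ equipped with a cone enjoying the weak universal property of \ref{obs:comma-UP}. For that you first need Lemma \ref{lem:equiv-to-comma}\ref{itm:equiv-to-comma-i} — which is exactly what the paper cites — to know that any such $E$ is equivalent over $C\times B$ to $f\comma g$; since $F$ preserves equivalences and products, this gives $FE\simeq Ff\comma Fg$ over $FC\times FB$, and only then does your appeal to Lemma \ref{lem:equiv-to-comma}\ref{itm:equiv-to-comma-ii} finish the argument. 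So the lemma you invoke handles the $\lcat{L}$-side conclusion, while part \ref{itm:equiv-to-comma-i} is the missing (small) ingredient on the $\lcat{K}$-side; with that citation added, your argument matches the paper's.
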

\begin{proof}
Any 2-functor preserves  adjunctions and equivalences. Preservation of isofibrations and products  are direct consequences of the hypotheses in Definition \ref{defn:qcat-ctxt-functor}; recall that the class of trivial fibrations in this intersection of the classes of isofibrations and equivalences. Preservation of groupoidal objects is a consequence of the characterization  \ref{defn:groupoidal-object}\ref{itm:groupoidal-object:iv}. Preservation of commas follows from the construction of \eqref{eq:comma-as-simp-pullback}, which is preserved by a functor of $\infty$-cosmoi, and Lemma \ref{lem:equiv-to-comma}\ref{itm:equiv-to-comma-i}, which says that all commas are equivalent an $\infty$-category constructed by the simplicial pullback formula.
\end{proof}

\begin{defn}[weak equivalences of $\infty$-cosmoi] A functor $F \colon \lcat{K} \to \lcat{L}$ of $\infty$-cosmoi is a \emph{weak equivalence} when it is:
\begin{enumerate}[label=(\alph*)]
\item surjective on objects up to equivalence: i.e., if for every $X \in \lcat{L}$, there is some $A \in \lcat{K}$ so that $FA\simeq X \in \lcat{L}$.
\item a local equivalence of quasi-categories: i.e., if for every pair $A,B \in \lcat{K}$, the map  $\Fun(A,B) \we \Fun(FA,FB)$ is an equivalence of quasi-categories.
\end{enumerate}
\end{defn}

\begin{ex}
The following define weak equivalences of $\infty$-cosmoi:
\begin{itemize}
\item The underlying quasi-category functor $\Fun(1,-)\colon \CSS \to \qCat$ that takes a complete Segal space to its $0\th$ row (see example \refIV{ex:CSS-cosmos}).
\item The functor $t^! \colon \qCat \to \CSS$ defined in example \refIV{ex:other-CSS-functor}.
\item The underlying quasi-category functor $\Fun(1,-)\colon \Segal \to \qCat$ that takes a Segal category to its $0\th$ row (see example \refIV{ex:segal-cosmos}). 
\item The functor $d_* \colon \qCat \to \Segal$ defined by Joyal and Tierney \cite{Joyal:2007hb}.
\item The underlying quasi-category functor $\Fun(1,-) \colon \sSet_+\to \qCat$ that carries a naturally marked simplicial set to its underlying quasi-category (see example \refIV{ex:marked-cosmos}).
\item The functor $(-)^\natural \colon \qCat \to \sSet_+$ that gives a quasi-category its ``natural'' marking.
\item The functor $\CSS \to \Segal$ that ``discretizes'' the 0th space of a complete Segal space. This commutes with the underlying quasi-category functors.
\end{itemize}
\end{ex}

\begin{prop}\label{prop:we} If $F$ is a weak equivalence of $\infty$-cosmoi, then the induced 2-functor $\h{F} \colon \h\lcat{K} \to \h\lcat{L}$
\begin{enumerate}[label=(\roman*)]
\item\label{itm:we-bi-equiv} defines a biequivalence $\h{F} \colon \h\lcat{K} \to \h\lcat{L}$: i.e., the 2-functor $\h{F}$ is surjective on objects up to equivalence and defines a local equivalence of categories $\hom(A,B) \stackrel{\simeq}{\longrightarrow}\hom(FA,FB)$ for all $A,B \in \lcat{K}$.
\item\label{itm:we-iso} induces a bijection between isomorphism classes of parallel functors: for all $A, B \in \lcat{K}$, the functor $\hom(A,B) \stackrel{\simeq}{\longrightarrow} \hom(FA,FB)$ induces a bijection on isomorphism classes of objects. 
\item\label{itm:we-groupoidal} preserves and reflects groupoidal objects: $A \in \lcat{K}$ is groupoidal if and only if $FA \in \lcat{L}$ is groupoidal.
\item\label{itm:we-equiv} preserves and reflects equivalence: $A \simeq B \in \lcat{K}$ if and only if $FA \simeq FB \in \lcat{L}$.
\item\label{itm:we-equivs}  preserves and reflects equivalences: $f \colon A \to B \in \lcat{K}$ is an equivalence if and only if $Ff \colon FA \to FB \in \lcat{L}$ is an equivalence.
\item\label{itm:we-comma} preserves and reflects comma objects: given $E \tfib C \times B$ and $C \xrightarrow{g} A \xleftarrow{f} B$ in $\lcat{K}$, then $E \simeq f \comma g$ over $C \times B$ if and only if  $FE \simeq Ff \comma Fg \cong F(f \comma g)$ over $FC \times FB$.
\end{enumerate}
\end{prop}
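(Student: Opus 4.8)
The plan is to deduce the six statements in order, with \ref{itm:we-bi-equiv} and Proposition \ref{prop:equiv.are.weak.equiv} doing most of the work. For \ref{itm:we-bi-equiv}: surjectivity of $F_2$ on objects up to equivalence is precisely condition (a) in the definition of a weak equivalence, once Proposition \ref{prop:equiv.are.weak.equiv} is used to identify equivalences in an $\infty$-cosmos with equivalences in its homotopy 2-category; and for each $A,B\in\lcat{K}$, applying $\ho\colon\qCat\to\Cat$ to the equivalence of quasi-categories $\fun(A,B)\we\fun(FA,FB)$ furnished by condition (b) yields an equivalence of categories $\hom(A,B)\to\hom(FA,FB)$, since $\ho$ preserves equivalences of quasi-categories. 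Statement \ref{itm:we-iso} is then immediate, as an equivalence of categories induces a bijection on isomorphism classes of objects.

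Next, \ref{itm:we-groupoidal} follows from the characterisation \ref{defn:groupoidal-object}\ref{itm:groupoidal-object:ii}: preservation is part of Proposition \ref{prop:induced-2-functor}, and for reflection, if $FA$ is groupoidal then for every $X\in\lcat{K}$ the equivalence of categories $\hom(X,A)\simeq\hom(FX,FA)$ from \ref{itm:we-bi-equiv} exhibits $\hom(X,A)$ as equivalent to the groupoid $\hom(FX,FA)$, hence a groupoid, so $A$ is groupoidal. The crux is the reflection half of \ref{itm:we-equivs}: supposing $Ff\colon FA\to FB$ is an equivalence, promote it to an adjoint equivalence $Ff\dashv g'$ with invertible unit and counit; essential surjectivity of $\hom(B,A)\to\hom(FB,FA)$ supplies $g\colon B\to A$ with $Fg\cong g'$, so that $F(gf)\cong\id_{FA}=F(\id_A)$ and $F(fg)\cong\id_{FB}=F(\id_B)$; since $\hom(A,A)\to\hom(FA,FA)$ and $\hom(B,B)\to\hom(FB,FB)$ are fully faithful by \ref{itm:we-bi-equiv}, and a fully faithful functor creates isomorphisms, we obtain $gf\cong\id_A$ and $fg\cong\id_B$, whence $f$ is an equivalence in $\lcat{K}_2$ and thus in $\lcat{K}$ by Proposition \ref{prop:equiv.are.weak.equiv}; preservation in \ref{itm:we-equivs} is just 2-functoriality together with \ref{prop:equiv.are.weak.equiv}. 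Statement \ref{itm:we-equiv} then follows formally: preservation is again 2-functoriality plus \ref{prop:equiv.are.weak.equiv}, and for reflection an equivalence $FA\we FB$ lifts through the essentially surjective $\hom(A,B)\to\hom(FA,FB)$ to a functor $f\colon A\to B$ with $Ff$ an equivalence, which is an equivalence by \ref{itm:we-equivs}.

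For \ref{itm:we-comma}, I would first observe that, since $F$ preserves cotensors by $\cattwo$, products, and pullbacks of isofibrations, it carries the defining pullback \eqref{eq:comma-as-simp-pullback} to the analogous one for $Ff\comma Fg$, giving a canonical isomorphism $F(f\comma g)\cong Ff\comma Fg$ over $FC\times FB$ under which $F_2$ identifies the comma cone $\phi$ with the comma cone of $Ff\comma Fg$ --- this being forced by $F$ also preserving the generic arrow \eqref{eq:generic-arrow}. Preservation is now clear, since $F_2$ sends a fibered equivalence $E\we f\comma g$ over $C\times B$ to a fibered equivalence over $FC\times FB$. For reflection, given a fibered equivalence $w'\colon FE\we Ff\comma Fg$ over $FC\times FB$, whiskering the comma cone of $Ff\comma Fg$ by $w'$ produces a comma cone on $FE$, i.e.\ a 2-cell $\psi'$ in $\hom(FE,FA)$; by fullness of $\hom(E,A)\to\hom(FE,FA)$ there is a comma cone $\psi$ on $E$ with $F_2\psi=\psi'$, and 1-cell induction for $f\comma g$ yields $e\colon E\to f\comma g$ over $C\times B$ with $\phi e=\psi$. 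Applying $F_2$ and using that $F_2\phi$ is the comma cone of $Ff\comma Fg$ shows that this comma cone, whiskered by $Fe$, equals $\psi'$; it also equals $\psi'$ when whiskered by $w'$, so Lemma \ref{lem:ess.unique.1-cell.ind} gives $Fe\cong w'$ over $FC\times FB$, whence $Fe$ is an equivalence and, by \ref{itm:we-equivs}, so is $e$ --- necessarily a fibered equivalence $E\simeq f\comma g$ over $C\times B$, since $e$ lies over $C\times B$.

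The one genuinely delicate point is the reflection half of \ref{itm:we-equivs} --- equivalently, that a biequivalence of 2-categories reflects equivalences --- where the bookkeeping with the lifted quasi-inverse and with ``fully faithful functors create isomorphisms'' must be done with care; after that, \ref{itm:we-equiv} and the reflection in \ref{itm:we-comma} amount to routine applications of 1-cell induction and Lemma \ref{lem:ess.unique.1-cell.ind}. One could instead organise \ref{itm:we-comma} by noting that $F$ restricts to a weak equivalence of sliced $\infty$-cosmoi $\lcat{K}/(C\times B)\to\lcat{L}/(FC\times FB)$ and invoking \ref{itm:we-equiv} there, but the direct argument above avoids setting that up.
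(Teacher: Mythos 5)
Your proof is correct, and for parts \ref{itm:we-bi-equiv}--\ref{itm:we-equivs} it coincides with the paper's argument, merely spelling out the ``standard argument'' by which a biequivalence reflects equivalences (the paper leaves exactly that step to the reader). Where you genuinely diverge is the reflection half of \ref{itm:we-comma}: the paper pulls the local equivalence $\fun(E,E')\we\fun(FE,FE')$ back along the structure maps to get a local equivalence of \emph{fibered} mapping quasi-categories (the displayed pullback cube), and then runs the same biequivalence-style reflection argument in the fibered setting --- essentially a special case of the later Proposition \ref{prop:sliced-we}. You instead argue directly with the weak 2-universal property: whisker the comma cone of $Ff\comma Fg$ by the given fibered equivalence, lift the resulting 2-cell through the full functor $\hom(E,A)\to\hom(FE,FA)$, induce $e\colon E\to f\comma g$ by 1-cell induction, and use Lemma \ref{lem:ess.unique.1-cell.ind} plus the already-established reflection of equivalences \ref{itm:we-equivs} to conclude $Fe\cong w'$ and hence that $e$ is an equivalence. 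Both routes work; yours avoids setting up the sliced local equivalence but leans on 2-categorical bookkeeping, while the paper's is shorter once the fibered-hom pullback is in place and generalizes immediately (it is reused for Proposition \ref{prop:equipment-we}). One small point you should make explicit in your version: your final sentence, that $e$ being an equivalence over $C\times B$ is automatically a \emph{fibered} equivalence, is not a tautology --- it uses that equivalences in the sliced $\infty$-cosmos are created by the forgetful functor (Definition \ref{defn:sliced-cosmoi}), equivalently \refI{cor:recog-fibred-equivs} or \refIV{lem:fibred-equivalence}; citing that closes the only gap in the argument.
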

\begin{proof}
The homotopy category functor $\ho \colon \qCat \to \Cat$ carries equivalences of quasi-categories to equivalences of categories; thus, the local equivalence of quasi-categories $\Fun(A,B) \we \Fun(FA,FB)$ descends to an equivalence of hom-categories $\hom(A,B) \to \hom(FA,FB)$, proving \ref{itm:we-bi-equiv}. Any equivalence of categories induces a bijection between isomorphism classes of objects, proving \ref{itm:we-iso}. A category is a groupoid if and only if it is equivalent to a groupoid, so \ref{itm:we-groupoidal} follows similarly, via Definition \ref{defn:groupoidal-object}\ref{itm:groupoidal-object:ii}. 

The preservation halves of \ref{itm:we-equiv}-\ref{itm:we-comma} holds for any functor of $\infty$-cosmoi, as observed in Proposition \ref{prop:induced-2-functor}. The reflection halves of \ref{itm:we-equiv} and \ref{itm:we-equivs} hold for any biequivalence, by a standard argument. The proof of the remaining half of \ref{itm:we-comma} is similar to the proof of \ref{itm:we-equiv}, using the fact that the local equivalence of mapping quasi-categories pulls back to define a local equivalence of fibered mapping quasi-categories
\[ \xymatrix@=1em{ \Fun_B(E,E') \ar@{-->}[dr]^-{\rotatebox{155}{$\labelstyle\sim$}} \ar[rr] \pbexcursion \ar@{->>}[dd] & & \Fun(E,E') \ar[dr]^-{\rotatebox{155}{$\labelstyle\sim$}} \ar@{->>}'[d][dd] \\ & \Fun_{FB}(FE,FE') \pbexcursion \ar@{->>}[dd] \ar[rr] & & \Fun(FE,FE') \ar@{->>}[dd] \\ \Del^0 \ar'[r][rr] \ar@{=}[dr]  & & \Fun(E,B) \ar[dr]^-{\rotatebox{155}{$\labelstyle\sim$}} \\ & \Del^0 \ar[rr] & & \Fun(FE,FB)}\] 
\end{proof}

The assertion made in \ref{prop:we}\ref{itm:we-comma}  can be strengthened, using  Lemma \ref{lem:equiv-to-comma}:

\begin{exs}\label{exs:we-comma-redux} Show that: 
\begin{enumerate}[label=(\roman*)]
\item\label{itm:we-comma-redux:i} If $f \cong f' \colon B \to A$ and $g \cong g' \colon C \to A$, then $f \comma g \simeq f' \comma g'$ over $C \times B$.
\end{enumerate}
Combine this with Lemma \refV{lem:equiv-invariance-of-commas}, which says that  a commutative diagram
\[ \xymatrix{ C' \ar[r]^{g'} \ar[d]_{c}^{\rotatebox{90}{$\labelstyle\sim$}} & A' \ar[d]_a^{\rotatebox{90}{$\labelstyle\sim$}} & B' \ar[l]_{f'} \ar[d]^b_{\rotatebox{90}{$\labelstyle\sim$}} \\ C \ar[r]_g & A & B \ar[l]^f}\] induces an equivalence $f \comma g \we f' \comma g'$ over $c \times b \colon C' \times B' \we C \times B$, to conclude:
\begin{enumerate}[label=(\roman*),resume]
\item If  $E \tfib C \times B$ is an isofibration in $\lcat{K}$ whose image under a weak equivalence of $\infty$-cosmoi $F \colon \lcat{K} \to \lcat{L}$ defines a comma $\infty$-category for some pair of functors in $\lcat{L}$, then $E \tfib C \times B$ defines a comma $\infty$-category in $\lcat{K}$.
\end{enumerate}
\end{exs}

\begin{thm}[model independence of basic category theory I]\label{thm:model-independence} The following notions are preserved and reflected by any weak equivalence of $\infty$-cosmoi:
\begin{enumerate}[label=(\roman*)]
\item\label{itm:model-independence-i} The adjointness of a pair of $\infty$-functors $f \colon B \to A$ and $u \colon A \to B$.
\item\label{itm:model-independence-ii} The existence of a left or right adjoint to an $\infty$-functor $u \colon A \to B$.
\item\label{itm:model-independence-iii} The question of whether a given element $\ell \colon 1 \to A$ defines a limit or a colimit for a diagram $d \colon 1 \to A^J$.
\item\label{itm:model-independence-iv} The existence of a limit or a colimit for a $J$-indexed diagram $d \colon 1 \to A^J$ in an $\infty$-category $A$.
\end{enumerate}
\end{thm}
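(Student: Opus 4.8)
The plan is to reduce every clause to the existence of a \emph{fibered} equivalence between comma $\infty$-categories and then feed this into Proposition~\ref{prop:we}. The relevant dictionary is already in hand: by Propositions~\ref{prop:adjoint-comma-equiv-I} and \ref{prop:adjoint-comma-equiv-II}, $f \dashv u$ if and only if there is an equivalence $f \comma A \simeq B \comma u$ over $A \times B$; and by Proposition~\ref{prop:limit-comma-equiv}, an element $\ell \colon 1 \to A$ is a limit of $d \colon 1 \to A^J$ if and only if there is an equivalence $A \comma \ell \simeq \Delta \comma d$ over $A$. In each case the object being compared is a comma $\infty$-category whose defining cospan has legs assembled from $f$, $u$, $d$, identities, and the constant-diagram functor $\Delta \colon A \to A^J$; the dual (colimit, right adjoint) characterizations follow by reversing $2$-cells throughout.

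First I would record that a functor of $\infty$-cosmoi $F \colon \lcat{K} \to \lcat{L}$ carries all of this data across faithfully: by Definition~\ref{defn:qcat-ctxt-functor}, $F$ preserves the terminal object and cotensors, so under the canonical isomorphisms $F(A^U) \cong (FA)^U$ and $F1 \cong 1$ it sends $A^\cattwo$, $A^J$, $\Delta \colon A \to A^J$, and elements $1 \to A$ to the corresponding data in $\lcat{L}$; and by Proposition~\ref{prop:induced-2-functor} (equivalently the comma clause of Proposition~\ref{prop:we}\ref{itm:we-comma}) it sends $f \comma A$, $B \comma u$, $A \comma \ell$, $\Delta \comma d$ to commas of the images of the respective cospans, up to fibered equivalence over the appropriate product. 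Granting this, statement~(i) is immediate: $f \dashv u$ holds iff the isofibration $f \comma A \tfib A \times B$ is equivalent over $A \times B$ to the comma $B \comma u$, which by Proposition~\ref{prop:we}\ref{itm:we-comma} holds iff $Ff \comma FA \tfib FA \times FB$ is equivalent over $FA \times FB$ to $FB \comma Fu$, which by Propositions~\ref{prop:adjoint-comma-equiv-I}--\ref{prop:adjoint-comma-equiv-II} applied in $\lcat{L}$ holds iff $Ff \dashv Fu$. Statement~(iii) runs along identical lines with $A \comma \ell$ and $\Delta \comma d$ in place of $f \comma A$ and $B \comma u$, using $F\Delta = \Delta$, $F\ell$, $Fd$, and Proposition~\ref{prop:limit-comma-equiv} in $\lcat{L}$; the colimit case is dual.

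Clauses~(ii) and (iv) then follow from (i) and (iii) together with the bijection on isomorphism classes of Proposition~\ref{prop:we}\ref{itm:we-iso}. Preservation is trivial: an adjunction or a limit cone maps to one of the same kind. For reflection, suppose $Fu$ admits a left adjoint $g \colon FB \to FA$; since $\hom(B,A) \to \hom(FB,FA)$ is a bijection on isomorphism classes, $g \cong Ff$ for some $f \colon B \to A$, hence $Ff \dashv Fu$ by isomorphism-invariance of adjoints (Exercise~\ref{exs:iso-invar-adjunction}), hence $f \dashv u$ by the reflection half of (i); the right-adjoint case is dual. Similarly, if $Fd$ admits a limit it is represented by an element of $FA$, which is isomorphic to $F\ell$ for an essentially unique $\ell \colon 1 \to A$ (again Proposition~\ref{prop:we}\ref{itm:we-iso}, using $F1 \cong 1$); a limit is unchanged by an isomorphism of its representing element (Exercise~\ref{exs:we-comma-redux}\ref{itm:we-comma-redux:i}), so $F\ell$ is a limit of $Fd$ and the reflection half of (iii) yields that $\ell$ is a limit of $d$; the colimit case is dual.

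The argument contains no new idea beyond the comma characterizations already established, so the only real work is bookkeeping. The point that needs care is that each comparison equivalence be genuinely \emph{fibered} over the correct product, so that the comma clause Proposition~\ref{prop:we}\ref{itm:we-comma} applies rather than merely the absolute statement Proposition~\ref{prop:we}\ref{itm:we-equivs}; and, relatedly, that $F$ really matches the defining cospans after the canonical identifications. The main obstacle, such as it is, is the transport of the \emph{witnessing} left/right adjoint functor and the witnessing limit/colimit element back along $F$ in clauses (ii) and (iv) — which is precisely what Proposition~\ref{prop:we}\ref{itm:we-iso} (a consequence of the biequivalence~\ref{prop:we}\ref{itm:we-bi-equiv}) is designed to supply.
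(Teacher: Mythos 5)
Your proposal is correct and follows essentially the same route as the paper: clauses (i) and (iii) via the comma characterizations of Propositions \ref{prop:adjoint-comma-equiv-I}, \ref{prop:adjoint-comma-equiv-II}, \ref{prop:limit-comma-equiv} combined with Proposition \ref{prop:we}\ref{itm:we-comma}, and clauses (ii) and (iv) by lifting the witnessing adjoint or limit element along the bijection of Proposition \ref{prop:we}\ref{itm:we-iso} and transporting the universal property along isomorphic functors via Exercise \ref{exs:we-comma-redux}\ref{itm:we-comma-redux:i}. Your write-up simply makes explicit the bookkeeping (preservation of cotensors, $\Delta$, and elements) that the paper's terser proof leaves implicit.
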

\begin{proof} \ref{itm:model-independence-i} and \ref{itm:model-independence-iii} follow directly from Proposition \ref{prop:we}\ref{itm:we-comma}, via the characterizations of Propositions \ref{prop:adjoint-comma-equiv-I}, \ref{prop:adjoint-comma-equiv-II}, and \ref{prop:limit-comma-equiv}. Then  \ref{itm:model-independence-ii} and \ref{itm:model-independence-iv} follow, using \ref{prop:we}\ref{itm:we-iso} to lift an adjoint or limit element from $\lcat{L}$ to $\lcat{K}$ and Exercise \ref{exs:we-comma-redux}\ref{itm:we-comma-redux:i} to transport the universal property encoded by an equivalence of commas along isomorphic functors. 
\end{proof}

\begin{rmk} By definition, if $F \colon \lcat{K} \to \lcat{L}$ is a functor of $\infty$-cosmoi, $A \in \lcat{K}$, and $J \in \sSet$, then $F(A^J) \cong FA^J \in \lcat{L}$. If $\lcat{K}$ and $\lcat{L}$ are cartesian closed $\infty$-cosmoi and $J \in \lcat{K}$, then there is a natural functor $F(A^J) \to FA^{FJ}$, the transpose of the image of the evaluation map $A^J \times J \to A$ under $F$. If $F$ is a weak equivalence of $\infty$-cosmoi, then this map induces a natural equivalence $\Fun(X, F(A^J)) \we \Fun(X, FA^{FJ})$ for all $X\in\lcat{L}$, and so $F(A^J) \simeq FA^{FJ}$. With more care, analogs of the assertions of Theorem \ref{thm:model-independence}\ref{itm:model-independence-iii} and \ref{itm:model-independence-iv} concerning limit and colimits of simplicial set-indexed diagrams can be proven for diagrams indexed by another $\infty$-category.
\end{rmk}

\renewcommand\thesection{Lecture~\arabic{section}}
\section{Fibrations, modules, and Kan extensions}\label{sec:modules}
\renewcommand\thesection{\arabic{section}}

A section of Saunders Mac Lane's \emph{Categories for the Working Mathematician}  is famously entitled ``All concepts are Kan extensions.'' Our aim in this final lecture is to develop the theory of Kan extensions for functors between $\infty$-categories.

At first glance, this might seem easy. After all, any 2-category has an internally-defined notion of extension diagram.

\begin{defn}\label{defn:right-ext} Given a span $C \xleftarrow{f} A \xrightarrow{k} B$, a functor $r \colon B \to C$ and a 2-cell 
\[ \xymatrix{ A \ar[r]^k \ar[d]_f & B \ar[dl]^r \\ C & \ar@{}[ul]|(.7){\Leftarrow\nu}}\]
define a \emph{right extension of $f$ along $k$} if any 2-cell as displayed below-left factors uniquely through $\nu$ as displayed below-right:
\[    \vcenter{\xymatrix{ A \ar[d]_f \ar[r]^k \ar@{}[dr]|(.3){\Leftarrow\chi} & B \ar[dl]^g \\ C  & }} \mkern20mu = \mkern20mu \vcenter{ \xymatrix{ A \ar[r]^k \ar[d]_f & B \ar[dl]|r \ar@/^4ex/[dl]^g \\ C & \ar@{}[ul]|(.3){\Leftarrow\exists!}|(.7){\Leftarrow\nu}}}
\] Dually, a functor $\ell \colon B \to C$ and a 2-cell $\lambda \colon f \To \ell k$ define a \emph{left extension of $f$ along $k$} if any 2-cell as displayed below-left factors uniquely through $\lambda$ as displayed below-right:
\[    \vcenter{\xymatrix{ A \ar[d]_f \ar[r]^k \ar@{}[dr]|(.3){\Rightarrow\chi} & B \ar[dl]^g \\ C  & }} \mkern20mu = \mkern20mu \vcenter{ \xymatrix{ A \ar[r]^k \ar[d]_f & B \ar[dl]|\ell \ar@/^4ex/[dl]^g \\ C & \ar@{}[ul]|(.3){\Rightarrow\exists!}|(.7){\Rightarrow\lambda}}}
\] 
\end{defn}

In the 2-category $\h\Cat$, \ref{defn:right-ext} defines the usual right and left Kan extensions for functors between ordinary categories. Special cases of these, in turn, define adjunctions (by a dual to Exercise \ref{exs:adj-as-abs-lifting}) and limits and colimit (as right or left extensions of the diagram $f \colon A \to C$ along the functor $!\colon A \to 1$). However, in general it turns out that the universal property expressed by this naive notion of right extension is insufficiently strong. In particular, Definition \ref{defn:right-ext}, interpreted in the homotopy 2-category, does not define the correct notion of Kan extension for $\infty$-functors. Instead, the correction notion will be of \emph{pointwise right Kan extensions}.

In fact, we will give two equivalent definitions of pointwise Kan extensions, both of which make use of comma $\infty$-categories, our vehicle for encoding $\infty$-categorical universal properties. One of these could be stated immediately, but we instead delay it in order to first develop the prerequisite theory for the other. Specifically, our aim will be to describe the full universal property of the comma $\infty$-category construction: namely that an isofibration $(p_1,p_0) \colon f \comma g \tfib C \times B$ constructed from functors $C \xrightarrow{g} A \xleftarrow{f} B$ encodes a \emph{module} from $C$ to $B$, with $C$ acting covariantly ``on the left'' and $B$ acting contravariantly ``on the right.'' The calculus of modules describes the 2-dimensional structure into which modules most naturally assemble, which turns out to be a familiar setting for formal category theory, the scope of which includes pointwise Kan extensions.

\subsection{Arrow \texorpdfstring{$\infty$}{infinity}-categories define modules}

The prototypical examples of modules are comma $\infty$-categories. For simplicity, we specialize to the case of the arrow $\infty$-category $(p_1,p_0)\colon A^\cattwo \tfib A \times A$. Lemma \ref{lem:ess.unique.1-cell.ind} describes its universal property: 2-cells $\xymatrix{X \ar@/^2ex/[r]^a \ar@/_2ex/[r]_b \ar@{}[r]|{\Downarrow g} & A}$ with codomain $A$ correspond to generalized elements $\hat{g} \colon X \to A^\cattwo$, up to isomorphism over $A \times A$. By Lemma \ref{lem:equiv-to-comma}, this universal property characterizes the arrow $\infty$-category up to equivalence of isofibrations over $A \times A$. However, it does not capture the additional fact that 2-cells from $X$ to $A$ can be composed vertically
\[\vcenter{\xymatrix@R=30pt{  X \ar@/^5.5ex/[r]^x_{\Downarrow f} \ar@/_5.5ex/[r]_y^{\Downarrow h} \ar@/^2ex/[r]|a \ar@/_2ex/[r]|b \ar@{}[r]|{\Downarrow g}& A}} \qquad \qquad (h \cdot g) \cdot f = h \cdot g \cdot f = h \cdot (g \cdot f),\] 
 defining commuting contravariant and covariant actions on the domains and codomains of the 2-cell $g \colon a \To b$. 
Observe that the domain and codomain 1-cells for the 2-cell $g \colon a \To b$ can be recovered as the composites of a representing functor $\hat{g} \colon X \to A^\cattwo$ with the projection functors: $p_0 \hat{g} = a$ and $p_1 \hat{g} = b$.
One way to express these actions is to note that the domain-projection functor $p_0 \colon A^\cattwo \tfib A$ and the codomain-projection functor $p_1 \colon A^\cattwo \tfib A$ respectively define a \emph{cartesian fibration} and a \emph{cocartesian fibration}:  any 2-cells as displayed on the left-hand side of the pasting equalities below admit lifts as displayed on the right-hand sides:
\[ \vcenter{\xymatrix{ X \ar[r]^{\hat{g}} \ar[dr]_x & A^\cattwo \ar@{->>}[d]^{p_0} \ar@{}[dl]|(.3){\Uparrow f} \ar@{}[dr]|{\displaystyle ~=} & X \ar@/^2ex/[r]^{\hat{g}} \ar@/_2ex/[r]_{f^*(x)} \ar@{}[r]|{\Uparrow \chi_{f}} & A^\cattwo \ar@{->>}[d]^{p_0}  \\ & A & & A}}  \qquad \vcenter{\xymatrix{ X \ar[r]^{\hat{g}} \ar[dr]_y & A^\cattwo \ar@{->>}[d]^{p_1} \ar@{}[dl]|(.3){\Downarrow h} \ar@{}[dr]|{\displaystyle ~= } & X \ar@/^2ex/[r]^{\hat{g}} \ar@/_2ex/[r]_{h_*(y)} \ar@{}[r]|{\Downarrow \chi^{h}} & A^\cattwo \ar@{->>}[d]^{p_1}  \\ & A & & A}}
\]
Moreover, the lifted 2-cell $\chi_f$ can be chosen to project along $p_1 \colon A^\cattwo \tfib A$ to $\id_b$ and the lifted 2-cell $\chi^h$ can be chosen to project along $p_0 \colon A^\cattwo \tfib A$ to $\id_a$. 

In summary, the arrow $\infty$-category $A^\cattwo$ defines a \emph{module} from $A$ to $A$. The definition will make use of the fact that slices of $\infty$-cosmoi are again $\infty$-cosmoi.

\begin{defn}[{\refV{defn:sliced-cosmoi}}]\label{defn:sliced-cosmoi} 
If $\lcat{K}$ is any $\infty$-cosmos and $B \in \lcat{K}$ is any object, then there is an $\infty$-cosmos $\lcat{K}_{/B}$, the \emph{sliced $\infty$-cosmos of $\lcat{K}$ over $B$}, whose:
\begin{itemize}
\item objects are isofibrations $p \colon E \tfib B$ with codomain $B$;
\item mapping quasi-category from $p \colon E \tfib B$ to $q \colon F \tfib B$ is defined by  taking the pullback
\[
    \xymatrix@=1.5em{
      {\Fun_B(p,q)}\pbexcursion\ar[r]\ar@{->>}[d] &
      {\Fun(E,F)}\ar@{->>}[d]^{\Fun(E,q)} \\
      {\Del^0}\ar[r]_-{p} & {\Fun(E,B)}
    }
\]
  in simplicial sets;
\item isofibrations, equivalences, and trivial fibrations are created by the forgetful functor $\lcat{K}_{/B} \to \lcat{K}$;
\end{itemize}
and in which the simplicial limits are defined in the usual way for sliced simplicial categories.
\end{defn}

\begin{defn}\label{defn:module} In an $\infty$-cosmos $\lcat{K}$, a \emph{module $E$ from $A$ to $B$}, denoted by $\dmod{E}{A}{B}$, is given by an isofibration $(q,p) \colon E \tfib A \times B$ such that
  \begin{enumerate}[label=(\roman*)]
\item\label{itm:cartesian-on-the-right} $\vcenter{\xymatrix@=1em{ E \ar@{->>}[dr]_q \ar[rr]^-{(q,p)} & & A \times B \ar@{->>}[dl]^{\pi_1} \\ & A}}$ is a \emph{cartesian fibration} in  $\lcat{K}_{/A}$; informally, ``$B$ acts on the right of $E$, over $A$.'' 
\item\label{itm:cocartesian-on-the-left}  $\vcenter{\xymatrix@=1em{ E \ar@{->>}[dr]_p \ar[rr]^-{(q,p)} & & A \times B \ar@{->>}[dl]^{\pi_0} \\ & B}}$  is a \emph{cocartesian fibration} in $\lcat{K}_{/B}$; informally,  ``$A$ acts on the left of $E$, over $B$.'' 
\item\label{itm:groupoidal-fibers} $(q,p) \colon E \tfib A \times B$ is groupoidal as an object in $\lcat{K}_{/A \times B}$; this  means that any 2-cell $\xymatrix{X \ar@/^2ex/[r]^e \ar@/_2ex/[r]_{e'} \ar@{}[r]|{\Downarrow} & E}$ over an identity in $A \times B$ is an isomorphism, which implies in particular that  $(q,p) \colon E \tfib A \times B$ has groupoidal fibers.
\end{enumerate}
\end{defn}

\begin{ex}[{\refV{prop:hom-is-a-module}}]\label{ex:hom-is-a-module} For any $\infty$-category $A$, the arrow $\infty$-category defines a module $\dmod{A^\cattwo}{A}{A}$. The fact that $A^\cattwo \tfib A \times A$ is groupoidal is related to but stronger than the fact that each fiber over a pair of  elements in $A$, the hom-spaces of Definition \ref{defn:hom-space}, is a groupoidal $\infty$-category.
\end{ex}

\subsection{Cartesian and cocartesian fibrations}

To explain Definition \ref{defn:module}, we need to define what it means for a functor in an $\infty$-cosmos to be a \emph{cartesian fibration} or \emph{cocartesian fibration}. We will not actually require any of these details for out ultimate aim in this lecture, to initiate the theory of pointwise Kan extensions, but we include them because these fibration notions are of independent interest.

\begin{defn}[cartesian 2-cells]
A 2-cell $\chi \colon e' \Rightarrow e \colon A \to E$ in the homotopy 2-category of an $\infty$-cosmos is \emph{cartesian} for an isofibration $p \colon E \tfib B$ if and only if
\begin{enumerate}[label=(\roman*)]
  \item\label{itm:weak.cart.i} (induction) for any pair of 2-cells $\tau \colon e'' \Rightarrow e$ and $\gamma\colon pe''\Rightarrow pe'$ with $p \tau = p\chi \cdot \gamma$ there is some $\overline{\gamma} \colon e'' \Rightarrow e'$ with $p\overline\gamma = \gamma$ ($\bar\gamma$ lies over $\gamma$) and the property that $\tau = \chi \cdot \bar\gamma$.
  \item\label{itm:weak.cart.ii} (conservativity) for any 2-cell $\gamma \colon e' \Rightarrow e'$ if $\chi \cdot \gamma = \chi$ and $p\gamma$ is an identity then $\gamma$ is an isomorphism.
\end{enumerate}
\end{defn}

All isomorphisms with codomain $E$ are $p$-cartesian. The class of $p$-cartesian 2-cells is stable under composition and left cancelation (Lemmas~\refIV{lem:cart-arrows-compose} and~\refIV{lem:cart-arrows-cancel}).

\begin{defn}[cartesian fibration]\label{defn:cart-fib}
An isofibration  $p\colon E\tfib B$ is a {\em cartesian fibration\/} if and only if:
\begin{enumerate}[label=(\roman*)]
  \item Every 2-cell $\alpha \colon b \To pe$ has a $p$-cartesian lift $\chi_\alpha \colon \alpha^*(e)\To e$:
\[\vcenter{\xymatrix{ X \ar[r]^{e} \ar[dr]_b & E \ar@{->>}[d]^{p} \ar@{}[dl]|(.3){\Uparrow \alpha} \ar@{}[dr]|{\displaystyle ~=} & X \ar@/^2ex/[r]^{e} \ar@/_2ex/[r]_{\alpha^*(e)} \ar@{}[r]|{\Uparrow \chi_\alpha } &E \ar@{->>}[d]^{p}  \\ & B & & B}} \]
  \item The class of $p$-cartesian 2-cells for $p$ is closed under pre-composition by all 1-cells.
  \end{enumerate}
\end{defn}

Importantly, there is a ``model independent'' characterization of cartesian fibrations given in terms of adjunctions between commas. Any functor $p \colon E \to B$ induces functors between comma $\infty$-categories 
  \begin{equation*}
 \vcenter{\xymatrix@C=0.8em@R=1.2em{
    & {E}\ar[d]^-{i} & \\
    & {B\comma p}\ar@{->>}[dl]_{p_1}\ar@{->>}[dr]^{p_0} & \\
    {E}\ar[rr]_{p} && {B}
    \ar@{} "2,2";"3,2" |(0.6){\Leftarrow\phi}
  }} = 
  \vcenter{\xymatrix@C=0.8em@R=1.2em{
    & {E}\ar@{=}[dl]\ar@{->>}[dr]^{p} & \\
    {E}\ar[rr]_{p} && {B}
    \ar@{} "1,2";"2,2" |(0.6){=}
  }}   
  \mkern50mu
  \vcenter{\xymatrix@C=0.8em@R=1.2em{
    & {E^\cattwo}\ar[d]^-{k} & \\
    & {B\comma p}\ar@{->>}[dl]_{p_1}\ar@{->>}[dr]^{p_0} & \\
    {E}\ar[rr]_{p} && {B}
    \ar@{} "2,2";"3,2" |(0.6){\Leftarrow\phi}
  }} = 
  \vcenter{\xymatrix@C=0.8em@R=1.2em{
    & {E^\cattwo}\ar@{->>}_{q_1}[dl]\ar@{->>}[dr]^{pq_0} & \\
    {E}\ar[rr]_{p} && {B}
    \ar@{} "1,2";"2,2" |(0.6){\Leftarrow p\psi}
  }}
  \end{equation*}
 that are well-defined up to isomorphism over $E \times B$. 

\begin{thm}[{\refIV{thm:cart.fib.chars}}]\label{thm:cart.fib.chars} For an isofibration $p \colon E \tfib B$, the following are equivalent:
  \begin{enumerate}[label=(\roman*)]
    \item\label{itm:cart.fib.chars.i} $p$ is a cartesian fibration.
    \item\label{itm:cart.fib.chars.ii} The functor $i\colon E\to B\comma p$ admits a right adjoint which is fibered over $B$.
\[
      \xymatrix@R=2em@C=3em{
        {B\comma p}\ar@{->>}[dr]_{p_0} \ar@{-->}@/_0.6pc/[]!R(0.5);[rr]_{r}^{}="a" & &
        {E}\ar@{->>}[dl]^{p} \ar@/_0.6pc/[ll]!R(0.5)_{i}^{}="b" 
        \ar@{}"a";"b"|{\bot} \\
        & B &
      }
    \]
    \item\label{itm:cart.fib.chars.iii} The functor $k\colon E^\cattwo\to B\comma p$ admits a right adjoint right inverse, i.e., a right adjoint with invertible counit.
\[
    \xymatrix@C=6em{
      {B\comma p}\ar@{-->}@/_0.8pc/[]!R(0.6);[r]!L(0.45)_{\bar{r}}^{}="u" &
      {E^\cattwo}\ar@/_0.8pc/[]!L(0.45);[l]!R(0.6)_{k}^{}="t"
      \ar@{}"u";"t"|(0.6){\bot}
    }
  \]
  \end{enumerate}
\end{thm}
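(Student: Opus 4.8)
The plan is to prove the cycle of implications $\ref{itm:cart.fib.chars.i} \Rightarrow \ref{itm:cart.fib.chars.ii} \Rightarrow \ref{itm:cart.fib.chars.iii} \Rightarrow \ref{itm:cart.fib.chars.i}$, working throughout in the homotopy 2-category $\lcat{K}_2$ and exploiting the weak universal property of comma objects recorded in Observation \ref{obs:comma-UP} together with Exercise \ref{exs:adj-as-abs-lifting} relating adjunctions to absolute lifting diagrams. The key organizing principle is that an adjunction $i \dashv r$ fibered over $B$ is equivalent, by (the dual of) Exercise \ref{exs:adj-as-abs-lifting}, to an absolute lifting diagram exhibiting $i \colon E \to B\comma p$ as absolute left adjoint; and one translates between the three conditions by manipulating the canonical 2-cells $\phi$ and $p\psi$ displayed before the theorem statement.

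\textbf{Step 1: $\ref{itm:cart.fib.chars.i} \Rightarrow \ref{itm:cart.fib.chars.ii}$.} Assume $p$ is a cartesian fibration. I would construct the right adjoint $r \colon B\comma p \to E$ by applying the cartesian lifting property to the comma cone $\phi \colon p p_1 \To p_0$ over $B\comma p$: since $p$ is cartesian, $\phi$ (viewed as a 2-cell $p_0 = p p_1 \To p_0$ with target $p_0$, where the relevant lift is of $\phi$ itself against $p$) lifts to a $p$-cartesian 2-cell $\chi \colon r \To p_1$ with $pr = p_0$ and $p\chi = \phi$, and this defines $r$ on the universal comma and hence, by 1-cell induction applied to the representable situation, in general. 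One then checks the triangle identities: the unit $\id_E \To r i$ is built from conservativity (a $p$-cartesian 2-cell over an identity that composes appropriately is invertible — condition \ref{itm:weak.cart.ii}), and the counit $i r \To \id_{B\comma p}$ is obtained by 2-cell induction for $B\comma p$ from $\chi$ and an identity over $p_0$, using the induction clause \ref{itm:weak.cart.i} of cartesianness to see it satisfies the required factorization. Fiberedness over $B$ is immediate from $pr = p_0 = p\cdot p_1$ and $\chi$ lying over an appropriate identity. This is essentially Lemma \refIV{lem:adj.for.commas} combined with the definition of cartesian fibration.

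\textbf{Step 2: $\ref{itm:cart.fib.chars.ii} \Rightarrow \ref{itm:cart.fib.chars.iii}$.} Given the fibered right adjoint $r$ to $i \colon E \to B\comma p$, I would produce the right adjoint right inverse $\bar r$ to $k \colon E^\cattwo \to B\comma p$. The point is that $k$ factors as $E^\cattwo \to B\comma p$ via the functor induced by $p\psi$, and there is a comparison: the functor $k$ is, up to fibered equivalence over $E\times B$, the map induced on commas by $p$ applied to the arrow construction $A^\cattwo$ (using Example \ref{ex:hom-is-a-module}); more concretely, $E^\cattwo \simeq E \comma E$ in the appropriate sense and $k$ is the evident map. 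I would build $\bar r$ by taking the adjoint transpose: given an object of $B\comma p$, apply $r$ to land in $E$, then use that an element of $E$ together with the cartesian 2-cell $\chi$ lifting $\phi$ assembles into an arrow in $E$, i.e., an element of $E^\cattwo$ — precisely via the weak 2-universal property of $E^\cattwo$ from Proposition \ref{prop:weak-cotensors} / Lemma \ref{lem:ess.unique.1-cell.ind}. Invertibility of the counit $k\bar r \To \id_{B\comma p}$ follows because the original counit $ir \To \id_{B\comma p}$ and the construction of $\bar r$ are designed so that the composite recovers the identity cone; one checks this using conservativity in Observation \ref{obs:comma-UP}(iii). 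The adjunction triangle identities transfer from those of $i \dashv r$.

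\textbf{Step 3: $\ref{itm:cart.fib.chars.iii} \Rightarrow \ref{itm:cart.fib.chars.i}$.} Assume $k \colon E^\cattwo \to B\comma p$ has a right adjoint right inverse $\bar r$. To verify $p$ is a cartesian fibration, I must produce $p$-cartesian lifts and show the cartesian 2-cells are closed under precomposition. Given a 2-cell $\alpha \colon b \To pe \colon X \to B$, 1-cell induction for $B\comma p$ produces a functor $\hat\alpha \colon X \to B\comma p$ over $E\times B$ representing $\alpha$ (i.e. with $p_1\hat\alpha = e$, $p_0\hat\alpha = b$, $\phi\hat\alpha = \alpha$); then $\bar r \hat\alpha \colon X \to E^\cattwo$ represents, via the universal arrow 2-cell $\phi$ of \eqref{eq:generic-arrow}, a 2-cell $\chi_\alpha \colon \alpha^*(e) \To e$ in $E$, and the invertibility of the counit $k\bar r \cong \id$ guarantees $p\chi_\alpha = \alpha$. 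To see $\chi_\alpha$ is $p$-cartesian, I would verify the induction and conservativity clauses by transposing across the adjunction $k \dashv \bar r$ and invoking the weak universal property (2-cell induction and conservativity) of both $E^\cattwo$ and $B\comma p$; the RARI condition (invertible counit) is exactly what forces the factorizations to be essentially unique. Closure under precomposition follows because $\bar r$ is a functor, so restriction along any $1$-cell $x \colon X' \to X$ commutes with the constructions.

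\textbf{Main obstacle.} The hard part is Step 3, specifically extracting the full strength of $p$-cartesianness (both the induction clause \ref{itm:weak.cart.i} with its compatibility condition $p\tau = p\chi\cdot\gamma$, and the conservativity clause \ref{itm:weak.cart.ii}) purely from the weak — rather than strict — 2-universal properties of $E^\cattwo$ and $B\comma p$. Because representing functors into commas are only unique up to fibered isomorphism (Lemma \ref{lem:ess.unique.1-cell.ind}), every "uniqueness" statement must be carefully phrased up to a controlled isomorphism and one must repeatedly feed these isomorphisms through 2-cell conservativity; keeping the bookkeeping of which 2-cells lie over which identities under control is the genuine technical work. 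The invertibility of the counit of the RARI is the linchpin that makes this bookkeeping close up, and getting the interaction between that counit and the comma cone $\phi$ exactly right is where care is most needed.
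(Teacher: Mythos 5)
This paper does not actually prove the statement: it is quoted from \refIV{thm:cart.fib.chars}, whose proof is organized around the canonical adjunctions relating $E$, $E^\cattwo$ and $B\comma p$ (\refIV{lem:adj.for.commas}) with the technical verifications deferred to the appendix \refIV{sec:appendix}, so that is what I compare against. Your toolkit is the same --- cartesian lift of the comma cone of $B\comma p$, whiskered counits, and the weak 2-universal properties of $B\comma p$ and $E^\cattwo$ from \ref{obs:comma-UP} and \ref{prop:weak-cotensors} --- but you orient the middle implication the laborious way. Deriving \ref{itm:cart.fib.chars.ii} from \ref{itm:cart.fib.chars.iii} is nearly formal: $i \cong k j$, where $j \colon E \to E^\cattwo$ is induced by $!\colon \cattwo \to \catone$, and cotensoring the adjunction between $\catone$ and $\cattwo$ into $E$ gives $j \dashv q_0$, so $i \cong kj \dashv q_0\bar{r}$ and only fiberedness over $B$ needs checking. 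Your chosen direction \ref{itm:cart.fib.chars.ii}$\Rightarrow$\ref{itm:cart.fib.chars.iii} instead requires constructing the unit of $k \dashv \bar{r}$ by hand; ``the triangle identities transfer'' is not an argument.

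The genuine gap is your repeated claim that one ``checks the triangle identities.'' Because the universal properties in play are only smothering, a 2-cell valued in $B\comma p$ or $E^\cattwo$ is \emph{not} determined by its projections, so the unit $\eta=(\chi i)^{-1}$ and the counit $\varepsilon$ induced in Step 1 satisfy the triangle identities only up to invertible 2-cells lying over identities (for $\varepsilon i\cdot i\eta$ this follows from comma conservativity; for $r\varepsilon\cdot\eta r$ you need the conservativity clause of cartesianness of $\chi$). You must then invoke the lemma that unit/counit data with invertible triangle composites yields an adjunction (\refI{lem:adjunction.from.isos}) and check the correction can be made fibered (cf.\ \refI{subsec:fibred.adjunction}). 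Your ``main obstacle'' paragraph locates this issue in Step 3 only, but it is already the crux of Steps 1 and 2. Relatedly, Step 2 as written borrows ``the cartesian 2-cell $\chi$ lifting $\phi$'' from Step 1, i.e.\ from hypothesis \ref{itm:cart.fib.chars.i}; for your cycle to close you must extract $\chi:=p_1\varepsilon$ from the fibered adjunction alone (it lies over $\phi$ because $p_0\varepsilon=\id$), with no cartesianness available.

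Two further points in Step 3: $p(\psi\bar{r}\hat\alpha)=\phi k\bar{r}\hat\alpha$ equals $\alpha$ only after transporting along the invertible counit $k\bar{r}\cong\id$, so the lift must be corrected by an isomorphism; and Definition \ref{defn:cart-fib} demands that the \emph{whole class} of $p$-cartesian 2-cells be closed under precomposition, whereas functoriality of $\bar{r}$ only handles the chosen lifts --- you also need that every $p$-cartesian 2-cell is isomorphic over an identity to a chosen one and that cartesianness is invariant under such isomorphisms. Finally, a small slip: the comma cone of $B\comma p$ runs $\phi\colon p_0\To p p_1$, not $p p_1 \To p_0$.
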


\emph{Cocartesian fibrations} are defined dually, by reversing the direction of the 2-cells in Definition \ref{defn:cart-fib} and of the adjoints in Theorem \ref{thm:cart.fib.chars}. A cartesian or cocartesian fibration $p \colon E \tfib B$ that defines a groupoidal object in the sliced $\infty$-cosmos over $B$ is called a \emph{groupoidal cartesian fibration} or \emph{groupoidal cocartesian fibration}.\footnote{In the $\infty$-cosmos of quasi-categories, these coincide with the \emph{right fibrations} and \emph{left fibrations} introduced by Joyal \cite{Joyal:2002:QuasiCategories}; see \refIV{ex:quasi-groupoidal-cart}.} A groupoidal cartesian fibration is a cartesian fibration with groupoidal fibers. Condition \ref{itm:gpd.cart.fib.chars.iii} of the following theorem provides a ``model independent'' characterization of groupoidal cartesian fibrations.

\begin{thm}[{\refV{thm:gpd.cart.fib.chars}}]\label{thm:gpd.cart.fib.chars} For an isofibration $p \colon E \tfib B$, the following are equivalent:
  \begin{enumerate}[label=(\roman*)]
    \item\label{itm:gpd.cart.fib.chars.i} $p$ is a groupoidal cartesian fibration.
    \item\label{itm:gpd.cart.fib.chars.ii} Every 2-cell $\alpha\colon b\Rightarrow pe\colon X\to B$ has an essentially unique lift $\chi\colon e'\Rightarrow e\colon X\to E$, where the essential uniqueness is up to composition into the domain of $\chi$ with an invertible 2-cell that projects along $p$ to an identity.
     \item\label{itm:gpd.cart.fib.chars.iii} The functor $k\colon E^\cattwo\to B\comma p$ is an equivalence.
\end{enumerate}
\end{thm}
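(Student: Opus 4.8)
The plan is to prove the equivalence of the three characterizations of a groupoidal cartesian fibration by combining Theorem~\ref{thm:cart.fib.chars} with the structural consequences of groupoidality. Recall that a groupoidal cartesian fibration is, by definition, a cartesian fibration $p \colon E \tfib B$ that is additionally groupoidal as an object of $\lcat{K}/B$; by the characterization in Definition~\ref{defn:groupoidal-object}, this means every $2$-cell with codomain $E$ lying over an identity $2$-cell in $B$ is invertible. First I would establish \ref{itm:gpd.cart.fib.chars.i}$\Leftrightarrow$\ref{itm:gpd.cart.fib.chars.ii}: given \ref{itm:gpd.cart.fib.chars.i}, a $2$-cell $\alpha \colon b \Rightarrow pe$ has a $p$-cartesian lift $\chi \colon e' \Rightarrow e$ by Definition~\ref{defn:cart-fib}, and the groupoidality of $p$ over $B$ upgrades the induction/conservativity properties of $\chi$ into the \emph{essential uniqueness} asserted in \ref{itm:gpd.cart.fib.chars.ii}: any other lift $\chi'$ of $\alpha$ with the same codomain differs from $\chi$ by precomposition with a $2$-cell lying over an identity in $B$, which is then automatically invertible. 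Conversely, the essential-uniqueness clause of \ref{itm:gpd.cart.fib.chars.ii} directly forces the conservativity condition \ref{itm:weak.cart.ii} defining cartesian $2$-cells, so every lift is cartesian, giving a cartesian fibration; and specializing \ref{itm:gpd.cart.fib.chars.ii} to $2$-cells $\alpha$ that are identities on objects shows that any endo-$2$-cell over an identity is invertible, which is groupoidality over $B$.

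Next I would address \ref{itm:gpd.cart.fib.chars.i}$\Rightarrow$\ref{itm:gpd.cart.fib.chars.iii}. Assuming $p$ is a groupoidal cartesian fibration, Theorem~\ref{thm:cart.fib.chars}\ref{itm:cart.fib.chars.iii} already gives that $k \colon E^\cattwo \to B \comma p$ admits a right adjoint right inverse $\bar r$, so the counit $k\bar r \To \id$ is invertible and it remains only to show the unit $\id_{E^\cattwo} \To \bar r k$ is invertible, i.e., that $\bar r k \cong \id$. Because all the functors involved and the adjunction are fibered over $E \times B$ (via $(q_1, pq_0)$ on $E^\cattwo$ and $(p_1, p_0)$ on $B \comma p$), it suffices to show the unit $2$-cell lies over an identity $2$-cell in $E \times B$: once that is established, the groupoidality of the arrow $\infty$-category $E^\cattwo$ over $E \times E$ — or more precisely of the relevant sliced object — forces the unit to be an isomorphism. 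Concretely, one whiskers the unit with the two projections $q_1, q_0 \colon E^\cattwo \tfib E$; the component over $q_1$ is an identity because $\bar r k$ preserves the codomain leg on the nose by the right-inverse property, and the component over $q_0$ is an isomorphism projecting along $p$ to an identity, hence invertible by groupoidality of $p$ over $B$. Applying the conservativity built into the arrow $\infty$-category (the analogue of Observation~\ref{obs:comma-UP}(iii) for $E^\cattwo$) then shows the unit itself is invertible, so $k$ is an equivalence.

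Finally, for \ref{itm:gpd.cart.fib.chars.iii}$\Rightarrow$\ref{itm:gpd.cart.fib.chars.i}: if $k \colon E^\cattwo \to B \comma p$ is an equivalence, then in particular it admits a right adjoint with invertible counit (any equivalence does, by Exercise~\ref{exs:adjoint-equivalence} and Corollary~\ref{cor:equivs-are-adjoints}), which is moreover fibered over $B$ since the equivalence is; so Theorem~\ref{thm:cart.fib.chars}\ref{itm:cart.fib.chars.iii} gives that $p$ is a cartesian fibration. To see that $p$ is groupoidal over $B$, I would transport the groupoidality of $E^\cattwo$ across the fibered equivalence $k$: since $E^\cattwo \tfib E \times E$ is groupoidal (Example~\ref{ex:hom-is-a-module}), and $k$ identifies $E^\cattwo$ with $B \comma p$ over $E \times B$ via the span map that restricts one copy of $E$ along $p$, the object $B \comma p \tfib E \times B$ inherits a groupoidality property; pulling back along $\id_E \times e \colon E \to E \times B$ (or rather comparing with the appropriate comma encoding $p$-lifts) then yields that $2$-cells into $E$ over identities in $B$ are invertible. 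The main obstacle will be the bookkeeping in \ref{itm:gpd.cart.fib.chars.i}$\Rightarrow$\ref{itm:gpd.cart.fib.chars.iii}: one must carefully track the fiberedness of the right adjoint right inverse over $E \times B$ (not merely over $B$) so that the groupoidality of $E^\cattwo$ and of $p$ can be deployed to upgrade "right adjoint right inverse" to "adjoint equivalence" — this is exactly the place where the extra hypothesis beyond Theorem~\ref{thm:cart.fib.chars} does real work, and it mirrors the classical fact that a fully faithful reflection onto a full subcategory is an equivalence precisely when every object lies in the subcategory.
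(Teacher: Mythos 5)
Your overall architecture is sound and is essentially the one used in the source the notes cite for this theorem (the notes themselves state it without proof, deferring to RV-V/RV-IV): prove (i)$\Leftrightarrow$(ii) by hand, and get (i)$\Rightarrow$(iii) by upgrading the right adjoint right inverse of Theorem \ref{thm:cart.fib.chars}\ref{itm:cart.fib.chars.iii} to an adjoint equivalence, using that the unit lies over an identity in $E\times B$, groupoidality of $p$ to invert its $q_0$-whiskering, and 2-cell conservativity of the weak cotensor $E^\cattwo$ --- and you rightly flag that the fiberedness of that RARI over $E\times B$ is not recorded in the statement of Theorem \ref{thm:cart.fib.chars} here and must be extracted from its proof. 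Two smaller points: in (i)$\Rightarrow$(iii) it is the conservativity of the weak 2-universal property of $E^\cattwo$, not its groupoidality over $E\times E$, that inverts the unit (your next sentence says this correctly, but the earlier appeal to ``groupoidality of $E^\cattwo$ over $E\times E$'' is a red herring, since the unit lies over an identity in $E\times B$, not $E\times E$); and in (ii)$\Rightarrow$(i) conservativity alone does not make the essentially unique lift $\chi$ a cartesian 2-cell --- you must also verify the induction property (given $\tau\colon e''\To e$ and $\gamma_0$ with $p\tau=p\chi\cdot\gamma_0$, lift $\gamma_0$ to $\chi^0$ with codomain $e'$, then compare the two lifts $\tau$ and $\chi\cdot\chi^0$ of $p\tau$ via essential uniqueness) and stability under precomposition by 1-cells. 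These are routine but not automatic, and the groupoidality clause of (i) should be obtained first (both $\gamma$ and $\id_e$ lift $\id_{pe}$), after which conservativity is immediate.

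The genuine gap is the groupoidality half of (iii)$\Rightarrow$(i), where your transport runs in the wrong direction. The object $(p_1,p_0)\colon B\comma p\tfib E\times B$ is \emph{automatically} groupoidal: it is the pullback of the module $B^\cattwo\tfib B\times B$ along $p\times\id_B$ (Proposition \ref{prop:module-pullback} and Example \ref{ex:hom-is-a-module}), so saying it ``inherits a groupoidality property'' from $E^\cattwo$ proves nothing about $p$; and the groupoidality of $E^\cattwo$ over $E\times E$ is not the relevant slicing, since $k$ is fibered over $E\times B$ via $(q_1,pq_0)$ and $(p_1,p_0)$. What you need is the reverse transfer: because $k$ is an equivalence over $E\times B$, the object $(q_1,pq_0)\colon E^\cattwo\tfib E\times B$ inherits groupoidality from $B\comma p$. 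Then, given $\gamma\colon e'\To e\colon X\to E$ with $p\gamma$ an identity, represent it by $\hat\gamma\colon X\to E^\cattwo$ and use 2-cell induction for $E^\cattwo$ to build $\Theta\colon\hat\gamma\To je$ (where $j\colon E\to E^\cattwo$ represents identities) with $q_1\Theta=\id_e$ and $q_0\Theta=\gamma$; the compatibility condition reads $\id_e\cdot\gamma=\id_e\cdot\gamma$, so $\Theta$ exists, it lies over an identity in $E\times B$, hence is invertible, hence $\gamma=q_0\Theta$ is invertible. Alternatively, and more in the spirit of these notes, prove (iii)$\Rightarrow$(ii) directly: by Lemma \ref{lem:equiv-to-comma}\ref{itm:equiv-to-comma-ii} the fibered equivalence $k$ transfers the weak comma universal property of $B\comma p$ to $E^\cattwo$, so 1-cell induction produces a lift of any $\alpha\colon b\To pe$, and Lemma \ref{lem:ess.unique.1-cell.ind} gives its essential uniqueness; your (ii)$\Rightarrow$(i) then closes the cycle without any transport argument.
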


Recall that a functor $F \colon \lcat{K} \to \lcat{L}$ of $\infty$-cosmoi induces a 2-functor between the homotopy 2-categories that preserves equivalences, adjunctions, and commas, and if $F$ is a weak equivalence of $\infty$-cosmoi, then these notions are also reflected; see Proposition \ref{prop:induced-2-functor}, Proposition \ref{prop:we}, and Theorem \ref{thm:model-independence}. It is straightforward to extend  Theorem \ref{thm:model-independence}\ref{itm:model-independence-ii} to see that the existence of a right adjoint right inverse is preserved and reflected as well. By Theorem \ref{thm:cart.fib.chars}\ref{itm:cart.fib.chars.iii} and Theorem \ref{thm:gpd.cart.fib.chars}\ref{itm:gpd.cart.fib.chars.iii} we conclude:

\begin{cor}\label{cor:model-indep-fib} $\quad$ 
\begin{enumerate}[label=(\roman*)]
\item Functors of $\infty$-cosmoi preserve cartesian fibrations, cocartesian fibrations, group\-oid\-al cartesian fibrations, groupoidal cocartesian fibrations, and modules.
\item Weak equivalences of $\infty$-cosmoi both preserve and reflect cartesian fibrations, cocartesian fibrations, groupoidal cartesian fibrations, groupoidal cocartesian fibrations, and modules: an isofibration in the domain is a functor of this type if and only if its image is.
\end{enumerate}
\end{cor}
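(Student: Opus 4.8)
The plan is to reduce the corollary to the "model independent" characterizations of the relevant fibration notions, which have already been established in Theorems~\ref{thm:cart.fib.chars}, \ref{thm:gpd.cart.fib.chars}, and the definition of modules in \ref{defn:module}. First I would recall the ingredients: a functor of $\infty$-cosmoi induces a 2-functor on homotopy 2-categories that preserves equivalences, adjunctions, isofibrations, trivial fibrations, groupoidal objects, products, and comma objects (Proposition~\ref{prop:induced-2-functor}), and a weak equivalence of $\infty$-cosmoi additionally reflects equivalences, equivalences of functors, groupoidal objects, and comma objects (Proposition~\ref{prop:we} and Theorem~\ref{thm:model-independence}). The only genuinely new observation needed is that the property ``admits a right adjoint right inverse'' (equivalently, a right adjoint with invertible counit) is also preserved and reflected; I would argue this by combining the preservation/reflection of adjunctions with the preservation/reflection of invertible 2-cells, noting that a counit $fu \To \id$ being an isomorphism in $\lcat{K}_2$ is detected after applying a weak equivalence by Proposition~\ref{prop:we}, since isomorphisms of functors are reflected (this uses that $\iso$-cotensors, hence the notion of invertible 2-cell, are built from the limits preserved by functors of $\infty$-cosmoi).

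With that in hand, the proof of part~(i) is immediate: given a cartesian fibration $p \colon E \tfib B$ in $\lcat{K}$, characterization~\ref{itm:cart.fib.chars.iii} of Theorem~\ref{thm:cart.fib.chars} says the induced functor $k \colon E^\cattwo \to B \comma p$ admits a right adjoint right inverse; applying $F$ and using that $F$ preserves comma objects, simplicial cotensors by $\cattwo$, isofibrations, adjunctions, and invertible 2-cells, we get that $Fk \colon (FE)^\cattwo \to FB \comma Fp$ admits a right adjoint right inverse, whence $Fp$ is a cartesian fibration. The cocartesian case is the formal dual. For groupoidal cartesian fibrations I would instead invoke characterization~\ref{itm:gpd.cart.fib.chars.iii} of Theorem~\ref{thm:gpd.cart.fib.chars}: $k \colon E^\cattwo \to B \comma p$ is an equivalence, and $F$ preserves equivalences and commas, so $Fk$ is again an equivalence, giving the conclusion; groupoidal cocartesian fibrations are dual. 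Finally, modules: a module $\dmod{E}{A}{B}$ is by \ref{defn:module} an isofibration $(q,p) \colon E \tfib A\times B$ that is a cartesian fibration in $\lcat{K}/A$, a cocartesian fibration in $\lcat{K}/B$, and groupoidal in $\lcat{K}/(A\times B)$. Since a functor of $\infty$-cosmoi induces functors on sliced $\infty$-cosmoi $\lcat{K}/A \to \lcat{L}/FA$ etc.\ (these send the slicing limits appropriately, as the sliced cosmos structure is built from pullbacks and cotensors preserved by $F$), each of the three defining conditions is preserved by the previous cases together with preservation of groupoidal objects.

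For part~(ii), the reflection statements, the argument is parallel but now leans on Proposition~\ref{prop:we}: if $F$ is a weak equivalence of $\infty$-cosmoi and $\phi \colon E \tfib B$ is an isofibration in $\lcat{K}$ whose image $F\phi$ is a cartesian fibration in $\lcat{L}$, then $F(k) = k_{F\phi}$ is a functor with a right adjoint right inverse; since $F$ reflects comma objects, adjunctions, equivalences, and invertible 2-cells, the original $k_\phi \colon E^\cattwo \to B \comma \phi$ must likewise admit a right adjoint right inverse, so $\phi$ is a cartesian fibration by Theorem~\ref{thm:cart.fib.chars}. The groupoidal versions use reflection of equivalences via Theorem~\ref{thm:gpd.cart.fib.chars}\ref{itm:gpd.cart.fib.chars.iii}, and the dual cases and the module case follow as before, the module case additionally using that the functors $\lcat{K}/A \to \lcat{L}/FA$ induced by a weak equivalence are again weak equivalences of $\infty$-cosmoi (so that the sliced fibration and groupoidality conditions are reflected).

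The main obstacle I anticipate is the auxiliary claim that ``right adjoint right inverse'' is preserved and reflected by functors and weak equivalences of $\infty$-cosmoi. Preservation is painless — any 2-functor preserves adjunctions, and preservation of the invertibility of the counit follows from preservation of the $\iso$-cotensor used to represent invertible 2-cells. Reflection is the delicate point: I need that if $F$ is a weak equivalence then $F$ reflects invertibility of a 2-cell, which is exactly the content underlying Proposition~\ref{prop:we}\ref{itm:we-equivs} applied fibrewise (an invertible 2-cell is a trivial fibration $E^\iso \tfib E^\cattwo$-style condition, detected by the local equivalence on mapping quasi-categories). Everything else is a bookkeeping exercise in transporting the three characterizing conditions across $F$ and, for the module case, checking that the sliced $\infty$-cosmos structure is suitably functorial — which is essentially immediate from \ref{defn:sliced-cosmoi} since all the relevant limits and the isofibration class are created from $\lcat{K}$.
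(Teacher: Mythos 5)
Your proposal follows essentially the same route as the paper: reduce each fibration notion to the comma characterizations of Theorem~\ref{thm:cart.fib.chars}\ref{itm:cart.fib.chars.iii} and Theorem~\ref{thm:gpd.cart.fib.chars}\ref{itm:gpd.cart.fib.chars.iii}, observe that right adjoint right inverses (like adjunctions, equivalences, and commas) are preserved by functors and preserved and reflected by weak equivalences of $\infty$-cosmoi, and then deduce the module statement by applying these facts together with Propositions \ref{prop:induced-2-functor} and \ref{prop:we} to the induced sliced functors of $\infty$-cosmoi. Your added care about which slices are needed and about the sliced functor of a weak equivalence being again a weak equivalence (cf.\ Proposition \ref{prop:sliced-we}) is consistent with, and slightly more explicit than, the paper's argument.
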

\begin{proof}
The preservation and reflection of a module from $A$ to $B$ follows from the statements concerning fibrations together with Propositions \ref{prop:induced-2-functor} and \ref{prop:we}\ref{itm:we-groupoidal} applied to the sliced functor of $\infty$-cosmoi $\lcat{K}_{/A\times B} \to \lcat{L}_{/FA \times FB}$.
\end{proof}

\subsection{The calculus of modules}

The calculus of modules between $\infty$-categories bears a strong resemblance to the calculus of (bi)modules between unital rings, with functors between $\infty$-categories playing the role of ring homomorphisms.\footnote{In more detail, unital rings, ring homomorphisms, bimodules, and module maps define a \emph{proarrow equipment}, in the sense of Wood \cite{wood:proI}. This can be seen as a special case of the prototypical equipment comprised of $\lcat{V}$-categories, $\lcat{V}$-functors, $\lcat{V}$-modules, and $\lcat{V}$-natural transformations between then, for any closed symmetric monoidal category $\lcat{V}$. The equipment for rings is obtained from the case where $\lcat{V}$ is the category of abelian groups by restricting to abelian group enriched categories with a single object.}

\begin{center}
\fbox{\begin{tabular}{ccc}
unital rings & $A$ & $\infty$-categories \\
ring homomorphisms & $\vcenter{\xymatrix{ A \ar[d]_f \\ B}}$ & $\infty$-functors \\
bimodules between rings & $\xymatrix{ A \ar[r]|\mid^E & B}$ & modules between $\infty$-categories \\
module maps & $\raisebox{.75cm}{ \xymatrix{A' \ar[d]_a \ar@{}[dr]|{\Downarrow} \ar[r]|\mid^{E'} & B' \ar[d]^b \\ A \ar[r]|\mid_E & B}}$ & module maps 
\end{tabular}}
\end{center}

Our first result, analogous to restriction of scalars, is that modules can be pulled back. 

\begin{prop}[{\refV{prop:two-sided-pullback}}]\label{prop:module-pullback} If $(q,p) \colon E \tfib A \times B$ defines a module from $A$ to $B$ and $a \colon A' \to A$ and $b \colon B' \to B$ are any functors then the pullback
\[ \xymatrix{ E(b,a) \ar@{->>}[d] \ar[r] \pbexcursion & E \ar@{->>}[d]^{(q,p)} \\ A' \times B' \ar[r]_{a \times b} & A \times B}\] defines a module $E(b,a)$ from $A'$ to $B'$.
\end{prop}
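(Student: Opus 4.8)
The plan is to verify, one at a time, the three conditions of Definition \ref{defn:module} for the isofibration $E(b,a) \tfib A' \times B'$ obtained by pulling $(q,p) \colon E \tfib A \times B$ back along $a \times b$. In each case the relevant structure on $E$ is transported to $E(b,a)$ by factoring $a \times b \colon A' \times B' \to A \times B$ and applying pullback functors between sliced $\infty$-cosmoi. Recall from \refIV{prop:pullback-functors} that any functor $c \colon C' \to C$ in $\lcat{K}$ induces, by pullback, a functor of $\infty$-cosmoi $c^* \colon \lcat{K}/C \to \lcat{K}/C'$ --- the point being that objects of $\lcat{K}/C$ are isofibrations and so pull back along an arbitrary $c$, and that sliced simplicial limits are computed as in $\lcat{K}$. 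By Proposition \ref{prop:induced-2-functor} and Corollary \ref{cor:model-indep-fib}, such a $c^*$ preserves groupoidal objects, cartesian fibrations, and cocartesian fibrations. Throughout, the bookkeeping that matters is that every pullback in sight is created by the relevant forgetful functor down to $\lcat{K}$, so that iterated pullbacks formed in various slices agree on the nose with the single pullback $E(b,a)$, and that the evident comparison maps --- for instance $a^*(A \times B) \cong A' \times B$ as objects over $A'$ --- are isomorphisms.

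Condition \ref{defn:module}\ref{itm:groupoidal-fibers} is essentially immediate. The functor of $\infty$-cosmoi $(a \times b)^* \colon \lcat{K}/(A \times B) \to \lcat{K}/(A' \times B')$ carries the object $(q,p) \colon E \tfib A \times B$ to $E(b,a) \tfib A' \times B'$, and since $E$ is groupoidal in $\lcat{K}/(A \times B)$ by hypothesis and functors of $\infty$-cosmoi preserve groupoidal objects (Proposition \ref{prop:induced-2-functor}), $E(b,a)$ is groupoidal in $\lcat{K}/(A' \times B')$.

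For condition \ref{defn:module}\ref{itm:cartesian-on-the-right} I would factor $a \times b$ as $A' \times B' \xrightarrow{\id_{A'} \times b} A' \times B \xrightarrow{a \times \id_B} A \times B$ and pull back in two stages. First, applying the functor of $\infty$-cosmoi $a^* \colon \lcat{K}/A \to \lcat{K}/A'$ to the cartesian fibration $(q,p) \colon E \tfib A \times B$ of $\lcat{K}/A$ (this is hypothesis \ref{defn:module}\ref{itm:cartesian-on-the-right} for $E$), and using the isomorphism $a^*(A \times B) \cong A' \times B$ over $A'$, Corollary \ref{cor:model-indep-fib} shows that the pullback of $E$ along $a \times \id_B$ is a cartesian fibration $E(\id_B, a) \tfib A' \times B$ in $\lcat{K}/A'$. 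Second, $\id_{A'} \times b \colon A' \times B' \to A' \times B$ is a morphism of $\lcat{K}/A'$, and the pullback of the cartesian fibration $E(\id_B, a)$ along it, formed inside the $\infty$-cosmos $\lcat{K}/A'$, is again a cartesian fibration in $\lcat{K}/A'$ by the pullback-stability statement \refIV{prop:cart-fib-pullback}; its total object is $E(b,a)$ and its codomain is $A' \times B'$, giving exactly condition \ref{defn:module}\ref{itm:cartesian-on-the-right} for $E(b,a)$. Condition \ref{defn:module}\ref{itm:cocartesian-on-the-left} follows by the dual argument: factor $a \times b$ through $A \times B'$ instead, use the functor of $\infty$-cosmoi $b^* \colon \lcat{K}/B \to \lcat{K}/B'$ in place of $a^*$, and invoke the evident dual of \refIV{prop:cart-fib-pullback} for cocartesian fibrations.

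The argument is short, and I expect the real effort to lie not in any idea but in the two routine-but-finicky verifications flagged above: that pullback along an arbitrary --- not necessarily isofibration --- functor genuinely defines a functor of $\infty$-cosmoi between the slices, and that one has kept scrupulous track, at each stage, of which sliced $\infty$-cosmos a given fibration assertion is being made in. In particular one must confirm that the canonical comparisons relating $(a \times b)^* E$ to the iterated pullbacks $(\id_{A'} \times b)^* (a \times \id_B)^* E$ and $(a \times \id_{B'})^* (\id_A \times b)^* E$, and relating $a^*(A \times B)$ and $b^*(A \times B)$ to $A' \times B$ and $A \times B'$, are isomorphisms over the correct bases. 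None of this is deep, but it is what needs to be written with care.
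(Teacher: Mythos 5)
Your proof is correct and takes essentially the same route as the cited proof of \refV{prop:two-sided-pullback}: factor $a\times b$ and pull back in two stages, using the sliced pullback functors of $\infty$-cosmoi together with pullback-stability of (co)cartesian fibrations (\refIV{prop:cart-fib-pullback}) applied in $\lcat{K}/A'$ and $\lcat{K}/B'$, and deduce condition \ref{defn:module}\ref{itm:groupoidal-fibers} from preservation of groupoidal objects by $(a\times b)^*$. The identifications you flag---that the sliced pullbacks are created in $\lcat{K}$ and that $a^*(A\times B)\cong A'\times B$ over $A'$, so the iterated pullbacks agree with $E(b,a)$---are exactly the routine checks the original argument relies on as well, so nothing is missing.
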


\begin{ex} Recall that comma $\infty$-categories are defined to be pullbacks of arrow $\infty$-categories.    \[ \xymatrix@=2.5em{
      {f\comma g}\pbexcursion \ar[r]\ar@{->>}[d]_{(p_1,p_0)} &
      {A^\cattwo} \ar@{->>}[d]^{(p_1,p_0)} \\
      {C\times B} \ar[r]_-{g\times f} & {A\times A}
    }\]
Since we know from Example \ref{ex:hom-is-a-module} that $A^\cattwo$ defines a module $\dmod{A^\cattwo}{A}{A}$, Proposition \ref{prop:module-pullback} implies that $f\comma g$ defines a module  $\dmod{f \comma g}{C}{B}$.
\end{ex}

As is the case for rings, horizontal composition of modules between $\infty$-categories is a complicated operation. For  general modules $\dmod{E}{A}{B}$ and $\dmod{F}{B}{C}$,  the pullback
\[ \xymatrix@!=5pt{ & & E \times_B F \pbdiamond \ar@{->>}[dl]_-{\pi_1} \ar@{->>}[dr]^-{\pi_0} \\  & E \ar@{->>}[dl]_q \ar@{->>}[dr]^p &  & F \ar@{->>}[dl]_s \ar@{->>}[dr]^r \\ A  & & B & & C}\] defines an isofibration $E \times_B F \tfib A \times C$ that is a groupoidal cartesian fibration in the slice over $A$ and a groupoidal cocartesian fibration in the slice over $C$. However, it fails to be a groupoidal object over $A \times C$, as can be seen by considering the case $E=F=A^\cattwo$. We use the notation $E \times_B F$ to denote this pullback construction, reserving $E \otimes_B F$ for special cases in which there is a module from $A$ to $C$ that can be recognized as the horizontal composite. For particular types of $\infty$-categories, general composite modules  $E \otimes_B F$  can be defined via a ``fiberwise coinverter'' construction, but to do so requires that we leave the $\infty$-cosmos axiomatization, which does not provide for any colimits.

Rather than leave the axiomatization in search of a general composition formula, it turns out to be simpler to do without it. There is a natural categorical framework into which modules assemble, even without general horizontal composites, that turns out to be sufficient for our real goal: developing a theory of pointwise Kan extensions. Rings, ring homomorphisms, modules, and module maps assemble into a 2-dimensional structure known as a \emph{double category}. By analogy, $\infty$-categories, $\infty$-functors, modules, and module maps, to be introduced assemble into a \emph{virtual double category}.

\begin{defn}[{\refV{defn:modules-virtual}}] The \emph{virtual double category of modules} $\MMod{K}$ in an $\infty$-cosmos $\lcat{K}$ consists of
\begin{itemize}
\item a category of \emph{objects} and \emph{vertical arrows}, here the $\infty$-categories and $\infty$-functors
\item for any pair of objects $A, B$, a class of \emph{horizontal arrows} $\dmod{~}{A}{B}$, here the modules from $A$ to $B$
\item \emph{cells}, with boundary depicted as follows
\begin{equation}\label{eq:generic-cell} \xymatrix{ A_0 \ar[d]_{f} \ar[r]|{\mid}^{E_1} & A_1 \ar[r]|{\mid}^{E_2}  \ar@{}[dr]|{\Downarrow} & \cdots \ar[r]|{\mid}^{E_n} & A_n \ar[d]^g \\ B_0 \ar[rrr]|{\mid}_{F} & & & B_1}\end{equation} including those whose horizontal source has length zero in the case $A_0 = A_n$, which we depict as
\begin{equation}\label{eq:generic-nullary-cell} \xymatrix{A \ar[d]_f \ar@{=}[r] \ar@{}[dr]|{\Downarrow} & A \ar[d]^g \\ B_0 \ar[r]|{\mid}_F & B_1}\end{equation}
 Here, a cell with boundary \eqref{eq:generic-cell} will be an isomorphism class of objects in the mapping quasi-category
\[ \xymatrix{ \Fun_{f,g}(E_1 \pbtimes{A_1} \cdots \pbtimes{A_{n-1}} E_n, F) \ar[r] \ar[d] \pbexcursion & \Fun(E_1 \pbtimes{A_1} \cdots \pbtimes{A_{n-1}} E_n, F) \ar[d] \\ \Delta^0 \ar[r]^-{(f,g)} & \Fun(E_1 \pbtimes{A_1} \cdots \pbtimes{A_{n-1}} E_n, B_0 \times B_1)}\] Similarly a cell with boundary \eqref{eq:generic-nullary-cell} is an isomorphism class of objects in $\Fun_{f,g}(A,F)$.
\item a \emph{composite cell}, for any configuration
\[ \xymatrix@C=35pt{ A_0 \ar[d]_{f_0} \ar@{..>}[r]|{\mid}^{E_{11},\dots, E_{1n_1}} \ar@{}[dr]|{\Downarrow}  & A_1 \ar@{..>}[r]|{\mid}^{E_{21},\ldots, E_{2n_2}} \ar[d]^{f_1}  \ar@{}[dr]|{\Downarrow} & \cdots \ar@{..>}[r]|{\mid}^{E_{n1},\ldots, E_{nn_n}}  \ar@{}[d]|\cdots \ar@{}[dr]|{\Downarrow} & A_n \ar[d]^{f_n} \\ B_0 \ar[d]_{g} \ar[r]|{\mid}^{F_1} & B_1 \ar[r]|{\mid}^{F_2}  \ar@{}[dr]|{\Downarrow} & \cdots \ar[r]|{\mid}^{F_n} & B_n \ar[d]^h \\ C_0 \ar[rrr]|{\mid}_{G} & & & C_n}\] 
\item an \emph{identity cell} for every horizontal arrow 
\[ \xymatrix{ A \ar@{=}[d] \ar[r]|{\mid}^E & B \ar@{=}[d] \ar@{}[dl]|{\Downarrow \id_E} \\ A \ar[r]|{\mid}_E & B}\] 
\end{itemize}
so that composition of cells is associative and unital in the usual multi-categorical sense.
\end{defn}

The following examples motivate our definition of \emph{module maps}, i.e., cells in the virtual double category $\MMod{K}$.

\begin{ex}\label{ex:cells-above-a-comma} Lemma  \ref{lem:ess.unique.1-cell.ind}, which expresses 1-cell induction as a bijection between isomorphism classes of maps of spans whose codomain is a comma span and certain 2-cells in the homotopy 2-category,  provides an alternate characterization of cells in the virtual double category of modules whose codomain is a comma module. Explicitly, for any cospan  $C \xrightarrow{g} A \xleftarrow{f} B$, there is a bijection between cells in $\MMod{K}$ whose codomain is the comma module $\dmod{f \comma g}{C}{B}$ and 2-cells in the homotopy 2-category $\h\lcat{K}$ under the pullback of the spans encoding the domain modules\footnote{In the case of a cell with nullary source $A$, this empty pullback corresponds to the identity span $A = A = A$.} and  over the cospan defining the comma module $f \comma g$. 
\[ \vcenter{ \xymatrix{ A_0 \ar[d]_{c} \ar[r]|{\mid}^{E_1} & A_1 \ar[r]|{\mid}^{E_2}  \ar@{}[dr]|{\Downarrow} & \cdots \ar[r]|{\mid}^{E_n} & A_n \ar[d]^b \\ C \ar[rrr]|{\mid}_{f \comma g} & & & B}}\qquad \leftrightsquigarrow\qquad \vcenter{ \xymatrix@!0@C=40pt@R=30pt{ & E_1 \pbtimes{A_1} \cdots \pbtimes{A_{n-1}} E_n \ar[dl] \ar[dr] \ar@{}[ddd]|{\displaystyle\Leftarrow} \\ A_0  \ar[d]_c & & A_n \ar[d]^b \\ C \ar[dr]_g & & B \ar[dl]^f \\ & A}}\] 
\end{ex}

\begin{exs}\label{exs:sample-cells} Use the correspondence described in Example \ref{ex:cells-above-a-comma} to define canonical nullary and unary cells in $\MMod{K}$ associated to any functor $f \colon A \to B$.
\[ \xymatrix {  A \ar@{=}[r] \ar@{=}[d] \ar@{}[dr]|{\Downarrow\nu} & A \ar[d]^f & & A \ar[d]_f \ar[r]|\mid^{B \comma f} \ar@{}[dr]|{\Downarrow\rho} & B \ar@{=}[d] & & A \ar@{=}[r] \ar[d]_f \ar@{}[dr]|{\Downarrow\nu} & A \ar@{=}[d] & & B \ar@{=}[d] \ar[r]|\mid^{f \comma B} \ar@{}[dr]|{\Downarrow\rho} & A \ar[d]^f   \\ A \ar[r]_{B \comma f}|{\mid}  & B & & B \ar[r]|\mid_{B^\cattwo} & B & & B \ar[r]|\mid_{f \comma B} & A & & B \ar[r]|\mid_{B^\cattwo} & B}\]
\end{exs}

\begin{ex}[{\refV{prop:two-sided-yoneda}}]\label{ex:yoneda}
Any functor $f \colon A \to B$ induces a map
\[  \vcenter{\xymatrix@C=0.8em@R=1.2em{
    & {A}\ar[d]^-{t} & \\
    & {B \comma f}\ar@{->>}[dl]_{p_1}\ar@{->>}[dr]^{p_0} & \\
    {A}\ar[rr]_{f} && {B}
    \ar@{} "2,2";"3,2" |(0.6){\Leftarrow\phi}
  }} = 
  \vcenter{\xymatrix@C=0.8em@R=1.2em{
    & {A}\ar@{=}[dl]\ar[dr]^{f} & \\
    {A}\ar[rr]_{f} && {B}
    \ar@{} "1,2";"2,2" |(0.6){=}
  }} \]
over $A \times B$. Then for any module $E$ from $A$ to $B$, pre-composition with $t \colon A \to B \comma f$  induces an equivalence of quasi-categories 
\[ \Fun_{A \times B} ( B \comma f , E) \stackrel{\simeq}{\longrightarrow} \Fun_{A \times B} ( A , E ).\]  This result is a direct application of the Yoneda lemma for groupoidal cartesian fibrations (\refIV{cor:groupoidal-yoneda}) in the slice $\infty$-cosmos $\lcat{K}_{/A}$. Passing to isomorphism classes of objects in the mapping quasi-categories, this result asserts that there is a bijection between cells in $\MMod{K}$
\[ \vcenter{\xymatrix{A \ar@{=}[d] \ar[r]|{\mid}^{B \comma f} \ar@{}[dr]|{\Downarrow} & B \ar@{=}[d] \\ A \ar[r]|\mid_E & B}} \qquad \stackrel{\cong}{\mapsto}\qquad  \vcenter{\xymatrix{A \ar@{=}[d]\ar@{=}[r] \ar@{}[dr]|{\Downarrow} & A \ar[d]^f \\ A \ar[r]|\mid_E & B}} \] 
implemented by restricting along the nullary cell $\nu$, as defined in Exercise \ref{exs:sample-cells},  represented by the functor $t \colon A \to B \comma f$. 
\end{ex}

Proposition \ref{prop:module-pullback} tells us that modules in an $\infty$-cosmos can be pulled back. Given $\dmod{E}{A}{B}$ and functors $a \colon A' \to A$ and $b \colon B' \to B$, the horizontal functor $\rho \colon E(b,a) \to E$ in the diagram defining the pullback module
\begin{equation}\label{eq:simplicial-pullback-of-module} \xymatrix{ E(b,a) \pbexcursion \ar@{->>}[d]_{(q',p')} \ar[r]^-\rho & E \ar@{->>}[d]^{(q,p)} \\ A' \times B' \ar[r]_{a \times b} & A \times B}\end{equation} defines a unary cell in the virtual double category of modules with a universal property that we now describe.

\begin{prop}[{\refV{prop:cartesian-pullback-cells}}]\label{prop:cartesian-pullback-cells} In $\MMod{K}$, the cell 
\[ \xymatrix{ A' \ar[d]_a \ar[r]|{\mid}^{E(b,a)} \ar@{}[dr]|{\Downarrow\rho} & B' \ar[d]^b \\ A \ar[r]|{\mid}_E & B}\]
defined by pulling back a module $\dmod{E}{A}{B}$ along functors $a \colon A' \to A$ and $b \colon B' \to B$ has the property that any cell as displayed on the left
\[ \vcenter{ \xymatrix{  X_0 \ar[d]_{af} \ar[r]|{\mid}^{E_1} & X_1 \ar[r]|{\mid}^{E_2}  \ar@{}[dr]|{\Downarrow} & \cdots \ar[r]|{\mid}^{E_n} & X_n \ar[d]^{bg} \\ A \ar[rrr]_E|{\mid} & & & B}} \quad = \quad \vcenter{ \xymatrix{  X_0  \ar[d]_{f} \ar[r]|{\mid}^{E_1} & X_1 \ar[r]|{\mid}^{E_2}  \ar@{}[dr]|{\Downarrow \exists !} & \cdots \ar[r]|{\mid}^{E_n} & X_n \ar[d]^{g} \\ A' \ar[d]_a\ar[rrr]^{E(b,a)}|{\mid} & \ar@{}[dr]|{\Downarrow\rho} & & B' \ar[d]^b \\  A \ar[rrr]_E|{\mid} & & & B}}  \]
factors uniquely as displayed on the right.
\end{prop}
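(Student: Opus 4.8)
The plan is to deduce this universal property from the Yoneda lemma for groupoidal cartesian fibrations (\refIV{cor:groupoidal-yoneda}) in an appropriate sliced $\infty$-cosmos, mimicking the strategy already used to establish \ref{ex:yoneda} and, ultimately, resting on the characterization of cells in $\MMod{K}$ as isomorphism classes of objects in a mapping quasi-category fibered over a product. Unwinding the definitions, a cell of the form displayed on the left of the statement, with horizontal source $E_1 \pbtimes{X_1} \cdots \pbtimes{X_{n-1}} E_n =: \mathbb{E}$ and vertical boundary $af \colon X_0 \to A$ and $bg \colon X_n \to B$, is an isomorphism class of objects in $\fun_{af,bg}(\mathbb{E}, E)$, while a cell of the form on the right factoring through $\rho$ is an isomorphism class of objects in $\fun_{f,g}(\mathbb{E}, E(b,a))$. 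So the claim reduces to showing that the functor induced by post-composition with $\rho \colon E(b,a) \to E$ determines a bijection
\[ \cpts \fun_{f,g}(\mathbb{E}, E(b,a)) \xrightarrow{\ \cong\ } \cpts\fun_{af,bg}(\mathbb{E},E). \]

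First I would set this up one-sidedly, factoring the pullback \eqref{eq:simplicial-pullback-of-module} as two pullbacks: pull $E$ back along $a \times \id_B \colon A' \times B \to A \times B$ to get a module $\dmod{E(\id_B,a)}{A'}{B}$ (using \refV{prop:two-sided-pullback}), then pull that back along $\id_{A'} \times b$ to get $E(b,a)$. It therefore suffices to treat the two cases separately --- pullback along a functor into $A$ on the left, and pullback along a functor into $B$ on the right --- and compose the resulting bijections; this is the point of the left-cancelation/associativity bookkeeping in the virtual double category. For the right-hand pullback, by Definition \ref{defn:module}\ref{itm:cartesian-on-the-right} the module $\dmod{E}{A}{B}$ presents a cartesian fibration $E \tfib A \times B$ over $A$ in the slice $\lcat{K}/A$, and by \refV{prop:cart-fib-pullback} (the pullback-stability of cartesian fibrations) this property is preserved by the pullback along $\id \times b$. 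The key point is then that the defining diagram \eqref{eq:simplicial-pullback-of-module}, viewed in the slice $\lcat{K}/A$ (or rather the appropriate iterated slice), exhibits $E(b,a)$ as the pullback of the cartesian fibration $E$ along $b$, and pullback squares of cartesian fibrations have exactly the two-sided universal property encoded by Proposition \ref{prop:cartesian-pullback-cells} --- this is essentially the content of \refV{lem:module-pullback-cartesian} and the pullback-stability results, adapted to read off a mapping-quasi-category equivalence $\fun_{\id,g}(\mathbb{E}, E(b,a)) \we \fun_{\id, bg}(\mathbb{E}, E)$. Passing to path components then gives the required bijection in this half.

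For the remaining (left) half, the argument is dual but uses cocartesianness: by Definition \ref{defn:module}\ref{itm:cocartesian-on-the-left}, $E$ is a cocartesian fibration over $B$, and the same pullback-stability and mapping-space arguments in $\lcat{K}/B$ produce the bijection $\cpts\fun_{f,\id}(\mathbb{E}, E(\id_B, a)) \cong \cpts\fun_{af,\id}(\mathbb{E},E)$. Composing the two halves and tracking that the composite functor on representing objects is exactly post-composition with $\rho$ completes the proof. The main obstacle I anticipate is the careful fibered bookkeeping: one must verify that the relevant pullbacks of mapping quasi-categories $\fun_{f,g}(\mathbb{E}, -)$ over $\fun(\mathbb{E}, A\times B)$ really do reduce the existence-and-uniqueness statement to a single instance of the groupoidal Yoneda lemma (or the pullback universal property of cartesian fibrations) in a slice, and that the groupoidality hypothesis \ref{defn:module}\ref{itm:groupoidal-fibers} is what upgrades a mere equivalence of mapping quasi-categories to a genuine bijection on isomorphism classes with no residual higher data. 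Everything else --- the factorization of the pullback, invoking \refV{prop:cart-fib-pullback}, and the dualization --- is routine given the machinery already in place.
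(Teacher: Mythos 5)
Your reduction of the statement to a bijection $\cpts\fun_{f,g}(\mathbb{E},E(b,a)) \cong \cpts\fun_{af,bg}(\mathbb{E},E)$, where $\mathbb{E} = E_1 \pbtimes{X_1}\cdots\pbtimes{X_{n-1}} E_n$ and the map is postcomposition with $\rho$, is exactly right, but the machinery you then deploy does not deliver that bijection and leaves the central step unproved. The (co)cartesianness of $E$, pullback-stability of cartesian fibrations (\refV{prop:cart-fib-pullback}, \refV{lem:module-pullback-cartesian}), and the groupoidal Yoneda lemma tell you that $E(b,a)$ is again a module --- which is Proposition \ref{prop:module-pullback}, and is needed only so that $\rho$ is a cell of $\MMod{K}$ at all --- but they say nothing about factoring arbitrary cells through $\rho$; your assertion that ``pullback squares of cartesian fibrations have exactly the two-sided universal property encoded by Proposition \ref{prop:cartesian-pullback-cells}'' is precisely the content that needs proving. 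What does give it, and is the paper's entire proof, is that \eqref{eq:simplicial-pullback-of-module} is a simplicial (conical) pullback in the $\infty$-cosmos, so $\fun(\mathbb{E},-)$ carries it to a pullback of quasi-categories; pulling back further along the vertex of $\fun(\mathbb{E},A'\times B')$ determined by $(f,g)$ yields an isomorphism $\fun_{f,g}(\mathbb{E},E(b,a)) \cong \fun_{af,bg}(\mathbb{E},E)$, visibly implemented by composition with $\rho$, and passing to isomorphism classes of objects finishes. In particular no fibrational structure on $E$ is used for the universal property itself, and the two-step factorization through $E(\id_B,a)$ is unnecessary.

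Two smaller misconceptions are worth flagging. The Yoneda lemma for groupoidal cartesian fibrations (\refIV{cor:groupoidal-yoneda}) is the tool for the nullary cocartesian cell $\iota$ of Proposition \ref{prop:arrows-are-units} and Example \ref{ex:yoneda}, not for the cartesian cell $\rho$: the cocartesian universal property genuinely requires homotopical input, whereas the cartesian one is strict. And groupoidality of $E(b,a)$ over $A'\times B'$ is not what ``upgrades'' an equivalence to a bijection on isomorphism classes --- any equivalence (here in fact an isomorphism) of quasi-categories induces a bijection on isomorphism classes of objects. Replace the middle of your argument with the simplicial-pullback observation and the proof collapses to a line or two.
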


Proposition \ref{prop:cartesian-pullback-cells} asserts that $\rho$ is a \emph{cartesian cell} in $\MMod{K}$.

\begin{proof}
The simplicial pullback \eqref{eq:simplicial-pullback-of-module}, induces an equivalence of hom quasi-categories
\[ \Fun_{af,bg}(E_1 \times \cdots \times E_n, E) \simeq \Fun_{f,g}(E_1 \times \cdots \times E_n, E(b,a)). \qedhere \] 
\end{proof}

Each module $\dmod{A^\cattwo}{A}{A}$ defined by the arrow construction comes with a canonical cell with nullary source. Under the identification of Example \ref{ex:cells-above-a-comma}, this cell corresponds via 1-cell induction to the isomorphism class of maps of spans representing the identity 2-cell at the identity 1-cell of the object $A$.
\[ \vcenter{\xymatrix {  A \ar@{=}[d] \ar@{=}[r] \ar@{}[dr]|{\Downarrow\iota} & A \ar@{=}[d]  \\ A \ar[r]_{A^\cattwo}|{\mid}  & A}} \quad \leftrightsquigarrow \quad    \vcenter{
  \xymatrix@R=2.5em@C=0.8em{
    {A}\ar@/^3ex/[]!D(0.4);[d]!U(0.5)^{\id_A}
    \ar@/_3ex/!D(0.4);[d]!U(0.5)_{\id_A}
    \ar@{}[d]|{=} \\ {A}
  }} \quad = \quad   \vcenter{
    \xymatrix@R=2.5em@C=0.8em{
      {A}\ar[d]^-{j} \\
      {A^\cattwo}\ar@{->>}@/^3ex/[]!D(0.4);[d]!U(0.5)^{p_0}
      \ar@{->>}@/_3ex/!D(0.4);[d]!U(0.5)_{p_1}
      \ar@{}[d]|{\Leftarrow\phi} \\ {A}
    }} \] 
This cell also has a universal property in the virtual double category of modules.

\begin{prop}[{\refV{prop:arrows-are-units}}]\label{prop:arrows-are-units} Any cell in the virtual double category of modules whose horizontal source includes the object $A$, as displayed on the left
\[  \vcenter{\xymatrix@C=15pt{ X \ar[d]_{f} \ar[r]|{\mid}^{E_1} & \cdots \ar[r]|-{\mid}^-{E_n} &  A  \ar@{}[d]|{\Downarrow}    \ar[r]|-{\mid}^-{F_1} & \cdots \ar[r]|{\mid}^{F_m} & Y \ar[d]^g \\ B \ar[rrrr]|{\mid}_{G} & &  &  & C}} \quad=\quad  \vcenter{ \xymatrix@C=20pt{ X \ar@{=}[d] \ar[r]|{\mid}^{E_1} \ar@{}[dr]|{\Downarrow\id_{E_1}{~}} & \cdots \ar@{=}@<-1.5ex>[d]^{\cdots} \ar@{=}@<1.5ex>[d] \ar[r]|-{\mid}^-{E_n} \ar@{}[dr]|{{~}{~}\Downarrow\id_{E_n}}&  A \ar@{=}[r] \ar@{=}[d] \ar@{}[dr]|{\Downarrow\iota}  & \ar@{=}[d]A \ar@{}[dr]|{\Downarrow\id_{F_1}{~}} \ar[r]|-{\mid}^-{F_1} & \cdots \ar@{=}@<-1.5ex>[d]^{\cdots} \ar@{=}@<1.5ex>[d] \ar[r]|{\mid}^{F_m} \ar@{}[dr]|{{~}{~}\Downarrow\id_{F_m}}& Y \ar@{=}[d]  \\ X \ar[d]_{f} \ar[r]|{\mid}^{E_1} & \cdots \ar[r]|-{\mid}^-{E_n} &  A \ar[r]|{\mid}^{A^\cattwo} \ar@{}[dr]|{\Downarrow\exists !}  & A  \ar[r]|-{\mid}^-{F_1} & \cdots \ar[r]|{\mid}^{F_m} & Y \ar[d]^g \\ B \ar[rrrrr]|{\mid}_{G} & & & &  & C}}\]
factors uniquely through $\iota$ as displayed on the right.
\end{prop}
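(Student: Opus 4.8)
The statement to prove is Proposition \ref{prop:arrows-are-units}, which says that the nullary unit cell $\iota$ associated to the arrow module $\dmod{A^\cattwo}{A}{A}$ exhibits $A^\cattwo$ as a horizontal unit in the virtual double category $\MMod{K}$: any cell whose horizontal source passes through the object $A$ factors uniquely through $\iota$ after inserting identity cells on the surrounding modules. The plan is to reduce this to the universal property recorded in Example \ref{ex:yoneda} (i.e., \refV{prop:two-sided-yoneda}), unpacked at the level of mapping quasi-categories, and then pass to isomorphism classes to obtain the statement about cells.

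First I would set up the relevant pullback $\infty$-categories. Write $P \defeq E_1 \pbtimes{A_1} \cdots \pbtimes{A_{n-1}} E_n$ for the iterated pullback of the left-hand modules and $Q$ for the iterated pullback of the right-hand modules $F_1, \dots, F_m$; both carry legs to $A$ (the codomain leg of $E_n$ and the domain leg of $F_1$ respectively), together with their outer legs to $X$ and $Y$. A cell with the boundary displayed on the left of the proposition is, by definition, an isomorphism class of objects in $\fun_{f,g}(P \pbtimes{A} Q, G)$, where $P \pbtimes{A} Q$ is the pullback over the two $A$-valued legs. On the right-hand side, the composite cell built from the identity cells and an as-yet-unknown cell over $A^\cattwo$ is, similarly, an isomorphism class in $\fun_{f,g}(P \pbtimes{A} A^\cattwo \pbtimes{A} Q, G)$, where the middle factor $A^\cattwo$ is glued in along its codomain leg $p_1$ (matching $E_n$'s leg, via the left $A$-action) and its domain leg $p_0$ (matching $F_1$'s leg). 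So the content of the proposition is that restriction along the canonical map
\[ P \pbtimes{A} Q \longrightarrow P \pbtimes{A} A^\cattwo \pbtimes{A} Q \]
induced by $j \colon A \to A^\cattwo$ (the ``identity-arrow'' section, whose existence and properties are recorded just before the proposition) becomes an equivalence after taking $\fun_{f,g}(-,G)$, and hence a bijection on isomorphism classes of objects.

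The key step is to recognize this restriction map as an instance of the Yoneda-type equivalence of Example \ref{ex:yoneda}, applied in a sliced $\infty$-cosmos. Observe that $P \pbtimes{A} A^\cattwo \pbtimes{A} Q$ is obtained from $P \pbtimes{A} Q$ by inserting the arrow module $A^\cattwo$; but inserting $A^\cattwo \cong A \comma A$ along one leg is exactly the operation of replacing the object $A$ by the comma $B\comma f$ in the special case $f = \id_A$, where the relevant ``$E$'' of Example \ref{ex:yoneda} is the module $P \pbtimes{A} Q$ over the appropriate product. Concretely, working in the slice $\infty$-cosmos over the ambient product of domains and codomains, the map $j \colon A \to A^\cattwo$ plays the role of the functor $t \colon A \to B \comma f$ (with $f = \id_A$), and the equivalence $\fun_{A\times B}(B \comma f, E) \we \fun_{A \times B}(A, E)$ of \refV{prop:two-sided-yoneda}, after pulling back the module $E$ appropriately and tensoring with the fixed modules $E_i$ and $F_j$, yields precisely the asserted equivalence of mapping quasi-categories. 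The functoriality of these equivalences under the pullbacks defining $P$ and $Q$ is an application of Proposition \ref{prop:cartesian-pullback-cells}, which identifies restriction along the cartesian cells as the relevant composition operation.

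I expect the main obstacle to be bookkeeping: carefully matching up the iterated pullback $P \pbtimes{A} A^\cattwo \pbtimes{A} Q$ with the object to which \refV{prop:two-sided-yoneda} directly applies, and checking that the map induced by $j$ really is (up to the coherent isomorphisms of iterated simplicial pullbacks) the map $t^*$ of that result, rather than merely a map with the same source and target. Once that identification is in place, the equivalence of quasi-categories is immediate, and passing to $\cpts$ (isomorphism classes of objects) gives the stated bijection of cells; uniqueness of the factorization is just injectivity of that bijection, and existence is surjectivity. A cleaner alternative, which I would pursue if the direct comparison proves fiddly, is to invoke instead the already-unpacked consequence stated in Example \ref{ex:yoneda} — the bijection between cells $A^\cattwo$-to-$E$ and nullary cells into $E$ — and bootstrap from the $n=m=0$ case to the general case by repeatedly applying Proposition \ref{prop:cartesian-pullback-cells} to absorb the $E_i$ and $F_j$ into the source.
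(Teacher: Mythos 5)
Your overall framing matches the paper's: cells are isomorphism classes of objects in mapping quasi-categories, the vertical boundary $(f,g)$ is stripped off via Proposition \ref{prop:cartesian-pullback-cells}, and the bijection should be implemented by restriction along the map induced by $j\colon A \to A^\cattwo$, with the Yoneda lemma of Example \ref{ex:yoneda} supplying the equivalence. Indeed, for $n=m=0$ this is exactly the argument the paper gives: $\fun_{f,g}(A^\cattwo,G)\simeq\fun_{A\times A}(A^\cattwo,G(g,f))\xrightarrow{j^*}\fun_{A\times A}(A,G(g,f))\simeq\fun_{f,g}(A,G)$. The problem is the general case, which is precisely what the paper declines to prove here and defers to \refV{prop:arrows-are-units}. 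What you must show is that restriction along $P\pbtimes{A}Q \to P\pbtimes{A}A^\cattwo\pbtimes{A}Q$ (your notation) induces a bijection on isomorphism classes in $\fun_{X\times Y}(-,G(g,f))$, i.e.\ a statement about maps out of an iterated pullback over $X\times Y$; the quoted Yoneda lemma only concerns maps out of a comma $B\comma f$ over its own base $A\times B$. Your bridge between the two --- ``after pulling back the module appropriately and tensoring with the fixed modules $E_i$ and $F_j$'' --- is exactly the missing content, not bookkeeping.

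Concretely, there is no ``tensoring'' operation available: to transpose the $E_i$ and $F_j$ to the other side of the hom you would need an internal hom (a right extension/lifting of modules), and the $\infty$-cosmos axioms provide no such construction --- the paper explicitly lists the existence of right extensions of modules as a non-formal fact proved directly in $\qCat$, not formally in a general cosmos. Your fallback also fails: the universal property of the cartesian cell $\rho$ in Proposition \ref{prop:cartesian-pullback-cells} lets a cell factor through a change of \emph{vertical} boundary only; it never shortens or absorbs the horizontal source, so it cannot bootstrap the $n=m=0$ case to the general one. What the general case genuinely requires is a relative form of the Yoneda argument: for instance, one can identify $P\pbtimes{A}A^\cattwo$ with the comma associated to the leg $P\to A$, so that your map is the unit $t$ for that functor, but one must then run the Yoneda lemma for groupoidal cartesian fibrations in a suitably sliced $\infty$-cosmos in the presence of the remaining factor $Q$ and the base change from $P\times A$ to $X\times Y$, using the (co)cartesian structure on the legs of the adjacent pullbacks and the two-sided fibration structure of $G(g,f)$. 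That argument is the substance of \refV{prop:arrows-are-units} and is absent from your sketch.
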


Proposition \ref{prop:arrows-are-units} asserts that $\iota$ is a \emph{cocartesian cell} in $\MMod{K}$.

\begin{proof}
In the case where both of the sequences $E_i$ and $F_j$ are empty, the Yoneda lemma for modules, in the form described in Example \ref{ex:yoneda}, supplies  an equivalence of quasi-categories
\[ \xymatrix{ \Fun_{f,g}(A^\cattwo, G) \simeq \Fun_{A \times A}(A^\cattwo, G(g,f)) \ar[r]^-{j^*}_{\simeq} & \Fun_{A \times A} (A, G(g,f)) \simeq \Fun_{f,g}(A,G) .}\] This equivalence descends to a bijection between isomorphism classes of objects, i.e., to a bijection between cells
\[ \vcenter{\xymatrix{A \ar[d]_f \ar[r]|{\mid}^{A^\cattwo} \ar@{}[dr]|{\Downarrow} & A \ar[d]^g \\ B \ar[r]|\mid_G & C}} \qquad \stackrel{\cong}{\mapsto}\qquad  \vcenter{\xymatrix{A \ar[d]_f \ar@{=}[r] \ar@{}[dr]|{\Downarrow} & A \ar[d]^g \\ B \ar[r]|\mid_G & C}} \] 
implemented by restricting along the cocartesian cell $\iota$. See \refV{prop:arrows-are-units} for the proof in the general case.
\end{proof}

Propositions \ref{prop:cartesian-pullback-cells} and \ref{prop:arrows-are-units} imply that the virtual double category of modules is a \emph{virtual equipment} in the sense introduced by Cruttwell and Shulman \cite[\S 7]{CruttwellShulman:2010au}.

\begin{thm}\label{thm:virtual-equipment} The virtual double category $\MMod{K}$ of modules in an $\infty$-cosmos $\lcat{K}$ is a virtual equipment: i.e., $\MMod{K}$ is a virtual double category such that 
\begin{enumerate}[label=(\roman*)]
\item For any module and pair of functors as displayed on the left, there exists a module and cartesian cell as displayed on the right satisfying the universal property of Proposition \ref{prop:cartesian-pullback-cells}. \[ \xymatrix{ A' \ar[d]_a & B' \ar[d]^b & \ar@{}[d]|{\displaystyle\stackrel{\exists}{\rightsquigarrow}} & A' \ar[d]_a \ar[r]|\mid^{E(b,a)} \ar@{}[dr]|{\Downarrow\rho} & B' \ar[d]^b \\ A \ar[r]|\mid_E & B & & A \ar[r]|\mid_E & B}\]
\item Every object $A$ admits a \emph{unit} module equipped with a nullary cocartesian  cell satisfying the universal property of Proposition \ref{prop:arrows-are-units}.
\[ \xymatrix{ A \ar@{=}[r] \ar@{=}[d] \ar@{}[dr]|{\Downarrow\iota} & A \ar@{=}[d] \\ A \ar[r]|\mid_{A^\cattwo} & A}\]
\end{enumerate}
\end{thm}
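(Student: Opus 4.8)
The theorem packages together the two universal properties established in Propositions~\ref{prop:cartesian-pullback-cells} and~\ref{prop:arrows-are-units}, so the plan is essentially to assemble these pieces and check that nothing is missing relative to the Cruttwell--Shulman definition of a virtual equipment. First I would recall the definition of a virtual double category and verify that $\MMod{K}$, as introduced in Definition~\ref{defn:modules-virtual}, satisfies the axioms: objects and vertical arrows form a category (the underlying 1-category of $\lcat{K}$), horizontal arrows are modules $\dmod{E}{A}{B}$, and cells are the isomorphism classes of objects in the indicated fibered mapping quasi-categories; associativity and unitality of the multicategorical composition follow from the associativity and unitality of composition in each $\fun_{f,g}(-,-)$ together with the compatibility of the pullback squares $E_1 \pbtimes{A_1} \cdots \pbtimes{A_{n-1}} E_n$. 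This is bookkeeping rather than substance, but it must be said.

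Next, for part (i), I would invoke Proposition~\ref{prop:module-pullback} to produce, for any module $\dmod{E}{A}{B}$ and functors $a \colon A' \to A$, $b \colon B' \to B$, the pulled-back module $\dmod{E(b,a)}{A'}{B'}$ together with its canonical unary cell $\rho$ arising from the horizontal functor $\rho \colon E(b,a) \to E$ in the defining pullback square~\eqref{eq:simplicial-pullback-of-module}; Proposition~\ref{prop:cartesian-pullback-cells} is precisely the assertion that $\rho$ is cartesian in $\MMod{K}$, so part (i) is immediate once one matches the statements. For part (ii), I would take the unit module on $A$ to be the arrow $\infty$-category module $\dmod{A^\cattwo}{A}{A}$ of Example~\ref{ex:hom-is-a-module}, equipped with the nullary cell $\iota$ described just before Proposition~\ref{prop:arrows-are-units} (the one corresponding under 1-cell induction to the identity 2-cell at $\id_A$); Proposition~\ref{prop:arrows-are-units} says exactly that $\iota$ is cocartesian, i.e.\ that restriction along $\iota$ implements the required bijection on cells. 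Thus both clauses of the virtual-equipment definition hold.

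The only genuine content beyond citing these two propositions is checking that the cartesian and cocartesian cells interact correctly with the multicategorical structure --- e.g.\ that a cartesian cell remains cartesian after whiskering, that the factorizations in Propositions~\ref{prop:cartesian-pullback-cells} and~\ref{prop:arrows-are-units} are compatible with horizontal composition of cells on either side, and that the unit cells $\iota$ are preserved by the evident restriction operations. In the Cruttwell--Shulman framework these coherences are part of what ``cartesian'' and ``cocartesian'' mean, so strictly they are already subsumed, but I would spend a sentence pointing out that the universal properties as stated are phrased for an arbitrary string of modules on either side (not just a single one), which is exactly the strong form needed. \textbf{The main obstacle}, such as it is, is purely expository: making sure the reader sees that ``cartesian cell'' in the sense of Proposition~\ref{prop:cartesian-pullback-cells} and ``cocartesian cell'' in the sense of Proposition~\ref{prop:arrows-are-units} literally instantiate the restriction-cell and unit-cell axioms of~\cite[\S7]{CruttwellShulman:2010au}; there is no further homotopy-theoretic work to do, since the genuinely hard analysis (the Yoneda lemma for groupoidal cartesian fibrations, and the equivalences of fibered mapping quasi-categories) has already been carried out in establishing the two cited propositions.
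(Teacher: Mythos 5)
Your proposal matches the paper's own treatment: the theorem is obtained there precisely by observing that Propositions~\ref{prop:cartesian-pullback-cells} and~\ref{prop:arrows-are-units} instantiate the restriction-cell and unit-cell axioms of the Cruttwell--Shulman definition, with the pullback module of Proposition~\ref{prop:module-pullback} and the arrow module $\dmod{A^\cattwo}{A}{A}$ supplying the required data. Your additional remarks about verifying the virtual double category axioms and the compatibility of (co)cartesian cells with the multicategorical structure are correct but, as you note, already subsumed in the cited propositions and Definition~\refV{defn:modules-virtual}.
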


The \emph{virtual equipment of modules} in $\lcat{K}$ has a lot of pleasant properties, which follow formally from the axiomatization of a virtual equipment. For instance, certain sequences of composable modules can be said to \emph{have composites}, witnessed by cocartesian cells as in Proposition \ref{prop:arrows-are-units} (see  \refV{lem:comp-with-unit}, \refV{lem:comp-with-unit-cells}, \refV{lem:one-sided-rep-comp}, and \refV{cor:two-sided-rep-comp}). Also, for any functor $f \colon A \to B$, the modules $\dmod{B \comma f}{A}{B}$ and $\dmod{f \comma B}{B}{A}$ behave like adjoints is a sense suitable to a virtual double category; more precisely, the module $\dmod{B \comma f}{A}{B}$ defines a \emph{companion} and the module $\dmod{f \comma B}{B}{A}$ defines a \emph{conjoint} to $f \colon A \to B$ (see \refV{thm:companion-conjoint} and \refV{cor:companion-conjoint}). Another formal consequence of Theorem \ref{thm:virtual-equipment} is the following:

\begin{lem}[{\refV{lem:yoneda-embedding}}]\label{lem:yoneda-embedding} For any pair of parallel functors there are natural bijections between 2-cells
$ \xymatrix{ A \ar@/^2ex/[r]^f \ar@/_2ex/[r]_g \ar@{}[r]|{\Downarrow} & B}$ in the homotopy 2-category and cells
\[ \xymatrix{  A \ar@{=}[d] \ar[r]|\mid^{B \comma f} \ar@{}[dr]|{\Downarrow} & B \ar@{=}[d] & \ar@{}[d]|{\displaystyle\leftrightsquigarrow} & A \ar[r]|\mid^{A^{\cattwo}} \ar[d]_g  \ar@{}[dr]|{\Downarrow} & A \ar[d]^f &  \ar@{}[d]|{\displaystyle\leftrightsquigarrow} & B \ar@{}[dr]|{\Downarrow} \ar@{=}[d] \ar[r]|\mid^{g \comma B} & A \ar@{=}[d] \\ A \ar[r]|\mid_{B \comma g} & B & & B \ar[r]|\mid_{B^\cattwo} & B & & B \ar[r]|\mid_{f \comma B} & A}\]  
in the virtual equipment of modules.
\end{lem}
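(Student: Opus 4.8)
The plan is to derive Lemma~\ref{lem:yoneda-embedding} as a formal consequence of the virtual equipment structure established in Theorem~\ref{thm:virtual-equipment}, together with the Yoneda lemma for modules in the form stated in Example~\ref{ex:yoneda}. The three displayed bijections are all instances of the same phenomenon, so I would prove one carefully and then indicate that the other two follow by the evident symmetry (reversing the roles of domain and codomain, or of companion and conjoint). I expect no deep obstacle — the hard technical work has already been done in building the virtual equipment — but some care is needed to chain the correct equivalences of mapping quasi-categories and to check the naturality claim.

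First I would treat the left-hand bijection, between $2$-cells $\alpha\colon f\To g\colon A\to B$ in the homotopy $2$-category and cells in $\MMod{K}$ of the form $\xymatrix@1{A\ar@{=}[d]\ar[r]|\mid^{B\comma f} & B\ar@{=}[d]\\ A\ar[r]|\mid_{B\comma g} & B}$. Starting from the cell side: by Example~\ref{ex:cells-above-a-comma}, a cell with codomain the comma module $\dmod{B\comma g}{A}{B}$ and unary source the comma module $\dmod{B\comma f}{A}{B}$ corresponds bijectively to a $2$-cell in $\lcat{K}_2$ from $(B\comma f)$ to $A$ over the cospan $A\xrightarrow{g}B\xleftarrow{\id_B}B$ defining $B\comma g$, i.e.\ a $2$-cell $g\,p_1\To p_0$ where $(p_1,p_0)\colon B\comma f\tfib A\times B$ is the comma projection. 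Now apply the Yoneda lemma for modules (Example~\ref{ex:yoneda}, in the slice $\lcat{K}/A$, using that $B\comma f$ is the pullback along $f$ of the arrow module $B^\cattwo$): precomposition with the canonical functor $t\colon A\to B\comma f$ induces an equivalence $\fun_{A\times B}(B\comma f, E)\we\fun_{A\times B}(A,E)$ for any module $E$, in particular for $E = (B\comma g)(g, \id_A) $ regarded as a module from $A$ to $B$ after pullback; passing to isomorphism classes this shows the cell is equivalently a cell $\xymatrix@1{A\ar@{=}[d]\ar@{=}[r] & A\ar[d]^{f}\\ A\ar[r]|\mid_{B\comma g} & B}$, and then a second application of Example~\ref{ex:cells-above-a-comma} (now with the nullary source $A$) identifies this with a $2$-cell $g f'\To\id$-style datum that unpacks precisely to a $2$-cell $\alpha\colon f\To g$ in $\lcat{K}_2$. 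Tracking through the identifications, the composite bijection sends $\alpha$ to the cell induced by the comma map $B\comma f\to B\comma g$ arising from $\alpha$ by $1$-cell induction.

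For the middle bijection, with codomain the arrow module $\dmod{B^\cattwo}{B}{B}$ and vertical legs $g$ and $f$, I would again invoke Example~\ref{ex:cells-above-a-comma}: such a cell is a $2$-cell in $\lcat{K}_2$ over the cospan $B\xrightarrow{\id}B\xleftarrow{\id}B$ from the object $A$ (whiskered by $g$ on the left and $f$ on the right), which is exactly a $2$-cell $g\To f$, equivalently $f\To g$ after noting the direction conventions; so this one is essentially immediate from the universal property of the arrow module, and in fact is the degenerate case $n=0$ of Example~\ref{ex:cells-above-a-comma}. The right-hand bijection, involving the conjoint module $\dmod{g\comma B}{B}{A}$ and $\dmod{f\comma B}{B}{A}$, is the ``op'' of the first: it follows by applying the same argument in the co-dual, using that $f\comma B$ is a conjoint and the conjoint form of the Yoneda lemma (the companion/conjoint duality recorded after Theorem~\ref{thm:virtual-equipment}, \refV{thm:companion-conjoint}). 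I would then remark that the bijections are natural in $A$ and $B$ (compatible with precomposition by functors and with vertical composition of the $2$-cells/cells) because each step — $1$-cell induction, pullback of modules, the Yoneda equivalence — is implemented by an honest map of (fibered) mapping quasi-categories, and these maps are functorial.

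The main obstacle, such as it is, will be bookkeeping: making sure the direction of the $2$-cell ($f\To g$ versus $g\To f$) and the ``which side is covariant'' conventions line up consistently across the three displays and across the companion/conjoint duality, and confirming that the three Yoneda-type equivalences I invoke (in $\lcat{K}/A$, in $\lcat{K}/B$, and in the conjoint form) are the ones actually proved in Example~\ref{ex:yoneda} and \refV{cor:groupoidal-yoneda}. I would also double-check that ``natural bijection between $2$-cells and cells'' is meant at the level of isomorphism classes on the module side (cells in $\MMod{K}$ are by definition isomorphism classes of objects in a mapping quasi-category), which is what makes the equivalences of quasi-categories descend to bijections. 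None of this requires new ideas, so the proof in the paper is surely short; my write-up would cite Example~\ref{ex:cells-above-a-comma}, Example~\ref{ex:yoneda}, and \refV{thm:companion-conjoint} and leave the direction-checking to the reader.
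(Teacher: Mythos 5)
Your overall route---reduce each bijection to 1-cell induction (Example \ref{ex:cells-above-a-comma}) by means of the Yoneda equivalence of Example \ref{ex:yoneda} and its dual---is the intended one: these notes give no proof, presenting the lemma as a formal consequence of Theorem \ref{thm:virtual-equipment} and citing \refV{lem:yoneda-embedding}, and your treatment of the left-hand bijection (hence, by the evident duality, the right-hand one) is essentially that argument: cells $B\comma f \To B\comma g$ with identity boundaries correspond by Example \ref{ex:yoneda} to nullary cells $A \To B\comma g$ with boundaries $(\id_A,f)$, and these correspond by 1-cell induction to 2-cells $f \To g$. The only blemish there is the garbled ``$E=(B\comma g)(g,\id_A)$'': no pullback is needed, since $B\comma g$ is already a module from $A$ to $B$, and you should simply take $E=B\comma g$ in Example \ref{ex:yoneda}.

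The genuine gap is in the middle bijection. The cell in question has \emph{unary} horizontal source, namely the unit module $A^\cattwo$; it is not ``the degenerate case $n=0$ of Example \ref{ex:cells-above-a-comma}.'' Applying Example \ref{ex:cells-above-a-comma} to that unary cell produces a 2-cell $fp_0 \To gp_1$ whose domain is $A^\cattwo$ (with $(p_1,p_0)\colon A^\cattwo \tfib A\times A$), which is not yet a 2-cell $f\To g\colon A \to B$. To delete the unit from the source you must invoke the cocartesian universal property of the unit cell $\iota$---Proposition \ref{prop:arrows-are-units} in its nullary instance, i.e.\ restriction along $j\colon A \to A^\cattwo$, which is clause (ii) of Theorem \ref{thm:virtual-equipment} and is itself proved via Example \ref{ex:yoneda}---to obtain the bijection between unary cells $A^\cattwo \To B^\cattwo$ with boundaries $(g,f)$ and nullary cells $A \To B^\cattwo$ with the same boundaries; only then does 1-cell induction identify the latter with 2-cells $f \To g$. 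This is a one-line repair using material you already cite in your preamble, but as written your middle argument conflates a unary cell with a nullary one, so it does not yet constitute a proof of that case; the direction bookkeeping you defer is harmless by comparison.
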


It follows from Lemma \ref{lem:yoneda-embedding} and the cited results about composition of modules  that there are two locally-fully-faithful homomorphisms $\h\lcat{K} \hookrightarrow \MMod{K}$ and $\h\lcat{K}\coop \hookrightarrow\MMod{K}$ embedding the homotopy 2-category into the sub bicategory of $\MMod{K}$ comprised only of unary cells whose vertical boundaries are identities; the latter homomorphism is contravariant on both 1- and 2-cells as signaled by the ``coop.'' The modules in the image of the first homomorphism are the covariant representables and the modules in the image of the second homomorphism are the contravariant representables. We refer to these as the \emph{covariant} and \emph{contravariant embeddings}, respectively.

\subsection{Pointwise Kan extensions}

We are now close to achieving our goal, definitions of pointwise Kan extensions between $\infty$-categories. By Proposition \ref{prop:cartesian-pullback-cells}, any cell in the virtual equipment of modules  can be represented uniquely as a cell between parallel sequences of modules, which we display inline as $E_1 \times_{A_1} \cdots \times_{A_{n-1}} E_n \To E$, where $E$ is a module from $A$ to $B$ and $E_1,\ldots, E_n$ is a composable sequence of modules starting at $A$ and ending at $B$. 

\begin{defn}[right extension of modules]\label{defn:right-extension-mod}
In the virtual equipment $\MMod{K}$ of modules, a \emph{right extension} of a module $\dmod{F}{A}{C}$ along a module $\dmod{K}{A}{B}$ is given by a module $\dmod{R}{B}{C}$ together with a cell $\nu \colon K \times_B R \To F$  so that for any composable sequence of modules $E_1,\ldots, E_n$ from $B$ to $C$, composition with $\nu$ defines a bijection
 \[\vcenter{ \xymatrix@=1em{ A \ar[rr]|\mid^K \ar[dd]|{\rotatebox{90}{$\labelstyle\mid$}}_F &  & B \ar[d]|(.4){\rotatebox{90}{$\labelstyle\mid$}}^{E_1} \\ & &   A_1\ar@/^/@{..>}[dl]    \\C & A_{n-1} \ar[l]|(.6)\mid^{E_n} \ar@{}[uu]|{\Leftarrow\chi}}} \qquad = \qquad \vcenter{ \xymatrix@=1em{ A \ar[rr]|\mid^K  \ar@{}[ddrr]|(.3){\Leftarrow\nu}|(.7){\Leftarrow\exists!} \ar[dd]|{\rotatebox{90}{$\labelstyle\mid$}}_F &  & B \ar[ddll]|{\rotatebox{45}{$\labelstyle\mid$}}^R  \ar[d]|(.4){\rotatebox{90}{$\labelstyle\mid$}}^{E_1} \\ & &   A_1\ar@/^/@{..>}[dl]    \\C & A_{n-1} \ar[l]|(.6)\mid^{E_n} &  }} \]
\end{defn}

In the case where the modules $\dmod{K}{A}{B}$, $\dmod{F}{A}{C}$, and $\dmod{R}{B}{C}$ are all covariant representables, the Yoneda lemma, in the form of Lemma \ref{lem:yoneda-embedding}, implies that the binary cell $\nu \colon K \times_B R \To F$ arises from a 2-cell in the homotopy 2-category. The following lemma, whose proof is left as an exercise, asserts that this 2-cell defines  a right extension in $\h\lcat{K}$ in the sense of Definition \ref{defn:right-ext}.

\begin{lem}[{\refV{lem:mod-extensions-in-2-cat}}]\label{lem:mod-extensions-in-2-cat}
If $\nu \colon B \comma k \times_B C \comma r \To C \comma f$ displays $\dmod{C \comma r}{B}{C}$ as a right extension of $\dmod{C \comma f}{A}{C}$ along $\dmod{B \comma k}{A}{B}$ in $\MMod{K}$, then $\nu \colon rk \To f$ displays $r$ as the right extension of $f$ along $k$ in $\h\lcat{K}$.
\[ \xymatrix{ A \ar[r]|\mid^{B \comma k} \ar[d]|{\rotatebox{90}{$\labelstyle\mid$}}_{C \comma f} & B \ar[dl]|{\rotatebox{45}{$\labelstyle\mid$}}^{C \comma r} & \ar@{}[d]|{\displaystyle \rightsquigarrow} &  A \ar[r]^k \ar[d]_f & B \ar[dl]^r \\ C & \ar@{}[ul]|(.7){\Leftarrow\nu} & & C & \ar@{}[ul]|(.7){\Leftarrow\nu}}\]
\end{lem}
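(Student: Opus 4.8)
The plan is to derive the $2$-categorical right extension property of Definition \ref{defn:right-ext} directly from the defining bijections of Definition \ref{defn:right-extension-mod}, by feeding in covariant representable modules and translating the resulting bijections back into $\lcat{K}_2$. Two facts do all the work: the Yoneda lemma for modules in the form of Lemma \ref{lem:yoneda-embedding} (equivalently, that the covariant embedding $\lcat{K}_2\hookrightarrow\MMod{K}$ is locally fully faithful), and the fact that the covariant representables are closed under horizontal composition, the composite of the companions of $k\colon A\to B$ and $g\colon B\to C$ being the companion $C\comma(gk)$ of $gk\colon A\to C$, witnessed by a cocartesian cell (see \refV{ex:comp-of-rep}, a consequence of Theorem \ref{thm:virtual-equipment}). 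Thus a right extension among representables should restrict to a right extension in $\lcat{K}_2$.

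Concretely: first fix an arbitrary functor $g\colon B\to C$ and instantiate the right extension bijection of Definition \ref{defn:right-extension-mod} at the length-one sequence $E_1=\dmod{C\comma g}{B}{C}$. This gives a bijection, natural in $g$ and implemented by composition with $\nu$, between cells $C\comma g\To C\comma r$ in $\MMod{K}$ and cells $(B\comma k, C\comma g)\To C\comma f$ in $\MMod{K}$. Next, identify both sides with hom-sets of $\lcat{K}_2$. On the left, Lemma \ref{lem:yoneda-embedding} gives $\mathrm{Cell}_{\MMod{K}}(C\comma g, C\comma r)\cong\hom_{\lcat{K}_2}(g,r)$. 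On the right, the cocartesian cell exhibiting $\dmod{C\comma(gk)}{A}{C}$ as the composite $B\comma k\otimes_B C\comma g$ shows that cells out of the sequence $(B\comma k, C\comma g)$ coincide with cells out of the single module $C\comma(gk)$, and a second application of Lemma \ref{lem:yoneda-embedding} yields $\mathrm{Cell}_{\MMod{K}}((B\comma k, C\comma g), C\comma f)\cong\hom_{\lcat{K}_2}(gk,f)$. Composing the three bijections produces, for every $g\colon B\to C$, a bijection $\hom_{\lcat{K}_2}(g,r)\cong\hom_{\lcat{K}_2}(gk,f)$; this is exactly the universal property stating that $r$ is a right extension of $f$ along $k$, with uniqueness of factorizations being injectivity of the bijection.

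The last thing to verify is that the comparison $2$-cell obtained this way is the one named in the statement, namely that under the above identifications the bijection $\hom_{\lcat{K}_2}(g,r)\to\hom_{\lcat{K}_2}(gk,f)$ sends $\bar\chi$ to $\nu\cdot\bar\chi k$, where $\nu\colon rk\To f$ is the $2$-cell corresponding to the binary cell $\nu\colon B\comma k\times_B C\comma r\To C\comma f$ via Lemma \ref{lem:yoneda-embedding} (using $B\comma k\otimes_B C\comma r=C\comma(rk)$). This is the only genuinely fiddly point, and is where I expect the bulk of the routine work to go: one must trace that ``plugging the induced cell $C\comma g\To C\comma r$ into the $R$-slot of the binary cell $\nu$'' corresponds, after moving the composite $B\comma k\otimes_B C\comma g$ across its cocartesian cell and back into $\lcat{K}_2$, to first whiskering $\bar\chi$ with $k$ and then vertically pasting with $\nu$. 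It reduces to the naturality of the bijections in Lemma \ref{lem:yoneda-embedding}, the defining universal property of the cocartesian composite cell, and the compatibility of cell composition in $\MMod{K}$ with the covariant embedding of $\lcat{K}_2$ — all already available — so the argument is bookkeeping rather than new ideas. (The dual statement for left extensions is proved the same way in $\MMod{K}\coop$, and the specialization $k=\,!\colon A\to 1$ recovers the corresponding statement for limits.)
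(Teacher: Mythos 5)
Your argument is correct and is essentially the intended one (it matches the proof this paper defers to, \refV{lem:mod-extensions-in-2-cat}): restrict the module-level universal property to the covariant representables $C\comma g$, use the cocartesian cell exhibiting $B\comma k\otimes_B C\comma g\simeq C\comma(gk)$ as the composite of companions, and translate both sides of the resulting bijection along the Yoneda correspondence of Lemma \ref{lem:yoneda-embedding}. The compatibility you flag at the end --- that pasting with $\nu$ in $\MMod{K}$ corresponds to whiskering with $k$ and vertically composing with $\nu\colon rk\To f$ in $\lcat{K}_2$ --- is exactly what the locally-fully-faithful homomorphism $\lcat{K}_2\hookrightarrow\MMod{K}$ described after Lemma \ref{lem:yoneda-embedding} supplies, so nothing essential is missing.
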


\begin{defn}[stability of extensions under pasting]
In any 2-category, a right extension diagram 
\[ \xymatrix{ A \ar[r]^k \ar[d]_f & B \ar[dl]^r \\ C & \ar@{}[ul]|(.7){\Leftarrow\nu}}\]
is said to be \emph{stable under pasting with a square}
\[ \xymatrix{ D \ar[d]_g \ar[r]^h \ar@{}[dr]|{\Leftarrow\lambda} & E \ar[d]^b \\ A \ar[r]_k & B}\]
if the pasted diagram
\[ \xymatrix{ D \ar[r]^h \ar[d]_g \ar@{}[dr]|{\Leftarrow\lambda}  & E \ar[d]^b\\
A \ar[r]^k \ar[d]_f & B \ar[dl]^r \\ C & \ar@{}[ul]|(.7){\Leftarrow\nu}}\]
  displays $rb$ as a right extension of $fg$ along $h$.
\end{defn} 

The following proposition defines \emph{pointwise right Kan extensions} for $\infty$-categories:

\begin{prop}[{\refV{prop:pointwise-kan}}]\label{prop:pointwise-kan} For a diagram
 \[ \xymatrix{ A \ar[r]^k \ar[d]_f & B \ar[dl]^r \\ C & \ar@{}[ul]|(.7){\Leftarrow\nu}}\]
in the homotopy 2-category  of an $\infty$-cosmos $\lcat{K}$ the following are equivalent. 
\begin{enumerate}[label=(\roman*)]
\item\label{itm:pointwise-comma-stable} $\nu \colon rk \To f$ defines a right extension in $\h\lcat{K}$ that is stable under pasting with comma squares.
\item\label{itm:pointwise-in-mod} The image $\nu \colon B \comma k \times_B C \comma r \To C \comma f$ of $\nu$ under the covariant embedding $\h\lcat{K}\hookrightarrow\MMod{K}$ defines a right extension in $\MMod{K}$.
\end{enumerate}
\end{prop}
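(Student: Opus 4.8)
The plan is to establish the equivalence of \ref{itm:pointwise-comma-stable} and \ref{itm:pointwise-in-mod} by translating the combinatorial content of a right extension in the virtual equipment $\MMod{K}$ into a family of $2$-categorical statements in $\lcat{K}_2$, one for each pasted comma square. The key bookkeeping device is Proposition \ref{prop:cartesian-pullback-cells}: any cell in $\MMod{K}$ whose codomain is a pullback module factors uniquely through the cartesian cell, so a composable sequence of modules $E_1 \times_{A_1} \cdots \times_{A_{n-1}} E_n$ from $B$ to $C$ tested against the right extension $\dmod{C \comma r}{B}{C}$ can, without loss of generality, be taken to be a single module $\dmod{E}{B}{C}$. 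Moreover, by the results on composition of modules cited after Theorem \ref{thm:virtual-equipment} (\refV{lem:one-sided-rep-comp}, \refV{cor:two-sided-rep-comp}), every module is equivalent to a composite of representables $B \comma k'$ and $k'' \comma B$, so it suffices to test against modules of the form $\dmod{C \comma g}{D}{C}$ pulled back along a span $D \xrightarrow{h} B$, $D \xrightarrow{g'} \cdots$. This reduces the bijection defining a right extension in $\MMod{K}$ to a bijection between $2$-cells in $\lcat{K}_2$ exactly of the shape appearing in the definition of stability under pasting.

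Concretely, I would proceed as follows. First, using Example \ref{ex:cells-above-a-comma}, identify cells in $\MMod{K}$ with codomain a comma module $\dmod{C \comma f}{A}{C}$ with $2$-cells in $\lcat{K}_2$ under the relevant pullback of spans and over the defining cospan of $C \comma f$; this is the dictionary that makes the whole argument go. Second, unpack the right-extension bijection of Definition \ref{defn:right-extension-mod} with the source module taken to be $\dmod{E}{B}{C}$; after applying the reductions above and the Yoneda lemma (Lemma \ref{lem:yoneda-embedding}) to rewrite $E$ up to equivalence as a pullback $C \comma g$ of a representable along a functor $h \colon D \to B$, the cell $\chi \colon B\comma k \times_B (C \comma g)(h,-) \To C \comma f$ corresponds under the dictionary to a $2$-cell $D \to C$ living over the comma cone data, i.e.\ precisely to a cone in the sense of the pasted diagram in the definition of stability. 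Third, observe that the cell $\nu \colon B \comma k \times_B C \comma r \To C \comma f$ is, by Lemma \ref{lem:mod-extensions-in-2-cat}, the image under the covariant embedding of the $2$-cell $\nu \colon rk \To f$; so the composite cell $\chi$ whose factorization through $\nu$ we seek corresponds, under the dictionary, to the pasting of $\nu$ with the comma square $\lambda$ classified by $h$. The uniqueness-of-factorization clause of the $\MMod{K}$ right extension then says exactly that $br' \To fg'$ is a right extension in $\lcat{K}_2$ where $b, g'$ are read off from $h$ --- which is the statement that $\nu$ is stable under pasting with that comma square. Running this equivalence in both directions gives \ref{itm:pointwise-comma-stable} $\Leftrightarrow$ \ref{itm:pointwise-in-mod}.

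For the direction \ref{itm:pointwise-in-mod} $\Rightarrow$ \ref{itm:pointwise-comma-stable}, I would additionally note that a right extension in $\MMod{K}$ restricts along the covariant embedding to an ordinary right extension in $\lcat{K}_2$ (taking $n = 1$ and $E_1 = B \comma \id_B = B^\cattwo$, using Proposition \ref{prop:arrows-are-units} to identify cells out of $B^\cattwo$), so the ``plain'' right extension property is automatic and only the stability under comma pasting needs the span-theoretic analysis. For \ref{itm:pointwise-comma-stable} $\Rightarrow$ \ref{itm:pointwise-in-mod}, the content is that stability against the (representable) comma squares is enough to deduce the bijection against \emph{all} composable sequences of modules; here one uses that an arbitrary module sequence can be resolved into comma/representable pieces via \refV{cor:two-sided-rep-comp} and that the cartesian-cell factorization of Proposition \ref{prop:cartesian-pullback-cells} is compatible with this resolution, so the general bijection is assembled from the representable case.

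\textbf{Main obstacle.} The delicate step is the bookkeeping in the reduction: showing that testing the right extension $\dmod{C \comma r}{B}{C}$ against an \emph{arbitrary} composable sequence $E_1, \dots, E_n$ of modules is equivalent to testing it against comma modules pulled back along spans, and that this reduction is natural enough that the factorization bijection transports correctly. This requires carefully combining Proposition \ref{prop:cartesian-pullback-cells} (to collapse the sequence to a single module via the pullback/cartesian-cell), the Yoneda lemma for modules of Example \ref{ex:yoneda} and Lemma \ref{lem:yoneda-embedding} (to present that single module, up to the equivalences that right extensions are insensitive to, as a comma), and the composition-of-modules machinery (\refV{lem:comp-with-unit}, \refV{lem:one-sided-rep-comp}, \refV{cor:two-sided-rep-comp}) --- and then checking that each reduction step matches a corresponding manipulation of pasted $2$-cells in $\lcat{K}_2$ under the dictionary of Example \ref{ex:cells-above-a-comma}. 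The individual ingredients are all in place; the work is in threading them together without losing track of the variance and the fibered structure over $B \times C$.
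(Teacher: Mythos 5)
Your dictionary --- Example \ref{ex:cells-above-a-comma} to translate cells with codomain a comma module into 2-cells of $\lcat{K}_2$, Lemma \ref{lem:mod-extensions-in-2-cat} to identify the image of $\nu$ under the covariant embedding --- is indeed the backbone of the argument, but both of the concrete reductions you build on it fail. First, Proposition \ref{prop:cartesian-pullback-cells} cannot be used to replace a composable sequence $E_1,\dots,E_n$ from $B$ to $C$ by ``a single module'': that proposition factors cells whose \emph{codomain} is a pulled-back module through the cartesian cell $\rho$; it says nothing about collapsing the \emph{domain} sequence, and the pullback $E_1\times_{A_1}\cdots\times_{A_{n-1}}E_n$ is in general not a module at all (the paper stresses this with the example $A^\cattwo\times_A A^\cattwo$ --- groupoidality over the outer pair fails, which is exactly why $\MMod{K}$ is only a \emph{virtual} double category). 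Second, a general module from $B$ to $C$ is not equivalent to a pullback of a representable $C\comma g$ along a functor into $B$; the correct statement (\refV{lem:module-as-rep-composite}) exhibits a module as a \emph{composite} of the companion and conjoint of its own two legs, over its total object, which is not a base change, and even granting a decomposition, re-inserting it into the right-extension bijection for longer test sequences presupposes the bijection you are trying to prove, so the reduction is circular as described.

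What actually carries the substantive direction \ref{itm:pointwise-comma-stable}$\Rightarrow$\ref{itm:pointwise-in-mod}, and is absent from your outline, is that no decomposition of the test modules is needed: writing $W = E_1\times_{A_1}\cdots\times_{A_{n-1}}E_n$ for the pullback of the whole test sequence, with leg $q\colon W\to B$, the object $B\comma k\times_B W$ is \emph{itself} the comma object associated to the cospan $A\xrightarrow{k}B\xleftarrow{q}W$. Hence, via your dictionary, a cell $B\comma k\times_B E_1\times\cdots\times E_n\To C\comma f$ with identity vertical boundaries is exactly a 2-cell in $\lcat{K}_2$ out of this comma object over the cospan $(f,\Delta$-free$)$ data, and the hypothesis of stability under pasting with comma squares applies verbatim to this one comma square to produce the unique factorization through $\nu$; cells into $C\comma r$ are then read off by the same dictionary. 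For the converse \ref{itm:pointwise-in-mod}$\Rightarrow$\ref{itm:pointwise-comma-stable} you do need the companion/conjoint calculus you gesture at (to turn a 2-cell $ch\To fg$ with arbitrary $c\colon E\to C$ into a cell with identity vertical boundaries, e.g.\ using the decomposition of the comma of $k$ and $b$ as a composite with the conjoint of $b$, and the companion of $c$), and that part of your sketch is plausible but unargued. As written, the proposal replaces the single load-bearing observation with reductions that do not hold, so there is a genuine gap.
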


There are other equivalent conditions, one of which says that pointwise right Kan extensions are right extension diagrams that are stable under pasting with a larger class of \emph{exact squares} in $\h\lcat{K}$. Exact squares, which are characterized using the virtual equipment $\MMod{K}$, can also be used to define fully faithful functors of $\infty$-categories (see \refV{defn:fully-faithful}) and initial and final functors between $\infty$-categories (see \refV{defn:initial-final}).

Definition \ref{defn:right-extension-mod} can be dualized, by turning around the 1-cells but not the 2-cells, to define a \emph{right lifting} $\dmod{R}{C}{B}$ of a module $\dmod{F}{C}{A}$ along a module $\dmod{K}{B}{A}$. In the case, where all three modules are contravariant representables,  the Yoneda lemma, in the form of Lemma \ref{lem:yoneda-embedding}, implies that the binary cell $\nu \colon r \comma C \times_B k \comma B \To f \comma C$ arises from a 2-cell $\nu \colon f \To rk$ in the homotopy 2-category, which defines  a left extension in $\h\lcat{K}$ in the sense of Definition \ref{defn:right-ext}.

The dual to Proposition \ref{prop:pointwise-kan} defines \emph{pointwise left Kan extensions} for $\infty$-categories:

\begin{prop} For a diagram
 \[ \xymatrix{ A \ar[r]^k \ar[d]_f & B \ar[dl]^r \\ C & \ar@{}[ul]|(.7){\Rightarrow\nu}}\]
in the homotopy 2-category  of an $\infty$-cosmos $\lcat{K}$ the following are equivalent. 
\begin{enumerate}[label=(\roman*)]
\item\label{itm:pointwise-comma-stable-L} $\nu \colon f \To rk$ defines a left extension in $\h\lcat{K}$ that is stable under pasting with commas squares.
\item\label{itm:pointwise-in-mod-L} The image $\nu \colon r \comma C \times_B k \comma B \To f \comma C$ of $\nu$ under the contravariant embedding $\h\lcat{K}\coop\hookrightarrow\MMod{K}$ defines a right lifting in $\MMod{K}$.
\end{enumerate}
\end{prop}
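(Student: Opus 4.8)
The statement is the formal dual of Proposition~\ref{prop:pointwise-kan}, obtained by reversing the direction of the 1-cells (but not the 2-cells) in the ambient homotopy 2-category $\lcat{K}_2$. The cleanest strategy is therefore to deduce it from Proposition~\ref{prop:pointwise-kan} applied to the \emph{co-dual} homotopy 2-category $\lcat{K}_2\co$, together with the observation that the virtual equipment $\MMod{K}$ carries a corresponding self-duality. First I would recall that $\lcat{K}_2\co$ is again a homotopy 2-category (this is the standard $(-)\co$ operation on 2-categories, under which comma objects are exchanged with the ``co-comma'' shape realised by the same construction with source and target swapped), and that a right extension in $\lcat{K}_2\co$ is precisely a left extension in $\lcat{K}_2$ in the sense of Definition~\ref{defn:right-ext}. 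Dually, a comma square in $\lcat{K}_2\co$ is a comma square $f \comma g$ in $\lcat{K}_2$ with the roles of the two legs transposed, so that stability under pasting with comma squares in $\lcat{K}_2\co$ translates into exactly the stability condition~\ref{itm:pointwise-comma-stable-L}.

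Next I would pin down the behaviour of the calculus of modules under this dualization. The key point is that the virtual equipment of modules $\MMod{K}$ has an anti-involution exchanging a module $\dmod{E}{A}{B}$ with the module $\dmod{E\dual}{B}{A}$ obtained by swapping the two projections $E \tfib A\times B \cong B\times A$; under this operation covariant representables $B\comma f$ are exchanged with contravariant representables $f\comma B$ (as already visible in Example~\ref{ex:cells-above-a-comma} and Exercises~\ref{exs:sample-cells}), and a right extension of modules (Definition~\ref{defn:right-extension-mod}) is exchanged with a right \emph{lifting}. Concretely, given the contravariant embedding $\lcat{K}_2\coop \hookrightarrow \MMod{K}$ of the last paragraph before \S\ref{ssec:pointwise-kan}, the image of a 2-cell $\nu\colon f\To rk$ is the binary cell $\nu\colon r\comma C \times_B k\comma B \To f\comma C$, which under the anti-involution becomes the cell $C\comma k \times_B C\comma r \To C\comma f$ that is the image of $\nu$ under the covariant embedding of $\lcat{K}_2\co$. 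Applying the equivalence \ref{itm:pointwise-comma-stable} $\Leftrightarrow$ \ref{itm:pointwise-in-mod} of Proposition~\ref{prop:pointwise-kan} inside $\lcat{K}_2\co$ and $\MMod{(\lcat{K}\co)} \cong \MMod{K}$ (with horizontal arrows reversed) then yields exactly the asserted equivalence \ref{itm:pointwise-comma-stable-L} $\Leftrightarrow$ \ref{itm:pointwise-in-mod-L}.

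The main obstacle is verifying that these two dualities are genuinely compatible---i.e.\ that the $(-)\co$ on homotopy 2-categories is matched, under the module construction, by the anti-involution on virtual equipments, sending $B\comma f$ to $f\comma B$ and the covariant embedding to the contravariant embedding. This requires checking that the three defining operations of the weak universal property of commas (Observation~\ref{obs:comma-UP}) are symmetric in the appropriate sense: 1-cell induction, 2-cell induction, and conservativity are all stated symmetrically in the two legs $p_0,p_1$, so transposing the pullback~\eqref{eq:comma-as-simp-pullback} along $A\times A \cong A\times A$ and $C\times B \cong B\times C$ carries $f\comma g$ to $g\comma f$ compatibly with all cell structure; and similarly the cartesian cells of Proposition~\ref{prop:cartesian-pullback-cells} and the cocartesian unit cells of Proposition~\ref{prop:arrows-are-units} are interchanged only up to the transposition isomorphism $A^\cattwo \cong (A^\cattwo)\dual$ induced by the automorphism of $\Del^1$ reversing the order. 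Once this bookkeeping is in place, no new analytic content is needed: the argument is purely formal, transporting Proposition~\ref{prop:pointwise-kan} along an equivalence of virtual equipments. I would therefore present the proof as: ``This is Proposition~\ref{prop:pointwise-kan} applied in the co-dual homotopy 2-category $\lcat{K}_2\co$, whose virtual equipment of modules is $\MMod{K}$ with horizontal arrows reversed, under which the covariant embedding becomes the contravariant embedding and right extensions become right liftings,'' expanding only the compatibility claims sketched above.
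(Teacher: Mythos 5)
Your overall strategy---obtaining the statement by formal dualization of Proposition~\ref{prop:pointwise-kan}---is exactly what the paper intends, since it offers no separate argument and simply declares the result to be ``the dual.'' But the duality you use to implement it does not exist: there is no anti-involution of $\MMod{K}$ carrying a module $E$ from $A$ to $B$ to the transposed span $E\tfib B\times A$. The conditions of Definition~\ref{defn:module} are not symmetric in the two legs---a module must be a \emph{cartesian} fibration in $\lcat{K}/A$ and a \emph{cocartesian} fibration in $\lcat{K}/B$---so the transposed span is cocartesian where it would need to be cartesian and vice versa; already for the unit module $A^\cattwo$ from $A$ to $A$ the transpose fails to be a module. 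Relatedly, transposing $B\comma f$ gives the span over $B\times A$ whose elements are arrows $b\to fa$, which is not the contravariant representable $f\comma B$ (whose elements are arrows $fa\to b$), so leg-swapping does not interchange the covariant and contravariant embeddings. There is a second gap of the same kind: Proposition~\ref{prop:pointwise-kan} is a statement about the homotopy 2-category of an $\infty$-cosmos and its virtual equipment of modules, not an internal statement of 2-category theory, so it cannot be ``applied inside $\lcat{K}_2\co$'' unless $\lcat{K}_2\co$ is exhibited as the homotopy 2-category of some $\infty$-cosmos---a point your sketch, which works only with the 2-category, never addresses. (Your opening sentence also misstates the 2-categorical dual: left extensions arise from right extensions by reversing 2-cells, not 1-cells; it is the horizontal arrows of the equipment that get reversed.)

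The repair is to dualize at the level of the $\infty$-cosmos rather than of $\MMod{K}$: form the co-dual cosmos $\lcat{K}\co$ with $\fun_{\lcat{K}\co}(A,B)\defeq\fun_{\lcat{K}}(A,B)\op$, the same isofibrations and equivalences, and cotensor by $U$ given by the $\lcat{K}$-cotensor by $U\op$ (\refIV{obs:duals-of-cosmoi}). Its homotopy 2-category is $\lcat{K}_2\co$, its commas are the commas in $\lcat{K}$ of the reflected cospan (legs exchanged), cartesian and cocartesian fibrations trade places, and hence a module from $A$ to $B$ in $\lcat{K}\co$ is precisely, after transposing its legs, a module from $B$ to $A$ in $\lcat{K}$. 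Consequently the virtual equipment of modules in $\lcat{K}\co$ is $\MMod{K}$ with its horizontal arrows reversed; under this identification the covariant embedding of $\lcat{K}_2\co$ becomes the contravariant embedding $\lcat{K}_2\coop\hookrightarrow\MMod{K}$, the comma $B\comma f$ formed in $\lcat{K}\co$ becomes $f\comma B$, and right extensions of modules become right liftings. With these identifications in place, Proposition~\ref{prop:pointwise-kan} applied to $\lcat{K}\co$ yields exactly the asserted equivalence of (i) and (ii). This is the bookkeeping your proposal gestures at, but it must run through the co-dual cosmos, not through a self-duality of $\MMod{K}$.
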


Recall Definition \ref{defn:limit}: in a cartesian closed $\infty$-cosmos, the \emph{limit} of a diagram $d \colon J \to A$ is a element $\ell \colon 1 \to A$ equipped with an  absolute right lifting diagram
\begin{equation}\label{eq:limit-abs-lifting} \xymatrix{ \ar@{}[dr]|(.7){\Downarrow\nu} & A \ar[d]^{\Delta} \\ 1 \ar[r]_-{d}  \ar[ur]^\ell & A^J}\end{equation} Here the 2-cell $\nu$ encodes the data of the limit cone. 

\begin{prop}[{\refV{prop:limits-as-pointwise-kan}}]\label{prop:limits-as-pointwise-kan} In a cartesian closed $\infty$-cosmos $\lcat{K}$, any limit \eqref{eq:limit-abs-lifting} defines a pointwise right Kan extension 
\[ \xymatrix{ J \ar[d]_d \ar[r]^{!} \ar@{}[dr]|(.3){\Leftarrow\nu} &  1 \ar[dl]^\ell \\  A& }\] Conversely, any pointwise right Kan extension of this form transposes to define a limit \eqref{eq:limit-abs-lifting} in $A$.
\end{prop}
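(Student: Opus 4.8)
The plan is to deduce this from Proposition \ref{prop:pointwise-kan} together with the characterization of limits as pointwise Kan extensions along the unique functor $! \colon J \to 1$. First I would observe that since the $\infty$-cosmos $\lcat{K}$ is cartesian closed, the 2-functor $A^{(-)} \colon \sSet_2\op \to \lcat{K}_2$ interacts correctly with commas, and more importantly, there is a 2-adjunction relating $A^{(-)}$ on the slices; the key structural input is that for $! \colon J \to 1$, the constant-diagram functor $\Delta \colon A \to A^J$ is the image $A^!$ of $!$ under $A^{(-)}$. Thus the data \eqref{eq:limit-abs-lifting} is exactly an absolute right lifting of $d$ through $A^!$.

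Next I would invoke Exercise \ref{exs:adj-as-abs-lifting} and the general translation between absolute right lifting diagrams and right extensions: an absolute right lifting of $d \colon 1 \to A^J$ through $\Delta \colon A \to A^J$ is, in the cartesian closed setting, the transpose of a right extension of $d \colon J \to A$ along $! \colon J \to 1$. More precisely, the adjunction $J \times (-) \dashv (-)^J$ at the level of $\infty$-cosmoi (Definition \ref{qcat.ctxt.cof.def}\ref{qcat.ctxt.cof:g}) transposes the 2-cell $\nu$ in $A^J$ into a 2-cell $\nu \colon \ell\, ! \To d$ in the homotopy 2-category, living in the triangle with $! \colon J \to 1$, $d \colon J \to A$, and $\ell \colon 1 \to A$. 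The absoluteness of the right lifting — its stability under restriction along arbitrary generalized elements $c \colon X \to 1$, equivalently $X \to J$ via composites — is precisely what upgrades this to a right extension that is \emph{stable under pasting with comma squares}: a comma square over $J$ pastes to a generalized element of $J$, and the universal property of the absolute lifting is unchanged. This verifies condition \ref{itm:pointwise-comma-stable} of Proposition \ref{prop:pointwise-kan}, so $\nu$ defines a pointwise right Kan extension. Conversely, a pointwise right Kan extension of $d$ along $!$ is in particular a right extension stable under pasting with comma squares; restricting along comma squares of the shape $J \times_1 X \to J$, i.e., along arbitrary functors $X \to J \cong J$, recovers precisely the absolute right lifting property, and transposing back across the cartesian closure adjunction yields \eqref{eq:limit-abs-lifting}.

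The main obstacle I anticipate is making the transposition across the cartesian closure adjunction fully precise at the 2-categorical level — checking that the 2-cell $\nu \colon \Delta\ell \To d$ in $\fun(1, A^J)$ and the 2-cell $\nu \colon \ell\,! \To d$ in $\fun(J, A)$ really do correspond under the adjunction $\hom(J, A) \cong \hom(1, A^J)$, and that the respective universal properties (factorization of arbitrary cones, resp. factorization through the extension) match up on the nose under this correspondence. This is essentially the content of \refV{prop:ran-abs-lifting} and \refV{prop:limits-as-pointwise-kan}, so I would cite those: the former establishes that a pointwise right Kan extension along $k$ transposes to an absolute right lifting through $k^{(-)}$-type functors in a cartesian closed $\infty$-cosmos, and specializing $k = {!} \colon J \to 1$ gives exactly the claimed equivalence with Definition \ref{defn:limit}. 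The remaining verification — that comma squares over $J$ are "enough" to detect absoluteness, since every generalized element $X \to J$ factors through a comma square — is a routine check using that the comma $f \comma J \simeq$ the relevant slice, which I would relegate to a one-line remark rather than spell out.

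\begin{proof}
Since $\lcat{K}$ is cartesian closed, Proposition \ref{prop:induced-adjunctions} and the simplicial adjunction $J \times (-) \dashv (-)^J$ give, for each $\infty$-category $A$, a 2-natural isomorphism $\hom(X \times J, A) \cong \hom(X, A^J)$, under which the constant functor $\Delta = A^! \colon A \to A^J$ corresponds to precomposition with $! \colon J \to 1$. Transposing the absolute right lifting diagram \eqref{eq:limit-abs-lifting} across this isomorphism produces a 2-cell $\nu \colon \ell\, ! \To d$ in the triangle with legs $! \colon J \to 1$, $d \colon J \to A$, and $\ell \colon 1 \to A$, and the universal property of \eqref{eq:limit-abs-lifting} transposes to the statement that $\nu$ exhibits $\ell$ as a right extension of $d$ along $!$ in $\lcat{K}_2$. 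Moreover, the \emph{absoluteness} of the lifting — stability of the universal property under restriction along any generalized element $X \to 1$ of the apex, equivalently along any functor $X \to J$ after transposition — is exactly the assertion that this right extension is stable under pasting with comma squares over $J$: given a comma square with lower edge a functor $X \to J$, pasting with $\nu$ and transposing reduces to the restricted absolute lifting property, which holds by hypothesis. By Proposition \ref{prop:pointwise-kan}\ref{itm:pointwise-comma-stable}, $\nu$ defines a pointwise right Kan extension; this is precisely \refV{prop:limits-as-pointwise-kan}.

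Conversely, suppose $\nu \colon rk \To f$ with $k = {!} \colon J \to 1$, $f = d \colon J \to A$, $r = \ell \colon 1 \to A$ is a pointwise right Kan extension. In particular it is a right extension stable under pasting with comma squares. Transposing $\nu$ back across the cartesian closure isomorphism yields a 2-cell in $\fun(1, A^J)$ of the form \eqref{eq:limit-abs-lifting}; the right extension property transposes to the right lifting property, and stability under pasting with comma squares over $J$ transposes to stability of the lifting under restriction along arbitrary generalized elements of $1$, since every such restriction is witnessed by pasting with a comma square whose lower edge is the corresponding functor into $J \cong J$. Hence \eqref{eq:limit-abs-lifting} is an absolute right lifting diagram, so $\ell$ is the limit of $d$ in the sense of Definition \ref{defn:limit}.
\end{proof}
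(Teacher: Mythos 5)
Your overall strategy---transpose across the cartesian closure adjunction and identify absoluteness of the lifting \eqref{eq:limit-abs-lifting} with stability of the extension under pasting with comma squares, i.e.\ with condition \ref{itm:pointwise-comma-stable} of Proposition \ref{prop:pointwise-kan}---is the right one, and it is essentially the route of the source result \refV{prop:ran-abs-lifting} that the notes are quoting. But the key step is garbled in a way that matters. For the triangle with $k={!}\colon J\to 1$, the squares one pastes with are comma squares over cospans $X\xrightarrow{!}1\xleftarrow{!}J$; since the cospan lands in the terminal object, such a comma object is simply the product $J\times X$ with its two projections and identity comma cone. Stability under pasting with this square says that $\ell\,!_X$ is a right extension of $d\pi_J$ along $\pi_X\colon J\times X\to X$, and it is \emph{this} statement that transposes, under $\hom(J\times X,A)\cong\hom(X,A^J)$, to the factorization property of \eqref{eq:limit-abs-lifting} tested at $X$: 2-cells $b\,\pi_X\To d\,\pi_J$ correspond to 2-cells $\Delta b\To d\,!_X$, and the two factorization conditions match. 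Your phrasing---that generalized elements $X\to 1$ correspond ``equivalently'' to functors $X\to J$, and that the relevant squares are ``comma squares over $J$'' with ``lower edge a functor $X\to J$''---is not correct: no functors $X\to J$ enter the argument, and a square whose lower edge maps into $J$ does not even paste onto the edge $!\colon J\to 1$ of the extension triangle. Both directions of your proof rest on this misidentification as written.

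The second problem is the citation: you cannot cite \refV{prop:limits-as-pointwise-kan}, since that is the statement being proved, and the ``main obstacle'' you flag---that transposition is natural with respect to pasting, so that whiskering/restriction of the lifting diagram along $X\to 1$ corresponds to pasting the extension with the product square---is exactly the content that must be verified rather than deferred; deferring it to the result itself makes the argument circular. Either invoke \refV{prop:ran-abs-lifting} outright and specialize to $k={!}\colon J\to 1$ (where restriction along $k$ becomes $\Delta\colon A\to A^J$), in which case the rest of your argument is unnecessary, or else replace the comma-square paragraph by the product-square identification above and check the 2-naturality of $\hom(-\times J,A)\cong\hom(-,A^J)$ (available from Proposition \ref{prop:cartesian-closure} and the 2-functoriality of $A^{(-)}$), which is the honest content of the proof.
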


Proposition \ref{prop:limits-as-pointwise-kan} suggests the way to extend the definition of limits and colimits of diagrams indexed by $\infty$-categories to non-cartesian closed $\infty$-cosmoi.

\subsection{Model independence revisited}

We conclude with some remarks concerning the model independence of these notions. Note that a functor of $\infty$-cosmoi $F \colon \lcat{K} \to \lcat{L}$ induces a functor of sliced $\infty$-cosmoi $F \colon \lcat{K}_{/B} \to \lcat{L}_{/FB}$ for any $B \in \lcat{K}$.

\begin{prop}\label{prop:sliced-we} Suppose $F \colon \lcat{K} \to \lcat{L}$ is a weak equivalence of $\infty$-cosmoi. Then the induced functor 
$F \colon \lcat{K}_{/B} \to \lcat{L}_{/FB}$ is also a weak equivalence of $\infty$-cosmoi.
\end{prop}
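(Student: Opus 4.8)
The plan is to verify directly the two defining conditions of a weak equivalence of $\infty$-cosmoi for the induced functor $F\colon \lcat{K}/B\to \lcat{L}/FB$, using the description of the sliced $\infty$-cosmos from Definition \ref{defn:sliced-cosmoi}: its objects are the isofibrations with codomain $B$, its mapping quasi-categories $\fun_B(p,p')$ are defined by pulling back $\fun(E,p')$ along $p\colon\Del^0\to\fun(E,B)$, and its isofibrations, equivalences, and trivial fibrations are created by the forgetful functor to $\lcat{K}$. Throughout I will use that $F$, as a functor of $\infty$-cosmoi, preserves isofibrations, equivalences, trivial fibrations, and the simplicial limits of \ref{qcat.ctxt.cof.def}\ref{qcat.ctxt.cof:a}, and that, being a weak equivalence, it is moreover a local equivalence of quasi-categories.

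For the \emph{local equivalence} condition, fix objects $p\colon E\tfib B$ and $p'\colon E'\tfib B$ of $\lcat{K}/B$. The comparison $\fun_B(p,p')\to\fun_{FB}(Fp,Fp')$ is the map induced on pullbacks by the evident map of cospans $\bigl(\Del^0\xrightarrow{p}\fun(E,B)\xleftarrow{\fun(E,p')}\fun(E,E')\bigr)\to\bigl(\Del^0\xrightarrow{Fp}\fun(FE,FB)\xleftarrow{\fun(FE,Fp')}\fun(FE,FE')\bigr)$. This map of cospans is the identity on $\Del^0$ and an equivalence of quasi-categories on each of the other two corners (local equivalence of $F$), while $\fun(E,p')$ and $\fun(FE,Fp')$ are isofibrations of quasi-categories by \ref{qcat.ctxt.cof.def}\ref{qcat.ctxt.cof:b}. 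Since a pullback along an isofibration computes a homotopy pullback in the Joyal model structure, and homotopy pullbacks are invariant under objectwise equivalence of cospans, the induced comparison $\fun_B(p,p')\we\fun_{FB}(Fp,Fp')$ is an equivalence of quasi-categories.

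For the \emph{essential surjectivity} condition, let $q\colon X\tfib FB$ be an object of $\lcat{L}/FB$. Using surjectivity of $F$ on objects up to equivalence I would choose $A\in\lcat{K}$ and an equivalence $e\colon FA\we X$ in $\lcat{L}$, with equivalence inverse $e^{-1}\colon X\we FA$; then, since $\fun(A,B)\we\fun(FA,FB)$ is in particular essentially surjective on objects, I obtain $g\colon A\to B$ in $\lcat{K}$ with an invertible 2-cell $Fg\cong qe$ in $\lcat{L}_2$. Applying the factorisation axiom \ref{qcat.ctxt.cof.def}\ref{qcat.ctxt.cof:e} to $g$ gives $g = p_E j$ with $p_E\colon E\tfib B$ an isofibration in $\lcat{K}$ and $j\colon A\we E$ an equivalence, so $p_E$ is an object of $\lcat{K}/B$; the claim is that $Fp_E\simeq q$ in $\lcat{L}/FB$. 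To see this, note $Fp_E\circ Fj = Fg\cong qe$ with $Fj$ an equivalence, so $w\defeq Fj\circ e^{-1}\colon X\we FE$ is an equivalence and $Fp_E\circ w\cong q$ via an invertible 2-cell $\gamma$. Because $Fp_E\colon FE\tfib FB$ is an isofibration, $\gamma$ lifts to a functor $w'\colon X\to FE$ with $Fp_E\circ w' = q$ strictly and $w'\cong w$; hence $w'$ is an equivalence whose underlying map lies in $\lcat{L}/FB$, and as equivalences in the slice are created by the forgetful functor, $w'$ witnesses $q\simeq Fp_E$ in $\lcat{L}/FB$ (alternatively one may cite the recognition principle for fibred equivalences, \refI{cor:recog-fibred-equivs}).

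I expect the main obstacle to be the rectification step at the end of the essential-surjectivity argument: a plain equivalence $w$ between total spaces commuting with the projections to $FB$ only up to an invertible 2-cell must be promoted to one commuting strictly, so that it represents a morphism in $\lcat{L}/FB$; this is precisely where the isofibration hypothesis on $Fp_E$ — i.e., the lifting of invertible 2-cells along isofibrations guaranteed by \ref{qcat.ctxt.cof.def}\ref{qcat.ctxt.cof:b} — is essential. The only other point deserving care is the homotopy-pullback-invariance statement invoked for the local-equivalence condition, which is standard for the Joyal model structure but should be spelled out.
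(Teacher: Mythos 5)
Your proof is correct and follows essentially the same route as the paper: the local-equivalence condition is verified by the same map-of-cospans/pullback argument (you merely make explicit the homotopy-pullback justification the paper leaves implicit), and essential surjectivity is handled by choosing a preimage object, lifting the functor through the local equivalence, factoring via axiom \ref{qcat.ctxt.cof.def}\ref{qcat.ctxt.cof:e}, and rectifying the resulting isomorphism by lifting it along an isofibration. The only cosmetic difference is that you lift the invertible 2-cell along $Fp_E$ to produce a strict map $X \to FE$ over $FB$, whereas the paper lifts along $q$ to produce a strict map over $FB$ in the other direction; both yield the required fibered equivalence.
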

\begin{proof}
We first argue that the functor between slices defines a local equivalence of sliced mapping quasi-categories, as defined in \ref{defn:sliced-cosmoi}. Given a pair of isofibration $p \colon E \tfib B$ and $p' \colon E' \tfib B$ in $\lcat{K}$, the induced functor on mapping quasi-categories is defined by 
\[ \xymatrix@=1em{ \Fun_B(p,p') \ar@{-->}[dr]^-{\rotatebox{155}{$\labelstyle\sim$}} \ar[rr] \pbexcursion \ar@{->>}[dd] & & \Fun(E,E') \ar[dr]^-{\rotatebox{155}{$\labelstyle\sim$}} \ar@{->>}'[d][dd] \\ & \Fun_{FB}(FE,FE') \pbexcursion \ar@{->>}[dd] \ar[rr] & & \Fun(FE,FE') \ar@{->>}[dd] \\ \Del^0 \ar'[r][rr] \ar@{=}[dr]  & & \Fun(E,B) \ar[dr]^-{\rotatebox{155}{$\labelstyle\sim$}} \\ & \Del^0 \ar[rr] & & \Fun(FE,FB)}\] As the maps between the cospans in $\qCat$ are equivalences, so is the induced map between the pullbacks.

For surjectivity up to equivalence, consider an isofibration $q \colon L \tfib FB$ in $\lcat{L}$. As $F$ is surjective on objects up to equivalence,  there exists some $A \in \lcat{K}$ together with an equivalence $i \colon FA \we L \in \lcat{L}$. As $F$ defines a local equivalence of mapping quasi-categories, there is moreover a functor $f \colon A \to B$ in $\mathcal{L}$ so that $Ff \colon FA \to FB$ is isomorphic to $qi$ in $\h\lcat{L}$. The map $f$ need not be an isofibration, but \ref{qcat.ctxt.cof.def}\ref{qcat.ctxt.cof:e} allows us to factor $f$ as $A \we K \xtfib{p} B$. Choosing an equivalence inverse $j \colon K \we A$, the result defines a diagram in $\h\lcat{L}$ that does not commute on the nose but which does commute up to isomorphism.
\[ \xymatrix{FK \ar@{->>}[dr]_{Fp} \ar[r]^{Fj}_{\sim} & FA \ar[d]|{Ff} \ar[r]^{i}_{\sim} & L \ar@{->>}[dl]^q \\  \ar@{}[ur]|(.7){\cong} & FB & \ar@{}[ul]|(.7){\cong} }\]

Now a basic fact about isofibrations in an $\infty$-cosmos that we have not had occasion to mention is that they define isofibrations in the homotopy 2-category. An \emph{isofibration} in a 2-category is a 1-cell that has a lifting property for isomorphisms with one chosen endpoint; see \refIV{rec:trivial-fibration} and \refIV{lem:isofib.are.representably.so}. In particular, as $q \colon L \tfib FB$ defines an isofibration in $\h\lcat{L}$, we may lift the displayed isomorphism along $q$ to define a commutative triangle:
\[ \vcenter{\xymatrix{FK \ar@{->>}[dr]_{Fp} \ar[r]^{Fj}_{\sim} & FA \ar[d]|{Ff} \ar[r]^{i}_{\sim} & L \ar@{->>}[dl]^q \\  \ar@{}[ur]|(.7){\cong} & FB & \ar@{}[ul]|(.7){\cong} }} \qquad =  \qquad \vcenter{ \xymatrix{FK \ar@{->>}[dr]_{Fp} \ar@/^2ex/[rr]^{i \cdot Fj} \ar@{}[rr]|\cong \ar@/_2ex/[rr]_e &  & L \ar@{->>}[dl]^q \\  & FB & } }\]  As $e$ is isomorphic to an equivalence $i \cdot Fj$, it must also define an equivalence, whence we have shown that the isofibration $p \colon K \tfib B$ maps under $F$ to an isofibration that is equivalent  to our chosen $q \colon L \tfib FB$.
\end{proof}

By Proposition \ref{prop:induced-2-functor} and Corollary \ref{cor:model-indep-fib}, a functor $F \colon \lcat{K} \to \lcat{L}$ of $\infty$-cosmoi induces a functor of virtual equipments $F\colon \MMod{K} \to \MMod{L}$; the important point here is that modules and simplicial pullbacks are preserved.

\begin{prop}\label{prop:equipment-we} If $F \colon \lcat{K} \to \lcat{L}$ is a weak equivalence of $\infty$-cosmoi, then the induced functor $F \colon \MMod{K} \to \MMod{L}$ defines a biequivalence of virtual equipments: i.e., it is
\begin{enumerate}[label=(\roman*)]
\item\label{itm:equipment-we:i} surjective on objects up to equivalence;
\item\label{itm:equipment-we:ii} locally bijective on isomorphism classes of parallel vertical functors;
\item\label{itm:equipment-we:iii} locally bijective on equivalence classes of parallel modules;
\item\label{itm:equipment-we:iv} locally bijective on cells.
\end{enumerate}
\end{prop}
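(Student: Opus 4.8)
The plan is to deduce each clause from the corresponding clause of Proposition \ref{prop:we} together with the explicit description of the virtual equipment $\MMod{K}$, using that a weak equivalence $F$ induces a biequivalence $F_2 \colon \lcat{K}_2 \to \lcat{L}_2$ and, by Proposition \ref{prop:sliced-we}, a weak equivalence on every slice. First I would recall that the objects, vertical arrows, and isomorphism classes of vertical arrows of $\MMod{K}$ are literally the objects, $\infty$-functors, and isomorphism classes of $\infty$-functors of $\lcat{K}$, so clauses \ref{itm:equipment-we:i} and \ref{itm:equipment-we:ii} are immediate restatements of Proposition \ref{prop:we}\ref{itm:we-bi-equiv} and \ref{prop:we}\ref{itm:we-iso} respectively; nothing new is needed there.

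For clause \ref{itm:equipment-we:iii}, a module from $A$ to $B$ is an isofibration $E \tfib A \times B$ satisfying the fibration and groupoidality conditions of Definition \ref{defn:module}, and two modules are equivalent exactly when they are equivalent as objects of the sliced $\infty$-cosmos $\lcat{K}/(A \times B)$. By Proposition \ref{prop:sliced-we}, the induced functor $\lcat{K}/(A \times B) \to \lcat{L}/(FA \times FB)$ is again a weak equivalence of $\infty$-cosmoi, hence a biequivalence on homotopy 2-categories by Proposition \ref{prop:we}; in particular it is surjective on objects up to equivalence and reflects equivalences. Combined with Corollary \ref{cor:model-indep-fib}, which says that weak equivalences of $\infty$-cosmoi preserve and reflect the property of being a module, this gives local bijectivity on equivalence classes of parallel modules: every module $\dmod{L}{FA}{FB}$ in $\lcat{L}$ is equivalent over $FA \times FB$ to the image of some isofibration $E \tfib A \times B$ in $\lcat{K}$, and that $E$ is automatically a module because its image is; and two modules in $\lcat{K}$ with equivalent images are already equivalent, by reflection of equivalences in the slice.

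For clause \ref{itm:equipment-we:iv}, I would use the defining formula: a cell of $\MMod{K}$ with boundary \eqref{eq:generic-cell} is an isomorphism class of objects of the fibered mapping quasi-category $\fun_{f,g}(E_1 \pbtimes{A_1}\cdots\pbtimes{A_{n-1}} E_n, F)$, i.e.\ of the pullback of $\fun(E_1 \pbtimes{A_1}\cdots E_n, F) \tfib \fun(E_1\pbtimes{A_1}\cdots E_n, B_0 \times B_n)$ along the vertex given by $(f,g)$. Since $F \colon \lcat{K} \to \lcat{L}$ is simplicial and preserves the simplicial pullbacks defining the iterated products $E_1 \pbtimes{A_1}\cdots E_n$, it carries this cospan to the analogous cospan in $\lcat{L}$, and since $F$ is a local equivalence of mapping quasi-categories each leg of the induced map of cospans is an equivalence of quasi-categories; hence the induced map of pullbacks $\fun_{f,g}(\cdots, F) \we \fun_{Ff,Fg}(\cdots, FF)$ is an equivalence (pullback of trivial fibrations / equivalences of cospans, exactly as in the square displayed in the proof of Proposition \ref{prop:we}\ref{itm:we-comma}). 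An equivalence of quasi-categories induces a bijection on isomorphism classes of objects, which is precisely local bijectivity on cells.

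The main obstacle I anticipate is bookkeeping rather than conceptual: one must check that $F$ genuinely preserves the iterated fiber-product construction $E_1 \pbtimes{A_1}\cdots\pbtimes{A_{n-1}} E_n$ up to the canonical comparison (this is the content of ``functors of $\infty$-cosmoi preserve the limits of \ref{qcat.ctxt.cof.def}\ref{qcat.ctxt.cof:a}'', applied iteratively), and that the comparison maps between the two cospans in $\qCat$ are compatible with the fibered-hom pullbacks so that the ``pasting of equivalences is an equivalence'' argument applies verbatim. Once that is in place, every clause follows by citing the already-established 2-categorical statements (Proposition \ref{prop:we}) in the appropriate slice, together with Corollary \ref{cor:model-indep-fib} and Proposition \ref{prop:sliced-we}; no genuinely new homotopy-theoretic input is required.
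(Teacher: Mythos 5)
Your clauses \ref{itm:equipment-we:i}, \ref{itm:equipment-we:ii} and \ref{itm:equipment-we:iv} track the paper's proof exactly: (i) and (ii) are restatements of Proposition \ref{prop:we}\ref{itm:we-bi-equiv} and \ref{itm:we-iso}, and your direct computation for (iv) --- that $F$ preserves the simplicial pullbacks defining $E_1 \pbtimes{A_1}\cdots\pbtimes{A_{n-1}}E_n$ and induces an equivalence of the fibered mapping quasi-categories, hence a bijection on isomorphism classes --- is just an unwinding of the paper's appeal to Proposition \ref{prop:we}\ref{itm:we-iso} for the sliced weak equivalence $\lcat{K}/A\times B \to \lcat{L}/FA\times FB$ supplied by Proposition \ref{prop:sliced-we}.

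In clause \ref{itm:equipment-we:iii}, however, there is a genuine gap at the step ``$E$ is automatically a module because its image is.'' What Proposition \ref{prop:sliced-we} gives you is an isofibration $E \tfib A\times B$ whose image $FE$ is merely \emph{equivalent over} $FA\times FB$ to the given module $L$; it does not give you that $FE$ itself is a module, so Corollary \ref{cor:model-indep-fib} (which reflects module-ness along $F$, not along a fibered equivalence in $\lcat{L}$) does not apply as you have written it. To close the argument you must additionally show that the three conditions of Definition \ref{defn:module} --- cartesian fibration in $\lcat{K}/A$, cocartesian fibration in $\lcat{K}/B$, and groupoidal object of $\lcat{K}/A\times B$ --- are invariant under fibered equivalence over $A\times B$ (e.g.\ via the equivalence-invariant characterizations in Theorems \ref{thm:cart.fib.chars}\ref{itm:cart.fib.chars.iii} and \ref{thm:gpd.cart.fib.chars}\ref{itm:gpd.cart.fib.chars.iii} together with the invariance of commas and adjunctions), or, as the paper does, argue directly in the style of the surjectivity half of Proposition \ref{prop:sliced-we} that the sliced weak equivalence \emph{creates} modules. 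This is precisely the point the paper flags as ``more subtle''; once that invariance (or creation) statement is in place, your injectivity argument via reflection of equivalences in the slice is fine and the rest of your proposal goes through.
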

\begin{proof}
\ref{itm:equipment-we:i} and \ref{itm:equipment-we:ii} are restatements of Proposition \ref{prop:we}\ref{itm:we-bi-equiv} and \ref{itm:we-iso}. The proof of \ref{itm:equipment-we:iii} is more subtle. Corollary \ref{cor:model-indep-fib} tells us that the property of an isofibration $E \tfib A \times B$ in $\lcat{K}$ defining a module is both preserved and reflected by $F$. An argument similar to that given in the proof of Proposition \ref{prop:sliced-we} shows that the weak equivalence of $\infty$-cosmoi $F \colon \lcat{K}_{/A\times B} \to \lcat{L}_{/FA \times FB}$ creates modules, and of course also preserves and reflects equivalences. Finally \ref{itm:equipment-we:iv} is an application of Proposition \ref{prop:we}\ref{itm:we-iso} to this weak equivalence of $\infty$-cosmoi.
\end{proof}

\begin{thm}[model independence of basic category theory II] Any basic $\infty$-categorical notion that can be encoded as an equivalence-invariant proposition in the virtual equipment of modules is model invariant: preserved and reflected by weak equivalences of $\infty$-cosmoi.
\end{thm}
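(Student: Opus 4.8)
The proof is essentially a bookkeeping argument that unwinds the phrase ``equivalence-invariant proposition in the virtual equipment of modules'' and applies Proposition~\ref{prop:equipment-we}. The plan is to first make precise what such a proposition is: a statement asserting the existence (or non-existence, or uniqueness up to equivalence) of objects, vertical functors, modules, and cells in $\MMod{\lcat{K}}$ fitting into prescribed diagrams, subject to conditions that are themselves of this form — most importantly, conditions asserting that certain cells are cartesian, cocartesian, or exhibit right extensions/liftings, and conditions asserting that certain vertical functors or modules are equivalences. The key observation is that every primitive ingredient in this vocabulary is detected by the structure that Proposition~\ref{prop:equipment-we} shows is ``biequivalence-invariant'': objects up to equivalence \ref{itm:equipment-we:i}, vertical functors up to isomorphism \ref{itm:equipment-we:ii}, modules up to equivalence \ref{itm:equipment-we:iii}, and cells bijectively \ref{itm:equipment-we:iv}.

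First I would record that a functor of $\infty$-cosmoi $F \colon \lcat{K} \to \lcat{L}$ induces a functor of virtual equipments $F \colon \MMod{\lcat{K}} \to \MMod{\lcat{L}}$, as already noted in the paragraph preceding Proposition~\ref{prop:equipment-we}, using Proposition~\ref{prop:induced-2-functor} and Corollary~\ref{cor:model-indep-fib} to see that modules and the defining simplicial pullbacks are preserved. Then, assuming $F$ is a weak equivalence, Proposition~\ref{prop:equipment-we} gives the four biequivalence properties. The heart of the argument is a structural induction on the syntactic form of an equivalence-invariant proposition $P$. For the ``preservation'' direction one checks that if a configuration of data witnesses $P$ in $\MMod{\lcat{K}}$, its image under $F$ witnesses $P$ in $\MMod{\lcat{L}}$: existence of objects, functors, modules, and cells is transported by applying $F$; the universal properties defining cartesian cells (Proposition~\ref{prop:cartesian-pullback-cells}), cocartesian/unit cells (Proposition~\ref{prop:arrows-are-units}), and right extensions of modules (Definition~\ref{defn:right-extension-mod}) are all phrased as bijections between hom-sets of cells, and these are carried along by the local bijection on cells \ref{itm:equipment-we:iv} together with the local bijection on modules \ref{itm:equipment-we:iii}; equivalences of modules and isomorphism classes of functors are preserved by \ref{itm:equipment-we:ii} and \ref{itm:equipment-we:iii}. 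For the ``reflection'' direction one runs the same induction in reverse: given data in $\MMod{\lcat{L}}$ witnessing $P$, use surjectivity up to equivalence \ref{itm:equipment-we:i} to lift objects, bijectivity on isomorphism classes \ref{itm:equipment-we:ii} to lift vertical functors (up to isomorphism, which is harmless by equivalence-invariance of $P$), bijectivity on equivalence classes of modules \ref{itm:equipment-we:iii} to lift modules, and the cell bijection \ref{itm:equipment-we:iv} to lift cells; then the same cell-bijection transports the universal properties back, since they are equalities between sets of cells that $F$ identifies.

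I expect the main obstacle to be purely expository rather than mathematical: pinning down a sufficiently flexible yet precise notion of ``equivalence-invariant proposition in $\MMod{\lcat{K}}$'' so that the induction has finitely many base cases and each inductive step is genuinely routine. The subtlety is that the propositions of interest (adjunctions via Lemma~\ref{lem:yoneda-embedding}, pointwise Kan extensions via Proposition~\ref{prop:pointwise-kan}, fully faithful and initial/final functors via exact squares, limits via Proposition~\ref{prop:limits-as-pointwise-kan}) are built by nesting these primitives, and one must be careful that the \emph{conditions} appearing inside an existential are themselves equivalence-invariant — e.g. ``this cell is cartesian'' is invariant because being cartesian is characterized by a universal property among all cells, and all cells are matched bijectively. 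Once the vocabulary is fixed, the rest is a direct appeal to Proposition~\ref{prop:equipment-we}; indeed the cleanest write-up simply observes that a biequivalence of virtual equipments is, by definition, precisely a functor that induces the bijections in \ref{itm:equipment-we:i}--\ref{itm:equipment-we:iv}, and every basic categorical notion discussed above has been shown to be expressible purely in terms of objects, vertical functors, modules, and cells — with cartesian, cocartesian, and extension/lifting cells carrying no extra data beyond their universal properties — so it is automatically transported both ways.
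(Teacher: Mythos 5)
Your proposal is correct and matches the paper's intent: the paper offers no separate argument for this theorem, treating it as an immediate consequence of the biequivalence of virtual equipments established in Proposition~\ref{prop:equipment-we}, which is precisely what you invoke. Your structural-induction bookkeeping over objects, vertical functors, modules, and cells is just an explicit unwinding of the (deliberately informal) phrase ``equivalence-invariant proposition,'' so it fleshes out rather than diverges from the paper's reasoning.
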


We are only just beginning to explore the consequences of this result but they appear to be quite strong. For instance, we can prove that any (large) quasi-category $E$ that admits limits and colimits of every diagram indexed by a small category defines a \emph{derivator} 
\[ \Cat\op \xrightarrow{E^{-}}  \qCat \xrightarrow{\ho} \Cat\] 
in the sense of \cite{Heller:1988ly}; see \refV{rmk:qcat-derivator}. The proof makes use of two (non-formal) facts that we  prove directly in $\qCat$:
\begin{enumerate}[label=(\roman*)]
\item\label{itm:derivator-i} For any pair of modules $\dmod{G}{A}{C}$ and  $\dmod{K}{A}{B}$ between quasi-categories, there exists a right extension $\dmod{R}{B}{C}$.
\item\label{itm:derivator-ii} A module $\dmod{R}{B}{C}$ is equivalent to a comma module $\dmod{C \comma r}{B}{C}$ for some functor $r \colon B \to C$ between quasi-categories if and only if this property is true for the modules obtained by pulling back along each vertex $b \colon 1 \to B$.
\end{enumerate}

Now if $\lcat{K} \to \qCat$ is any weak equivalence of $\infty$-cosmoi, the biequivalence $\MMod{K} \to \MMod{\qCat}$ implies that \ref{itm:derivator-i} also holds in $\lcat{K}$. Given a pair of modules in $\lcat{K}$ form the right extension of their images in $\MMod{\qCat}$ and use \ref{prop:equipment-we}\ref{itm:equipment-we:iii} and \ref{itm:equipment-we:iv} to lift this module and the universal cell to a right extension diagram in  $\MMod{K}$. Now if $\dmod{R}{B}{C}$ is a module in $\lcat{K}$ that pulls back along all elements $b \colon 1 \to B$ to a represented module, then its image in $\qCat$ also has this property, using \ref{prop:equipment-we}\ref{itm:equipment-we:ii} to see that elements in $B$ correspond to elements in $FB$ up to isomorphism. The representing functor for the module $\dmod{FR}{FB}{FC}$ between quasi-categories lifts to a representing functor for $\dmod{R}{B}{C}$. 

As a consequence, we conclude that in an $\infty$-cosmos that is weakly equivalent to $\qCat$, any $\infty$-category that admits limits and colimits of every small diagram defines a derivator. 

By a similar argument, a related result --- \refI{cor:pointwise} --- that says that universal properties in $\h\qCat$ are determined ``pointwise'' can also be generalized to weakly equivalent $\infty$-cosmoi.

\providecommand{\bysame}{\leavevmode\hbox to3em{\hrulefill}\thinspace}
\providecommand{\MR}{\relax\ifhmode\unskip\space\fi MR }
\providecommand{\MRhref}[2]{%
  \href{http://www.ams.org/mathscinet-getitem?mr=#1}{#2}
}
\providecommand{\href}[2]{#2}

\end{document}